\numberwithin{equation}{section}
\numberwithin{figure}{section}
\theoremstyle{plain}
\newtheorem{thm}{\protect\theoremname}[section]
\theoremstyle{remark}
\newtheorem{rem}[thm]{\protect\remarkname}
\theoremstyle{plain}
\newtheorem{cor}[thm]{\protect\corollaryname}
\theoremstyle{definition}
\newtheorem{defn}[thm]{\protect\definitionname}
\theoremstyle{plain}
\newtheorem{lem}[thm]{\protect\lemmaname}
\theoremstyle{remark}
\newtheorem*{rem*}{\protect\remarkname}
\theoremstyle{plain}
\newtheorem{prop}[thm]{\protect\propositionname}
\theoremstyle{remark}
\newtheorem{claim}[thm]{\protect\claimname}
\theoremstyle{plain}
\newtheorem*{thm*}{\protect\theoremname}
\providecommand{\claimname}{Claim}
\providecommand{\corollaryname}{Corollary}
\providecommand{\definitionname}{Definition}
\providecommand{\lemmaname}{Lemma}
\providecommand{\propositionname}{Proposition}
\providecommand{\remarkname}{Remark}
\providecommand{\theoremname}{Theorem}
\begin{document}

\title{Exact dimensionality and Ledrappier-Young formula for the Furstenberg
measure}

\author{\noindent Ariel Rapaport}

\subjclass[2000]{\noindent 28A80, 37C45.}

\thanks{This research was supported by the Herchel Smith Fund at the University
of Cambridge.}
\begin{abstract}
Assuming strong irreducibility and proximality, we prove that the
Furstenberg measure, corresponding to a finitely supported measure
on the general linear group of a finite dimensional real vector space,
is exact dimensional. We also establish a Ledrappier-Young type formula
for its dimension. The general strategy of the proof is based on the
argument given by Feng for the exact dimensionality of self-affine
measures.
\end{abstract}

\maketitle

\section{\label{sec:Introduction}Introduction}

\subsection{\label{subsec:Background}Background and the main result}

Let $V$ be a real vector space with $2\le\dim V<\infty$. Fix an
inner product $\left\langle \cdot,\cdot\right\rangle $ on $V$, and
denote the induced norm by $|\cdot|$. For a linear subspace $W$
of $V$ denote by $\mathrm{P}(W)$ its projective space. For $\overline{x},\overline{y}\in\mathrm{P}(V)$
set,
\[
d(\overline{x},\overline{y})=\left(1-\left\langle x,y\right\rangle ^{2}\right)^{1/2},
\]
where $x\in\overline{x}$ and $y\in\overline{y}$ are unit vectors.
It is easy to verify that this defines a metric on $\mathrm{P}(V)$.

The general linear group of $V$ acts on $\mathrm{P}(V)$ in a natural
way by setting,
\[
A\overline{x}=\overline{Ax}\text{ for }A\in\mathrm{GL}(V)\text{ and }\overline{x}\in\mathrm{P}(V)\:.
\]
Let $\mu\in\mathcal{M}(\mathrm{GL}(V))$, where for a standard Borel
space $X$ the collection of Borel probability measures on $X$ is
denoted by $\mathcal{M}(X)$. We say that $\nu\in\mathcal{M}(\mathrm{P}(V))$
is $\mu$-stationary if,
\[
\nu(F)=\int A\nu(F)\:d\mu(A)\;\text{ for every Borel set }F\subset\mathrm{P}(V),
\]
where $A\nu$ is the push-forward of $\nu$ via the map $\overline{x}\rightarrow A\overline{x}$.
Since $\mathrm{P}(V)$ is compact there always exists at least one
$\mu$-stationary measure.

Write $S_{\mu}$ for the smallest closed subsemigroup of $\mathrm{GL}(V)$
such that $\mu(S_{\mu})=1$. Suppose from now on that $S_{\mu}$ is
strongly irreducible and proximal. The first assumption means that
there does not exist a finite family of proper nonzero linear subspaces
$W_{1},...,W_{k}$ of $V$ such that,
\[
A(\cup_{i=1}^{k}W_{i})=\cup_{i=1}^{k}W_{i}\text{ for all }A\in S_{\mu}\:.
\]
The second assumption means that there exist $A_{1},A_{2},...\in S_{\mu}$
and $\alpha_{1},\alpha_{2},...\in\mathbb{R}$ such that $\{\alpha_{n}A_{n}\}_{n\ge1}$
converges to a rank $1$ endomorphism of $V$ in the norm topology.
From these assumptions it follows that there exists a unique $\nu\in\mathcal{M}(\mathrm{P}(V))$
which is $\mu$-stationary. It is called the Furstenberg measure corresponding
to the distribution $\mu$. For a proof see \cite[Theorem III.3.1]{BL}
or \cite[Proposition 4.7]{BQ}. 

The main purpose of this paper is to establish the exact dimensionality
of the Furstenberg measure, under the additional assumption of $\mu$
being finitely supported. A Borel probability measure $\theta$ on
a metric space $X$ is said to be exact dimensional if there exists
a number $\alpha\ge0$ such that,
\[
\underset{r\downarrow0}{\lim}\:\frac{\log\theta(B(x,r))}{\log r}=\alpha\text{ for }\theta\text{-a.e. }x\in X,
\]
where $B(x,r)$ is the closed ball in $X$ with centre $x$ and radius
$r$. If $\theta$ is exact dimensional then the number $\alpha$
is denoted $\dim\theta$, is called the dimension of $\theta$ and
is equal to the value given to $\theta$ by other commonly used notions
of dimension (see \cite[Chapter 10]{Fa}). In particular $\dim\theta$
is equal to the Hausdorff dimension of $\theta$, which is denoted
$\dim_{H}\theta$ and defined by,
\begin{equation}
\dim_{H}\theta=\inf\{\dim_{H}F\::\:F\subset X\text{ is Borel with }\theta(F)>0\},\label{eq:def of h-dim}
\end{equation}
where $\dim_{H}F$ is the Hausdorff dimension of $F$.

When $\dim V=2$ the exact dimensionality of the Furstenberg measure
$\nu$ was already established in previous works, without assuming
that $\mu$ is finitely supported. It was shown by Ledrappier (see
\cite{Le}) that in this case the function,
\[
\overline{x}\rightarrow\frac{\log\nu(B(\overline{x},r))}{\log r},
\]
converges in $\nu$-probability to the value $h_{F}(\nu)/(\lambda_{0}-\lambda_{1})$
as $r\rightarrow0$. Here $\lambda_{0}>\lambda_{1}$ are the Lyapunov
exponents corresponding to $\mu$ (see the next section), and $h_{F}(\nu)$
is the Furstenberg entropy of $\nu$ which is defined by,
\[
h_{F}(\nu)=\int\int\log\frac{dA\nu}{d\nu}(\overline{x})\:dA\nu(\overline{x})\:d\mu(A)\:.
\]
More recently, Hochman and Solomyak \cite{HS} (see the discussion
below) have shown that $\nu$ is exact dimensional with,
\[
\dim\nu=h_{F}(\nu)/(\lambda_{0}-\lambda_{1}),
\]
whenever $\dim V=2$. In a recent paper Lessa \cite{Les} has extended
these results to disintegrations along certain $1$-dimensional foliations,
of stationary measures on the space of complete flags.

In this paper we establish the exact dimensionality of $\nu$ also
in higher dimensions. The following theorem is our main result.
\begin{thm}
\label{thm:ED of nu}Let $\mu\in\mathcal{M}(\mathrm{GL}(V))$ be finitely
supported, and suppose that $S_{\mu}$ is strongly irreducible and
proximal. Let $\nu\in\mathcal{M}(\mathrm{P}(V))$ be the Furstenberg
measure corresponding to $\mu$. Then $\nu$ is exact dimensional,
and $\dim\nu$ satisfies a Ledrappier-Young type dimension formula.
\end{thm}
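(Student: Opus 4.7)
The plan is to follow Feng's framework for the exact dimensionality of self-affine measures, adapted to the curved geometry of projective space. First I would set up the symbolic model: write $d=\dim V$, $\Omega=(\mathrm{supp}\,\mu)^{\mathbb{N}}$, let $\sigma$ be the shift, and $\mathbb{P}=\mu^{\mathbb{N}}$. Proximality gives a measurable coding $\pi:\Omega\to\mathrm{P}(V)$ defined $\mathbb{P}$-a.s.\ by $\pi(\omega)=\lim_{n}A_{1}(\omega)\cdots A_{n}(\omega)\overline{x}_{0}$, independent of generic $\overline{x}_{0}\in\mathrm{P}(V)$, and $\nu=\pi_{*}\mathbb{P}$. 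Applying the Oseledets theorem together with the Guivarc'h--Raugi / Goldsheid--Margulis simplicity results (which follow from strong irreducibility and proximality of $S_{\mu}$), one obtains Lyapunov exponents $\lambda_{1}>\lambda_{2}\geq\cdots\geq\lambda_{d}$ with $\lambda_{1}$ simple, together with a measurable Oseledets filtration of $V$ whose successive quotients carry the distinct exponents.

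Next I would introduce intermediate factor systems. Each level $i$ of the Oseledets flag determines a stationary Furstenberg-type measure on a Grassmannian or partial flag variety, and the canonical projections between these produce a tower of measurable factors refining down to $\nu$ on $\mathrm{P}(V)$ at the bottom. In local charts centred at $\pi(\omega)$, the iterate $A_{n}(\omega)\cdots A_{1}(\omega)$ is approximately linear on a small neighbourhood, and the Oseledets subspaces at $\omega$ cut out foliations whose level-$i$ leaves contract at the rate $e^{-(\lambda_{1}-\lambda_{i})n}$. Disintegrating $\nu$ along these nested foliations then produces a sequence of conditional measures, which is where the dimension analysis is performed.

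The core of the argument is to show that, for each $i$, the conditional measures of $\nu$ along the level-$i$ foliation are exact dimensional with dimension $\gamma_{i}\in[0,1]$ equal to the ratio of an appropriate conditional Furstenberg-type entropy $H_{i}-H_{i-1}$ over the Lyapunov gap $\lambda_{1}-\lambda_{i}$; summing then yields $\dim\nu=\sum_{i=2}^{d}\gamma_{i}$. To carry this out I would follow Feng's template: set up a subadditive cocycle whose values approximate $\log\nu(B(\pi(\omega),r))$ along a sequence of radii tied to the Oseledets contraction, invoke Kingman's theorem to obtain an almost sure limit, and then use $\mu$-stationarity of $\nu$ together with shift invariance of $\mathbb{P}$ to identify this limit with the formula above. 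Exact dimensionality of $\nu$ itself then follows from exact dimensionality of the finest conditional together with the tower structure of factors.

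The main obstacle, I expect, will be the lower bounds on local dimensions of the conditional measures. Upper bounds come rather cheaply from entropy considerations together with Oseledets' theorem, but the matching lower bounds require ruling out concentration of the conditionals on thin subsets of their leaves. Compared with Feng's self-affine setting, two new features must be handled: the projective action is nonlinear and must be linearised in charts adapted to the Oseledets flag at $\pi(\omega)$, with quantitative error control along long random words; and strong irreducibility of $S_{\mu}$ must be leveraged to prevent the conditionals from being supported in proper algebraic subvarieties of the leaves. I anticipate that a Hochman-style entropy-increase argument on $\mathrm{P}(V)$, combined with the uniform hyperbolicity of the projective cocycle along $\mathbb{P}$-typical orbits, will provide the required rigidity.
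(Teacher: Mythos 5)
Your overall scaffolding (symbolic coding, Oseledets filtration with simple top exponent, linearising charts adapted to the Oseledets flag, nested conditional measures indexed by the flag, dimensions given by entropy increments over Lyapunov gaps) matches the paper's strategy, but the two steps you lean on for the actual core of the proof are not viable as stated. First, there is no useful subadditive cocycle approximating $\log\nu(B(\pi(\omega),r))$ to which Kingman's theorem applies: the measure of a metric ball does not factorise multiplicatively under the shift because balls and cylinder images are transverse to each other, and making them comparable is precisely the content of the Ledrappier--Young machinery. The paper never invokes Kingman; the a.e.\ existence of the local dimension is obtained by sandwiching $\underline{\gamma}_{i,k}\le\overline{\gamma}_{i,k}$ between two chains of inequalities (Propositions \ref{prop:ineq for delta underline} and \ref{prop:ineq for delta overline}), whose proofs are counting arguments comparing cylinders of $\mathcal{P}_0^{n-1}$ intersected with the sets $\Gamma_k(\omega,r)$, together with the transverse-dimension lower bound of Proposition \ref{prop:transvers dim =00003D}. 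That lower bound is itself proved via an induced transformation on a set where the Oseledets angles are bounded below, Kac's formula and Maker's ergodic theorem --- an additive, not subadditive, ergodic argument applied to the information function $\mathrm{I}_\beta(\mathcal{P}\mid\widehat{\xi_i})$. Your proposal gives no substitute for this step.

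Second, your suggestion that the lower bounds require a ``Hochman-style entropy-increase argument'' conflates two different problems. Inverse theorems for entropy are used to show that the dimension \emph{equals} the Lyapunov dimension under separation hypotheses; they play no role in establishing exact dimensionality, and the paper uses nothing of the sort. What is actually needed, and what you omit, is quantitative control on how close $\pi\sigma^n\omega$ comes to the singular hyperplane $\mathrm{P}(V^0_{\sigma^n\omega})$ of the chart at time $n$: this is supplied by Guivarc'h's regularity theorem (Theorem \ref{thm:guivarch}) via Borel--Cantelli, and it is the ingredient that replaces the uniform contraction available in the self-affine setting. Relatedly, your one-sided symbolic model is structurally inadequate: the partitions whose conditional measures project to the slices of $\nu$ must condition on the \emph{past} (which determines the flag $V_\omega^i$) while $\pi$ depends on the future, and the independence of these two halves under the two-sided Bernoulli measure is used repeatedly, including in the final disintegration identifying $\pi\beta_\omega^{\xi_i}$ with $\nu_{\overline{x}}^{\zeta_{V_\omega^i}}$.
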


The precise formula satisfied by $\dim\nu$ will be given in the next
section. Its name comes from the work of Ledrappier and Young \cite{LY},
in which they have obtained a formula, in terms of conditional entropies
and Lyapunov exponents, for the local dimensions along stable and
unstable manifolds of invariant measures of $C^{2}$ smooth diffeomorphisms.

Let us provide some more background and mention other related results.
A measure $\theta$ on $\mathbb{R}^{d}$ is said to be self-affine
if it is stationary with respect to a finitely supported measure $\rho$
on the semigroup of affine invertible contractions of $\mathbb{R}^{d}$.
If $\rho$ is supported on the semigroup of contracting similarities,
then $\theta$ is said to be self-similar. The support of the measure
$\rho$ is sometimes referred to as an iterated function system (IFS).

Self-affine measures and Furstenberg measures share various features.
For instance, both can be realised as the image of a Bernoulli measure
on the symbolic space under an appropriate equivariant map. In the
case of self-affine measures this map is called the coding map. For
the Furstenberg measure it is called the Furstenberg boundary map
(see the next section). Additionally, both types of measures can be
represented as a weighted average of distorted copies of themselves,
which are sometimes referred to as cylinder measures. The Furstenberg
measure on the $1$-dimensional projective space resembles a self-similar
measure on the real line. In higher dimensions the Furstenberg measure
resembles a self-affine measure, for which the linear parts of the
maps in the IFS satisfy irreducibility and proximality assumptions
similar to ours.

Exact dimensionality plays an important role in the study of stationary
fractal measures, and the question of whether every self-affine measure
satisfies this property has received a lot of attention. In \cite{FH},
by introducing a notion of projection entropy, Feng and Hu have proved
that every self-similar measure on $\mathbb{R}^{d}$ is exact dimensional,
with dimension given by the projection entropy divided by the Lyapunov
exponent. In fact they have shown this, more generally, for the push-forward
of any ergodic measure under the coding map.

In \cite{BK} Bárány and Käenmäki proved that every planar self-affine
measure is exact dimensional. Moreover, they proved this for every
self-affine measure on $\mathbb{R}^{d}$ with $d$ distinct Lyapunov
exponents, and showed that its dimension is given by a Ledrappier-Young
type formula. Additionally, under further assumptions, they established
this for projections under the coding map of quasi-Bernoulli measures.

Lastly, in a recent paper Feng \cite{Fe} has managed to provide a
complete solution for this problem, and proved that all self-affine
measures are exact dimensional and satisfy a Ledrappier-Young type
formula. In fact he was able to show this for projections of general
ergodic measures and to systems which are only average contracting.
The general strategy for our proof of Theorem \ref{thm:ED of nu}
is based on Feng's argument.

Besides their intrinsic interest, exact dimensionality and Ledrappier-Young
type formulas have played an important role in some recent and significant
developments in the dimension theory of fractal measures. Hochman
\cite{Ho1,Ho2} has shown that, under a mild exponential separation
assumption on the maps in the IFS and an additional irreducibility
assumption in higher dimensions, the dimension of a self-similar measure
is equal to its natural upper bound. If $\rho$ is the corresponding
finitely supported measure on the contracting similarities, this upper
bound is equal to the minimum between the dimension of the ambient
space and the quotient obtained by dividing the Shanon entropy of
$\rho$ by its Lyapunov exponent. To be more precise regarding the
exponential separation assumption, it requires the existence of an
$\epsilon>0$ such that for all $n\ge1$ the distance between two
distinct compositions of length $n$ of map from the IFS is at least
$\epsilon^{n}$. These works rely on the exact dimensionality of self-similar
measures.

Bárány, Hochman and Rapaport \cite{BHR} have proved for planar self-affine
measures that if one assumes strong irreducibility and proximality
for the linear parts of the maps in the IFS, and that the supports
of the cylinder measures are disjoint, then the dimension is equal
to its natural upper bound. In this case the natural upper bound is
the minimum between $2$ and a quantity known as the Lyapunov dimension,
which generalises the entropy divided by exponent formula. The condition
regarding the disjointness of the supports is usually referred to
as the strong separation condition (SSC). Hochman and Rapaport \cite{HR}
have later established this statement under a much milder exponential
separation assumption instead of the SSC. Both of these results rely
on the exact dimensionality and Ledrappier-Young formula for planar
self-affine measures.

Lastly, in \cite{HS} Hochman and Solomyak have proved their main
result while relying on the exact dimensionality of the $1$-dimensional
Furstenberg measure, which they establish in the same paper. Stated
in the notation of Theorem \ref{thm:ED of nu}, this result says that
if $\mu$ is finitely supported, $S_{\mu}$ is strongly irreducible
and proximal, and the matrices in the support of $\mu$ satisfy an
exponential separation condition, then $\dim\nu$ is equal to its
natural upper bound, where $\nu$ is the Furstenberg measure on $\mathrm{P}(\mathbb{R}^{2})$.
The natural upper bound in this case is the minimum between $1$ and
the Shanon entropy of $\mu$ divided by the difference of the two
Lyapunov exponents.

In Section \ref{subsec:The-Lyapunov-dimension} we introduce a value
$\dim_{\mathrm{LY}}\mu$, which we call the Lyapunov dimension corresponding
to $\mu$. It extends the aforementioned upper bound for $\dim\nu$
in the $1$-dimensional case, and is analogous to the Lyapunov dimension
of a self-affine measure. From the dimension formula stated in the
next section it will follow easily that $\dim_{\mathrm{LY}}\mu$ is
always an upper bound for $\dim\nu$. Considering the results mentioned
above, it is reasonable to expect for these two values to be equal
under an additional exponential separation assumption. The results
of this paper should be a necessary ingredient in the proof of such
a statement.

\subsection{\label{subsec:Dimension-formulas}Dimension formulas}

In this section we provide a precise statement for the dimension formula
satisfied by the Furstenberg measure $\nu$. We also give similar
formulas for typical projections and slices of $\nu$. First we need
some more definitions and notations.

Let $\mu\in\mathcal{M}(\mathrm{GL}(V))$ be finitely supported. Then
there exist a finite index set $\Lambda$, distinct elements $\{A_{l}\}_{l\in\Lambda}$
of $\mathrm{GL}(V)$, and a probability vector $p=(p_{l})_{l\in\Lambda}$
with strictly positive coordinates, such that
\[
\mu=\sum_{l\in\Lambda}p_{l}\delta_{A_{l}}\:.
\]
Here $\delta_{A_{l}}\in\mathcal{M}(\mathrm{GL}(V))$ is the Dirac
mass at $A_{l}$. As before, denote by $S_{\mu}$ the smallest closed
subsemigroup of $\mathrm{GL}(V)$ such that $\mu(S_{\mu})=1$, and
suppose that $S_{\mu}$ is strongly irreducible and proximal. Let
$\nu\in\mathcal{M}(\mathrm{P}(V))$ be the Furstenberg measure corresponding
to $\mu$.

Write $\Omega=\Lambda^{\mathbb{Z}}$ and equip $\Omega$ with its
Borel $\sigma$-algebra, generated by the cylinder sets. Let $\beta$
be the Bernoulli measure on $\Omega$ corresponding to the probability
vector $p$, that is $\beta=p^{\mathbb{Z}}$. Let $\sigma:\Omega\rightarrow\Omega$
be the left shift map, i.e.
\[
(\sigma\omega)_{n}=\omega_{n+1}\text{ for }\omega\in\Omega\text{ and }n\in\mathbb{Z}\:.
\]

From our assumptions on $S_{\mu}$ it follows (see \cite[Lemma 2.17 and Proposition 4.7]{BQ})
that there exists a Borel map $\pi:\Omega\rightarrow\mathrm{P}(V)$
such that,
\begin{enumerate}
\item $\pi$ depends only on the nonnegative coordinates of $\Omega$;
\item $\pi\omega=A_{\omega_{0}}\pi\sigma\omega$ for $\beta$-a.e. $\omega$;
\item the distribution of $\pi$ with respect to $\beta$ is equal to $\nu$,
that is $\pi\beta=\nu$;
\item for $\beta$-a.e. $\omega$,
\[
\underset{n\rightarrow\infty}{\lim}\:A_{\omega_{0}}\cdot\cdot\cdot A_{\omega_{n}}\nu=\delta_{\pi(\omega)},
\]
where $\delta_{\pi(\omega)}$ is the Dirac mass at $\pi(\omega)$
and the convergence is in the weak-{*} topology.
\end{enumerate}
The map $\pi$ is often called the Furstenberg boundary map.

By the Oseledets' multiplicative ergodic theorem \cite{O}, applied
to the ergodic system $(\Omega,\beta,\sigma^{-1})$ and the matrix
cocycle $\omega\rightarrow A_{\omega_{-1}}$, there exist positive
integers $s,d_{0},...,d_{s}$, real numbers $\lambda_{0}>...>\lambda_{s}$
and linear subspaces,
\[
V=V_{\omega}^{-1}\supset V_{\omega}^{0}\supset...\supset V_{\omega}^{s}=\{0\}\text{ for }\omega\in\Omega,
\]
such that,
\begin{enumerate}
\item $\dim V_{\omega}^{i}=\sum_{k=i+1}^{s}d_{k}$ for $\omega\in\Omega$
and $-1\le i\le s$;
\item the map $\omega\rightarrow(V_{\omega}^{i})_{i=-1}^{s}$ is Borel measurable
and depends only on the negative coordinates of $\Omega$;
\item $V_{\sigma^{-1}\omega}^{i}=A_{\omega_{-1}}V_{\omega}^{i}$ for $\beta$-a.e.
$\omega$ and each $-1\le i\le s$;
\item for $\beta$-a.e. $\omega$ and each $0\le i\le s$,
\[
\underset{n\rightarrow\infty}{\lim}\:\frac{1}{n}\log|A_{\omega_{-n}}...A_{\omega_{-1}}x|=\lambda_{i}\text{ for }x\in V_{\omega}^{i-1}\setminus V_{\omega}^{i}\:.
\]
\end{enumerate}
The numbers $\lambda_{0},...,\lambda_{s}$ are called the Lyapunov
exponents corresponding to $\mu$. For $0\le i\le s$ the integer
$d_{i}$ is called the multiplicity of $\lambda_{i}$. Note that from
our assumptions on $S_{\mu}$ it follows that $d_{0}=1$ (see \cite[Theorem III.6.1]{BL}).
We set,
\[
\tilde{\lambda}_{i}=\lambda_{i}-\lambda_{0}\text{ for }1\le i\le s\:.
\]

\begin{rem}
Let $\theta$ be the distribution of the random flag $(V_{\omega}^{i})_{i=-1}^{s}$.
That is, for every Borel subset $B$ of the flag manifold,
\[
\theta(B)=\beta\{\omega\::\:(V_{\omega}^{i})_{i=-1}^{s}\in B\}\:.
\]
Write $\mu^{-}$ for the distribution,
\[
\sum_{l\in\Lambda}p_{l}\delta_{A_{l}^{-1}}\in\mathcal{M}(\mathrm{GL}(V))\:.
\]
Then from the identities $V_{\sigma^{-1}\omega}^{i}=A_{\omega_{-1}}V_{\omega}^{i}$
it follows that $\theta$ is $\mu^{-}$-stationary. Note that in general,
our assumptions do not guarantee the uniqueness of a $\mu^{-}$-stationary
measure on the flag manifold.
\end{rem}

For a proper linear subspace $W$ of $V$ write $P_{W^{\perp}}$ for
the orthogonal projection onto $W^{\perp}$. Note that $P_{W^{\perp}}$
defines a map from $\mathrm{P}(V)\setminus\mathrm{P}(W)$ to $\mathrm{P}(W^{\perp})$
by setting,
\[
P_{W^{\perp}}\overline{x}=\overline{P_{W^{\perp}}x}\quad\text{ for }\overline{x}\in\mathrm{P}(V)\setminus\mathrm{P}(W)\:.
\]
Let $\zeta_{W}$ be the partition of $\mathrm{P}(V)\setminus\mathrm{P}(W)$
such that for $\overline{x}\in\mathrm{P}(V)\setminus\mathrm{P}(W)$,
\[
\zeta_{W}(\overline{x})=\{\overline{y}\in\mathrm{P}(V)\setminus\mathrm{P}(W)\::\:P_{W^{\perp}}\overline{y}=P_{W^{\perp}}\overline{x}\}\:.
\]
Here $\zeta_{W}(\overline{x})$ denotes the unique element of $\zeta_{W}$
which contains $\overline{x}$. Since $W\ne V$, and because $S_{\mu}$
is strongly irreducible, is follows that $\nu(\mathrm{P}(W))=0$ (see
\cite[Proposition III.2.3]{BL}). Hence $P_{W^{\perp}}$ defines a
Borel map on $\mathrm{P}(V)$ outside a set of zero $\nu$-measure,
and the disintegration
\[
\{\nu_{\overline{x}}^{\zeta_{W}}\}_{\overline{x}\in\mathrm{P}(V)}\subset\mathcal{M}(\mathrm{P}(V)),
\]
of $\nu$ with respect to the measurable partition $\zeta_{W}$, is
$\nu$-a.e. well defined (see Section \ref{subsec:Disintegration}).
Note that if $W$ is of codimension $1$ then $\nu_{\overline{x}}^{\zeta_{W}}=\nu$
for $\nu$-a.e. $\overline{x}$. 

Denote by $\mathcal{P}$ the partition of $\Omega$ according to the
$0$-coordinate, that is
\[
\mathcal{P}=\{\{\omega\in\Omega\::\:\omega_{0}=l\}\::\:l\in\Lambda\}\:.
\]
Write $\mathcal{B}$ for the Borel $\sigma$-algebra of $\mathrm{P}(V)$.
For a proper linear subspace $W$ of $V$, write $\mathrm{H}_{\beta}(\mathcal{P}\mid\pi^{-1}P_{W^{\perp}}^{-1}\mathcal{B})$
for the conditional entropy of $\mathcal{P}$ given $\pi^{-1}P_{W^{\perp}}^{-1}\mathcal{B}$
with respect to $\beta$ (see Section \ref{subsec:info and ent}).
It is well defined since the identity $\pi\beta=\nu$ implies that
the composition $P_{W^{\perp}}\circ\pi$ defines a Borel map on $\Omega$
outside of a set of zero $\beta$-measure. Thus for $0\le i\le s$
we can set,
\begin{equation}
\mathrm{H}_{i}=\int\mathrm{H}_{\beta}(\mathcal{P}\mid\pi^{-1}P_{(V_{\omega}^{i})^{\perp}}^{-1}\mathcal{B})\:d\beta(\omega)\:.\label{eq:intro def of H_i}
\end{equation}
Note that since the subspaces $V_{\omega}^{0}$ are of codimension
$1$, the $\sigma$-algebras $\pi^{-1}P_{(V_{\omega}^{0})^{\perp}}^{-1}\mathcal{B}$
are trivial with respect to $\beta$. This implies that $\mathrm{H}_{0}=\mathrm{H}(p)$,
where $\mathrm{H}(p)$ is the entropy of the probability vector $p$.
Also observe that, since $V_{\omega}^{i+1}\subset V_{\omega}^{i}$
for $0\le i<s$ and $\omega\in\Omega$, the $\sigma$-algebras which
appear in the definition of $\mathrm{H}_{i+1}$ are finer than the
ones which appear in the definition of $\mathrm{H}_{i}$. This implies
that $\mathrm{H}_{i+1}\le\mathrm{H}_{i}$ for $0\le i<s$.

We are now ready to state our dimension formulas for $\nu$, its projections
and its slices.
\begin{thm}
\label{thm:LY formula and ED}Suppose that $\mu$ is finitely supported,
and that $S_{\mu}$ is strongly irreducible and proximal. Let $\nu$
be the Furstenberg measure corresponding to $\mu$. Then, in the notations
above, for $\beta$-a.e. $\omega\in\Omega$, $\nu$-a.e. $\overline{x}\in\mathrm{P}(V)$
and every $0\le i<k\le s$, the following statements are satisfied.
\begin{enumerate}
\item \label{enu:LY for nu}$\nu$ is exact dimensional with,
\[
\dim\nu=\sum_{j=0}^{s-1}\frac{\mathrm{H}_{j+1}-\mathrm{H}_{j}}{\tilde{\lambda}_{j+1}};
\]
\item \label{enu:LY for proj}$P_{(V_{\omega}^{k})^{\perp}}\nu$ is exact
dimensional with,
\[
\dim P_{(V_{\omega}^{k})^{\perp}}\nu=\sum_{j=0}^{k-1}\frac{\mathrm{H}_{j+1}-\mathrm{H}_{j}}{\tilde{\lambda}_{j+1}};
\]
\item \label{enu:LY for slices}$\nu_{\overline{x}}^{\zeta_{V_{\omega}^{i}}}$
is exact dimensional with,
\[
\dim\nu_{\overline{x}}^{\zeta_{V_{\omega}^{i}}}=\sum_{j=i}^{s-1}\frac{\mathrm{H}_{j+1}-\mathrm{H}_{j}}{\tilde{\lambda}_{j+1}};
\]
\item \label{enu:LY for proj of slices}$P_{(V_{\omega}^{k})^{\perp}}\nu_{\overline{x}}^{\zeta_{V_{\omega}^{i}}}$
is exact dimensional with,
\[
\dim P_{(V_{\omega}^{k})^{\perp}}\nu_{\overline{x}}^{\zeta_{V_{\omega}^{i}}}=\sum_{j=i}^{k-1}\frac{\mathrm{H}_{j+1}-\mathrm{H}_{j}}{\tilde{\lambda}_{j+1}}\:.
\]
\end{enumerate}
\end{thm}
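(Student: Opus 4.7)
The plan is to derive all four parts from the most general statement~(4), which subsumes (1)--(3): (4) with $i=0$ reduces to (2) because $V_\omega^0$ has codimension one so $\nu_{\overline{x}}^{\zeta_{V_\omega^0}}=\nu$; (4) with $k=s$ reduces to (3) because $V_\omega^s=\{0\}$ makes the projection the identity; and taking both specialisations yields (1). The overall strategy mirrors Feng's treatment of self-affine measures: realise $\nu=\pi\beta$ through the boundary map, use the Oseledets flag $(V_\omega^i)_{i=-1}^s$ to stratify $\mathrm{P}(V)$ into layers corresponding to the exponents $\lambda_0>\cdots>\lambda_s$, and compute the local dimension of $\nu$ along each layer in terms of the entropy increment $H_{j+1}-H_j$ and the exponent gap $\tilde{\lambda}_{j+1}$.

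First I would set up a symbolic framework on $(\Omega,\beta,\sigma^{-1})$ exploiting properties~(1)--(4) of $\pi$. The Oseledets theorem controls the rate at which $A_{\omega_{-n}}\cdots A_{\omega_{-1}}$ contracts along each $V_\omega^i$, which links the geometry of $\nu$ near $\pi(\omega)$ at scale $e^{-n\tilde{\lambda}_{j+1}}$ in the $(j{+}1)$-st layer direction to symbolic quantities. Using Rokhlin disintegration, the measures $\nu_{\overline{x}}^{\zeta_{V_\omega^i}}$ and their projections $P_{(V_\omega^k)^\perp}\nu_{\overline{x}}^{\zeta_{V_\omega^i}}$ are defined $\nu$-a.e.\ and live on affine spaces of dimension $d_{i+1}+\cdots+d_k$; they are compatible with the equivariance $V_{\sigma^{-1}\omega}^i=A_{\omega_{-1}}V_\omega^i$ and so transform naturally under the cocycle, which will be essential for iterating the single-layer argument.

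The key technical step is a layer-wise dimension identity: for $\beta$-a.e.\ $\omega$ and $\nu$-a.e.\ $\overline{x}$, the conditional projection $P_{(V_\omega^{j+1})^\perp}\nu_{\overline{x}}^{\zeta_{V_\omega^j}}$ is exact dimensional with dimension exactly $(H_{j+1}-H_j)/\tilde{\lambda}_{j+1}$. This is the analogue of Feng's projection-entropy formula. To prove it, I would analyse the refining symbolic partitions $\mathcal{P}^n=\bigvee_{m=0}^{n-1}\sigma^{-m}\mathcal{P}$, apply Shannon--McMillan--Breiman for the conditional entropy of $\mathcal{P}^n$ given $\pi^{-1}P_{(V_\omega^{j+1})^\perp}^{-1}\mathcal{B}$ (yielding exponential rate $H_{j+1}$) and likewise given $\pi^{-1}P_{(V_\omega^j)^\perp}^{-1}\mathcal{B}$ (yielding $H_j$), and use Oseledets-controlled contraction to convert the difference of these symbolic rates into a local dimension at scale $e^{-n\tilde{\lambda}_{j+1}}$. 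Telescoping across $j=i,\ldots,k-1$ then gives the additive formula in~(4).

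The main obstacle will be this layer-wise identification of entropy with local dimension. In Feng's self-affine setting the relevant cylinder geometry is linear and scales behave uniformly; here one must work in local affine charts near $\pi(\omega)$ and control the deviation between the projective action of $A_{\omega_{-n}}\cdots A_{\omega_{-1}}$ and its linearisation at $\pi(\omega)$, with estimates uniform enough for $\beta$-typical $\omega$ over all but a negligible set of scales so that symbolic entropy bounds transfer faithfully to ball measures of $\nu$. This is compounded by the a.e.\ nature of $\nu_{\overline{x}}^{\zeta_{V_\omega^i}}$: one must coordinate the choice of a typical $\omega$ with the choice of a typical $\overline{x}$ in its fibre, which requires arguing on the two-sided shift and conditioning on the tail $\sigma$-algebra generated by the positive coordinates, on which $\pi$ depends, while the Oseledets data depend on the negative coordinates.
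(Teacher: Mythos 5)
Your reduction of parts (1)--(3) to part (4) is exactly the paper's Remark \ref{rem:only last part needed}, and the overall Feng-style framework (boundary map, Oseledets flag, conditional entropies $\mathrm{H}_i$, induced dynamics to control angles) is the right one. The genuine gap is the sentence ``Telescoping across $j=i,\ldots,k-1$ then gives the additive formula in (4).'' This telescoping is not a formality; it is the main content of the theorem. Even if you establish the single-layer identity that $P_{(V_\omega^{j+1})^\perp}\nu_{\overline{x}}^{\zeta_{V_\omega^j}}$ has exact dimension $(\mathrm{H}_{j+1}-\mathrm{H}_j)/\tilde{\lambda}_{j+1}$, the dimension of a measure is in general \emph{not} the sum of the dimension of its projection and the dimension of its typical conditional fibre measures -- dimension conservation can fail, and here it is a consequence of the theorem (Corollary \ref{cor:dim cons}), not an input. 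To make the layers add up one needs two separate inequalities running in opposite directions: a lower bound $\underline{\gamma}_{i,k}\ge\underline{\gamma}_{i+1,k}+\vartheta_i$ for the lower local dimensions together with the transverse-dimension bound $\vartheta_i\ge(\mathrm{H}_{i+1}-\mathrm{H}_i)/\tilde{\lambda}_{i+1}$ (Propositions \ref{prop:transvers dim =00003D} and \ref{prop:ineq for delta underline}), and an upper bound $\overline{\gamma}_{i,k}-\overline{\gamma}_{i+1,k}\le(\mathrm{H}_{i+1}-\mathrm{H}_i)/\tilde{\lambda}_{i+1}$ for the upper local dimensions obtained by a cylinder-counting argument (Proposition \ref{prop:ineq for delta overline}). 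Only the backward induction combining all three (Claim \ref{cla:gam up =00003D gam low}) forces $\overline{\gamma}_{i,k}=\underline{\gamma}_{i,k}$ and hence exact dimensionality; your sketch provides no mechanism for matching upper and lower local dimensions.

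A second, related soft spot: your proposed proof of the single-layer identity via Shannon--McMillan--Breiman plus ``Oseledets-controlled contraction'' would at best give one-sided estimates, because the contraction rates and the angles between Oseledets subspaces are only asymptotically controlled along $\beta$-typical orbits, not uniformly. The paper handles this by inducing $\sigma^N$ on a positive-measure set $F$ where $\kappa(\omega)$ and $\Vert g_\omega\pi\omega\Vert_\infty$ are bounded, and then the returns to $F$ are not at deterministic times, so the ergodic averaging of the information functions along the induced map requires Maker's theorem and Kac's formula rather than plain Birkhoff/SMB. Your final paragraph correctly flags the need to coordinate typical $\omega$ (negative coordinates) with typical $\overline{x}$ (positive coordinates); that part is resolved straightforwardly by the product structure $\beta=\beta^-\times\beta^+$, and is not where the difficulty lies.
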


\begin{rem}
\label{rem:only last part needed}For every $\omega\in\Omega$ the
subspace $V_{\omega}^{s}$ is trivial and $V_{\omega}^{0}$ is of
codimension $1$. Hence $P_{(V_{\omega}^{s})^{\perp}}$ is the identity
and $\nu_{\overline{x}}^{\zeta_{V_{\omega}^{0}}}=\nu$ for $\nu$-a.e.
$\overline{x}$. Thus in order to prove Theorems \ref{thm:LY formula and ED}
and \ref{thm:ED of nu}, it is enough to establish part (\ref{enu:LY for proj of slices})
of Theorem \ref{thm:LY formula and ED}.
\end{rem}

The above theorem yields a dimension conservation result for the Furstenberg
measure. Let $W$ be a proper linear subspace of $V$, and let $\theta\in\mathcal{M}(\mathrm{P}(V))$
be with $\theta(\mathrm{P}(W))=0$. Following Furstenberg \cite{Fu},
we say that $\theta$ is dimension conserving with respect to $P_{W^{\perp}}$
if,
\[
\dim_{H}P_{W^{\perp}}\theta+\dim_{H}\theta_{\overline{x}}^{\zeta_{W}}=\dim_{H}\theta\text{ for }\theta\text{-a.e. }\overline{x},
\]
where $\dim_{H}$ is as defined in (\ref{eq:def of h-dim}). The following
corollary follows directly from parts (\ref{enu:LY for nu})-(\ref{enu:LY for slices})
of Theorem \ref{thm:LY formula and ED}.
\begin{cor}
\label{cor:dim cons}Assume the conditions of Theorem \ref{thm:LY formula and ED}
are satisfied. Then $\nu$ is dimension conserving with respect to
$P_{(V_{\omega}^{i})^{\perp}}$ for $\beta$-a.e. $\omega\in\Omega$
and every $0\le i\le s$.
\end{cor}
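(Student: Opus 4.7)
The plan is to derive the corollary as a routine bookkeeping consequence of Theorem~\ref{thm:LY formula and ED}. First I would fix $\omega$ in the full-measure set supplied by that theorem, and observe that since every measure appearing in the statement is there asserted to be exact dimensional, its dimension coincides with its Hausdorff dimension. Dimension conservation therefore reduces to verifying the identity
\[
\dim P_{(V_{\omega}^{i})^{\perp}}\nu + \dim\nu_{\overline{x}}^{\zeta_{V_{\omega}^{i}}} = \dim\nu
\]
for $\nu$-a.e.\ $\overline{x}$ and every $0\le i\le s$.

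For the generic range $1\le i\le s-1$, I would apply part~(\ref{enu:LY for proj}) of the theorem with $k=i$ to evaluate $\dim P_{(V_{\omega}^{i})^{\perp}}\nu$, part~(\ref{enu:LY for slices}) to evaluate $\dim\nu_{\overline{x}}^{\zeta_{V_{\omega}^{i}}}$, and part~(\ref{enu:LY for nu}) to evaluate $\dim\nu$. The required equality then reduces to the trivial arithmetic identity
\[
\sum_{j=0}^{i-1}\frac{\mathrm{H}_{j+1}-\mathrm{H}_{j}}{\tilde{\lambda}_{j+1}}+\sum_{j=i}^{s-1}\frac{\mathrm{H}_{j+1}-\mathrm{H}_{j}}{\tilde{\lambda}_{j+1}}=\sum_{j=0}^{s-1}\frac{\mathrm{H}_{j+1}-\mathrm{H}_{j}}{\tilde{\lambda}_{j+1}},
\]
obtained by splitting $\{0,\ldots,s-1\}$ at the index $i$.

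The two boundary cases $i=0$ and $i=s$ lie outside the index range of parts~(\ref{enu:LY for proj}) and~(\ref{enu:LY for slices}), but are handled by inspection. When $i=0$, the subspace $V_{\omega}^{0}$ has codimension one, so $\mathrm{P}((V_{\omega}^{0})^{\perp})$ is a single point, $P_{(V_{\omega}^{0})^{\perp}}\nu$ is a Dirac mass of dimension zero, and by the remark in the excerpt $\nu_{\overline{x}}^{\zeta_{V_{\omega}^{0}}}=\nu$ for $\nu$-a.e.\ $\overline{x}$; thus the identity collapses to $0+\dim\nu=\dim\nu$. When $i=s$, we have $V_{\omega}^{s}=\{0\}$, so $P_{(V_{\omega}^{s})^{\perp}}$ is the identity on $\mathrm{P}(V)$ while the partition $\zeta_{\{0\}}$ consists of singletons, making $\nu_{\overline{x}}^{\zeta_{V_{\omega}^{s}}}$ a Dirac mass of dimension zero, so again the identity collapses trivially. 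Since everything flows directly from the dimension formulas, there is no substantive obstacle here: all the analytical depth lives in Theorem~\ref{thm:LY formula and ED} itself, and the corollary amounts to rearranging three telescoping sums.
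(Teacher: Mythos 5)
Your proposal is correct and matches the paper's (unwritten) argument: the paper simply asserts that the corollary "follows directly from parts (1)--(3) of Theorem \ref{thm:LY formula and ED}", and your telescoping of the dimension formulas, together with the observation that exact dimensionality lets one replace $\dim_H$ by $\dim$, is exactly that deduction. Your explicit treatment of the boundary cases $i=0$ and $i=s$ is a harmless (and correct) elaboration of what the paper leaves implicit in Remark \ref{rem:only last part needed}.
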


For self-affine measures results analogous to Corollary \ref{cor:dim cons}
were obtained in \cite{BK} and \cite{Fe}. It is worth pointing out
that in \cite{FJ} Falconer and Jin proved that self-similar measures
on $\mathbb{R}^{d}$ with finite rotation groups are dimension conserving
with respect to any orthogonal projection. For self-similar sets with
finite rotation groups this result was first obtained by Furstenberg
\cite{Fu}, who introduced this notion.

\subsection{\label{subsec:The-Lyapunov-dimension}The Lyapunov dimension}

In this section we introduce the upper bound for $\dim\nu$, which
was mentioned in the discussion at the end of Section \ref{subsec:Background}.
We continue to use the notations from the previous section, and assume
the conditions of Theorem \ref{thm:LY formula and ED} are satisfied.
As before write $\mathrm{H}(p)$ for the entropy of the probability
vector $p$. We also set,
\[
L_{i}=-\sum_{j=1}^{i}\tilde{\lambda}_{j}d_{j}\;\text{ for }0\le i\le s\:.
\]

\begin{defn}
Let $m=m(\mu)$ be such that,
\[
m=\max\{0\le i\le s\::\:\mathrm{H}(p)\ge L_{i}\},
\]
and write,
\[
\dim_{\mathrm{LY}}\mu=\begin{cases}
\sum_{j=1}^{m}d_{j}+\frac{\mathrm{H}(p)-L_{m}}{-\tilde{\lambda}_{m+1}} & \text{, if }m<s\\
\dim V-1 & \text{, if }m=s
\end{cases}\:.
\]
We call the number $\dim_{\mathrm{LY}}\mu$ the Lyapunov dimension
corresponding to $\mu$.
\end{defn}

This definition is analogous to the one given by Jordan, Pollicott
and Simon in \cite{JPS} for the Lyapunov dimension $\dim_{\mathrm{LY}}\theta$
of a self-affine measure $\theta$ on $\mathbb{R}^{d}$. It was shown
there that $\dim_{\mathrm{LY}}\theta$ is always an upper bound for
$\dim_{H}\theta$, and that if the linear parts of the maps in the
IFS are fixed and all have norm strictly less than $1/2$, then
\[
\dim_{H}\theta=\min\{\dim_{\mathrm{LY}}\theta,d\}
\]
for Lebesgue a.e. selection of the translations.

Now let $\Delta$ be the set of numbers of the form $-\sum_{i=1}^{s}\tilde{\lambda}_{i}^{-1}x_{i}$,
where $x_{1},...,x_{s}$ are nonnegative real numbers which satisfy,
\[
\sum_{i=1}^{s}x_{i}\le\mathrm{H}(p)\;\text{ and }\;x_{i}\le-\tilde{\lambda}_{i}d_{i}\;\text{ for }1\le i\le s\:.
\]
From,
\[
0>\tilde{\lambda}_{1}>...>\tilde{\lambda}_{s}\;\text{ and }\;\sum_{i=1}^{s}d_{i}=\dim V-1,
\]
it follows easily that,
\begin{equation}
\dim_{\mathrm{LY}}\mu=\max\Delta\:.\label{eq:=00003Dmax}
\end{equation}

On the other hand, as a simple consequence of part (\ref{enu:LY for proj of slices})
of Theorem \ref{thm:LY formula and ED} (see Lemma \ref{lem:ub on dif H}),
\begin{equation}
0\le\mathrm{H}_{i}-\mathrm{H}_{i+1}\le-\tilde{\lambda}_{i+1}d_{i+1}\;\text{ for }0\le i<s\:.\label{eq:ub dim H}
\end{equation}
Also note that,
\[
\mathrm{H}(p)=\mathrm{H}_{0}\ge\sum_{i=0}^{s-1}(\mathrm{H}_{i}-\mathrm{H}_{i+1})\:.
\]
Combining the last inequality with (\ref{eq:ub dim H}) and part (\ref{enu:LY for nu})
of Theorem \ref{thm:LY formula and ED}, we obtain that $\dim\nu$
is a member of $\Delta$. This together with (\ref{eq:=00003Dmax})
yields the following corollary.
\begin{cor}
Assume the conditions of Theorem \ref{thm:LY formula and ED} are
satisfied, then $\dim\nu\le\dim_{\mathrm{LY}}\mu$.
\end{cor}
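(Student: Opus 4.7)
The plan is to observe that the corollary is essentially bookkeeping combining the Ledrappier--Young formula from Theorem~\ref{thm:LY formula and ED}(\ref{enu:LY for nu}), the bounds (\ref{eq:ub dim H}), and the identity (\ref{eq:=00003Dmax}); the result will follow by exhibiting $\dim\nu$ as a specific element of the set $\Delta$.

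Concretely, I would set
\[
x_i = \mathrm{H}_{i-1} - \mathrm{H}_{i} \quad\text{for } 1\le i\le s,
\]
and check that the tuple $(x_1,\ldots,x_s)$ is an admissible choice in the definition of $\Delta$. The nonnegativity $x_i\ge 0$ and the upper bound $x_i\le -\tilde\lambda_i d_i$ are precisely the two halves of (\ref{eq:ub dim H}). For the global constraint, I would telescope
\[
\sum_{i=1}^{s} x_i \;=\; \mathrm{H}_0 - \mathrm{H}_s \;\le\; \mathrm{H}_0 \;=\; \mathrm{H}(p),
\]
using that $\mathrm{H}_s\ge 0$ (being a conditional entropy) and that $\mathrm{H}_0=\mathrm{H}(p)$, as noted after (\ref{eq:intro def of H_i}).

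It then remains to recognise that, after rewriting the Ledrappier--Young formula with a sign flip to expose the negative denominators $\tilde\lambda_j<0$, part~(\ref{enu:LY for nu}) of Theorem~\ref{thm:LY formula and ED} reads
\[
\dim\nu \;=\; \sum_{j=0}^{s-1}\frac{\mathrm{H}_{j+1}-\mathrm{H}_{j}}{\tilde\lambda_{j+1}} \;=\; -\sum_{i=1}^{s}\tilde\lambda_i^{-1}\, x_i,
\]
which is exactly the general form of an element of $\Delta$ for the admissible $(x_i)$ above. Hence $\dim\nu\in\Delta$, and invoking (\ref{eq:=00003Dmax}) yields $\dim\nu\le\max\Delta=\dim_{\mathrm{LY}}\mu$.

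There is essentially no obstacle at this stage: all the genuine work has been done in establishing Theorem~\ref{thm:LY formula and ED} and the key bound (\ref{eq:ub dim H}) (which is attributed to Lemma~\ref{lem:ub on dif H} and follows from part~(\ref{enu:LY for proj of slices})). The only minor point requiring care is the sign convention for $\tilde\lambda_i$, which must be handled consistently so that the ratio $(\mathrm{H}_{j+1}-\mathrm{H}_j)/\tilde\lambda_{j+1}$ — a nonpositive number divided by a negative number — is correctly identified as $-\tilde\lambda_i^{-1}x_i\ge 0$; once this is made explicit, the proof reduces to the two-line calculation above.
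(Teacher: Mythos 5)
Your proof is correct and follows exactly the paper's own route: the paper likewise takes $x_i=\mathrm{H}_{i-1}-\mathrm{H}_i$, invokes (\ref{eq:ub dim H}) and the inequality $\mathrm{H}(p)=\mathrm{H}_0\ge\sum_{i=0}^{s-1}(\mathrm{H}_i-\mathrm{H}_{i+1})$ to place $\dim\nu$ in $\Delta$, and concludes via (\ref{eq:=00003Dmax}). Your write-up merely makes the sign bookkeeping explicit, which the paper leaves implicit.
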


As mentioned in Section \ref{subsec:Background}, it is reasonable
to expect for the equality $\dim\nu=\dim_{\mathrm{LY}}\mu$ to hold
under an additional exponential separation assumption.

\subsection{\label{subsec:About-the-proof}About the proof of Theorem \ref{thm:LY formula and ED}}

The starting point of the argument is the observation that, under
our standing assumptions, the matrices
\[
A_{\omega_{-n}...\omega_{-1}}:=A_{\omega_{-n}}\cdot\cdot\cdot A_{\omega_{-1}}
\]
contract the projective space, outside of the set $\mathrm{P}(V_{\omega}^{0})$,
for $\beta$-a.e. $\omega$ and for $n\ge1$ large. Here $V_{\omega}^{0}$
is the linear hyperplane obtained by the Oseledets' theorem. This
contraction property follows from the fact that the multiplicity of
the top Lyapunov exponent is equal to $1$, and it makes it possible
to employ techniques used in the study of self-affine measures on
$\mathbb{R}^{d}$.

As mentioned before, the general idea of the proof is based on Feng's
argument for the exact dimensionality of self-affine measures. Nevertheless
our proof contains nontrivial differences and new features. Some of
these come from the fact that the matrices $A_{\omega_{-n}...\omega_{-1}}$
only contract most of the projective space. In the self-affine case,
or even in the more general average contracting case considered in
\cite{Fe}, these matrices uniformly contract all of the Euclidean
space for $\beta$-a.e. $\omega$.

In order to deal with this issue, for $\beta$-a.e. $\omega$ we use
the Oseledets splitting of $V$ at $\omega$ (see Section \ref{subsec:Oseledets})
in order to construct a coordinate chart for $\mathrm{P}(V)$, whose
domain contains $\pi\omega$ and on which the matrices $A_{\omega_{-n}...\omega_{-1}}$
are uniformly contracting. Recall that $\pi$ is the Furstenberg boundary
map. For this approach to succeed we have to make sure that, for $n\ge1$
large, the lines $\pi\sigma^{n}\omega$ do not become to close to
the boundary of the coordinate domains. This is achieved by applying
a result of Guivarc'h \cite{Gu} (see Section \ref{subsec:boundary map}).
It yields a regularity property for the Furstenberg measure $\nu$,
which controls the $\nu$-measure of neighbourhoods of projective
spaces of linear hyperplanes of $V$.

As in \cite{Fe}, a key part of the argument involves the estimation
of the so-called transverse dimensions, which are the local dimensions
of projections of certain conditional measures of $\beta$. These
conditional measures correspond to measurable partitions $\xi_{0},...,\xi_{s}$
of $\Omega$, which are similar to the ones constructed in \cite{Fe}.
When they are projected via $\pi$, one obtains the conditional measures
of $\nu$ which appear in the statement of Theorem \ref{thm:LY formula and ED}.
In order to deal with the estimation of the transverse dimensions
we employ an idea used in \cite{Fe}, which involves an induced dynamics
and makes it possible to focus on trajectories where the angles between
the Oseledets subspaces are not too small.

The result of Guivarc'h, which provides the regularity property for
$\nu$, requires the strong irreducibility and proximality assumptions.
These assumptions also insure that the multiplicity of the top Lyapunov
exponent is $1$, a fact which is crucial for our development. Strong
irreducibility also implies the necessary fact that $\nu(\mathrm{P}(W))=0$
for every proper linear subspace $W$ of $V$, though this may be
regarded as a very mild form of the regularity property. The assumption
of $\mu$ being finitely supported is needed in order to cary out
entropy computations and to guarantee the integrability of a certain
dominating function (see Lemma \ref{lem:prep for maker} and Remark
\ref{rem:fin sup assump} following it). It seems reasonable to expect
for the exact dimensionality of $\mu$-stationary measures to hold
under weaker assumptions than ours, but this will probably require
a different method of proof.

\subsection*{Structure of the paper}

In Section \ref{sec:Preliminaries} we develop necessary notations
and background. In Section \ref{sec:Construction-of-local} we construct
the coordinate charts mentioned above, and prove some related auxiliary
results. In Section \ref{sec:measurable partitions} we construct
the measurable partitions $\xi_{0},...,\xi_{s}$, and derive some
necessary properties of them. In Section \ref{sec:Transverse-dimensions}
we estimate the transverse dimensions. In Section \ref{sec:Proof-of-results}
we complete the proof of our main result Theorem \ref{thm:LY formula and ED}.

\section{\label{sec:Preliminaries}Preliminaries}

\subsection{\label{subsec:General-notations}General notations}

For a metric space $X$, $x\in X$ and $r>0$, we denote by $B(x,r)$
the closed ball in $X$ with centre $x$ and radius $r$. Given a
set $Y$, a map $\phi:Y\rightarrow X$, $y\in Y$ and $r>0$, we often
write $B^{\phi}(y,r)$ in place of $\phi^{-1}(B(\phi(y),r))$.

It will sometimes be convenient to use the little-o notation. For
parameters $\alpha_{1},...,\alpha_{k}$ we write $o_{\alpha_{1},...,\alpha_{n}}(n)$
in order to denote an unspecified function $f:\mathbb{N}\rightarrow\mathbb{R}$,
which depends on $\alpha_{1},...,\alpha_{n}$ and satisfies $\frac{1}{n}f(n)\rightarrow0$
as $n\rightarrow\infty$.

\subsection{\label{subsec:info and ent}Conditional information and entropy}

We give here the definitions and basic properties of the entropy and
information functions. For more details see \cite[Section 2]{Pa}
for instance.

Let $(X,\mathcal{B},\rho)$ be a probability space. For a sub-$\sigma$-algebra
$\mathcal{F}$ of $\mathcal{B}$ and $f\in L^{1}(\rho)$ we denote
the conditional expectation of $f$ given $\mathcal{F}$ by $\mathrm{E}_{\rho}(f\mid\mathcal{F})$.
Given a finite measurable partition $\mathcal{E}$ of $X$ we write
$\mathrm{I}_{\rho}(\mathcal{E}\mid\mathcal{F})$ for the conditional
information of $\mathcal{E}$ given $\mathcal{F}$. That is,
\[
\mathrm{I}_{\rho}(\mathcal{E}\mid\mathcal{F})=-\sum_{E\in\mathcal{E}}1_{E}\log\mathrm{E}_{\rho}(1_{E}\mid\mathcal{F}),
\]
where $1_{E}$ is the indicator function of $E$. The conditional
entropy of $\mathcal{E}$ given $\mathcal{F}$ is denoted $\mathrm{H}_{\rho}(\mathcal{E}\mid\mathcal{F})$
and defined by,
\[
\mathrm{H}_{\rho}(\mathcal{E}\mid\mathcal{F})=\int\mathrm{I}_{\rho}(\mathcal{E}\mid\mathcal{F})\:d\rho\:.
\]
When $\mathcal{F}$ is the trivial $\sigma$-algebra we write $\mathrm{H}_{\rho}(\mathcal{E})$
in place of $\mathrm{H}_{\rho}(\mathcal{E}\mid\mathcal{F})$.

If $\mathcal{G}$ is a sub-$\sigma$-algebra of $\mathcal{F}$,
\begin{equation}
\mathrm{H}_{\rho}(\mathcal{E}\mid\mathcal{F})\le\mathrm{H}_{\rho}(\mathcal{E}\mid\mathcal{G})\:.\label{eq:monoto of ent wrt sig-alg}
\end{equation}
If $L^{1}(\rho)$ is separable as a metric space, and $\mathcal{C}$
is another finite measurable partition of $X$,
\begin{equation}
\mathrm{I}_{\rho}(\mathcal{E}\vee\mathcal{C}\mid\mathcal{F})=\mathrm{I}_{\rho}(\mathcal{E}\mid\mathcal{F})+\mathrm{I}_{\rho}(\mathcal{C}\mid\mathcal{F}\vee\widehat{\mathcal{E}})\:.\label{eq:cond info formula}
\end{equation}
Here $\mathcal{E}\vee\mathcal{C}$ is the common refinement of $\mathcal{E}$
and $\mathcal{C}$, and $\widehat{\mathcal{E}}$ is the $\sigma$-algebra
generated by $\mathcal{E}$. Integrating the last equality we obtain,
\begin{equation}
\mathrm{H}_{\rho}(\mathcal{E}\vee\mathcal{C}\mid\mathcal{F})=\mathrm{H}_{\rho}(\mathcal{E}\mid\mathcal{F})+\mathrm{H}_{\rho}(\mathcal{C}\mid\mathcal{F}\vee\widehat{\mathcal{E}})\:.\label{eq:cond ent formula}
\end{equation}
If $T:X\rightarrow X$ is measure preserving,
\begin{equation}
\mathrm{I}_{\rho}(\mathcal{E}\mid\mathcal{F})\circ T=\mathrm{I}_{\rho}(T^{-1}\mathcal{E}\mid T^{-1}\mathcal{F})\:.\label{eq:info comp MP map}
\end{equation}

\subsection{\label{subsec:Disintegration}Disintegration of measures}

We give the necessary facts regarding disintegration of measures.
For more details see \cite[Section 5]{EW}.

We call a measurable space $(X,\mathcal{B})$ a Borel space, if $X$
is a Borel subset of a compact metric space $\overline{X}$ and $\mathcal{B}$
is the restriction of the Borel $\sigma$-algebra of $\overline{X}$
to $X$. We denote the collection of probability measures on $(X,\mathcal{B})$
by $\mathcal{M}(X)$. If $\rho\in\mathcal{M}(X)$ we say that $(X,\mathcal{B},\rho)$
is a Borel probability space.

Suppose $(X,\mathcal{B})$ is a Borel space. Given a partition $\xi$
of $X$ into measurable sets and $x\in X$, we write $\xi(x)$ for
the unique element of $\xi$ which contains $x$. A subset $F$ of
$X$ is said to be $\xi$-saturated if it contains $\xi(x)$ for every
$x\in F$. The sub-$\sigma$-algebra of $\mathcal{B}$ determined
by $\xi$ is denoted $\widehat{\xi}$ and defined by,
\[
\widehat{\xi}=\{F\in\mathcal{B}\::\:F\text{ is \ensuremath{\xi}-saturated}\}\:.
\]
We say that $\xi$ is a measurable partition if it is generated by
a countable collection of measurable sets. That is if there exist
$F_{1},F_{2},...\in\mathcal{B}$ such that,
\[
\xi(x)=\bigcap_{x\in F_{n}}F_{n}\cap\bigcap_{x\notin F_{n}}(X\setminus F_{n})\text{ for all }x\in X\:.
\]
If $(Y,\mathcal{F})$ is another Borel space, $\zeta$ is a measurable
partition of $Y$ and $\varphi:X\rightarrow Y$ is measurable, then
\[
\varphi^{-1}\zeta:=\{\varphi^{-1}\zeta(y)\::\:y\in Y\}
\]
is easily seen to be a measurable partition of $X$. In particular
this is the case for the partition $\{\varphi^{-1}\{y\}\}_{y\in Y}$
into level sets of $\varphi$.
\begin{thm}
\label{thm:def of cond measures}Let $(X,\mathcal{B},\rho)$ be a
Borel probability space and let $\xi$ be a measurable partition of
$X$. Then there exists a collection $\{\rho_{x}^{\xi}\}_{x\in X}\subset\mathcal{M}(X)$
such that,
\begin{enumerate}
\item for every $f\in L^{1}(\rho)$,
\[
\int f\:d\rho_{x}^{\xi}=\mathrm{E}_{\rho}(f\mid\widehat{\xi})(x)\text{ for }\rho\text{-a.e. }x;
\]
\item $\rho_{x}^{\xi}(\xi(x))=1$ for $x\in X$;
\item $\rho_{x}^{\xi}=\rho_{y}^{\xi}$ for $x,y\in X$ with $\xi(x)=\xi(y)$.
\end{enumerate}
Moreover, these properties uniquely determine $\{\rho_{x}^{\xi}\}_{x\in X}$
up to a set of zero $\rho$-measure. We call the collection $\{\rho_{x}^{\xi}\}_{x\in X}$
the disintegration of $\rho$ with respect to the partition $\xi$.
\end{thm}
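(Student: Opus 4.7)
My plan is to reduce to the standard Rokhlin disintegration theorem by exploiting the Borel-space hypothesis. First, since $X$ embeds as a Borel subset of a compact metric space $\overline{X}$, the space $\mathcal{B}$ is countably generated and countably separated; pick a countable algebra $\mathcal{A}=\{F_n\}_{n\ge 1}\subset\mathcal{B}$ that generates $\mathcal{B}$ and is closed under finite unions and complements. Second, because $\xi$ is a measurable partition, $\widehat{\xi}$ is itself countably generated, and conditional expectations $\mathrm{E}_\rho(1_{F_n}\mid\widehat{\xi})$ are well defined pointwise $\rho$-a.e.\ after choosing representatives.

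Next I would construct the family $\{\rho_x^\xi\}$ on the algebra $\mathcal{A}$. For each $x$ outside a $\rho$-null set $N_0$, define the set-function $\nu_x(F_n)=\mathrm{E}_\rho(1_{F_n}\mid\widehat{\xi})(x)$. Using countably many $\rho$-a.e.\ identities (finite additivity on $\mathcal{A}$; $0\le\nu_x(F_n)\le 1$; $\nu_x(X)=1$; compatibility with countable unions of disjoint $\mathcal{A}$-sets whose union lies in $\mathcal{A}$) one gets, outside a single $\rho$-null set, a finitely additive probability measure on $\mathcal{A}$. To upgrade to countable additivity, I would use a tightness/regularity argument: since $X$ is a Borel subset of a compact metric space, one can arrange the generating algebra to consist of traces of open and closed sets of $\overline{X}$, and use that for each $F\in\mathcal{A}$ the inner regularity by compact sets holds for $\rho$, giving countable additivity on $\mathcal{A}$, hence a unique Borel extension $\rho_x^\xi\in\mathcal{M}(X)$ by Carath\'eodory. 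Item~(1) of the theorem then follows for indicators of $\mathcal{A}$, and by a monotone-class argument for every $1_F$ with $F\in\mathcal{B}$, and by linearity and density for all $f\in L^1(\rho)$.

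To verify item~(2), I would use that the partition $\xi$ is generated by a countable family $\{E_m\}\subset\widehat{\xi}$: for every such $E_m$ one has $\mathrm{E}_\rho(1_{E_m}\mid\widehat{\xi})=1_{E_m}$, so $\rho_x^\xi(E_m)=1_{E_m}(x)$ for $x$ outside a $\rho$-null set, and intersecting countably many such identities gives $\rho_x^\xi(\xi(x))=1$ a.e. Item~(3) is enforced by redefining $\rho_x^\xi$ to be constant on each atom of $\xi$ lying in the conull good set; equivalently, choose the $\widehat{\xi}$-measurable representatives of the conditional expectations. Uniqueness comes straight from property~(1): any two systems must integrate every bounded Borel $f$ to the same value of $\mathrm{E}_\rho(f\mid\widehat{\xi})(x)$ for $\rho$-a.e.\ $x$, and applying this to indicators in a countable generating algebra pins down $\rho_x^\xi$ on $\mathcal{B}$ almost everywhere.

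The only genuinely delicate step is the passage from finite additivity on $\mathcal{A}$ to a countably additive Borel measure, because the countable additivity identities must hold simultaneously for $\rho$-a.e.\ $x$. The standard way around this is to fix at the outset a countable family witnessing inner regularity (e.g.\ a base of relatively clopen subsets of $X$ inside $\overline{X}$) so that only countably many a.e.\ statements are invoked; this is where the Borel-space hypothesis is doing its essential work, and it is the main obstacle I would need to handle carefully.
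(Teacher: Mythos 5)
The paper does not actually prove this theorem: it is quoted as a standard fact (Rokhlin's disintegration theorem) with a pointer to \cite[Section 5]{EW}, so there is no internal proof to compare against. Your plan is a correct outline of one of the two standard proofs, and you correctly isolate the only delicate point, namely that countable additivity of the set functions $\nu_{x}$ must be secured by finitely many (in fact countably many) almost-everywhere statements fixed in advance; the Markov/Borel--Cantelli argument applied to $\mathrm{E}_{\rho}(1_{F_{n}\setminus K_{n,k}}\mid\widehat{\xi})$ for a pre-chosen countable family of compact sets $K_{n,k}\subset F_{n}$, followed by the compact-class criterion for countable additivity, does the job. It is worth noting that the proof in the cited reference takes a slightly different route at exactly this step: instead of Carath\'eodory extension from an algebra, one works with a countable dense subset of $C(\overline{X})$, defines $f\mapsto\mathrm{E}_{\rho}(f\mid\widehat{\xi})(x)$ on it, checks positivity and linearity off a single null set, extends to a positive linear functional on all of $C(\overline{X})$, and invokes the Riesz representation theorem; countable additivity then comes for free and one only has to check afterwards that the resulting measure is carried by $X$ (using $\mathrm{E}_{\rho}(1_{X}\mid\widehat{\xi})=1$ a.e.). That route buys a cleaner treatment of the step you flag as the main obstacle, while your route stays entirely inside $X$ and avoids the Riesz theorem. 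Two small points to tidy up: your verification of item (2) uses that the countable family $\{F_{n}\}$ generating $\xi$ consists of $\xi$-saturated sets, so that $\mathrm{E}_{\rho}(1_{F_{n}}\mid\widehat{\xi})=1_{F_{n}}$ a.e.; this is true by the paper's definition of a measurable partition but should be said explicitly. And since the theorem asserts (2) and (3) for \emph{every} $x\in X$ rather than almost every $x$, you must specify $\rho_{x}^{\xi}$ on the exceptional atoms as well (e.g.\ enlarge the bad set to a $\xi$-saturated measurable set and define $\rho_{x}^{\xi}$ there constantly on atoms, say via a measurable selection); this costs nothing but is needed for the statement as written.
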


\begin{lem}
\label{lem:slices of slices}Let $(X,\mathcal{B},\rho)$ be a Borel
probability space and let $\xi$ and $\zeta$ be a measurable partitions
of $X$. Suppose that $\xi$ is finer that $\zeta$, that is $\xi(x)\subset\zeta(x)$
for all $x\in X$. Then for $\rho$-a.e. $x$,
\[
(\rho_{y}^{\zeta})_{y}^{\xi}=(\rho_{x}^{\zeta})_{y}^{\xi}=\rho_{y}^{\xi}\text{ for }\rho_{x}^{\zeta}\text{-a.e. }y\:.
\]
\end{lem}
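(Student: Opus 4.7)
The plan is to invoke the uniqueness clause of Theorem \ref{thm:def of cond measures} by verifying that, for $\rho$-a.e.\ $x$, the restriction of the family $\{\rho_y^{\xi}\}_{y\in X}$ to $y\in\zeta(x)$ serves as the $\xi$-disintegration of $\rho_x^{\zeta}$. The first equality $(\rho_y^{\zeta})_y^{\xi} = (\rho_x^{\zeta})_y^{\xi}$ is immediate from property (3) of Theorem \ref{thm:def of cond measures}: for $\rho_x^{\zeta}$-a.e.\ $y$ one has $y\in\zeta(x)$, hence $\zeta(y)=\zeta(x)$ and so $\rho_y^{\zeta}=\rho_x^{\zeta}$. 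The content is thus in the second equality.

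For that, observe that since $\xi$ refines $\zeta$, every $\zeta$-saturated set is $\xi$-saturated, so $\widehat{\zeta}\subset\widehat{\xi}$. Fix a countable algebra $\mathcal{A}\subset\mathcal{B}$ generating $\mathcal{B}$, and for $B\in\mathcal{A}$ set $g_B(y)=\rho_y^{\xi}(B)$; by property (3) of Theorem \ref{thm:def of cond measures} applied to $\xi$, each $g_B$ is $\widehat{\xi}$-measurable. Then for any $A\in\widehat{\xi}$ and $B\in\mathcal{A}$, the tower property together with the $\widehat{\xi}$-measurability of $1_A$ yields
\[
\mathrm{E}_{\rho}\bigl(1_A\cdot g_B \mid \widehat{\zeta}\bigr) = \mathrm{E}_{\rho}\bigl(1_A\cdot \mathrm{E}_{\rho}(1_B\mid\widehat{\xi}) \mid \widehat{\zeta}\bigr) = \mathrm{E}_{\rho}\bigl(\mathrm{E}_{\rho}(1_{A\cap B}\mid\widehat{\xi}) \mid \widehat{\zeta}\bigr) = \mathrm{E}_{\rho}(1_{A\cap B}\mid\widehat{\zeta}),
\]
using $\widehat{\zeta}\subset\widehat{\xi}$ in the last step. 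Rewriting via property (1) of Theorem \ref{thm:def of cond measures} for the $\zeta$-disintegration gives, for each such pair $(A,B)$ and for $\rho$-a.e.\ $x$,
\[
\int_A g_B(y)\,d\rho_x^{\zeta}(y) \;=\; \rho_x^{\zeta}(A\cap B).
\]

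Taking the intersection of the exceptional null sets over a countable generating family of pairs $(A,B)$ produces a $\rho$-conull set of $x$ on which the above identity holds simultaneously. A monotone class argument then shows that for each such $x$ the family $\{\rho_y^{\xi}\}_{y}$ satisfies property (1) of Theorem \ref{thm:def of cond measures} for the measure $\rho_x^{\zeta}$ and partition $\xi$; properties (2) and (3) are inherited automatically from the same properties of $\{\rho_y^{\xi}\}_{y}$ viewed as the $\xi$-disintegration of $\rho$. The uniqueness statement in Theorem \ref{thm:def of cond measures} then gives $(\rho_x^{\zeta})_y^{\xi}=\rho_y^{\xi}$ for $\rho_x^{\zeta}$-a.e.\ $y$. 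The main (and only) obstacle is the usual null-set bookkeeping: the identity above holds for each fixed $(A,B)$ only outside a $\rho$-null set depending on $(A,B)$, so one must restrict to a countable family before inverting the quantifiers. Beyond this, the argument relies only on the tower property and on the inclusion $\widehat{\zeta}\subset\widehat{\xi}$.
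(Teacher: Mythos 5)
The paper does not prove this lemma at all: it is stated as a standard fact about disintegrations and attributed to \cite[Section 5]{EW}, so there is no in-paper argument to compare against. Your proposal is the standard proof of that fact and is essentially correct: the first equality via property (3) of Theorem \ref{thm:def of cond measures} together with $\rho_x^{\zeta}(\zeta(x))=1$, and the second by verifying that $\{\rho_y^{\xi}\}_y$ satisfies the defining properties of the $\xi$-disintegration of $\rho_x^{\zeta}$ and invoking uniqueness; the key identity $\mathrm{E}_{\rho}(1_A\,g_B\mid\widehat{\zeta})=\mathrm{E}_{\rho}(1_{A\cap B}\mid\widehat{\zeta})$ is a correct use of $\widehat{\zeta}\subset\widehat{\xi}$, pulling out the $\widehat{\xi}$-measurable factor, and the tower property, and your handling of the quantifier inversion via a countable generating family is the right move. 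The one point worth spelling out more carefully is the passage from the countable algebra of $\xi$-saturated sets (finite Boolean combinations of the generators $F_1,F_2,\dots$ of $\xi$) to all of $\widehat{\xi}$: the monotone class argument only reaches $\sigma(\{F_n\})$, and identifying this with $\widehat{\xi}$ modulo $\rho_x^{\zeta}$-null sets for a.e.\ $x$ requires the standard Rokhlin fact that the uniqueness clause of Theorem \ref{thm:def of cond measures} is already determined by the generating family. This is exactly the content of the reference the paper leans on, so it is an acceptable level of detail rather than a gap.
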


\begin{lem}
\label{lem:push of slices}Let $(X,\mathcal{B},\rho)$ and $(Y,\mathcal{F},\tau)$
be Borel probability spaces, let $T:X\rightarrow Y$ be measure preserving
and let $\xi$ be a measurable partition of $Y$. Then,
\[
T\rho_{x}^{T^{-1}\xi}=\tau_{Tx}^{\xi}\text{ for }\rho\text{-a.e. }x\:.
\]
\end{lem}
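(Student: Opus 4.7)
The plan is to invoke the uniqueness clause of Theorem~\ref{thm:def of cond measures} applied to $(Y,\mathcal{F},\tau)$ and $\xi$. I define a candidate family $\{\eta_y\}_{y\in Y} \subset \mathcal{M}(Y)$ as follows: for $y \in T(X)$, pick any $x$ with $Tx = y$ and set $\eta_y := T\rho_x^{T^{-1}\xi}$ (well-defined because $T^{-1}\xi(x) = T^{-1}\xi(Tx)$ depends only on $y$, and by property (3) of Theorem~\ref{thm:def of cond measures} the measure $\rho_x^{T^{-1}\xi}$ is constant on $T^{-1}\xi$-blocks); for $y \notin T(X)$, choose $\eta_y$ arbitrarily. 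I will verify that $\{\eta_y\}_{y\in Y}$ satisfies properties (1)--(3) of Theorem~\ref{thm:def of cond measures} as a disintegration of $\tau$ along $\xi$; uniqueness will then give $\eta_y = \tau_y^\xi$ for $\tau$-a.e.~$y$, which combined with $T_*\rho = \tau$ yields $T\rho_x^{T^{-1}\xi} = \tau_{Tx}^\xi$ for $\rho$-a.e.~$x$.

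Properties (2) and (3) are immediate from the construction: $\rho_x^{T^{-1}\xi}$ is concentrated on $T^{-1}\xi(x) = T^{-1}(\xi(Tx))$, so $\eta_{Tx}$ is concentrated on $\xi(Tx)$; and constancy of $\eta_y$ on $\xi$-blocks is built into the definition.

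The substantive step is property (1). Fix $g \in L^1(\tau)$. For $\rho$-a.e.~$x$, property (1) of Theorem~\ref{thm:def of cond measures} applied to $(\rho, T^{-1}\xi)$ gives
\[
\int g \, d\eta_{Tx} \;=\; \int (g \circ T) \, d\rho_x^{T^{-1}\xi} \;=\; \mathrm{E}_\rho(g \circ T \mid \widehat{T^{-1}\xi})(x).
\]
I next claim $\widehat{T^{-1}\xi} = T^{-1}\widehat{\xi}$: since $\xi$ is measurable it is generated by a countable family $F_1, F_2, \ldots$, whence $T^{-1}\xi$ is generated by $T^{-1}F_1, T^{-1}F_2, \ldots$, and $T^{-1}$ commutes with countable $\sigma$-algebra operations. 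Consequently $h := \mathrm{E}_\tau(g \mid \widehat{\xi}) \circ T$ is $\widehat{T^{-1}\xi}$-measurable, and for every $B \in \widehat{\xi}$ the measure-preserving property of $T$ yields
\[
\int_{T^{-1}B} h \, d\rho \;=\; \int_B \mathrm{E}_\tau(g \mid \widehat{\xi}) \, d\tau \;=\; \int_B g \, d\tau \;=\; \int_{T^{-1}B} (g \circ T) \, d\rho,
\]
which identifies $h$ as a version of $\mathrm{E}_\rho(g \circ T \mid \widehat{T^{-1}\xi})$. Combining, $\int g \, d\eta_{Tx} = \mathrm{E}_\tau(g \mid \widehat{\xi})(Tx)$ for $\rho$-a.e.~$x$, which under $T_*\rho = \tau$ is equivalent to the $\tau$-a.e.~identity on $Y$ required by property (1).

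To upgrade from ``each $g$ separately'' to the full measure-theoretic equality $\eta_y = \tau_y^\xi$, I would fix a countable determining family $\{g_n\}$ in $C(\overline{Y})$ (possible since $\overline{Y}$ is compact metric) and apply the previous paragraph to each $g_n$ simultaneously on a common $\tau$-conull set. The main obstacle is the $\sigma$-algebra identification $\widehat{T^{-1}\xi} = T^{-1}\widehat{\xi}$, which depends essentially on the measurable-partition hypothesis; once this is in hand, the remaining verifications are routine.
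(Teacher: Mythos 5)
The paper offers no proof of this lemma; it is presented as one of the background facts on disintegration of measures cited to \cite[Section 5]{EW}. So the comparison is purely about the internal correctness of your proposal. Your overall strategy is the right one: build a candidate $\{\eta_y\}$, check it satisfies the defining properties of Theorem~\ref{thm:def of cond measures} for the pair $(\tau,\xi)$, and invoke uniqueness. Properties (2) and (3) and the change-of-variables chain
\[
\int g\,d\eta_{Tx}=\int (g\circ T)\,d\rho_x^{T^{-1}\xi}=\mathrm{E}_\rho\bigl(g\circ T\mid\widehat{T^{-1}\xi}\bigr)(x)
\]
are all handled correctly, as is the identification of $h=\mathrm{E}_\tau(g\mid\widehat{\xi})\circ T$ as a version of $\mathrm{E}_\rho(g\circ T\mid\widehat{T^{-1}\xi})$ once the $\sigma$-algebra identification is available.

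The genuine gap is exactly at the step you describe as depending ``essentially on the measurable-partition hypothesis,'' but your justification does not actually close it. ``$T^{-1}$ commutes with countable $\sigma$-algebra operations'' proves $T^{-1}\sigma(\{F_n\})=\sigma(\{T^{-1}F_n\})$, which is immediate; it does not prove $T^{-1}\widehat{\xi}=\widehat{T^{-1}\xi}$, because the paper defines $\widehat{\xi}$ as the $\sigma$-algebra of \emph{all} $\xi$-saturated Borel sets, which is a priori strictly larger than $\sigma(\{F_n\})$ (each $F_n$ is $\xi$-saturated, but there is no obvious reason an arbitrary $\xi$-saturated Borel set should lie in $\sigma(\{F_n\})$). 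What you need is the nontrivial fact that $\widehat{\xi}=\sigma(\{F_n\})$, or at least that they agree mod $\tau$-null sets. That can be proved, but via descriptive set theory rather than by a routine remark: let $\Phi\colon Y\to\{0,1\}^{\mathbb N}$ be the coding map $y\mapsto(1_{F_n}(y))_n$; for any $\xi$-saturated Borel $B$, the images $\Phi(B)$ and $\Phi(Y\setminus B)$ are disjoint analytic sets, and Lusin's separation theorem yields a Borel $C$ with $B=\Phi^{-1}(C)\in\sigma(\{F_n\})$. You should make this input explicit, since without it the direction $\widehat{T^{-1}\xi}\subset T^{-1}\widehat{\xi}$ is not established, and that is precisely the inclusion your conditional-expectation computation requires. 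A lesser point worth a sentence: $T(X)$ need not be Borel, only analytic; you should note that it is universally measurable with $\tau(T(X))=1$ before treating $y\notin T(X)$ as negligible, and the ``arbitrary'' choice of $\eta_y$ there should be made measurably (e.g., a fixed constant measure on each block) so the candidate is a bona fide disintegration.
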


We say that a complete separable metric space $Y$ is a Besicovitch
space if the Besicovitch covering lemma (see e.g. \cite{Mat}) holds
in $Y$. Besicovitch spaces include, for instance, Euclidean spaces
and compact finite-dimensional Riemannian manifolds. The following
lemma is stated in \cite[Lemma 2.5]{Fe}. Its proof for the case $Y=\mathbb{R}^{d}$
is given in \cite[Lemma 3.3]{FH}. Recall the notation $B^{\phi}(x,r)$
from Section \ref{subsec:General-notations}.
\begin{lem}
\label{lem:con exp as lim}Let $\phi:X\rightarrow Y$ be a measurable
mapping from a Borel probability space $(X,\mathcal{B}_{X},\rho)$
to a Besicovitch space $Y$. Denote by $\mathcal{B}_{Y}$ the Borel
$\sigma$-algebra of $Y$. Let $\xi$ be a measurable partition of
$X$ and let $A\in\mathcal{B}_{X}$. Then for $\rho$-a.e. $x\in X$,
\[
\underset{r\downarrow0}{\lim}\:\frac{\rho_{x}^{\xi}(B^{\phi}(x,r)\cap A)}{\rho_{x}^{\xi}(B^{\phi}(x,r))}=\mathrm{E}_{\rho}(1_{A}\mid\widehat{\xi}\vee\phi^{-1}(\mathcal{B}_{Y}))(x)\:.
\]
\end{lem}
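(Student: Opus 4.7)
The plan is to iterate the disintegration and reduce the statement to Besicovitch differentiation on the target space $Y$. For $\rho$-a.e. $x$, let $\tau_x = \phi\rho_x^\xi \in \mathcal{M}(Y)$ denote the push-forward of the conditional measure. The partition $\eta$ of $X$ into the level sets $\xi(x) \cap \phi^{-1}\{\phi(x)\}$ is a measurable refinement of $\xi$ with $\widehat{\eta} = \widehat{\xi} \vee \phi^{-1}(\mathcal{B}_Y)$ modulo $\rho$. Applying Theorem \ref{thm:def of cond measures} combined with Lemma \ref{lem:slices of slices} gives, for $\rho$-a.e. $x$, a further disintegration
\[
\rho_x^\xi = \int \rho_{x,z}^{\xi,\phi} \, d\tau_x(z),
\]
where $\rho_{x,z}^{\xi,\phi}$ is supported on $\xi(x) \cap \phi^{-1}\{z\}$. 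By the characterisation of conditional expectation,
\[
\mathrm{E}_\rho(1_A \mid \widehat{\xi} \vee \phi^{-1}(\mathcal{B}_Y))(x) = \rho_{x,\phi(x)}^{\xi,\phi}(A) \quad \text{for } \rho\text{-a.e. } x.
\]

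Next I would rewrite the left-hand side using this decomposition: since $B^\phi(x,r) = \phi^{-1}(B(\phi(x),r))$,
\[
\frac{\rho_x^\xi(B^\phi(x,r) \cap A)}{\rho_x^\xi(B^\phi(x,r))} = \frac{1}{\tau_x(B(\phi(x),r))}\int_{B(\phi(x),r)} g_x(z) \, d\tau_x(z),
\]
where $g_x(z) := \rho_{x,z}^{\xi,\phi}(A)$ is a bounded measurable function on $Y$. This is precisely the Besicovitch ball average of $g_x$ with respect to $\tau_x$ at the point $\phi(x)$. Since $Y$ is a Besicovitch space, the Besicovitch differentiation theorem (see \cite{Mat}) furnishes a $\tau_x$-null set $N_x \subset Y$ outside of which these averages converge to $g_x(z)$ as $r \downarrow 0$. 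Thus the problem reduces to showing that $\phi(x) \notin N_x$ for $\rho$-a.e. $x$.

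To close this step, one uses the identity $\rho_x^\xi(\phi^{-1}(N_x)) = \tau_x(N_x) = 0$ and integrates: for $\rho$-a.e. $x$,
\[
\rho\{x' : \phi(x') \in N_{x'}\} = \int \rho_x^\xi\{y : \phi(y) \in N_x\} \, d\rho(x) = 0,
\]
where the outer application of Fubini requires that the exceptional set depend measurably on $x$. This can be arranged by choosing $N_x$ explicitly as the set of $z$ at which the averages over a fixed countable family of radii fail to converge to $g_x(z)$, so that $(x,z) \mapsto 1_{N_x}(z)$ becomes jointly measurable.

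The main obstacle I anticipate is precisely this measurable-selection issue in the last step: the Besicovitch differentiation theorem is proved pointwise for each measure $\tau_x$, and one must verify that the joint dependence on $(x,z)$ is well behaved before a Fubini-type argument can be applied. The remaining ingredients, namely the identification of the iterated disintegration with the conditional expectation and the rewriting of the ratio as a ball average, are formal and follow from the structural results recorded in Section \ref{subsec:Disintegration}.
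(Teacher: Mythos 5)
The paper does not actually prove this lemma; it is quoted from \cite[Lemma 2.5]{Fe}, and the proof (for $Y=\mathbb{R}^{d}$) is in \cite[Lemma 3.3]{FH}. Your argument is a correct reconstruction of that standard proof: disintegrating $\rho_{x}^{\xi}$ over the fibres of $\phi$, identifying $\mathrm{E}_{\rho}(1_{A}\mid\widehat{\xi}\vee\phi^{-1}(\mathcal{B}_{Y}))(x)$ with the fibre density $g_{x}(\phi(x))$, rewriting the ratio as a ball average of $g_{x}$ against $\tau_{x}=\phi\rho_{x}^{\xi}$, and applying Besicovitch differentiation is exactly the route taken there. The two points you flag are the only delicate ones and both close in the standard way: joint measurability is obtained by defining $g_{x}$ and the exceptional set $N_{x}$ via ratios over rational radii (the full limit $r\downarrow0$ reduces to rational radii because, for closed balls, $r\mapsto\tau_{x}(B(z,r))$ and the corresponding numerator are monotone and right-continuous in $r$), and the Fubini step uses that $\tau_{x}$, $g_{x}$ and hence $N_{x}$ depend only on the atom $\xi(x)$, so that $\rho_{x}^{\xi}\{y:\phi(y)\in N_{y}\}=\tau_{x}(N_{x})=0$ for $\rho$-a.e.\ $x$.
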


\subsection{A metric on the projective space}

Recall that $V$ is a real vector space with $2\le\dim V<\infty$.
Fix an inner product $\left\langle \cdot,\cdot\right\rangle $ on
$V$, and denote its induced norm by $|\cdot|$. Given a linear subspace
$W$ of $V$, write $\mathrm{P}(W)$ for its projective space and
$P_{W}$ for the orthogonal projection onto $W$ (by definition $\mathrm{P}(\{0\})=\emptyset$).
For $0\ne x\in V$ denote by $\overline{x}$ the unique element of
$\mathrm{P}(V)$ which contains $x$. For $0\le k\le d$ write $\mathrm{Gr}(k,V)$
for Grassmannian manifold of $k$-dimensional linear subspaces of
$V$.

Let $V^{*}$ be the dual of $V$. Denote by $\mathrm{A}^{2}(V)$ the
vector space of alternating $2$-forms on $V^{*}$. Let $\left\langle \cdot,\cdot\right\rangle $
be the inner product on $\mathrm{A}^{2}(V)$ which satisfies,
\[
\left\langle x_{1}\wedge x_{2},y_{1}\wedge y_{2}\right\rangle =\det\left(\begin{array}{cc}
\left\langle x_{1},y_{1}\right\rangle  & \left\langle x_{1},y_{2}\right\rangle \\
\left\langle x_{2},y_{1}\right\rangle  & \left\langle x_{2},y_{2}\right\rangle 
\end{array}\right)\text{ for }x_{1},x_{2},y_{1},y_{2}\in V\:.
\]
We denote the norm induced by this inner product by $\Vert\cdot\Vert$.
Given an endomorphism $T$ of $V$, write $\mathrm{A}^{2}T$ for the
endomorphism of $\mathrm{A}^{2}(V)$ which satisfies,
\begin{equation}
\mathrm{A}^{2}T(x\wedge y)=(Tx)\wedge(Ty)\text{ for }x,y\in V\:.\label{eq:def of end on alt}
\end{equation}
It is easy to verify that if $P:V\rightarrow V$ is an orthogonal
projection, then $\mathrm{A}^{2}P$ is also an orthogonal projection
(defined on $\mathrm{A}^{2}(V)$).

For $\overline{x},\overline{y}\in\mathrm{P}(V)$ write,
\[
d(\overline{x},\overline{y})=\left(1-\left\langle x,y\right\rangle ^{2}\right)^{1/2},
\]
where $x\in\overline{x}$ and $y\in\overline{y}$ are unit vectors.
It is easy to verify that this defines a metric on $\mathrm{P}(V)$.
Note that,
\[
d(\overline{x},\overline{y})=|x|^{-1}|y|^{-1}\Vert x\wedge y\Vert\text{ for any }0\ne x\in\overline{x}\text{ and }0\ne y\in\overline{y}\:.
\]
For a subset $Y\subset\mathrm{P}(V)$ we set,
\[
d(\overline{x},Y)=\inf\{d(\overline{x},\overline{y})\::\:\overline{y}\in Y\}\:.
\]

The following simple lemma will be used is Section \ref{sec:Proof-of-results}. 
\begin{lem}
\label{lem:ub on dist of proj}Let $W\ne\{0\}$ be a proper linear
subspace of $V$ and let $\overline{x},\overline{y}\in\mathrm{P}(V)$.
Suppose that $\overline{x},\overline{y}\notin\mathrm{P}(W^{\perp})$,
then
\[
d(P_{W}\overline{x},P_{W}\overline{y})\le d(\overline{x},\mathrm{P}(W^{\perp}))^{-1}d(\overline{y},\mathrm{P}(W^{\perp}))^{-1}d(\overline{x},\overline{y})\:.
\]
\end{lem}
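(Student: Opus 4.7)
The plan is to compute everything in terms of the formula $d(\overline{x},\overline{y}) = |x|^{-1}|y|^{-1}\Vert x\wedge y\Vert$ and use the fact, noted explicitly just before the lemma, that $\mathrm{A}^{2}P_{W}$ is an orthogonal projection on $\mathrm{A}^{2}(V)$.

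First I would identify the distance from $\overline{x}$ to $\mathrm{P}(W^{\perp})$ in linear-algebraic terms. Fix a unit vector $x\in\overline{x}$ and write $x = P_{W}x + P_{W^{\perp}}x$, so $|P_{W}x|^{2}+|P_{W^{\perp}}x|^{2}=1$. For any unit vector $w\in W^{\perp}$ one has $\langle x,w\rangle = \langle P_{W^{\perp}}x, w\rangle$, which by Cauchy--Schwarz is at most $|P_{W^{\perp}}x|$ in absolute value, with equality when $w$ is a unit multiple of $P_{W^{\perp}}x$ (possible since $P_{W^{\perp}}x\ne 0$ would contradict $\overline{x}\notin\mathrm{P}(W^{\perp})$ only in the reverse direction; if $P_{W^{\perp}}x=0$ the infimum is $1 = |P_Wx|$). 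Therefore
\[
d(\overline{x},\mathrm{P}(W^{\perp}))^{2} = 1 - |P_{W^{\perp}}x|^{2} = |P_{W}x|^{2}.
\]
Homogenising, for an arbitrary nonzero $x\in\overline{x}$ this gives $d(\overline{x},\mathrm{P}(W^{\perp})) = |P_{W}x|/|x|$. The assumption $\overline{x}\notin\mathrm{P}(W^{\perp})$ precisely ensures $P_{W}x\ne 0$, so this quantity is strictly positive; similarly for $\overline{y}$.

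Next I would estimate the numerator. Using the identity $\mathrm{A}^{2}P_{W}(x\wedge y)=(P_{W}x)\wedge(P_{W}y)$ from the definition of $\mathrm{A}^{2}T$, and the fact that $\mathrm{A}^{2}P_{W}$ is an orthogonal projection on $(\mathrm{A}^{2}(V),\Vert\cdot\Vert)$, we get
\[
\Vert P_{W}x\wedge P_{W}y\Vert = \Vert \mathrm{A}^{2}P_{W}(x\wedge y)\Vert \le \Vert x\wedge y\Vert.
\]

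Finally I would combine the two pieces. Plugging the vectors $P_{W}x\in\overline{P_{W}x} = P_{W}\overline{x}$ and $P_{W}y$ into the formula for $d$ on $\mathrm{P}(V)$ gives
\[
d(P_{W}\overline{x},P_{W}\overline{y}) \;=\; \frac{\Vert P_{W}x\wedge P_{W}y\Vert}{|P_{W}x|\,|P_{W}y|} \;\le\; \frac{\Vert x\wedge y\Vert}{|P_{W}x|\,|P_{W}y|} \;=\; \frac{|x|\,|y|\,d(\overline{x},\overline{y})}{|P_{W}x|\,|P_{W}y|},
\]
and substituting $|P_{W}x|/|x| = d(\overline{x},\mathrm{P}(W^{\perp}))$ and $|P_{W}y|/|y| = d(\overline{y},\mathrm{P}(W^{\perp}))$ yields the claimed inequality. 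There is no real obstacle here: the only mildly non-trivial input is the formula $d(\overline{x},\mathrm{P}(W^{\perp})) = |P_{W}x|/|x|$, after which the estimate is essentially immediate from the contractivity of the orthogonal projection $\mathrm{A}^{2}P_{W}$.
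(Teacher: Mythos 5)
Your proof is correct and follows essentially the same route as the paper: identify $d(\overline{x},\mathrm{P}(W^{\perp}))$ with $|P_{W}x|/|x|$ and then use that $\mathrm{A}^{2}P_{W}$ is an orthogonal projection to bound $\Vert P_{W}x\wedge P_{W}y\Vert$ by $\Vert x\wedge y\Vert$. The only (harmless) difference is that you establish the exact identity $d(\overline{x},\mathrm{P}(W^{\perp}))=|P_{W}x|/|x|$ via Cauchy--Schwarz, whereas the paper settles for the inequality $d(\overline{x},\mathrm{P}(W^{\perp}))\le|P_{W}x|$, which is all the final estimate requires.
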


\begin{proof}
Let $x\in\overline{x}$ and $y\in\overline{y}$ be with $|x|=|y|=1$.
If $\overline{x}\in\mathrm{P}(W)$,
\[
|P_{W}x|=|x|=1=d(\overline{x},\mathrm{P}(W^{\perp}))\:.
\]
If $\overline{x}\notin\mathrm{P}(W)$,
\begin{multline*}
d(\overline{x},\mathrm{P}(W^{\perp}))\le d(\overline{x},P_{W^{\perp}}\overline{x})=|P_{W^{\perp}}x|^{-1}\Vert x\wedge P_{W^{\perp}}x\Vert\\
=|P_{W^{\perp}}x|^{-1}\Vert P_{W}x\wedge P_{W^{\perp}}x\Vert=|P_{W}x|\cdot d(P_{W}\overline{x},P_{W^{\perp}}\overline{x})=|P_{W}x|\:.
\end{multline*}
Similarly we always have $|P_{W}y|\ge d(\overline{y},\mathrm{P}(W^{\perp}))$.
Additionally, since $\mathrm{A}^{2}P_{W}$ is an orthogonal projection,
\[
\Vert P_{W}x\wedge P_{W}y\Vert=\Vert\mathrm{A}^{2}P_{W}(x\wedge y)\Vert\le\Vert x\wedge y\Vert\:.
\]
Hence,
\begin{eqnarray*}
d(P_{W}\overline{x},P_{W}\overline{y}) & = & |P_{W}x|^{-1}|P_{W}y|^{-1}\Vert P_{W}x\wedge P_{W}y\Vert\\
 & \le & d(\overline{x},\mathrm{P}(W^{\perp}))^{-1}d(\overline{y},\mathrm{P}(W^{\perp}))^{-1}\Vert x\wedge y\Vert\\
 & = & d(\overline{x},\mathrm{P}(W^{\perp}))^{-1}d(\overline{y},\mathrm{P}(W^{\perp}))^{-1}d(\overline{x},\overline{y}),
\end{eqnarray*}
which completes the proof of the lemma.
\end{proof}

\subsection{\label{subsec:boundary map}The Furstenberg measure and the boundary
map}

Recall from Section \ref{subsec:Dimension-formulas} that $\Lambda$
is a finite index set, $\{A_{l}\}_{l\in\Lambda}$ are distinct elements
of $\mathrm{GL}(V)$, $p=(p_{l})_{l\in\Lambda}$ is a probability
vector with strictly positive coordinates and,
\[
\mu=\sum_{l\in\Lambda}p_{l}\delta_{A_{l}}\in\mathcal{M}(\mathrm{GL}(V))\:.
\]
As before, let $S_{\mu}$ be the smallest closed subsemigroup of $\mathrm{GL}(V)$
such that $\mu(S_{\mu})=1$. We shall always assume from now on that
$S_{\mu}$ is strongly irreducible and proximal. Let $\nu$ be the
Furstenberg measure corresponding to $\mu$, which means that $\nu$
is unique $\mu$-stationary member of $\mathcal{M}(\mathrm{P}(V))$.

Write $\Omega=\Lambda^{\mathbb{Z}}$ and let $\sigma:\Omega\rightarrow\Omega$
be the left shift map. That is,
\[
(\sigma\omega)_{n}=\omega_{n+1}\text{ for }\omega\in\Omega\text{ and }n\in\mathbb{Z}\:.
\]
Denote by $\mathcal{P}$ the partition of $\Omega$ according to the
$0$-coordinate, i.e.
\[
\mathcal{P}=\{\{\omega\in\Omega\::\:\omega_{0}=l\}\::\:l\in\Lambda\}\:.
\]
For integers $m\le n$ set,
\[
\mathcal{P}_{m}^{n}=\bigvee_{j=m}^{n}\sigma^{-j}\mathcal{P}\:.
\]
The atoms of these partitions are called the cylinder sets of $\Omega$.
We equip $\Omega$ with the $\sigma$-algebra generated by its cylinder
sets, which makes it into a Borel space. Let $\beta$ be the Bernoulli
measure on $\Omega$ corresponding to the probability vector $p$,
that is $\beta=p^{\mathbb{Z}}$. The triple $(\Omega,\beta,\sigma)$
is an invertible ergodic measure preserving system.

As mentioned in Section \ref{subsec:Dimension-formulas}, from our
assumptions on $S_{\mu}$ we obtain the following statement. Given
a finite word $l_{1}...l_{n}$ over the alphabet $\Lambda$, we write
$A_{l_{1}...l_{n}}$ in place of $A_{l_{1}}\cdot\cdot\cdot A_{l_{n}}$.
\begin{thm}
\label{thm:def of bd map}There exist a Borel set $\Omega_{0}\subset\Omega$,
with $\sigma(\Omega_{0})=\Omega_{0}$ and $\beta(\Omega_{0})=1$,
and a Borel map $\pi:\Omega_{0}\rightarrow\mathrm{P}(V)$, called
the Furstenberg boundary map, such that:
\begin{enumerate}
\item $\pi$ depends only on the nonnegative coordinates of $\Omega$;
\item \label{enu:equi of pi}$\pi\omega=A_{\omega_{0}}\pi\sigma\omega$
for $\omega\in\Omega_{0}$;
\item the distribution of $\pi$ with respect to $\beta$ is equal to $\nu$,
that is $\pi\beta=\nu$;
\item for every $\omega\in\Omega_{0}$,
\[
\underset{n\rightarrow\infty}{\lim}\:A_{\omega_{0}...\omega_{n}}\nu=\delta_{\pi(\omega)}\text{ in the weak-* topology}\:.
\]
\end{enumerate}
\end{thm}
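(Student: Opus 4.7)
The strategy is a measure-valued martingale construction combined with the Furstenberg contraction principle, following \cite[Chapter III]{BL} and \cite[Chapter 4]{BQ}. For each $\omega \in \Omega$ and $n \geq 0$, set $\nu_n(\omega) := A_{\omega_0 \cdots \omega_n} \nu$, and let $\mathcal{F}_n$ denote the $\sigma$-algebra on $\Omega$ generated by the coordinates $\omega_0,\ldots,\omega_n$. Using the stationarity identity $\nu = \sum_{l \in \Lambda} p_l A_l \nu$, an application of the Markov property shows
$$\mathrm{E}_\beta[\nu_{n+1} \mid \mathcal{F}_n] = A_{\omega_0 \cdots \omega_n} \sum_{l \in \Lambda} p_l A_l \nu = \nu_n,$$
so $(\nu_n)$ is a martingale taking values in $\mathcal{M}(\mathrm{P}(V))$. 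Testing against a countable weak-{*} dense family of continuous functions on the compact space $\mathrm{P}(V)$ and applying the scalar martingale convergence theorem to each, we obtain a Borel set $\Omega_1 \subset \Omega$ with $\beta(\Omega_1)=1$ on which $\nu_n(\omega)$ converges weak-{*} to some Borel probability measure $\nu_\infty(\omega)$.

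The crucial step is to show that $\nu_\infty(\omega)$ is $\beta$-almost surely a Dirac mass. Under proximality and strong irreducibility one has the contraction principle: for $\beta$-almost every $\omega$ there exist positive scalars $\alpha_n(\omega)$ and a rank-one endomorphism $Y(\omega)$ of $V$ such that $\alpha_n(\omega) A_{\omega_0 \cdots \omega_n} \to Y(\omega)$ in operator norm, with $Y(\omega)$ depending measurably on the nonnegative coordinates of $\omega$. Strong irreducibility forces $\nu(\mathrm{P}(W)) = 0$ for every proper linear subspace $W$ (\cite[Proposition III.2.3]{BL}); applied to $W = \ker Y(\omega)$, this allows one to push $\nu$ forward under the rank-one limit and conclude that $A_{\omega_0 \cdots \omega_n} \nu \to \delta_{\pi(\omega)}$ weak-{*}, where $\pi(\omega) \in \mathrm{P}(V)$ is the image line of $Y(\omega)$. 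Let $\Omega_0$ be the shift-invariant, $\beta$-conull subset of $\Omega_1$ on which this convergence holds, and define $\pi : \Omega_0 \to \mathrm{P}(V)$ accordingly.

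Verification of the four properties is then routine. Properties 1 and 4 are built into the construction. For property 2, the factorisation $A_{\omega_0 \cdots \omega_n} = A_{\omega_0} A_{\omega_1 \cdots \omega_n}$ together with weak-{*} continuity of the push-forward by $A_{\omega_0}$ yields
$$\delta_{\pi \omega} = \lim_{n \to \infty} A_{\omega_0 \cdots \omega_n} \nu = A_{\omega_0}\, \lim_{n \to \infty} A_{\omega_1 \cdots \omega_n} \nu = A_{\omega_0} \delta_{\pi \sigma \omega},$$
hence $\pi \omega = A_{\omega_0} \pi \sigma \omega$. For property 3, test against a bounded continuous $f$ on $\mathrm{P}(V)$: Fubini and stationarity give $\int \nu_n(\omega)\, d\beta(\omega) = \nu$ for every $n$, while bounded convergence yields $\int f(\pi \omega)\, d\beta(\omega) = \lim_n \int \!\! \int f\, d\nu_n\, d\beta = \int f\, d\nu$, so $\pi \beta = \nu$.

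The main obstacle is the almost-sure contraction step. Proximality on its own only guarantees that \emph{some} sequence in $S_\mu$ has normalisations tending to a rank-one endomorphism; upgrading this to an assertion along almost every trajectory of the random walk requires a zero--one argument on the support of $\nu_\infty(\omega)$, using strong irreducibility and the non-concentration of $\nu$ on proper projective subspaces to rule out the possibility that $\nu_\infty$ spreads mass over more than one point. This is precisely the content of \cite[Theorem III.3.1]{BL} and \cite[Lemma 2.17 and Proposition 4.7]{BQ}, which we invoke rather than reproduce.
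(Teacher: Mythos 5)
Your proposal is correct and matches the paper's treatment: the paper does not prove this theorem but quotes it directly from \cite[Lemma 2.17 and Proposition 4.7]{BQ} and \cite[Theorem III.3.1]{BL}, and your outline is precisely the standard martingale-plus-contraction argument from those sources, with the one genuinely hard step (almost-sure convergence of the normalised products to a rank-one endomorphism) deferred to the same citations. The routine verifications you supply (martingale property via stationarity, equivariance via continuity of the push-forward, $\pi\beta=\nu$ via bounded convergence, shift-invariance of $\Omega_{0}$) are all sound.
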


The following theorem, due to Guivarc'h \cite[Theorem 7']{Gu}, is
used in Lemma \ref{lem:bd on mass near hyperplane} to bound the mass
given by $\nu=\pi\beta$ to neighbourhoods of projective spaces of
hyperplanes. A proof of this theorem can also be found in \cite[Theorem 14.1]{BQ}.
\begin{thm}
\label{thm:guivarch}Assume, as we do, that $\mu$ is finitely supported
and that $S_{\mu}$ is strongly irreducible and proximal. Then there
exist $0<\alpha\le1$ and $1<C_{0}<\infty$ such that for all $y\in V$
with $|y|=1$,
\[
\int\left(\frac{|x|}{|\left\langle x,y\right\rangle |}\right)^{\alpha}\:d\pi\beta(\overline{x})\le C_{0}\:.
\]
\end{thm}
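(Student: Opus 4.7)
The plan is to prove this via spectral-gap analysis of a perturbed transfer operator (the Le Page--Guivarc'h method). First I would reformulate the statement geometrically: for unit $y \in V$, the ratio $|\langle x, y\rangle|/|x|$ equals the projective distance $d(\overline{x}, \mathrm{P}(y^\perp))$, so the conclusion is equivalent to a uniform H\"older estimate $\nu\{\overline{x} : d(\overline{x}, \mathrm{P}(y^\perp)) < r\} \le C r^\alpha$ for all unit $y$ and all small $r > 0$.

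Next I would introduce, for small $\alpha > 0$, the dual perturbed transfer operator on continuous functions on the dual projective space $\mathrm{P}(V^*)$,
\[
P_\alpha f(\overline{y}) = \sum_{l\in\Lambda} p_l \left(\frac{|A_l^*y|}{|y|}\right)^{\alpha} f(\overline{A_l^*y}),
\]
whose iterates encode products of the transposes $A_{\mathbf{l}}^* = A_{l_n}^* \cdots A_{l_1}^*$. By iterated stationarity of $\nu$ together with the elementary singular-value bounds $|A_{\mathbf{l}} x| \le \|A_{\mathbf{l}}\|$ and $|A_{\mathbf{l}}^* y| \ge \|A_{\mathbf{l}}\|\cdot|\langle y, f_1(A_{\mathbf{l}})\rangle|$, where $f_1(A_{\mathbf{l}})$ is the top left singular direction of $A_{\mathbf{l}}$, the integral in the theorem can be dominated by the action of $P_\alpha^n$ on a suitable test function on $\mathrm{P}(V^*)$. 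The task thus reduces to showing that $\{P_\alpha^n\}_{n \ge 1}$ is uniformly bounded on a good function space. The key input is that $P_0$ has a spectral gap on the H\"older space on $\mathrm{P}(V^*)$: strong irreducibility and proximality of $S_\mu$, applied to the dual cocycle, yield a unique dual stationary measure $\check{\nu}$ and an exponential projective contraction of typical products with rate $\lambda_0 - \lambda_1 > 0$, and the Ionescu-Tulcea--Marinescu theorem converts these facts into quasi-compactness. An analytic perturbation argument then extends this spectral gap to small $\alpha > 0$, with leading eigenvalue $k(\alpha)$ close to $k(0)=1$. The finite-support assumption on $\mu$ is used to keep the weight $(|A_l^*y|/|y|)^\alpha$ uniformly bounded and to ensure $\alpha \mapsto P_\alpha$ is analytic.

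The main obstacle is establishing the spectral gap of $P_\alpha$, which hinges on a projective contraction estimate of the form $d(A_{\mathbf{l}}^*\overline{y_1}, A_{\mathbf{l}}^*\overline{y_2}) \le C e^{-cn}\, d(\overline{y_1}, \overline{y_2})$ for $\beta$-typical words $\mathbf{l}$ of length $n$, valid whenever $\overline{y_1}$ and $\overline{y_2}$ avoid certain exceptional sets controlled by $\check{\nu}$. Strong irreducibility (ruling out concentration of $\check{\nu}$ on proper projective subspaces) and proximality (supplying the positive Lyapunov gap) are both essential here, and finite support of $\mu$ provides the uniform analytic control needed to close the perturbation. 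Once quasi-compactness is in place, expanding the spectral decomposition of $P_\alpha^n$ against the constant function $1$ and using the identity $\pi\beta = \nu$ yields the desired uniform upper bound $C_0$.
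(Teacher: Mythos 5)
This theorem is not proved in the paper at all: it is quoted from Guivarc'h \cite{Gu} (see also \cite[Theorem 14.1]{BQ}), and your outline is precisely the Le Page--Guivarc'h argument used in that source --- reduction of the moment bound to a dual transfer operator $P_\alpha$, quasi-compactness of $P_0$ on H\"older functions via projective contraction (which needs strong irreducibility, proximality, and the gap $\lambda_0>\lambda_1$), and analytic perturbation in $\alpha$ using the finite support of $\mu$. So your proposal is the standard proof of the cited result; the only caveat is that it remains a sketch, with the genuinely hard step (the spectral gap and the domination of the integral by $P_\alpha^n$ applied to a test function) asserted rather than carried out, whereas \cite[Theorem 14.1]{BQ} offers a slightly softer alternative route through large-deviation estimates for the norm cocycle instead of full analytic perturbation theory.
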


\begin{rem*}
Theorem \ref{thm:guivarch} remains true if instead of assuming that
$\mu$ is finitely supported it is assumed that it has a finite exponential
moment.
\end{rem*}
\begin{lem}
\label{lem:bd on mass near hyperplane}There exist $0<\alpha\le1$
and $1<C<\infty$ such that for every $W\in\mathrm{Gr}(\dim V-1,V)$
and $r>0$,
\[
\pi\beta\{\overline{x}\::\:d(\overline{x},\mathrm{P}(W))\le r\}\le Cr^{\alpha}\:.
\]
\end{lem}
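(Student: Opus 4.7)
The plan is to reduce the geometric statement to Guivarc'h's integral bound (Theorem \ref{thm:guivarch}) via Markov's inequality, after identifying the distance to $\mathrm{P}(W)$ with the inner product against a unit normal of $W$.

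First I would express the distance in convenient form. Since $W$ has codimension one, pick a unit vector $y \in W^{\perp}$, so that $W = y^{\perp}$. For $\overline{x} \in \mathrm{P}(V)$ with unit representative $x$, decompose $x = \langle x,y\rangle y + P_{W}x$, so $|P_{W}x|^{2} = 1 - \langle x,y\rangle^{2}$. If $\langle x,y\rangle \ne 0$, the point of $\mathrm{P}(W)$ minimizing the distance to $\overline{x}$ is $\overline{P_{W}x}$, since minimizing $d(\overline{x},\overline{w})^{2} = 1 - \langle x, w/|w|\rangle^{2}$ over $0 \ne w \in W$ amounts to maximizing $|\langle x, w/|w|\rangle|$, which is achieved at $w = P_{W}x$ with maximal value $|P_{W}x|$. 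Hence
\[
d(\overline{x}, \mathrm{P}(W)) \;=\; \sqrt{1 - |P_{W}x|^{2}} \;=\; |\langle x, y\rangle|,
\]
an identity that also trivially holds when $\overline{x} \in \mathrm{P}(W)$.

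Next I would rewrite the event and invoke Markov's inequality. For any (not necessarily unit) $x \in \overline{x}$, the identity reads $d(\overline{x},\mathrm{P}(W)) = |\langle x,y\rangle|/|x|$, so
\[
\bigl\{\overline{x} : d(\overline{x}, \mathrm{P}(W)) \le r\bigr\} \;\subset\; \Bigl\{\overline{x} : \tfrac{|x|}{|\langle x, y\rangle|} \ge \tfrac{1}{r}\Bigr\} \;\cup\; \mathrm{P}(W).
\]
By the strong irreducibility of $S_{\mu}$ we have $\pi\beta(\mathrm{P}(W)) = \nu(\mathrm{P}(W)) = 0$ (cited earlier from \cite[Proposition III.2.3]{BL}), so the second set is negligible. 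Applying Markov's inequality to the nonnegative function $\overline{x} \mapsto (|x|/|\langle x,y\rangle|)^{\alpha}$ and using Theorem \ref{thm:guivarch}, with the constants $0 < \alpha \le 1$ and $C_{0} > 1$ supplied there, gives
\[
\pi\beta\Bigl\{\overline{x} : \tfrac{|x|}{|\langle x,y\rangle|} \ge \tfrac{1}{r}\Bigr\} \;\le\; r^{\alpha} \int \Bigl(\tfrac{|x|}{|\langle x,y\rangle|}\Bigr)^{\alpha} d\pi\beta(\overline{x}) \;\le\; C_{0}\, r^{\alpha}.
\]
Setting $C = C_{0}$ concludes the proof, with the constants $\alpha$ and $C$ depending only on $\mu$ and not on the hyperplane $W$, since the bound in Theorem \ref{thm:guivarch} is uniform in the unit vector $y$.

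There is no real obstacle here: the only genuine content is the distance identity $d(\overline{x}, \mathrm{P}(W)) = |\langle x, y\rangle|$, which is an elementary computation in the metric defined on $\mathrm{P}(V)$. The uniformity in $W$ comes for free from the uniformity in $y$ in Guivarc'h's theorem.
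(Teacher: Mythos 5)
Your proof is correct and follows essentially the same route as the paper: both reduce the statement to Theorem \ref{thm:guivarch} via Markov's inequality after relating $d(\overline{x},\mathrm{P}(W))$ to $|\langle x,y\rangle|/|x|$ for a unit normal $y$ of $W$. The only difference is cosmetic: you establish the exact identity $d(\overline{x},\mathrm{P}(W))=|\langle x,y\rangle|/|x|$ (so $C=C_{0}$), while the paper settles for the one-sided bound $|\langle x,y\rangle|/|x|\le 2^{1/2}r$ and ends with $C=2^{\alpha/2}C_{0}$.
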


\begin{proof}
Let $\alpha$ and $C_{0}$ be as in Theorem \ref{thm:guivarch}. Fix
$W\in\mathrm{Gr}(d-1,V)$ and $0<r<1$. Let $y\in W^{\perp}$ be with
$|y|=1$, and $x\in V$ be with $|x|=1$ and $d(\overline{x},\mathrm{P}(W))\le r$.
There exists $w\in W$ which satisfies $|w|=1$ and $d(\overline{x},\overline{w})\le r$.
We have,
\[
r^{2}\ge d(\overline{x},\overline{w})^{2}=1-\left\langle x,w\right\rangle ^{2}=(1-\left\langle x,w\right\rangle )(1+\left\langle x,w\right\rangle )\:.
\]
Thus, by replacing $w$ with $-w$ if necessary, we may assume that
$r^{2}\ge1-\left\langle x,w\right\rangle $. Hence,
\[
|x-w|^{2}=2-2\left\langle x,w\right\rangle \le2r^{2},
\]
and so,
\[
|x|^{-1}\cdot|\left\langle x,y\right\rangle |=|\left\langle x-w,y\right\rangle |\le|x-w|\le2^{1/2}r\:.
\]
From this we get,
\begin{eqnarray*}
\beta\left\{ \omega\::\:d(\pi\omega,\mathrm{P}(W))\le r\right\}  & \le & \pi\beta\left\{ \overline{x}\::\:|x|^{-1}\cdot|\left\langle x,y\right\rangle |\le2^{1/2}r\right\} \\
 & = & \pi\beta\left\{ \overline{x}\::\:|x|^{\alpha}\cdot|\left\langle x,y\right\rangle |^{-\alpha}\ge2^{-\alpha/2}r^{-\alpha}\right\} \\
 & \le & 2^{\alpha/2}r^{\alpha}\cdot\int\left(|x|\cdot|\left\langle x,y\right\rangle |^{-1}\right)^{\alpha}\:d\pi\beta(\overline{x})\\
 & \le & 2^{\alpha/2}C_{0}r^{\alpha},
\end{eqnarray*}
which completes the proof of the lemma with $C=C_{0}2^{\alpha/2}$.
\end{proof}

\subsection{\label{subsec:Oseledets}Oseledets' multiplicative ergodic theorem}

The following statement follows directly from Oseledets theorem (e.g.
see \cite[Section 3]{Ru}), applied to the system $(\Omega,\beta,\sigma^{-1})$
and the matrix cocycle $\omega\rightarrow A_{\omega_{-1}}$, and by
removing a set of zero $\beta$-measure from $\Omega_{0}$ without
changing the notation (while still maintaining $\sigma(\Omega_{0})=\Omega_{0}$).
\begin{thm}
\label{thm:from Oseledets}There exist positive integers $s,d_{0},...,d_{s}$,
with $\dim V=d_{0}+...+d_{s}$, and real numbers $\lambda_{0}>...>\lambda_{s}$,
so that for every $\omega\in\Omega_{0}$ there exist linear subspaces
$E_{\omega}^{0},...,E_{\omega}^{s}\subset V$ such that,
\begin{enumerate}
\item \label{enu:splitting of V}$V=\oplus_{i=0}^{s}E_{\omega}^{i}$ and
$\dim E_{\omega}^{i}=d_{i}$ for $0\le i\le s$;
\item \label{enu:equi of E^i}$E_{\sigma^{-1}\omega}^{i}=A_{\omega_{-1}}E_{\omega}^{i}$
for $0\le i\le s$;
\item \label{enu:asym of forw mul}for $0\le i\le s$ and $0\ne x\in E_{\omega}^{i}$,
\[
\underset{n\rightarrow\infty}{\lim}\:\frac{1}{n}\log\left|A_{\omega_{-n}...\omega_{-1}}x\right|=\lambda_{i},
\]
with uniform convergence on any compact subset of $E_{\omega}^{i}\setminus\{0\}$;
\item \label{enu:asym of back mul}for $0\le i\le s$,
\[
\underset{n\rightarrow\infty}{\lim}\:\frac{1}{n}\:\underset{x\in E_{\sigma^{n}\omega}^{i},|x|=1}{\max}\log\left|A_{\omega_{0}...\omega_{n-1}}x\right|=\lambda_{i},
\]
\[
\underset{n\rightarrow\infty}{\lim}\:\frac{1}{n}\:\underset{x\in E_{\sigma^{n}\omega}^{i},|x|=1}{\min}\log\left|A_{\omega_{0}...\omega_{n-1}}x\right|=\lambda_{i};
\]
\item \label{enu:asym dist of lines}$\underset{n\rightarrow\pm\infty}{\lim}\:\frac{1}{n}\log\kappa(\sigma^{n}\omega)=0$,
where for $\eta\in\Omega_{0}$
\[
\kappa(\eta)=\min\{d(\overline{x},\overline{y})\::\:0\ne x\in\oplus_{i\in I}E_{\eta}^{i},\:0\ne y\in\oplus_{j\in J}E_{\eta}^{j}\text{ and }I\cap J=\emptyset\};
\]
\item the map $\omega\rightarrow E_{\omega}^{i}$ is Borel measurable for
each $0\le i\le s$.
\end{enumerate}
\end{thm}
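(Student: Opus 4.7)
The plan is to apply the invertible Oseledets' multiplicative ergodic theorem to the ergodic system $(\Omega,\beta,\sigma^{-1})$ and the cocycle $\omega \mapsto A_{\omega_{-1}}$, and then verify that the additional structural properties follow from standard consequences. The log-integrability hypothesis is trivial: $\mu$ is finitely supported, so $\omega \mapsto \log\|A_{\omega_{-1}}^{\pm 1}\|$ is a bounded function on $\Omega$.

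Applying Oseledets, we obtain on a full-measure $\sigma$-invariant Borel set (which I intersect with the $\Omega_0$ of Theorem \ref{thm:def of bd map} and replace by a $\sigma$-invariant full-measure subset, still denoted $\Omega_0$) deterministic Lyapunov exponents $\lambda_0 > \cdots > \lambda_s$ and multiplicities $d_0,\ldots,d_s$ summing to $\dim V$ (constant by ergodicity), a Borel measurable splitting $V = \oplus_{i=0}^{s} E_\omega^i$ with $\dim E_\omega^i = d_i$, the equivariance $E_{\sigma^{-1}\omega}^i = A_{\omega_{-1}} E_\omega^i$, and the pointwise growth $n^{-1}\log|A_{\omega_{-n}\ldots\omega_{-1}}x| \to \lambda_i$ for each nonzero $x \in E_\omega^i$. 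This immediately yields items (1), (2), the pointwise form of (3), and (6).

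To upgrade (3) to uniform convergence on compact subsets of $E_\omega^i \setminus \{0\}$, I would use the standard fact that the top and bottom singular values of $A_{\omega_{-n}\ldots\omega_{-1}}|_{E_\omega^i}$ grow at the same exponential rate $\lambda_i$, with a subexponential ratio; convergence on the compact unit sphere of $E_\omega^i$ then extends by positive homogeneity. Item (4) is deduced from the uniform version of (3) by inverting the equivariance: iterating $E_{\sigma^{-1}\omega}^{i} = A_{\omega_{-1}}E_\omega^{i}$ gives $A_{\omega_0\ldots\omega_{n-1}} E_{\sigma^n\omega}^i = E_\omega^i$, so a unit vector $x \in E_{\sigma^n\omega}^i$ equals $A_{\omega_0\ldots\omega_{n-1}}^{-1} y$ for some $y \in E_\omega^i$, and the uniform bounds on $|A_{\omega_{-n}\ldots\omega_{-1}}|_{E_{\sigma^n\omega}^i}$ yield both the $\max$ and $\min$ limits.

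The main technical point is the temperedness claim (5). The function $\omega \mapsto -\log\kappa(\omega)$ is Borel measurable and $\beta$-a.s.\ finite, since the sum $V = \oplus_i E_\omega^i$ is $\beta$-a.s.\ a genuine direct sum. That $n^{-1}\log\kappa(\sigma^n\omega) \to 0$ in both directions of time is a classical output of the multiplicative ergodic theorem, obtained via a Borel--Cantelli argument on the level sets of $-\log\kappa$ using $\sigma$-invariance of $\beta$; see \cite[Section 3]{Ru}. Alternatively, one can derive (5) directly from (3) and (4) by writing the minimum angle between distinct Oseledets blocks as a ratio of an exterior-power norm to a product of norms, each of which has already been controlled. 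A final intersection of the full-measure sets arising at each step completes the construction of $\Omega_0$.
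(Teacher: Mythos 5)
Your overall route is the same as the paper's: the theorem is obtained by a direct application of the invertible multiplicative ergodic theorem to the system $(\Omega,\beta,\sigma^{-1})$ and the cocycle $\omega\mapsto A_{\omega_{-1}}$, with integrability trivial by finite support; the paper simply cites \cite[Section 3]{Ru} for items (1)--(3), (5) and (6), and your treatment of those items is fine.

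The step that does not work as written is your derivation of item (4) from item (3) and the equivariance. Item (3), applied at the base point $\eta=\sigma^{n}\omega$, controls $\frac{1}{m}\log|A_{\eta_{-m}\ldots\eta_{-1}}x|$ for unit $x\in E_{\eta}^{i}$ in the limit $m\to\infty$ with $\eta$ \emph{fixed}; the error is $o_{\eta}(m)$ and depends on $\eta$. The quantity appearing in item (4) is exactly this expression evaluated on the diagonal $m=n$, $\eta=\sigma^{n}\omega$, where the base point moves with $n$, so the pointwise (even locally uniform in $x$) statement of item (3) gives no control; the ``uniform bounds on $|A_{\omega_{-n}\ldots\omega_{-1}}|_{E_{\sigma^{n}\omega}^{i}}$'' you invoke are bounds you do not actually possess. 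The paper is explicit about this: it notes in a remark that item (4) is \emph{not} a restatement of what appears in \cite{Ru}, and instead extracts from the two-sided theorem the inverse-cocycle asymptotics at the \emph{fixed} base point $\omega$, namely
\[
\underset{n\rightarrow\infty}{\lim}\:\frac{1}{n}\log\left|A_{\omega_{n-1}}^{-1}\cdot\cdot\cdot A_{\omega_{0}}^{-1}x\right|=-\lambda_{i}
\]
for $0\ne x\in E_{\omega}^{i}$, uniformly on compact subsets of $E_{\omega}^{i}\setminus\{0\}$. Combined with $A_{\omega_{0}\ldots\omega_{n-1}}E_{\sigma^{n}\omega}^{i}=E_{\omega}^{i}$ (so that every unit vector of $E_{\sigma^{n}\omega}^{i}$ has the form $A_{\omega_{0}\ldots\omega_{n-1}}^{-1}y/|A_{\omega_{0}\ldots\omega_{n-1}}^{-1}y|$ with $y\in E_{\omega}^{i}$ a unit vector), this yields both the $\max$ and $\min$ limits in item (4). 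You should replace your argument for (4) by this one. The same moving-base-point caution applies to your proposed ``alternative'' derivation of the temperedness in item (5) from items (3) and (4); your primary route for (5), deferring to the tempering statement in \cite{Ru}, is the one to keep.
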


\begin{rem}
\label{rem:d_0 =00003D1}By the strong irreducibility and proximality
of $S_{\mu}$ it follows that $d_{0}=1$ (see \cite[Theorem III.6.1]{BL}).
This fact will play an important role in our development.
\end{rem}

\begin{rem}
The numbers $\lambda_{0},...,\lambda_{s}$ are called the Lyapunov
exponents corresponding to $\mu$. For $0\le i\le s$ the integer
$d_{i}$ is called the multiplicity of $\lambda_{i}$. The decomposition
$V=\oplus_{i=0}^{s}E_{\omega}^{i}$ is called the Oseledets splitting
of $V$ at $\omega$. The subspaces $E_{\omega}^{0},...,E_{\omega}^{s}$
are called the Oseledets subspaces corresponding to $\omega$.
\end{rem}

\begin{rem}
Property (\ref{enu:asym of back mul}) of Theorem \ref{thm:from Oseledets}
is not stated in \cite{Ru} in its present form. On the other hand,
it follows from the development carried out there that for $\omega\in\Omega_{0}$,
$0\le i\le s$ and $0\ne x\in E_{\omega}^{i}$,
\[
\underset{n\rightarrow\infty}{\lim}\:\frac{1}{n}\log\left|A_{\omega_{n-1}}^{-1}\cdot\cdot\cdot A_{\omega_{0}}^{-1}x\right|=-\lambda_{i},
\]
with uniform convergence on any compact subset of $E_{\omega}^{i}\setminus\{0\}$.
This together with property (\ref{enu:equi of E^i}) easily imply
property (\ref{enu:asym of back mul}).
\end{rem}

For $1\le i\le s$ write $\tilde{\lambda}_{i}=\lambda_{i}-\lambda_{0}$.
For $0\le i\le s$ and $\omega\in\Omega_{0}$ set $V_{\omega}^{i}=\oplus_{k=i+1}^{s}E_{\omega}^{k}$.
Note that $V_{\omega}^{0}$ is of codimension $1$. Also note that
$V_{\omega}^{s}=\{0\}$ and that for $0\le i<s$,
\begin{equation}
V_{\omega}^{i}=\{x\in V\::\:\underset{n\rightarrow\infty}{\lim}\:\frac{1}{n}\log\left|A_{\omega_{-n}...\omega_{-1}}x\right|\le\lambda_{i+1}\}\:.\label{eq:char of V^i_om}
\end{equation}
This shows that the Borel maps $\omega\rightarrow V_{\omega}^{i}$
depend only on the negative coordinates of $\omega$. It is worth
pointing out that this is not true for the Oseledets subspaces $E_{\omega}^{0},...,E_{\omega}^{s}$.

\section{\label{sec:Construction-of-local}Construction of local coordinates}

In this section we use the Oseledets splittings $V=\oplus_{i=0}^{s}E_{\omega}^{i}$
in order to construct coordinate charts $g_{\omega}$ for $\mathrm{P}(V)$,
whose domains are $\mathrm{P}(V)\setminus\mathrm{P}(V_{\omega}^{0})$.
We then derive some useful properties for these charts. First we show
that the domains just mentioned are neighbourhoods of the points $\pi\omega$,
and that $\pi\sigma^{n}\omega$ does not escape exponentially fast
to the boundary of the domains as $n\rightarrow\infty$.

\subsection{Coordinate neighbourhoods for $\pi\omega$}
\begin{lem}
\label{lem:pi om not in P(V^0_om)}We have,
\[
\beta\{\omega\::\:\pi\omega\in\mathrm{P}(V_{\omega}^{0})\}=0\:.
\]
Thus, by removing a subset of zero $\beta$-measure from $\Omega_{0}$
without changing the notation, we may assume that $\pi\omega\notin\mathrm{P}(V_{\omega}^{0})$
for all $\omega\in\Omega_{0}$.
\end{lem}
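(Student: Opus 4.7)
The plan is to exploit the fact that $\pi\omega$ and $V_\omega^0$ depend on disjoint blocks of coordinates, so under the Bernoulli measure $\beta$ they are independent random variables.

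More precisely, by Theorem \ref{thm:def of bd map} the map $\pi$ depends only on the coordinates $\omega_0,\omega_1,\dots$, so $\pi\beta = \nu$ is the distribution of $\pi$ viewed as a function of the non-negative coordinates. On the other hand, the characterization in equation (\ref{eq:char of V^i_om}) shows that $\omega\mapsto V_\omega^0$ depends only on the negative coordinates $\omega_{-1},\omega_{-2},\dots$. Let $\theta\in\mathcal{M}(\mathrm{Gr}(\dim V -1,V))$ denote the push-forward of $\beta$ under this map. Because $\beta$ is the product measure $p^{\mathbb{Z}}$, the random variables $\pi$ and $\omega\mapsto V_\omega^0$ are independent with respect to $\beta$.

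The second ingredient is that, under our standing assumptions of strong irreducibility and proximality, the Furstenberg measure $\nu$ assigns zero mass to the projective space of any proper linear subspace of $V$; this is the fact from \cite[Proposition III.2.3]{BL} used earlier in the paper, and it is also an immediate consequence of Lemma \ref{lem:bd on mass near hyperplane} applied with $r\downarrow 0$.

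Combining these two observations via Fubini's theorem,
\[
\beta\{\omega:\pi\omega\in\mathrm{P}(V_\omega^0)\}
= \int \nu\bigl(\mathrm{P}(W)\bigr)\,d\theta(W)
= 0,
\]
which is what we wanted. The statement that we may then remove a $\beta$-null set from $\Omega_0$ while preserving $\sigma(\Omega_0)=\Omega_0$ follows by intersecting with $\bigcap_{n\in\mathbb{Z}}\sigma^n\{\omega:\pi\omega\notin\mathrm{P}(V_\omega^0)\}$, which is $\sigma$-invariant and still of full $\beta$-measure. There is no real obstacle here; the entire argument rests on the independence coming from the product structure of $\beta$ and the already-cited non-atomicity of $\nu$ on projective hyperplanes.
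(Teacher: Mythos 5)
Your argument is correct and is essentially identical to the paper's proof: both use that $\pi$ depends only on the nonnegative coordinates while $\omega\mapsto V_{\omega}^{0}$ depends only on the negative ones, invoke the independence coming from the product structure of $\beta$ to write the probability as $\int\pi\beta(\mathrm{P}(V_{\omega}^{0}))\,d\beta(\omega)$, and conclude from $\nu(\mathrm{P}(W))=0$ for proper subspaces $W$. Your extra remark on preserving $\sigma(\Omega_0)=\Omega_0$ by intersecting over all shifts is a harmless (and slightly more careful) addition.
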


\begin{proof}
Since $S_{\mu}$ is strongly irreducible we have $\pi\beta(\mathrm{P}(W))=0$
for $W\in\mathrm{Gr}(\dim V-1,V)$ (see \cite[Proposition III.2.3]{BL}).
Recall that $\pi$ depends only on the nonnegative coordinates and
that $\omega\rightarrow V_{\omega}^{0}$ depends only on the negative
coordinates. Thus, since $\beta$ is a Bernoulli measure,
\[
\beta\{\omega\::\:\pi\omega\in\mathrm{P}(V_{\omega}^{0})\}=\int\pi\beta(\mathrm{P}(V_{\omega}^{0}))\:d\beta(\omega)=0,
\]
which is what we wanted.
\end{proof}
\begin{lem}
\label{lem:dist not exp small}For $\beta$-a.e. $\omega\in\Omega_{0}$,
\begin{equation}
\underset{n\rightarrow\infty}{\lim}\:\frac{1}{n}\log d(\pi\sigma^{n}\omega,\mathrm{P}(V_{\sigma^{n}\omega}^{0}))=0\:.\label{eq:dist not exp small}
\end{equation}
Thus, by removing a subset of zero $\beta$-measure from $\Omega_{0}$
without changing the notation, we may assume that (\ref{eq:dist not exp small})
holds for all $\omega\in\Omega_{0}$.
\end{lem}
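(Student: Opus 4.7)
The plan is to use a first Borel--Cantelli argument, combining the shift invariance of $\beta$, the independence of the nonnegative and negative coordinates under the Bernoulli $\beta$, and the Guivarc'h regularity estimate from Lemma \ref{lem:bd on mass near hyperplane}.

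First I would fix $\epsilon>0$ and define the bad events
\[
A_n=\{\omega\in\Omega_0\::\:d(\pi\sigma^{n}\omega,\mathrm{P}(V_{\sigma^{n}\omega}^{0}))\le e^{-\epsilon n}\},\qquad n\ge 1.
\]
By $\sigma$-invariance of $\beta$ we have $\beta(A_n)=\beta(A_0^{(n)})$ where
\[
A_0^{(n)}=\{\omega\::\:d(\pi\omega,\mathrm{P}(V_{\omega}^{0}))\le e^{-\epsilon n}\},
\]
so the question reduces to bounding the $\beta$-measure of $A_0^{(n)}$ uniformly in the relevant parameters.

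Next, recall that $\pi$ depends only on the coordinates $\omega_0,\omega_1,\dots$ while, by \eqref{eq:char of V^i_om}, the hyperplane $V_{\omega}^{0}$ depends only on the coordinates $\omega_{-1},\omega_{-2},\dots$. Since $\beta=p^{\mathbb{Z}}$ is a product measure, these two families are independent. Conditioning on the negative coordinates and using that $\pi\beta=\nu$ together with Lemma \ref{lem:bd on mass near hyperplane} (applied with $W=V_{\omega}^{0}$, which is of codimension one) gives
\[
\beta(A_0^{(n)})=\int \pi\beta\{\overline{x}\::\:d(\overline{x},\mathrm{P}(V_{\omega}^{0}))\le e^{-\epsilon n}\}\,d\beta(\omega)\le C\,e^{-\epsilon\alpha n},
\]
where $C$ and $\alpha$ are the constants supplied by Lemma \ref{lem:bd on mass near hyperplane}. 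Consequently $\sum_{n}\beta(A_n)<\infty$.

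By the Borel--Cantelli lemma, for $\beta$-a.e.\ $\omega$ there exists $N(\omega)$ such that $d(\pi\sigma^{n}\omega,\mathrm{P}(V_{\sigma^{n}\omega}^{0}))>e^{-\epsilon n}$ for all $n\ge N(\omega)$, which yields
\[
\liminf_{n\to\infty}\frac{1}{n}\log d(\pi\sigma^{n}\omega,\mathrm{P}(V_{\sigma^{n}\omega}^{0}))\ge-\epsilon.
\]
Taking $\epsilon$ through a sequence tending to $0$ and intersecting the resulting full-measure sets shows the liminf is nonnegative $\beta$-a.e. The matching upper bound is immediate since $d(\cdot,\cdot)\le 1$ forces $\frac{1}{n}\log d\le 0$. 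Combining the two bounds proves \eqref{eq:dist not exp small}, and removing the $\beta$-null set from $\Omega_0$ (which is possible because $\sigma(\Omega_0)=\Omega_0$ and the property is shift-invariant in the tail sense we need) gives the final statement. I do not expect a serious obstacle here; the only point to handle carefully is using shift invariance to convert the moving target $V_{\sigma^n\omega}^{0}$ into a static estimate and then exploiting the past/future independence under $\beta$.
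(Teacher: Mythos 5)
Your proposal is correct and follows essentially the same route as the paper: fix $\epsilon>0$, use the Bernoulli structure (independence of the nonnegative coordinates determining $\pi$ and the negative coordinates determining $V_\omega^0$) together with Lemma \ref{lem:bd on mass near hyperplane} to bound the measure of the static bad set by $Ce^{-\epsilon\alpha n}$, transfer to the shifted events by $\sigma$-invariance, and conclude via Borel--Cantelli. The only cosmetic difference is that you spell out the trivial upper bound from $d\le 1$ and the intersection over a sequence $\epsilon\downarrow 0$, which the paper leaves implicit.
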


\begin{proof}
Let $\epsilon>0$ and for $n\ge1$ set,
\[
F_{n}=\{\omega\in\Omega_{0}\::\:d(\pi\omega,\mathrm{P}(V_{\omega}^{0}))<e^{-n\epsilon}\}\:.
\]
As in the proof of the previous lemma, since $\beta$ is a Bernoulli
measure,
\[
\beta(F_{n})=\int\pi\beta\{\overline{x}\::\:d(\overline{x},\mathrm{P}(V_{\omega}^{0}))\le e^{-\epsilon n}\}\:d\beta(\omega)\:.
\]
Let $\alpha$ and $C$ be as in Lemma \ref{lem:bd on mass near hyperplane}.
Then since $\sigma$ preserves $\beta$,
\[
\beta(\sigma^{-n}F_{n})=\beta(F_{n})\le Ce^{-\epsilon\alpha n}\:.
\]
Thus, by the Borel-Cantelli lemma, for $\beta$-a.e. $\omega$ there
exists $N_{\epsilon,\omega}\ge1$ such that,
\[
d(\pi\sigma^{n}\omega,\mathrm{P}(V_{\sigma^{n}\omega}^{0}))\ge e^{-n\epsilon}\text{ for all }n\ge N_{\epsilon,\omega},
\]
which completes the proof of the lemma.
\end{proof}

\subsection{\label{subsec:The-coordinate-maps g}The coordinate maps $g_{\omega}$}

For $\omega\in\Omega_{0}$ and $0\le i\le s$ let $L_{\omega}^{i}$
be the linear projection of $V$ onto $E_{\omega}^{i}$ with respect
to the splitting $\oplus_{k=0}^{s}E_{\omega}^{k}$. That is for $x\in V$,
\[
x=\sum_{i=0}^{s}L_{\omega}^{i}(x)\text{ with }L_{\omega}^{i}(x)\in E_{\omega}^{i}\text{ for }0\le i\le s\:.
\]
Write,
\[
L_{\omega}(x)=(L_{\omega}^{0}(x),...,L_{\omega}^{s}(x))\:.
\]

For every $\omega\in\Omega_{0}$ fix a unit vector $u_{\omega}^{0}$
in $E_{\omega}^{0}$. Recall that $\dim E_{\omega}^{0}=1$, and so
$E_{\omega}^{0}=\mathrm{span}\{u_{\omega}^{0}\}$. Let $f_{\omega}^{0}:V\rightarrow\mathbb{R}$
be the linear functional with,
\[
L_{\omega}^{0}(x)=f_{\omega}^{0}(x)u_{\omega}^{0}\text{ for }x\in V\:.
\]
Note that $f_{\omega}^{0}(x)=0$ if and only if $x\in V_{\omega}^{0}$.
For $\overline{x}\in\mathrm{P}(V)\setminus\mathrm{P}(V_{\omega}^{0})$
set,
\[
g_{\omega}^{i}(\overline{x})=L_{\omega}^{i}(x)/f_{\omega}^{0}(x)\text{ for }0\le i\le s,
\]
and,
\[
g_{\omega}(\overline{x})=(g_{\omega}^{0}(\overline{x}),...,g_{\omega}^{s}(\overline{x}))\:.
\]
Many times we shall use the fact that $L_{\omega}(x)=g_{\omega}(\overline{x})$
for $x\in\overline{x}$ with $f_{\omega}^{0}(x)=1$. Given a vector
$v=(v^{0},...,v^{s})$, with $v^{i}\in E_{\omega}^{i}$ for $0\le i\le s$,
write
\[
\Vert v\Vert_{\infty}=\underset{0\le i\le s}{\max}\:|v^{i}|\:.
\]
Note that $\Vert g_{\omega}(\overline{x})\Vert_{\infty}\ge1$ for
all $\overline{x}\in\mathrm{P}(V)\setminus\mathrm{P}(V_{\omega}^{0})$.

\subsection{Useful properties}

Recall from (\ref{eq:char of V^i_om}) that the subspaces $V_{\omega}^{i}$
can be characterised in terms of the growth rate of $|A_{\omega_{-n}...\omega_{-1}}x|$.
The following lemma provides a similar characterisation for certain
foliations of $\mathrm{P}(V)$, which are defined in terms of the
maps $g_{\omega}$. Recall from Section \ref{subsec:Oseledets} that
for $1\le i\le s$ we write $\tilde{\lambda}_{i}=\lambda_{i}-\lambda_{0}$.
\begin{lem}
\label{lem:g^k equal implies}Let $\omega\in\Omega_{0}$ and $\overline{x},\overline{y}\in\mathrm{P}(V)\setminus\mathrm{P}(V_{\omega}^{0})$
be with $\overline{x}\ne\overline{y}$. Let $0\le i<s$ be such that,
\[
g_{\omega}^{i+1}(\overline{x})\ne g_{\omega}^{i+1}(\overline{y})\text{ and }g_{\omega}^{k}(\overline{x})=g_{\omega}^{k}(\overline{y})\text{ for }0\le k\le i\:.
\]
Then,
\[
\underset{n\rightarrow\infty}{\lim}\:\frac{1}{n}\log(d(A_{\omega_{-n}...\omega_{-1}}\overline{x},A_{\omega_{-n}...\omega_{-1}}\overline{y}))=\tilde{\lambda}_{i+1}\:.
\]
\end{lem}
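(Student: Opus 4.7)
The plan is to lift the lines $\bar x, \bar y$ to canonical representatives $x, y \in V$, use additivity of $A_n := A_{\omega_{-n}\ldots\omega_{-1}}$ and bilinearity of the wedge product to express the projective distance in terms of the difference vector $z := y - x$, and then read off each factor from Oseledets asymptotics (Theorem \ref{thm:from Oseledets}).

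Since $\bar x, \bar y \notin \mathrm{P}(V_\omega^0)$, we have $f_\omega^0(x), f_\omega^0(y) \neq 0$, so we may choose unique representatives with $f_\omega^0(x) = f_\omega^0(y) = 1$; then $L_\omega(x) = g_\omega(\bar x)$ and $L_\omega(y) = g_\omega(\bar y)$. For $z := y - x$ we get $L_\omega^k(z) = g_\omega^k(\bar y) - g_\omega^k(\bar x)$, which vanishes for $0 \le k \le i$ and is nonzero for $k = i+1$. Thus $z \ne 0$ and $z \in V_\omega^i \setminus V_\omega^{i+1}$. Using $A_n x \wedge A_n y = A_n x \wedge A_n z$ and the identity $\Vert u \wedge v \Vert = |u| \, |v| \, d(\bar u, \bar v)$, I would write
\[
d(A_n \bar x, A_n \bar y) = \frac{\Vert A_n x \wedge A_n z \Vert}{|A_n x| \cdot |A_n y|} = \frac{|A_n z|}{|A_n y|} \cdot d(A_n \bar x, A_n \bar z),
\]
and then analyse the three factors separately on a logarithmic scale.

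For the norm factors, decompose $x = u_\omega^0 + w$ with $w \in V_\omega^0$. By Theorem \ref{thm:from Oseledets}(\ref{enu:asym of forw mul}), $\frac{1}{n} \log |A_n u_\omega^0| \to \lambda_0$ while $\frac{1}{n}\log|A_n w| \le \lambda_1 + o(1)$, so $|A_n w|/|A_n u_\omega^0| \to 0$ exponentially and hence $\frac{1}{n}\log|A_n x| \to \lambda_0$; the same argument gives $\frac{1}{n}\log|A_n y| \to \lambda_0$. For $z$, decompose $z = L_\omega^{i+1}(z) + \tilde z$ with $L_\omega^{i+1}(z) \in E_\omega^{i+1} \setminus \{0\}$ and $\tilde z \in V_\omega^{i+1}$; the same dominant-term argument, now applied with $\lambda_{i+1} > \lambda_{i+2}$, yields $\frac{1}{n} \log |A_n z| \to \lambda_{i+1}$.

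The main obstacle is showing that the angular factor $d(A_n \bar x, A_n \bar z)$ is subexponential, i.e.\ $\frac{1}{n} \log d(A_n \bar x, A_n \bar z) \to 0$. The upper bound is immediate from $d \le 1$. For the lower bound, I would show that $A_n \bar x$ is exponentially close to $A_n \overline{u_\omega^0} \in \mathrm{P}(E_{\sigma^{-n}\omega}^0)$ and $A_n \bar z$ is exponentially close to $A_n \overline{L_\omega^{i+1}(z)} \in \mathrm{P}(E_{\sigma^{-n}\omega}^{i+1})$: indeed, expanding the wedge,
\[
d(A_n \bar x, A_n \overline{u_\omega^0}) = \frac{\Vert A_n w \wedge A_n u_\omega^0 \Vert}{|A_n x| \cdot |A_n u_\omega^0|} \le \frac{|A_n w|}{|A_n x|},
\]
which decays at rate $\lambda_1 - \lambda_0 < 0$; the analogous estimate for $z$ decays at rate $\lambda_{i+2} - \lambda_{i+1} < 0$ (and is trivial if $\tilde z = 0$). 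Since $\overline{u_\omega^0}$ and $\overline{L_\omega^{i+1}(z)}$ lie in distinct Oseledets subspaces $E_\omega^0$ and $E_\omega^{i+1}$, the equivariance of the splitting together with Theorem \ref{thm:from Oseledets}(\ref{enu:asym dist of lines}) gives
\[
d(A_n \overline{u_\omega^0}, A_n \overline{L_\omega^{i+1}(z)}) \ge \kappa(\sigma^{-n}\omega), \qquad \frac{1}{n}\log \kappa(\sigma^{-n}\omega) \to 0.
\]
The triangle inequality then yields $d(A_n \bar x, A_n \bar z) \ge \tfrac{1}{2}\kappa(\sigma^{-n}\omega)$ for all large $n$, and hence $\frac{1}{n}\log d(A_n \bar x, A_n \bar z) \to 0$. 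Combining the three asymptotics in the displayed identity gives the claimed limit $\lambda_{i+1} - \lambda_0 = \tilde\lambda_{i+1}$.
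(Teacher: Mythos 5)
Your proof is correct and follows essentially the same route as the paper's: both rest on the wedge-product formula for the projective metric, the Oseledets growth rates applied to the leading nonzero component of each vector, and the subexponential control $\kappa(\sigma^{-n}\omega)=e^{o_\omega(n)}$ on angles between images of the Oseledets subspaces. The only difference is organizational — the paper expands $x\wedge y=v_x\wedge(w_y-w_x)+w_x\wedge w_y$ and estimates wedge norms directly, whereas you factor the distance through $z=y-x$ and control the angle $d(A_n\overline{x},A_n\overline{z})$ by a triangle-inequality perturbation to the leading Oseledets directions; both are valid and yield the same estimates.
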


\begin{proof}
Since $\overline{x},\overline{y}\notin\mathrm{P}(V_{\omega}^{0})$
there exist $x\in\overline{x}$ and $y\in\overline{y}$ with $f_{\omega}^{0}(x)=f_{\omega}^{0}(y)=1$.
Write,
\[
v_{x}=\sum_{k=0}^{i}L_{\omega}^{k}x,\quad w_{x}=\sum_{k=i+1}^{s}L_{\omega}^{k}x,\quad v_{y}=\sum_{k=0}^{i}L_{\omega}^{k}y,\quad w_{y}=\sum_{k=i+1}^{s}L_{\omega}^{k}y\:.
\]
By the definition of $i$ we have $v_{x}=v_{y}$, hence
\begin{eqnarray}
x\wedge y & = & v_{x}\wedge v_{y}+v_{x}\wedge w_{y}+w_{x}\wedge v_{y}+w_{x}\wedge w_{y}\label{eq:x wedge y}\\
 & = & v_{x}\wedge(w_{y}-w_{x})+w_{x}\wedge w_{y}\:.\nonumber 
\end{eqnarray}

For $n\ge1$ set $A_{\omega,n}:=A_{\omega_{-n}...\omega_{-1}}$. Note
that $L_{\omega}^{0}v_{x}\ne0$ and $L_{\omega}^{i+1}(w_{y}-w_{x})\ne0$
by the definition of $i$. Additionally,
\[
L_{\omega}^{k}w_{x}=L_{\omega}^{k}w_{y}=0\text{ for }0\le k\le i\:.
\]
Combining these facts together with part (\ref{enu:asym of forw mul})
of Theorem \ref{thm:from Oseledets} gives,
\begin{equation}
|A_{\omega,n}v_{x}|=e^{n\lambda_{0}+o_{\omega,\overline{x},\overline{y}}(n)},\label{eq:norm A_n v_x}
\end{equation}
\begin{equation}
|A_{\omega,n}(w_{y}-w_{x})|=e^{n\lambda_{i+1}+o_{\omega,\overline{x},\overline{y}}(n)},\label{eq:norm A_n dif w}
\end{equation}
and,
\begin{equation}
\Vert A_{\omega,n}w_{x}\wedge A_{\omega,n}w_{y}\Vert\le e^{2n\lambda_{i+1}+o_{\omega,\overline{x},\overline{y}}(n)}\:.\label{eq:norm wedge A_n w}
\end{equation}

Note that,
\[
v_{x}\in\oplus_{k=0}^{i}E_{\omega}^{k}\quad\text{ and }\quad w_{y}-w_{x}\in\oplus_{k=i+1}^{s}E_{\omega}^{k},
\]
so by part (\ref{enu:equi of E^i}) of Theorem \ref{thm:from Oseledets},
\[
A_{\omega,n}v_{x}\in\oplus_{k=0}^{i}E_{\sigma^{-n}\omega}^{k}\quad\text{ and }\quad A_{\omega,n}(w_{y}-w_{x})\in\oplus_{k=i+1}^{s}E_{\sigma^{-n}\omega}^{k}\:.
\]
Hence by part (\ref{enu:asym dist of lines}) of Theorem \ref{thm:from Oseledets},
\[
1\ge\left\Vert \frac{A_{\omega,n}v_{x}}{|A_{\omega,n}v_{x}|}\wedge\frac{A_{\omega,n}(w_{y}-w_{x})}{|A_{\omega,n}(w_{y}-w_{x})|}\right\Vert \ge\kappa(\sigma^{-n}\omega)=e^{o_{\omega}(n)}\:.
\]
Thus by (\ref{eq:norm A_n v_x}) and (\ref{eq:norm A_n dif w}),
\[
\left\Vert A_{\omega,n}v_{x}\wedge A_{\omega,n}(w_{y}-w_{x})\right\Vert =e^{n(\lambda_{0}+\lambda_{i+1})+o_{\omega,\overline{x},\overline{y}}(n)}\:.
\]
From this, (\ref{eq:x wedge y}) and (\ref{eq:norm wedge A_n w}),
\begin{equation}
\Vert A_{\omega,n}x\wedge A_{\omega,n}y\Vert=e^{n(\lambda_{0}+\lambda_{i+1})+o_{\omega,\overline{x},\overline{y}}(n)}\:.\label{eq:norm A_n x wedge y}
\end{equation}

From $L_{\omega}^{0}x\ne0$, $L_{\omega}^{0}y\ne0$ and part (\ref{enu:asym of forw mul})
of Theorem \ref{thm:from Oseledets},
\[
|A_{\omega,n}x|=e^{n\lambda_{0}+o_{\omega,\overline{x}}(n)}\quad\text{ and }\quad|A_{\omega,n}y|=e^{n\lambda_{0}+o_{\omega,\overline{y}}(n)}\:.
\]
Hence by (\ref{eq:norm A_n x wedge y}),
\begin{eqnarray*}
d(A_{\omega,n}\overline{x},A_{\omega,n}\overline{y}) & = & |A_{\omega,n}x|^{-1}|A_{\omega,n}y|^{-1}\Vert A_{\omega,n}x\wedge A_{\omega,n}y\Vert\\
 & = & e^{n(\lambda_{i+1}-\lambda_{0})+o_{\omega,\overline{x},\overline{y}}(n)}\:.
\end{eqnarray*}
Since $\tilde{\lambda}_{i+1}=\lambda_{i+1}-\lambda_{0}$, this completes
the proof of the lemma.
\end{proof}
In the remaining part of this section we show that $d(\overline{x},\mathrm{P}(V_{\omega}^{0}))$
is comparable with $\Vert g_{\omega}(\overline{x})\Vert_{\infty}^{-1}$
in a manner depending on $\kappa(\omega)$. For this we need the following
lemma.
\begin{lem}
\label{lem:lb on norm}Let $\omega\in\Omega_{0}$ and $x\in V$ be
given. Then,
\[
|x|\ge2^{-s/2}\kappa(\omega)^{s}\Vert L_{\omega}(x)\Vert_{\infty}\:.
\]
\end{lem}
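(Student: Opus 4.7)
My plan is to reduce the claim to an iteration of a two-term angle bound. Write $\kappa = \kappa(\omega)$ and $x_i = L_\omega^i(x) \in E_\omega^i$, so that $x = \sum_{i=0}^s x_i$. For $0 \le k \le s$ set $z_k = \sum_{j=k}^s x_j$, so $z_0 = x$, $z_s = x_s$, and $z_k = x_k + z_{k+1}$ with $x_k \in E_\omega^k$ and $z_{k+1} \in \bigoplus_{j=k+1}^s E_\omega^j$ lying in subspaces indexed by disjoint subsets of $\{0,\ldots,s\}$.

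The key step is a two-term lemma: if $u \in \bigoplus_{k \in I} E_\omega^k$ and $v \in \bigoplus_{k \in J} E_\omega^k$ are nonzero and $I \cap J = \emptyset$, then by the very definition of $\kappa$ we have $d(\bar u,\bar v) \ge \kappa$, hence $|\langle u, v\rangle| \le |u||v|\sqrt{1-\kappa^2}$. Setting $c = \sqrt{1-\kappa^2}$, the algebraic identity $a^2+b^2-2abc = (1-c)(a^2+b^2) + c(a-b)^2$ combined with the elementary bound $1-\sqrt{1-\kappa^2} \ge \kappa^2/2$ (valid on $[0,1]$) gives
\[
|u+v|^2 \;\ge\; (1-\sqrt{1-\kappa^2})(|u|^2+|v|^2) \;\ge\; \tfrac{\kappa^2}{2}\,\max(|u|^2,|v|^2),
\]
so $|u+v| \ge (\kappa/\sqrt 2)\,\max(|u|,|v|)$. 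The inequality persists trivially when one of $u,v$ is zero.

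Applying this to $z_k = x_k + z_{k+1}$ yields $|z_k| \ge (\kappa/\sqrt 2)\,\max(|x_k|,|z_{k+1}|)$ for $0 \le k < s$. Iterating from $k=0$ and selecting the $|x_k|$ entry of the maximum at the final step produces $|x| = |z_0| \ge (\kappa/\sqrt 2)^{k+1}|x_k|$ for $0 \le k \le s-1$, while unrolling all the way down to $z_s$ produces $|x| \ge (\kappa/\sqrt 2)^s|x_s|$. Since $\kappa/\sqrt 2 \le 1$, the quantity $(\kappa/\sqrt 2)^{k+1}$ is at least $(\kappa/\sqrt 2)^s$ whenever $k \le s-1$, so in every case $|x| \ge (\kappa/\sqrt 2)^s |x_k|$. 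Taking the maximum over $k$ gives $|x| \ge 2^{-s/2}\kappa^s \|L_\omega(x)\|_\infty$, which is the desired inequality. The argument is elementary; the only substantive input is the Oseledets angle bound encoded in $\kappa(\omega)$, so I do not anticipate a serious obstacle.
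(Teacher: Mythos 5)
Your proof is correct and follows essentially the same route as the paper: both arguments telescope the Oseledets decomposition into $s$ applications of a two-term angle estimate based on the identity $a^{2}+b^{2}-2abc=(1-c)(a^{2}+b^{2})+c(a-b)^{2}$ and the bound $1-\sqrt{1-\kappa^{2}}\ge\kappa^{2}/2$. The only difference is cosmetic bookkeeping (you iterate on the tail sums $z_{k}=\sum_{j\ge k}x_{j}$ and track each $|x_{k}|$ separately, whereas the paper inducts on the partial sums $\sum_{i\le k}L_{\omega}^{i}(x)$ and carries the invariant $|x_{k}|\ge2^{-k/2}\kappa^{k}\Vert L_{\omega}(x_{k})\Vert_{\infty}$).
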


\begin{proof}
For $0\le k\le s$ set $x_{k}=\sum_{i=0}^{k}L_{\omega}^{i}(x)$. We
show by induction that for every $0\le k\le s$,
\begin{equation}
|x_{k}|\ge2^{-k/2}\kappa(\omega)^{k}\Vert L_{\omega}(x_{k})\Vert_{\infty}\:.\label{eq:ind claim}
\end{equation}
Since $x_{s}=x$ this will prove the lemma. Note that $\Vert L_{\omega}(x_{0})\Vert_{\infty}=|x_{0}|$,
hence (\ref{eq:ind claim}) holds for $k=0$.

Let $0\le k<s$ be such that (\ref{eq:ind claim}) is satisfied for
$k$. If $L_{\omega}^{k+1}(x)=0$ then $x_{k+1}=x_{k}$, and so (\ref{eq:ind claim})
holds also for $k+1$. If $x_{k}=0$ then $|x_{k+1}|=\Vert L_{\omega}(x_{k+1})\Vert_{\infty}$,
and so (\ref{eq:ind claim}) clearly holds for $k+1$. It follows
that we may assume that $L_{\omega}^{k+1}(x)\ne0$ and $x_{k}\ne0$.

Write $u=L_{\omega}^{k+1}(x)/|L_{\omega}^{k+1}(x)|$ and $v=x_{k}/|x_{k}|$.
From $u\in E_{\omega}^{k+1}$, $v\in\oplus_{i=0}^{k}E_{\omega}^{i}$
and the definition of $\kappa(\omega)$,
\[
1-\left|\left\langle u,v\right\rangle \right|=\frac{d(\overline{u},\overline{v})^{2}}{1+\left|\left\langle u,v\right\rangle \right|}\ge\kappa(\omega)^{2}/2\:.
\]
Thus,
\begin{eqnarray*}
|x_{k+1}|^{2} & = & \left\langle x_{k}+L_{\omega}^{k+1}(x),x_{k}+L_{\omega}^{k+1}(x)\right\rangle \\
 & \ge & |x_{k}|^{2}+|L_{\omega}^{k+1}(x)|^{2}-2|x_{k}|\cdot|L_{\omega}^{k+1}(x)|\cdot\left|\left\langle v,u\right\rangle \right|\\
 & = & (|x_{k}|^{2}+|L_{\omega}^{k+1}(x)|^{2})(1-\left|\left\langle v,u\right\rangle \right|)+\left|\left\langle v,u\right\rangle \right|(|x_{k}|-|L_{\omega}^{k+1}(x)|)^{2}\\
 & \ge & 2^{-1}\kappa(\omega)^{2}(|x_{k}|^{2}+|L_{\omega}^{k+1}(x)|^{2})\:.
\end{eqnarray*}
From this and since (\ref{eq:ind claim}) holds for $k$,
\begin{eqnarray*}
|x_{k+1}|^{2} & \ge & 2^{-1}\kappa(\omega)^{2}(2^{-k}\kappa(\omega)^{2k}\Vert L_{\omega}(x_{k})\Vert_{\infty}^{2}+|L_{\omega}^{k+1}(x)|^{2})\\
 & \ge & 2^{-k-1}\kappa(\omega)^{2(k+1)}\Vert L_{\omega}(x_{k+1})\Vert_{\infty}^{2}\:.
\end{eqnarray*}
This shows that (\ref{eq:ind claim}) holds for $k+1$, which completes
the induction and the proof of the lemma.
\end{proof}
\begin{lem}
\label{lem:ub on dist to hyperplane}Let $\omega\in\Omega_{0}$ and
$\overline{x}\in\mathrm{P}(V)\setminus\mathrm{P}(V_{\omega}^{0})$
be given. Then,
\[
d(\overline{x},\mathrm{P}(V_{\omega}^{0}))\le s2^{s}\kappa(\omega)^{-2s}\Vert g_{\omega}(\overline{x})\Vert_{\infty}^{-1}\:.
\]
\end{lem}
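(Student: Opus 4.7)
The plan is to exhibit an explicit nearby representative in $\mathrm{P}(V_{\omega}^{0})$ obtained from $\overline{x}$ by deleting its $E_{\omega}^{0}$-component in the Oseledets splitting, and then read off the bound from Lemma~\ref{lem:lb on norm}. First I will choose $x\in\overline{x}$ normalised so that $f_{\omega}^{0}(x)=1$, which is possible since $\overline{x}\notin\mathrm{P}(V_{\omega}^{0})$. Under this normalisation $L_{\omega}^{0}(x)=u_{\omega}^{0}$ and $L_{\omega}^{i}(x)=g_{\omega}^{i}(\overline{x})$ for every $i$, so in particular $\Vert L_{\omega}(x)\Vert_{\infty}=\Vert g_{\omega}(\overline{x})\Vert_{\infty}$.

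Next I will set $y:=\sum_{k=1}^{s}L_{\omega}^{k}(x)\in V_{\omega}^{0}$, so that $x=u_{\omega}^{0}+y$. Assuming $y\neq 0$, the wedge-product formula for $d$ together with the cancellation $y\wedge y=0$ gives
\[
\Vert x\wedge y\Vert=\Vert u_{\omega}^{0}\wedge y\Vert\le |u_{\omega}^{0}|\cdot|y|=|y|,
\]
whence
\[
d(\overline{x},\mathrm{P}(V_{\omega}^{0}))\le d(\overline{x},\overline{y})=\frac{\Vert x\wedge y\Vert}{|x|\cdot|y|}\le |x|^{-1}.
\]
A lower bound on $|x|$ then comes directly from Lemma~\ref{lem:lb on norm}, namely $|x|\ge 2^{-s/2}\kappa(\omega)^{s}\Vert g_{\omega}(\overline{x})\Vert_{\infty}$. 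Combining these will yield
\[
d(\overline{x},\mathrm{P}(V_{\omega}^{0}))\le 2^{s/2}\kappa(\omega)^{-s}\Vert g_{\omega}(\overline{x})\Vert_{\infty}^{-1},
\]
which is actually stronger than the advertised estimate; the stated constant $s2^{s}\kappa(\omega)^{-2s}$ then follows by monotonicity from $\kappa(\omega)\le 1$ (since $d$ takes values in $[0,1]$) and $s\ge 1$.

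It remains to handle the degenerate case $y=0$. Here $x=u_{\omega}^{0}$, so $\overline{x}=\overline{u_{\omega}^{0}}\in\mathrm{P}(E_{\omega}^{0})$ and $\Vert g_{\omega}(\overline{x})\Vert_{\infty}=|u_{\omega}^{0}|=1$, while the trivial global estimate $d(\overline{x},\mathrm{P}(V_{\omega}^{0}))\le 1$ on $\mathrm{P}(V)$ already suffices. I do not anticipate any genuine obstacle: the one substantive input is Lemma~\ref{lem:lb on norm}, and the remainder of the argument is bookkeeping with the wedge formula and the chosen normalisation of $x$.
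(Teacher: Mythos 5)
Your proof is correct and follows essentially the same route as the paper's: choose $x$ with $f_{\omega}^{0}(x)=1$, set $y=\sum_{k\ge1}L_{\omega}^{k}(x)\in V_{\omega}^{0}$, compare $d(\overline{x},\mathrm{P}(V_{\omega}^{0}))$ with $d(\overline{x},\overline{y})$ via the wedge formula, and invoke Lemma~\ref{lem:lb on norm}. The only difference is that you bound $\Vert u_{\omega}^{0}\wedge y\Vert$ by $|y|$ (so the $|y|^{-1}$ cancels and only the lower bound on $|x|$ is needed), which yields the sharper constant $2^{s/2}\kappa(\omega)^{-s}$; since $\kappa(\omega)\le1$ and $s\ge1$, this indeed implies the stated estimate.
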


\begin{proof}
Write $M$ for $\Vert g_{\omega}(\overline{x})\Vert_{\infty}$. If
$M=1$ the statement is trivial, so we may assume that $M>1$. Let
$x\in\overline{x}$ be with $f_{\omega}^{0}(x)=1$ and set $y=\sum_{i=1}^{s}L_{\omega}^{i}(x)$.
From $f_{\omega}^{0}(x)=1$ and $M>1$ it follows that $g_{\omega}(\overline{x})=L_{\omega}(x)$
and $\Vert L_{\omega}(y)\Vert_{\infty}=M$. Thus by Lemma \ref{lem:lb on norm},
\[
|x|,|y|\ge2^{-s/2}\kappa(\omega)^{s}M\:.
\]
Additionally,
\[
x\wedge y=(u_{\omega}^{0}+y)\wedge y=u_{\omega}^{0}\wedge y=\sum_{i=1}^{s}(u_{\omega}^{0}\wedge L_{\omega}^{i}(x))\:.
\]
Hence,
\[
\Vert x\wedge y\Vert\le\sum_{i=1}^{s}\Vert u_{\omega}^{0}\wedge L_{\omega}^{i}(x)\Vert\le sM\:.
\]
From $y\in V_{\omega}^{0}$ and these estimates we obtain,
\[
d(\overline{x},\mathrm{P}(V_{\omega}^{0}))\le d(\overline{x},\overline{y})=|x|^{-1}|y|^{-1}\Vert x\wedge y\Vert\le s2^{s}\kappa(\omega)^{-2s}M^{-1},
\]
which completes the proof of the lemma.
\end{proof}
\begin{lem}
\label{lem:lb on dist to hyperplane}Let $\omega\in\Omega_{0}$ and
$\overline{x}\in\mathrm{P}(V)\setminus\mathrm{P}(V_{\omega}^{0})$
be given. Then,
\[
d(\overline{x},\mathrm{P}(V_{\omega}^{0}))\ge(2s)^{-1}\kappa(\omega)\Vert g_{\omega}(\overline{x})\Vert_{\infty}^{-1}\:.
\]
\end{lem}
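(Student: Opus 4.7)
The plan is to exploit the elementary identity
\[
d(\overline{x},\mathrm{P}(U))=|P_{U^{\perp}}w|
\]
valid for any linear subspace $U\subset V$ and any unit representative $w$ of $\overline{x}$. This follows by writing $d(\overline{x},\overline{z})^{2}=1-\langle w,z\rangle^{2}$ for unit $z$, observing that $\sup_{z\in U,\,|z|=1}|\langle w,z\rangle|=|P_{U}w|$, and invoking Pythagoras. I would apply this with $U=V_{\omega}^{0}$, so that it only remains to bound $|P_{(V_{\omega}^{0})^{\perp}}w|$ from below.

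To produce such a bound, pick the representative $\tilde{x}\in\overline{x}$ normalised by $f_{\omega}^{0}(\tilde{x})=1$, so that $L_{\omega}(\tilde{x})=g_{\omega}(\overline{x})$ and in particular $L_{\omega}^{0}(\tilde{x})=u_{\omega}^{0}$. Decompose $\tilde{x}=u_{\omega}^{0}+y$ with $y:=\sum_{i=1}^{s}L_{\omega}^{i}(\tilde{x})\in V_{\omega}^{0}$. Since $y$ lies in $V_{\omega}^{0}$, the orthogonal projection onto $(V_{\omega}^{0})^{\perp}$ kills it, leaving
\[
d(\overline{x},\mathrm{P}(V_{\omega}^{0}))=\frac{|P_{(V_{\omega}^{0})^{\perp}}u_{\omega}^{0}|}{|\tilde{x}|}.
\]

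For the numerator, note that $|P_{(V_{\omega}^{0})^{\perp}}u_{\omega}^{0}|=d(\overline{u_{\omega}^{0}},\mathrm{P}(V_{\omega}^{0}))$ by the same identity applied to the unit vector $u_{\omega}^{0}$; and taking $I=\{0\}$, $J=\{1,\dots,s\}$ in the definition of $\kappa(\omega)$ shows this quantity is at least $\kappa(\omega)$. For the denominator, write $M:=\Vert g_{\omega}(\overline{x})\Vert_{\infty}$, so that $|L_{\omega}^{i}(\tilde{x})|\le M$ for each $i\ge 1$, giving $|y|\le sM$ and hence $|\tilde{x}|\le 1+sM\le 2sM$ (using $M\ge 1$, which holds because $|g_{\omega}^{0}(\overline{x})|=|u_{\omega}^{0}|=1$). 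Combining these two estimates yields the claimed inequality.

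There is no real obstacle here—once one spots the orthogonal-projection formula for the projective distance, everything collapses to two one-line estimates, using only the definition of $\kappa(\omega)$ and the triangle inequality. Notably, Lemma~\ref{lem:lb on norm} is not needed for this direction; it was the reverse inequality (Lemma~\ref{lem:ub on dist to hyperplane}) that required controlling $|x|$ from below by $\kappa(\omega)^{s}$.
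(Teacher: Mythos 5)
Your proof is correct, and every step checks out: the identity $d(\overline{x},\mathrm{P}(U))=|P_{U^{\perp}}w|$ for a unit representative $w$ is valid, the choice $I=\{0\}$, $J=\{1,\dots,s\}$ in the definition of $\kappa(\omega)$ does give $|P_{(V_{\omega}^{0})^{\perp}}u_{\omega}^{0}|=d(\overline{u_{\omega}^{0}},\mathrm{P}(V_{\omega}^{0}))\ge\kappa(\omega)$, and $|\tilde{x}|\le 1+sM\le 2sM$ (using $s\ge1$ and $M\ge1$) closes the argument with exactly the stated constant. The paper's proof is the same in substance — it too reduces to a lower bound on the component of $x$ along $(V_{\omega}^{0})^{\perp}$ and an upper bound on $|x|$ — but it packages the first step differently: it fixes an arbitrary unit $y\in V_{\omega}^{0}$ and a unit $z$ spanning $(V_{\omega}^{0})^{\perp}$, and bounds $\Vert x\wedge y\Vert\ge|\langle x\wedge y,z\wedge y\rangle|=|\langle x,z\rangle|$, which is precisely $|P_{(V_{\omega}^{0})^{\perp}}x|$ in disguise; it then lower-bounds $|\langle u_{\omega}^{0},z\rangle|$ by $\kappa(\omega)/2$ via the estimate $|\langle u_{\omega}^{0},w\rangle|\le 1-\kappa(\omega)^{2}/2$ for unit $w\in V_{\omega}^{0}$ and Pythagoras. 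Your direct appeal to the definition of $\kappa$ is cleaner and loses no constant there, which is what lets your accounting recover $(2s)^{-1}$ exactly (the paper instead absorbs its factor $1/2$ from the angle estimate and uses $|x|\le s\Vert L_{\omega}(x)\Vert_{\infty}$, where the triangle inequality over the $s+1$ summands strictly gives $s+1$ — a harmless slip, as the constant is immaterial downstream). You are also right that Lemma \ref{lem:lb on norm} is not needed here.
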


\begin{proof}
Let $x\in\overline{x}$ be with $f_{\omega}^{0}(x)=1$, $y\in V_{\omega}^{0}$
be with $|y|=1$ and $z\in(V_{\omega}^{0})^{\perp}$ be with $|z|=1$.
Recall that $V_{\omega}^{0}$ is of codimension $1$, so $z$ spans
$(V_{\omega}^{0})^{\perp}$. For every $w\in V_{\omega}^{0}$ with
$0<|w|\le1$,
\[
1-\left|\left\langle u_{\omega}^{0},w/|w|\right\rangle \right|=\frac{d(\overline{u_{\omega}^{0}},\overline{w})^{2}}{1+\left|\left\langle u_{\omega}^{0},w/|w|\right\rangle \right|}\ge\kappa(\omega)^{2}/2\:.
\]
Hence,
\[
\left|\left\langle u_{\omega}^{0},w\right\rangle \right|\le\left|\left\langle u_{\omega}^{0},w/|w|\right\rangle \right|\le1-\kappa(\omega)^{2}/2\:.
\]
From this and,
\[
u_{\omega}^{0}=\left\langle u_{\omega}^{0},z\right\rangle z+P_{V_{\omega}^{0}}u_{\omega}^{0},
\]
it follows,
\[
1=|u_{\omega}^{0}|^{2}=\left\langle u_{\omega}^{0},z\right\rangle ^{2}+\left\langle u_{\omega}^{0},P_{V_{\omega}^{0}}u_{\omega}^{0}\right\rangle \le\left\langle u_{\omega}^{0},z\right\rangle ^{2}+1-\kappa(\omega)^{2}/2\:.
\]
This together with $f_{\omega}^{0}(x)=1$ implies,
\[
\left|\left\langle x,z\right\rangle \right|=\left|\left\langle u_{\omega}^{0},z\right\rangle \right|\ge\kappa(\omega)/2\:.
\]
Now since $\Vert z\wedge y\Vert=1$, $\left\langle y,z\right\rangle =0$
and $|y|=1$,
\[
\Vert x\wedge y\Vert\ge\left|\left\langle x\wedge y,z\wedge y\right\rangle \right|=\left|\det\left(\begin{array}{cc}
\left\langle x,z\right\rangle  & \left\langle x,y\right\rangle \\
\left\langle y,z\right\rangle  & \left\langle y,y\right\rangle 
\end{array}\right)\right|=|\left\langle x,z\right\rangle |\ge\kappa(\omega)/2\:.
\]
From this and since,
\[
|x|\le s\Vert L_{\omega}(x)\Vert_{\infty}=s\Vert g_{\omega}(\overline{x})\Vert_{\infty},
\]
we get,
\[
d(\overline{x},\overline{y})=|x|^{-1}\Vert x\wedge y\Vert\ge(2s)^{-1}\kappa(\omega)\Vert g_{\omega}(\overline{x})\Vert_{\infty}^{-1}\:.
\]
Since $y$ is an arbitrary unit vector in $V_{\omega}^{0}$ this completes
the proof of the lemma.
\end{proof}

\section{\label{sec:measurable partitions}Construction and properties of
measurable partitions}

In this section we construct measurable partitions for $\Omega_{0}$,
similar to the ones defined in \cite[Section 4]{Fe}. We then establish
some useful properties for these partitions and their corresponding
conditional measures.

\subsection{Construction}

Let $\xi_{0}$ be the partition of $\Omega_{0}$ according to the
negative coordinates. That is for $\omega\in\Omega_{0}$,
\[
\xi_{0}(\omega)=\{\eta\in\Omega_{0}\::\:\eta_{j}=\omega_{j}\text{ for }j\le-1\}\:.
\]
Recall that for $0\le i\le s$ the Borel map $\omega\rightarrow V_{\omega}^{i}$
depends only on the negative coordinates (see Section \ref{subsec:Oseledets}).
Thus $V_{\omega}^{i}=V_{\eta}^{i}$ whenever $\xi_{0}(\omega)=\xi_{0}(\eta)$.
Also recall that $\pi\omega\notin\mathrm{P}(V_{\omega}^{0})$ for
all $\omega\in\Omega_{0}$ (see Lemma \ref{lem:pi om not in P(V^0_om)}).
Hence for each $0\le i\le s$ the map which takes $\omega\in\Omega_{0}$
to $P_{(V_{\omega}^{i})^{\perp}}\pi\omega\in\mathrm{P}((V_{\omega}^{i})^{\perp})$
is well defined. It is clear that this map is Borel measurable. For
$1\le i\le s$ let $\xi_{i}$ be the partition of $\Omega_{0}$ such
that for every $\omega\in\Omega_{0}$,
\[
\xi_{i}(\omega)=\{\eta\in\xi_{0}(\omega)\::\:P_{(V_{\omega}^{i})^{\perp}}\pi\eta=P_{(V_{\omega}^{i})^{\perp}}\pi\omega\}\:.
\]
It is easy to see that for every $0\le i\le s$ there exists a Borel
space $Y_{i}$ and a Borel map $\varphi_{i}:\Omega_{0}\rightarrow Y_{i}$,
such that the partition $\{\varphi_{i}^{-1}\{y\}\}_{y\in Y_{i}}$
into the level sets of $\varphi_{i}$ is equal to $\xi_{i}$. Thus
$\xi_{0},...,\xi_{s}$ are measurable partitions (see Section \ref{subsec:Disintegration}).

\subsection{Properties of the partitions}

The following lemma shows that it is possible to describe the partitions
$\xi_{i}$ in terms of the coordinate maps $g_{\omega}$.
\begin{lem}
\label{lem:dif char of xi_i}Let $0\le i\le s$ and $\omega,\eta\in\Omega_{0}$
be given, and suppose that $\xi_{0}(\omega)=\xi_{0}(\eta)$. Then
$\xi_{i}(\omega)=\xi_{i}(\eta)$ if and only if $g_{\omega}^{k}(\pi\eta)=g_{\omega}^{k}(\pi\omega)$
for $0\le k\le i$.
\end{lem}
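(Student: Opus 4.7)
The plan is to translate both sides of the biconditional into linear algebra in the Oseledets splitting at $\omega$, and check they say the same thing. The hypothesis $\xi_0(\omega)=\xi_0(\eta)$ gives $V^i_\omega=V^i_\eta$ for all $i$ (since $\omega\mapsto V^i_\omega$ depends only on the negative coordinates), so both the partition $\xi_i$ and the orthogonal projection $P_{(V^i_\omega)^\perp}$ use the same subspace for $\omega$ and $\eta$. By the very definition of $\xi_i$, the condition $\xi_i(\omega)=\xi_i(\eta)$ is equivalent to $P_{(V^i_\omega)^\perp}\pi\omega=P_{(V^i_\omega)^\perp}\pi\eta$ in $\mathrm{P}((V^i_\omega)^\perp)$.

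Next I would pick representatives $x\in\pi\omega$ and $y\in\pi\eta$ normalised so that $f^0_\omega(x)=f^0_\omega(y)=1$; this is legitimate because Lemma \ref{lem:pi om not in P(V^0_om)} (and $V^0_\omega=V^0_\eta$) gives $\pi\omega,\pi\eta\notin\mathrm{P}(V^0_\omega)$. In this normalisation $L_\omega(x)=g_\omega(\pi\omega)$ and $L_\omega(y)=g_\omega(\pi\eta)$, and in particular $L^0_\omega(x)=L^0_\omega(y)=u^0_\omega$. The equality of the projections in $\mathrm{P}((V^i_\omega)^\perp)$ means exactly that $P_{(V^i_\omega)^\perp}(x-cy)=0$ for some scalar $c\neq 0$, i.e.\ $x-cy\in V^i_\omega$.

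The last step is to read this containment off the Oseledets decomposition $V=\oplus_{k=0}^s E^k_\omega$. Since $V^i_\omega=\oplus_{k=i+1}^s E^k_\omega$, the vanishing of the $E^0_\omega,\dots,E^i_\omega$ components of
\[
x-cy=(1-c)u^0_\omega+\sum_{k=1}^{s}\bigl(L^k_\omega(x)-c\,L^k_\omega(y)\bigr)
\]
forces first $c=1$ (from the $E^0_\omega$ piece, using $d_0=1$ and $L^0_\omega(x)=L^0_\omega(y)=u^0_\omega$) and then $L^k_\omega(x)=L^k_\omega(y)$ for $1\le k\le i$. Combined with the $k=0$ identity, this is exactly $g^k_\omega(\pi\omega)=g^k_\omega(\pi\eta)$ for $0\le k\le i$. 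Conversely, these equalities give $x-y\in V^i_\omega$, so the projections agree. The case $i=0$ is automatic on both sides, since $(V^0_\omega)^\perp$ is one-dimensional (making $\mathrm{P}((V^0_\omega)^\perp)$ a point) and $g^0_\omega\equiv u^0_\omega$; the case $i=s$ reduces to $\pi\omega=\pi\eta$, which follows from injectivity of $L_\omega$ on affine slices. There is no real obstacle — the only point requiring care is the normalisation $f^0_\omega=1$, which is what makes $L_\omega$ literally read off the components of $g_\omega$ and turns the projection identity into a clean coordinate equation.
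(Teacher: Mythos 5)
Your proposal is correct and follows essentially the same route as the paper: normalise representatives by $f^0_\omega=1$, translate the projective identity $P_{(V^i_\omega)^\perp}\pi\eta=P_{(V^i_\omega)^\perp}\pi\omega$ into a linear condition with a scalar $c\ne0$, decompose along the Oseledets splitting, and use the $E^0_\omega$ component (with $d_0=1$) to force $c=1$. The only cosmetic difference is that you phrase the key linear-algebra step via the kernel ($x-cy\in V^i_\omega$) whereas the paper phrases it via injectivity of $P_{(V^i_\omega)^\perp}$ on each $E^k_\omega$ together with the directness of the sum of the images; these are equivalent.
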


\begin{proof}
Since $g_{\omega}^{0}(\pi\eta)$ and $g_{\omega}^{0}(\pi\omega)$
are both equal to $u_{\omega}^{0}$, the lemma holds trivially when
$i=0$. Since $V_{\omega}^{s}=\{0\}$, we have $\xi_{s}(\omega)=\xi_{s}(\eta)$
if and only if $\pi\omega=\pi\eta$, which clearly holds if and only
if $g_{\omega}(\pi\eta)=g_{\omega}(\pi\omega)$. Thus the lemma is
also clear when $i=s$, and so we may assume that $1\le i<s$.

Let $x_{\omega}\in\pi\omega$ and $x_{\eta}\in\pi\eta$ be with $f_{\omega}^{0}(x_{\omega})=f_{\omega}^{0}(x_{\eta})=1$.
Since $L_{\omega}^{k}x_{\omega},L_{\omega}^{k}x_{\eta}\in V_{\omega}^{i}$
for each $i<k\le s$,
\begin{equation}
P_{(V_{\omega}^{i})^{\perp}}x_{\omega}=\sum_{k=0}^{i}P_{(V_{\omega}^{i})^{\perp}}L_{\omega}^{k}x_{\omega}\quad\text{ and }\quad P_{(V_{\omega}^{i})^{\perp}}x_{\eta}=\sum_{k=0}^{i}P_{(V_{\omega}^{i})^{\perp}}L_{\omega}^{k}x_{\eta}\:.\label{eq:proj of x's}
\end{equation}
Moreover, it is not hard to see that,
\begin{equation}
P_{(V_{\omega}^{i})^{\perp}}\text{ is injective on }E_{\omega}^{k}\text{ for each }0\le k\le i,\label{eq:proj injective}
\end{equation}
and that,
\begin{equation}
V=V_{\omega}^{i}\oplus(\oplus_{k=0}^{i}P_{(V_{\omega}^{i})^{\perp}}E_{\omega}^{k})\:.\label{eq:V as sum of images}
\end{equation}

We have $\xi_{i}(\omega)=\xi_{i}(\eta)$ if and only if $P_{(V_{\omega}^{i})^{\perp}}\pi\eta=P_{(V_{\omega}^{i})^{\perp}}\pi\omega$,
which holds if and only if $P_{(V_{\omega}^{i})^{\perp}}x_{\eta}=cP_{(V_{\omega}^{i})^{\perp}}x_{\omega}$
for some $0\ne c\in\mathbb{R}$. By (\ref{eq:proj of x's}) this holds
if and only if,
\[
\sum_{k=0}^{i}P_{(V_{\omega}^{i})^{\perp}}L_{\omega}^{k}x_{\eta}=\sum_{k=0}^{i}cP_{(V_{\omega}^{i})^{\perp}}L_{\omega}^{k}x_{\omega}\text{ for some }0\ne c\in\mathbb{R}\:.
\]
But by (\ref{eq:proj injective}), (\ref{eq:V as sum of images})
and $f_{\omega}^{0}(x_{\omega})=f_{\omega}^{0}(x_{\eta})=1$ this
holds if and only if there exists $c\ne0$ with $g_{\omega}^{k}(\pi\eta)=cg_{\omega}^{k}(\pi\omega)$
for $0\le k\le i$. Since $g_{\omega}^{0}(\pi\eta)=g_{\omega}^{0}(\pi\omega)=u_{\omega}^{0}$
this completes the proof of the lemma.
\end{proof}
The following lemma describes the partitions $\sigma^{-n}\xi_{i}$.
Its proof is similar to that of \cite[Lemma 4.4(1)]{Fe}. Recall the
finite partitions $\mathcal{P}_{m}^{n}$ from Section \ref{subsec:boundary map}.
\begin{lem}
\label{lem:eq of partitions}Let $\omega\in\Omega_{0}$, $0\le i\le s$
and $n\ge1$ be given. Then,
\[
\xi_{i}(\omega)\cap\mathcal{P}_{0}^{n-1}(\omega)=\sigma^{-n}(\xi_{i}(\sigma^{n}\omega))\:.
\]
As a consequence,
\[
\xi_{i}\vee\mathcal{P}_{0}^{n-1}=\sigma^{-n}\xi_{i}\:.
\]
\end{lem}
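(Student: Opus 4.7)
The plan is to unwind both sides as explicit subsets of $\Omega_0$ and to show they coincide using the iterated equivariance of $\pi$ and of the Oseledets subspaces under the cocycle. Concretely, the left hand side $\xi_i(\omega)\cap\mathcal{P}_0^{n-1}(\omega)$ is the set of $\eta\in\Omega_0$ satisfying: (a) $\eta_j=\omega_j$ for all $j\le-1$ (from $\xi_i\subset\xi_0$), (b) $\eta_j=\omega_j$ for $0\le j\le n-1$ (from $\mathcal{P}_0^{n-1}$), and (c) $P_{(V_\omega^i)^\perp}\pi\eta=P_{(V_\omega^i)^\perp}\pi\omega$. The right hand side $\sigma^{-n}(\xi_i(\sigma^n\omega))$ consists of those $\eta$ with $\sigma^n\eta\in\xi_i(\sigma^n\omega)$, which unpacks to (a$'$) $\eta_j=\omega_j$ for all $j\le n-1$, and (c$'$) $P_{(V_{\sigma^n\omega}^i)^\perp}\pi\sigma^n\eta=P_{(V_{\sigma^n\omega}^i)^\perp}\pi\sigma^n\omega$. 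The combination of (a) and (b) is manifestly equivalent to (a$'$), so the task reduces to showing (c)$\Leftrightarrow$(c$'$) under the standing hypothesis that $\eta_j=\omega_j$ for $j\le n-1$.

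For this step I would use two iterated equivariance identities. Iterating the equivariance $\pi\omega=A_{\omega_0}\pi\sigma\omega$ gives $\pi\omega=A_{\omega_0\ldots\omega_{n-1}}\pi\sigma^n\omega$, and since $\eta_j=\omega_j$ for $0\le j\le n-1$ one likewise obtains $\pi\eta=A_{\omega_0\ldots\omega_{n-1}}\pi\sigma^n\eta$. Iterating the Oseledets equivariance $V_{\sigma^{-1}\eta}^i=A_{\eta_{-1}}V_\eta^i$ applied with $\eta=\sigma^k\omega$ for $k=n,n-1,\ldots,1$ (so that $(\sigma^k\omega)_{-1}=\omega_{k-1}$) telescopes to $V_\omega^i=A_{\omega_0\ldots\omega_{n-1}}V_{\sigma^n\omega}^i$. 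Set $A:=A_{\omega_0\ldots\omega_{n-1}}\in\mathrm{GL}(V)$. Now the relation $P_{W^\perp}\overline{u}=P_{W^\perp}\overline{v}$ (for $\overline{u},\overline{v}\notin\mathrm{P}(W)$) is equivalent to the existence of $c\in\mathbb{R}$ with $u-cv\in W$ for representatives $u,v$. Choosing representatives $x\in\pi\sigma^n\eta$ and $y\in\pi\sigma^n\omega$, condition (c$'$) becomes $x-cy\in V_{\sigma^n\omega}^i$ for some $c$; applying the linear isomorphism $A$ this reads $Ax-cAy\in AV_{\sigma^n\omega}^i=V_\omega^i$, where $Ax\in\pi\eta$ and $Ay\in\pi\omega$, which is exactly (c). Since $A$ is invertible the two conditions are equivalent, proving the first assertion.

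The consequence $\xi_i\vee\mathcal{P}_0^{n-1}=\sigma^{-n}\xi_i$ follows at once: the atom of $\xi_i\vee\mathcal{P}_0^{n-1}$ at $\omega$ is $\xi_i(\omega)\cap\mathcal{P}_0^{n-1}(\omega)$, the atom of $\sigma^{-n}\xi_i$ at $\omega$ is $\sigma^{-n}(\xi_i(\sigma^n\omega))$, and these coincide by the first part; since $\omega\in\Omega_0$ was arbitrary, the two partitions agree.

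I do not anticipate a serious obstacle; the argument is essentially a bookkeeping exercise combining the forward equivariance of $\pi$ with the Oseledets equivariance of the flag. The only delicate point is to iterate $V_{\sigma^{-1}\omega}^i=A_{\omega_{-1}}V_\omega^i$ in the correct direction so that the product which appears is $A_{\omega_0\ldots\omega_{n-1}}$ (matching the iterated $\pi$-relation), rather than a product of negatively indexed matrices; this is what makes the two projection conditions intertwine cleanly under a single linear isomorphism.
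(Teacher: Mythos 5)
Your proposal is correct, and it takes a genuinely more direct route than the paper. The paper's proof goes through the $g_{\omega}$-coordinate characterization of $\xi_{i}$ (Lemma~\ref{lem:dif char of xi_i}) and then the dynamical characterization of $g$-coordinate equality via the exact exponential contraction rate of $d(A_{\omega_{-m}...\omega_{-1}}\pi\eta,A_{\omega_{-m}...\omega_{-1}}\pi\omega)$ (Lemma~\ref{lem:g^k equal implies}); since that rate is manifestly the same whether one normalizes by $m$ or by $m+n$, and since the products $A_{\omega_{-m}...\omega_{n-1}}$ agree for $\eta$ and $\omega$ on the window where they coincide, the paper reads off the same rate at $\sigma^{n}\omega$ and converts back to a $g$-coordinate statement. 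You instead reformulate $P_{W^{\perp}}\overline{u}=P_{W^{\perp}}\overline{v}$ as $u-cv\in W$ for some $c\neq0$ and transport this affine-membership condition through the single invertible map $A_{\omega_{0}\ldots\omega_{n-1}}$, using the telescoped equivariances $\pi\eta=A_{\omega_{0}\ldots\omega_{n-1}}\pi\sigma^{n}\eta$ and $V_{\omega}^{i}=A_{\omega_{0}\ldots\omega_{n-1}}V_{\sigma^{n}\omega}^{i}$. This bypasses Lemmas~\ref{lem:dif char of xi_i} and \ref{lem:g^k equal implies} entirely and is shorter; the trade-off is that you are in effect re-deriving the linear-algebraic core of Lemma~\ref{lem:dif char of xi_i} inline rather than invoking it. One minor point of hygiene (not a gap): one should note that since $\sigma^{n}\eta$ and $\sigma^{n}\omega$ agree on all negative coordinates, $V_{\sigma^{n}\eta}^{0}=V_{\sigma^{n}\omega}^{0}$, so that $\pi\sigma^{n}\eta\notin\mathrm{P}(V_{\sigma^{n}\omega}^{0})$ and the representatives and projections appearing in your condition (c$'$) are well-defined.
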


\begin{proof}
Let $\eta\in\xi_{i}(\omega)\cap\mathcal{P}_{0}^{n-1}(\omega)$, then
$\eta_{j}=\omega_{j}$ for $j<n$ and by Lemma \ref{lem:dif char of xi_i},
\[
g_{\omega}^{k}(\pi\eta)=g_{\omega}^{k}(\pi\omega)\text{ for }0\le k\le i\:.
\]
Thus, since $A_{\omega_{0}...\omega_{n-1}}=A_{\eta_{0}...\eta_{n-1}}$,
by part (\ref{enu:equi of pi}) of Theorem \ref{thm:def of bd map}
and by Lemma \ref{lem:g^k equal implies},
\begin{multline*}
\underset{m\rightarrow\infty}{\lim}\:\frac{1}{n+m}\log d(A_{\omega_{-m}...\omega_{n-1}}\pi\sigma^{n}\eta,A_{\omega_{-m}...\omega_{n-1}}\pi\sigma^{n}\omega)\\
=\underset{m\rightarrow\infty}{\lim}\:\frac{1}{m}\log d(A_{\omega_{-m}...\omega_{-1}}\pi\eta,A_{\omega_{-m}...\omega_{-1}}\pi\omega)\le\tilde{\lambda}_{i+1},
\end{multline*}
(where $\tilde{\lambda}_{s+1}$ is interpreted as $-\infty$ in the
case $i=s$). Now another application of Lemma \ref{lem:g^k equal implies}
gives,
\[
g_{\sigma^{n}\omega}^{k}(\pi\sigma^{n}\eta)=g_{\sigma^{n}\omega}^{k}(\pi\sigma^{n}\omega)\text{ for }0\le k\le i\:.
\]
This together with Lemma \ref{lem:dif char of xi_i} implies $\sigma^{n}\eta\in\xi_{i}(\sigma^{n}\omega)$,
which shows
\[
\xi_{i}(\omega)\cap\mathcal{P}_{0}^{n-1}(\omega)\subset\sigma^{-n}(\xi_{i}(\sigma^{n}\omega))\:.
\]
The reverse containment is proven similarly, which completes the proof
of the lemma.
\end{proof}
For $n\ge1$ and $\epsilon>0$ set,
\begin{equation}
Q_{n,\epsilon}=\{\omega\in\Omega_{0}\::\:d(\pi\sigma^{j}\omega,\mathrm{P}(V_{\sigma^{j}\omega}^{0}))\ge e^{-j\epsilon}\text{ for }j\ge n\}\:.\label{eq:def of Q_n,eps}
\end{equation}
For $1\le i\le s$, $\omega\in\Omega_{0}$ and $r>0$ write,
\begin{equation}
\Gamma_{i}(\omega,r)=\{\eta\in\xi_{0}(\omega)\::\:d(P_{(V_{\omega}^{i})^{\perp}}\pi\omega,P_{(V_{\omega}^{i})^{\perp}}\pi\eta)\le r\}\:.\label{eq:def of Gamma_i}
\end{equation}
The following proposition, whose statement resembles that of \cite[Lemma 4.4(2)]{Fe},
will be used in Section \ref{sec:Proof-of-results} when we prove
our main result.
\begin{prop}
\label{prop:int cont in ball}Let $1\le j\le s$ and $0\le i<j$ be
given. Then for every $\epsilon>0$ there exists a Borel map $N_{\epsilon}:\Omega_{0}\rightarrow\mathbb{N}$
such that for $\omega\in\Omega_{0}$ and $n\ge N_{\epsilon}(\omega)$,
\[
Q_{n,\epsilon}\cap\xi_{i}(\omega)\cap\mathcal{P}_{0}^{n-1}(\omega)\subset\Gamma_{j}(\omega,e^{n(\tilde{\lambda}_{i+1}+5\epsilon)})\:.
\]
\end{prop}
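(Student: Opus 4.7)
The plan is to exploit the equivariance $\pi\omega = A_{\omega_0\ldots\omega_{n-1}}\pi\sigma^n\omega$ and the hypothesis $\eta \in \xi_i(\omega) \cap \mathcal{P}_0^{n-1}(\omega)$ to push the problem onto the shifted word $\omega' := \sigma^n\omega$, where by Lemma \ref{lem:eq of partitions} and Lemma \ref{lem:dif char of xi_i} one has $\sigma^n\eta \in \xi_i(\omega')$ and $g^k_{\omega'}(\pi\sigma^n\eta) = g^k_{\omega'}(\pi\omega')$ for $0 \le k \le i$. Write $B = A_{\omega_0\ldots\omega_{n-1}} = A_{\eta_0\ldots\eta_{n-1}}$ (using $\eta_j=\omega_j$ for $j<n$), and pick $x \in \pi\omega'$, $y \in \pi\sigma^n\eta$ with $f^0_{\omega'}(x) = f^0_{\omega'}(y) = 1$. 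The equality of the first $i+1$ coordinates of $g_{\omega'}$ at $\pi\omega'$ and $\pi\sigma^n\eta$ translates into $x-y \in V^i_{\omega'} = \oplus_{k>i} E^k_{\omega'}$. The projective distance I want to bound satisfies
\[
d(P_{(V^j_\omega)^{\perp}}\pi\omega,\, P_{(V^j_\omega)^{\perp}}\pi\eta) \;\le\; \frac{|P_{(V^j_\omega)^{\perp}} B(x-y)|}{|P_{(V^j_\omega)^{\perp}} By|},
\]
obtained from $\|PBx\wedge PBy\|=\|PBx\wedge P(By-Bx)\|\le|PBx|\cdot|P(By-Bx)|$.

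Next I control the numerator. Writing $B(x-y) = \sum_{k=i+1}^{s} B L^k_{\omega'}(x-y)$ with $BL^k_{\omega'}(x-y) \in E^k_\omega$, the terms with $k > j$ lie in $V^j_\omega$ and are killed by $P_{(V^j_\omega)^{\perp}}$. For the surviving indices $i+1 \le k \le j$, Lemma \ref{lem:ub on dist to hyperplane} combined with Lemma \ref{lem:dist not exp small} (applied to $\omega$) and the definition of $Q_{n,\epsilon}$ (using that $V^0_{\sigma^n\eta} = V^0_{\omega'}$ since $\sigma^n\eta$ and $\omega'$ share negative coordinates) gives $\|g_{\omega'}(\pi\omega')\|_\infty, \|g_{\omega'}(\pi\sigma^n\eta)\|_\infty \le s 2^s e^{2n\epsilon}$, so $|L^k_{\omega'}(x-y)| \le 2 s 2^s e^{2n\epsilon}$. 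Applying Theorem \ref{thm:from Oseledets}(\ref{enu:asym of back mul}) for the forward cocycle at $\sigma^n\omega$ with time $n$ yields $|BL^k_{\omega'}(x-y)| \le e^{n(\lambda_k+\epsilon)}|L^k_{\omega'}(x-y)| \le C e^{n\lambda_{i+1}+3n\epsilon}$ (using $\lambda_k\le\lambda_{i+1}$), so $|P_{(V^j_\omega)^{\perp}} B(x-y)| \le C' e^{n\lambda_{i+1}+3n\epsilon}$.

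For the denominator I observe that $By = Bu^0_{\omega'} + \sum_{k\ge 1} BL^k_{\omega'}y$, and $Bu^0_{\omega'} \in E^0_\omega$ with $|Bu^0_{\omega'}| \ge e^{n(\lambda_0-\epsilon)}$. Since $E^0_\omega$ meets $V^j_\omega$ only at the origin, the definition of $\kappa(\omega)$ in Theorem \ref{thm:from Oseledets}(\ref{enu:asym dist of lines}) gives $|P_{(V^j_\omega)^{\perp}}\tilde u|\ge\kappa(\omega)$ for any unit $\tilde u \in E^0_\omega$, hence $|P_{(V^j_\omega)^{\perp}} Bu^0_{\omega'}| \ge \kappa(\omega) e^{n(\lambda_0-\epsilon)}$. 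The remaining summands are bounded by $s 2^s e^{n\lambda_1+3n\epsilon}$, which is strictly smaller when $\epsilon < -\tilde\lambda_1/4$; hence $|P_{(V^j_\omega)^{\perp}} By| \ge \tfrac{1}{2}\kappa(\omega) e^{n(\lambda_0-\epsilon)}$ for $n$ large. Combining with the numerator estimate,
\[
d(P_{(V^j_\omega)^{\perp}}\pi\omega,\, P_{(V^j_\omega)^{\perp}}\pi\eta) \;\le\; C_\omega\, e^{n(\tilde\lambda_{i+1}+4\epsilon)},
\]
and taking $N_\epsilon(\omega)$ large enough to absorb $C_\omega$ into an extra $e^{n\epsilon}$ delivers the desired $e^{n(\tilde\lambda_{i+1}+5\epsilon)}$.

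The measurability of $N_\epsilon$ follows because it can be defined as the first $n$ at which a finite list of $\omega$-dependent quantities (the Oseledets rates for each $E^k_{\omega'}$, the quantities $\kappa(\omega)$, $\kappa(\omega')$, $d(\pi\omega',\mathrm{P}(V^0_{\omega'}))$, and the constant $C_\omega$) meet explicit bounds — each is a Borel function of $\omega$ by Oseledets, Lemma \ref{lem:dist not exp small} and Theorem \ref{thm:def of bd map}. The main obstacle is bookkeeping: keeping each contribution of $\epsilon$ separated (a factor $2\epsilon$ from Lemma \ref{lem:ub on dist to hyperplane}, a factor $\epsilon$ from the Oseledets upper bound on $B|_{E^k_{\omega'}}$, and a factor $\epsilon$ from the Oseledets lower bound on $|Bu^0_{\omega'}|$) so that the final sum is exactly $5\epsilon$ and not larger. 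The case $\epsilon \ge -\tilde\lambda_{i+1}/5$ is trivial since then $e^{n(\tilde\lambda_{i+1}+5\epsilon)}\ge 1$ bounds every projective distance, so one may freely assume $\epsilon$ is small enough for the estimates above (in particular $\epsilon < -\tilde\lambda_1/4$) to apply.
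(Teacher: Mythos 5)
Your proposal is correct and follows essentially the same route as the paper's proof: shift to $\sigma^{n}\omega$ via Lemmas \ref{lem:eq of partitions} and \ref{lem:dif char of xi_i}, control $\Vert g_{\sigma^{n}\omega}(\cdot)\Vert_{\infty}$ through $Q_{n,\epsilon}$, Lemma \ref{lem:dist not exp small} and Lemma \ref{lem:ub on dist to hyperplane}, bound the image of the $E^{k}$-components by the Oseledets rates, and lower-bound the projected norm using the angle $\kappa(\omega)$ between $E_{\omega}^{0}$ and $V_{\omega}^{j}$. The only (harmless) cosmetic difference is that you estimate $\Vert PBx\wedge PBy\Vert\le|PBx|\cdot|PB(y-x)|$ so that only one projected norm needs a lower bound, whereas the paper expands the full wedge $x_{\sigma^{n}\omega}\wedge x_{\sigma^{n}\eta}$ and lower-bounds both $|Tx_{\sigma^{n}\omega}|$ and $|Tx_{\sigma^{n}\eta}|$.
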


\begin{proof}
Let $0<\epsilon<-\tilde{\lambda}_{1}/3$ and $\omega\in\Omega_{0}$
be given. Let $n\ge1$ be large with respect to $\epsilon$ and $\omega$
in a manner described during the proof. Since the maps $\pi$ and
$\omega\rightarrow E_{\omega}^{k}$ are all Borel measurable, it will
be clear that the conditions imposed on how large $n$ should be,
are all Borel measurable as well.

Let $\eta\in Q_{n,\epsilon}\cap\xi_{i}(\omega)\cap\mathcal{P}_{0}^{n-1}(\omega)$.
From Lemma \ref{lem:eq of partitions} it follows that $\sigma^{n}\eta\in\xi_{i}(\sigma^{n}\omega)$.
Thus $V_{\sigma^{n}\eta}^{0}=V_{\sigma^{n}\omega}^{0}$ and by Lemma
\ref{lem:dif char of xi_i}, 
\begin{equation}
g_{\sigma^{n}\omega}^{k}(\pi\sigma^{n}\omega)=g_{\sigma^{n}\omega}^{k}(\pi\sigma^{n}\eta)\text{ for }0\le k\le i\:.\label{eq:g^k of sig^n eq}
\end{equation}
Let $x_{\sigma^{n}\omega}\in\pi\sigma^{n}\omega$ and $x_{\sigma^{n}\eta}\in\pi\sigma^{n}\eta$
be with $f_{\sigma^{n}\omega}^{0}(x_{\sigma^{n}\omega})=f_{\sigma^{n}\omega}^{0}(x_{\sigma^{n}\eta})=1$.

By part (\ref{enu:asym dist of lines}) of Theorem \ref{thm:from Oseledets},
and by assuming that $n$ is sufficiently large with respect to $\epsilon$
and $\omega$, we get $\kappa(\sigma^{n}\omega)\ge e^{-n\epsilon/(2s)}$.
From $\eta\in Q_{n,\epsilon}$, $V_{\sigma^{n}\eta}^{0}=V_{\sigma^{n}\omega}^{0}$
and Lemma \ref{lem:ub on dist to hyperplane},
\[
e^{-n\epsilon}\le d(\pi\sigma^{n}\eta,\mathrm{P}(V_{\sigma^{n}\omega}^{0}))\le s2^{s}\kappa(\sigma^{n}\omega)^{-2s}\Vert g_{\sigma^{n}\omega}(\pi\sigma^{n}\eta)\Vert_{\infty}^{-1}\:.
\]
Thus,
\begin{equation}
\Vert L_{\sigma^{n}\omega}x_{\sigma^{n}\eta}\Vert_{\infty}=\Vert g_{\sigma^{n}\omega}(\pi\sigma^{n}\eta)\Vert_{\infty}\le s2^{s}e^{2n\epsilon}\:.\label{eq:ub norm of L sig eta}
\end{equation}
From Lemma \ref{lem:dist not exp small}, and by assuming that $n$
is large enough with respect to $\omega$ and $\epsilon$, we get
$d(\pi\sigma^{n}\omega,\mathrm{P}(V_{\sigma^{n}\omega}^{0}))\ge e^{-n\epsilon}$.
Hence the same argument as above gives,
\begin{equation}
\Vert L_{\sigma^{n}\omega}x_{\sigma^{n}\omega}\Vert_{\infty}=\Vert g_{\sigma^{n}\omega}(\pi\sigma^{n}\omega)\Vert_{\infty}\le s2^{s}e^{2n\epsilon}\:.\label{eq:ub norm of L sig om}
\end{equation}

Write $A_{\omega,n}$ for $A_{\omega_{0}...\omega_{n-1}}$, and note
that from $\eta\in\mathcal{P}_{0}^{n-1}(\omega)$ it follows that
$A_{\omega,n}=A_{\eta_{0}...\eta_{n-1}}$. If $u_{\omega}^{0}\notin\mathrm{P}((V_{\omega}^{j})^{\perp})$
then,
\[
1-|P_{V_{\omega}^{j}}u_{\omega}^{0}|^{2}=1-\left|\left\langle u_{\omega}^{0},P_{V_{\omega}^{j}}u_{\omega}^{0}\right\rangle \right|=\frac{d(\overline{u_{\omega}^{0}},\overline{P_{V_{\omega}^{j}}u_{\omega}^{0}})^{2}}{1+\left|\left\langle u_{\omega}^{0},P_{V_{\omega}^{j}}u_{\omega}^{0}\right\rangle \right|}\ge\kappa(\omega)^{2}/2\:.
\]
Thus,
\[
1=|u_{\omega}^{0}|^{2}=|P_{V_{\omega}^{j}}u_{\omega}^{0}|^{2}+|P_{(V_{\omega}^{j})^{\perp}}u_{\omega}^{0}|^{2}\le1-\kappa(\omega)^{2}/2+|P_{(V_{\omega}^{j})^{\perp}}u_{\omega}^{0}|^{2},
\]
which gives $|P_{(V_{\omega}^{j})^{\perp}}u_{\omega}^{0}|\ge\kappa(\omega)/2$.
Note that this inequality holds trivially if $u_{\omega}^{0}\in\mathrm{P}((V_{\omega}^{j})^{\perp})$.
By part (\ref{enu:equi of E^i}) of Theorem \ref{thm:from Oseledets}
it follows that $A_{\omega,n}u_{\sigma^{n}\omega}^{0}\in\overline{u_{\omega}^{0}}$,
hence
\[
|P_{(V_{\omega}^{j})^{\perp}}A_{\omega,n}u_{\sigma^{n}\omega}^{0}|\ge\frac{1}{2}\kappa(\omega)|A_{\omega,n}u_{\sigma^{n}\omega}^{0}|\:.
\]
Now from this, from part (\ref{enu:asym of back mul}) of Theorem
\ref{thm:from Oseledets} and from (\ref{eq:ub norm of L sig eta}),
\begin{eqnarray*}
|P_{(V_{\omega}^{j})^{\perp}}A_{\omega,n}x_{\sigma^{n}\eta}| & = & |P_{(V_{\omega}^{j})^{\perp}}A_{\omega,n}u_{\sigma^{n}\omega}^{0}+\sum_{k=1}^{s}P_{(V_{\omega}^{j})^{\perp}}A_{\omega,n}L_{\sigma^{n}\omega}^{k}x_{\sigma^{n}\eta}|\\
 & \ge & \frac{1}{2}\kappa(\omega)|A_{\omega,n}u_{\sigma^{n}\omega}^{0}|-\sum_{k=1}^{s}|A_{\omega,n}L_{\sigma^{n}\omega}^{k}x_{\sigma^{n}\eta}|\\
 & = & e^{n\lambda_{0}+o_{\omega}(n)}-\sum_{k=1}^{s}e^{n\lambda_{k}+o_{\omega}(n)}|L_{\sigma^{n}\omega}^{k}x_{\sigma^{n}\eta}|\\
 & \ge & e^{n\lambda_{0}+o_{\omega}(n)}-\sum_{k=1}^{s}e^{n\lambda_{k}+o_{\omega}(n)}s2^{s}e^{2n\epsilon}\\
 & = & e^{n\lambda_{0}+o_{\omega}(n)},
\end{eqnarray*}
where the last equality follows from $\epsilon<-\tilde{\lambda}_{1}/3$.
Similarly by using (\ref{eq:ub norm of L sig om}) we obtain,
\[
|P_{(V_{\omega}^{j})^{\perp}}A_{\omega,n}x_{\sigma^{n}\omega}|\ge e^{n\lambda_{0}+o_{\omega}(n)}\:.
\]

Next we estimate the norm of,
\[
\mathrm{A}^{2}P_{(V_{\omega}^{j})^{\perp}}(A_{\omega,n}x_{\sigma^{n}\omega}\wedge A_{\omega,n}x_{\sigma^{n}\eta}),
\]
where $\mathrm{A}^{2}P_{(V_{\omega}^{j})^{\perp}}$ is defined in
(\ref{eq:def of end on alt}). Write,
\[
v_{\sigma^{n}\omega}=\sum_{k=0}^{i}L_{\sigma^{n}\omega}^{k}(x_{\sigma^{n}\omega})\text{ and }w_{\sigma^{n}\omega}=\sum_{k=i+1}^{s}L_{\sigma^{n}\omega}^{k}(x_{\sigma^{n}\omega}),
\]
and similarly,
\[
v_{\sigma^{n}\eta}=\sum_{k=0}^{i}L_{\sigma^{n}\omega}^{k}(x_{\sigma^{n}\eta})\text{ and }w_{\sigma^{n}\eta}=\sum_{k=i+1}^{s}L_{\sigma^{n}\omega}^{k}(x_{\sigma^{n}\eta})\:.
\]
From (\ref{eq:g^k of sig^n eq}) we get $v_{\sigma^{n}\omega}=v_{\sigma^{n}\eta}$.
Hence,
\begin{eqnarray}
x_{\sigma^{n}\omega}\wedge x_{\sigma^{n}\eta} & = & (v_{\sigma^{n}\omega}+w_{\sigma^{n}\omega})\wedge(v_{\sigma^{n}\eta}+w_{\sigma^{n}\eta})\nonumber \\
 & = & v_{\sigma^{n}\omega}\wedge w_{\sigma^{n}\eta}+w_{\sigma^{n}\omega}\wedge v_{\sigma^{n}\eta}+w_{\sigma^{n}\omega}\wedge w_{\sigma^{n}\eta}\:.\label{eq:wedge of sig^n}
\end{eqnarray}
By applying part (\ref{enu:asym of back mul}) of Theorem \ref{thm:from Oseledets}
and then (\ref{eq:ub norm of L sig eta}), it follows that for each
$0\le k\le s$,
\[
|A_{\omega,n}L_{\sigma^{n}\omega}^{k}x_{\sigma^{n}\eta}|=e^{n\lambda_{k}+o_{\omega}(n)}|L_{\sigma^{n}\omega}^{k}x_{\sigma^{n}\eta}|\le e^{n(\lambda_{k}+2\epsilon)+o_{\omega}(n)},
\]
and similarly by (\ref{eq:ub norm of L sig om}),
\[
|A_{\omega,n}L_{\sigma^{n}\omega}^{k}x_{\sigma^{n}\omega}|\le e^{n(\lambda_{k}+2\epsilon)+o_{\omega}(n)}\:.
\]
Thus from (\ref{eq:wedge of sig^n}) we get,
\[
\Vert A_{\omega,n}x_{\sigma^{n}\omega}\wedge A_{\omega,n}x_{\sigma^{n}\eta}\Vert\le e^{n(\lambda_{0}+\lambda_{i+1}+4\epsilon)+o_{\omega}(n)}\:.
\]
Since $P_{(V_{\omega}^{j})^{\perp}}$ is an orthogonal projection
the same holds for $\mathrm{A}^{2}P_{(V_{\omega}^{j})^{\perp}}$.
Hence,
\[
\Vert\mathrm{A}^{2}P_{(V_{\omega}^{j})^{\perp}}(A_{\omega,n}x_{\sigma^{n}\omega}\wedge A_{\omega,n}x_{\sigma^{n}\eta})\Vert\le e^{n(\lambda_{0}+\lambda_{i+1}+4\epsilon)+o_{\omega}(n)}\:.
\]
Now set $T=P_{(V_{\omega}^{j})^{\perp}}A_{\omega,n}$. Then from the
last inequality, from part (\ref{enu:equi of pi}) of Theorem \ref{thm:def of bd map}
and by the lower bounds on $|Tx_{\sigma^{n}\eta}|$ and $|Tx_{\sigma^{n}\omega}|$
obtained above,
\begin{eqnarray*}
d(P_{(V_{\omega}^{j})^{\perp}}\pi\omega,P_{(V_{\omega}^{j})^{\perp}}\pi\eta) & = & d(T\pi\sigma^{n}\omega,T\pi\sigma^{n}\eta)\\
 & = & |Tx_{\sigma^{n}\omega}|^{-1}|Tx_{\sigma^{n}\eta}|^{-1}\Vert Tx_{\sigma^{n}\omega}\wedge Tx_{\sigma^{n}\eta}\Vert\\
 & \le & e^{-2n\lambda_{0}+o_{\omega}(n)}e^{n(\lambda_{0}+\lambda_{i+1}+4\epsilon)+o_{\omega}(n)}\\
 & = & e^{n(\tilde{\lambda}_{i+1}+4\epsilon)+o_{\omega}(n)}\:.
\end{eqnarray*}
Thus, by assuming that $n$ is sufficiently large with respect to
$\omega$ and $\epsilon$ we get,
\[
\eta\in\Gamma_{j}(\omega,e^{n(\tilde{\lambda}_{i+1}+5\epsilon)}),
\]
which completes the proof of the proposition.
\end{proof}
For $0\le i\le s$ we write $\mathrm{H}_{i}$ in place of $\mathrm{H}_{\beta}(\mathcal{P}\mid\widehat{\xi_{i}})$,
where recall that the last expression is the conditional entropy of
$\mathcal{P}$ given the $\sigma$-algebra $\widehat{\xi_{i}}$ (see
Sections \ref{subsec:info and ent} and \ref{subsec:Disintegration}).
It is easy to verify that this definition of $\mathrm{H}_{i}$ is
consistent with the one given at the introduction in (\ref{eq:intro def of H_i})
(see Lemma \ref{lem:H_i same val}). The proof of the following Lemma
is similar to that of \cite[Lemma 4.6]{Fe}.
\begin{lem}
\label{lem:asym of ergo sum of info}Let $0\le i\le s$, then for
$\beta$-a.e. $\omega\in\Omega_{0}$ and each $n\ge1$,
\begin{equation}
-\log\:\beta_{\omega}^{\xi_{i}}(\mathcal{P}_{0}^{n-1}(\omega))=\mathrm{I}_{\beta}(\mathcal{P}_{0}^{n-1}\mid\widehat{\xi_{i}})(\omega)=\sum_{j=0}^{n-1}\mathrm{I}_{\beta}(\mathcal{P}\mid\widehat{\xi_{i}})(\sigma^{j}\omega),\label{eq:=00003Derg sum of info}
\end{equation}
and,
\begin{equation}
-\underset{n}{\lim}\:\frac{1}{n}\log\:\beta_{\omega}^{\xi_{i}}(\mathcal{P}_{0}^{n-1}(\omega))=\mathrm{H}_{i}\;\text{ for }\beta\text{-a.e. }\omega\:.\label{eq:=00003DH_i}
\end{equation}
\end{lem}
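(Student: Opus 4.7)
The plan is to prove the identities in three straightforward steps, imitating the standard derivation of the Shannon--McMillan--Breiman theorem but adapted to the $\sigma$-quasi-invariant family $\{\xi_i\}_{i=0}^s$. The first equality in \eqref{eq:=00003Derg sum of info} is a direct unwinding of definitions. By Theorem \ref{thm:def of cond measures}(1), for $\beta$-a.e.\ $\omega$ and every atom $A \in \mathcal{P}_0^{n-1}$ containing $\omega$ we have $\mathrm{E}_{\beta}(1_A \mid \widehat{\xi_i})(\omega) = \beta_{\omega}^{\xi_i}(A)$, which by definition of the conditional information function yields
\[
\mathrm{I}_{\beta}(\mathcal{P}_0^{n-1} \mid \widehat{\xi_i})(\omega) = -\log \beta_{\omega}^{\xi_i}(\mathcal{P}_0^{n-1}(\omega)).
\]

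For the telescoping identity, I would induct on $n$, decomposing $\mathcal{P}_0^{n-1} = \mathcal{P} \vee \sigma^{-1}\mathcal{P}_0^{n-2}$. The conditional information formula \eqref{eq:cond info formula} then gives
\[
\mathrm{I}_{\beta}(\mathcal{P}_0^{n-1} \mid \widehat{\xi_i}) = \mathrm{I}_{\beta}(\mathcal{P} \mid \widehat{\xi_i}) + \mathrm{I}_{\beta}(\sigma^{-1}\mathcal{P}_0^{n-2} \mid \widehat{\xi_i} \vee \widehat{\mathcal{P}}).
\]
The crucial point is that Lemma \ref{lem:eq of partitions} applied with $n=1$ gives $\xi_i \vee \mathcal{P} = \sigma^{-1}\xi_i$, so that $\widehat{\xi_i} \vee \widehat{\mathcal{P}} = \sigma^{-1}\widehat{\xi_i}$. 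Substituting this into the second term and applying the measure-preserving composition identity \eqref{eq:info comp MP map} with $T = \sigma$ gives
\[
\mathrm{I}_{\beta}(\sigma^{-1}\mathcal{P}_0^{n-2} \mid \sigma^{-1}\widehat{\xi_i}) = \mathrm{I}_{\beta}(\mathcal{P}_0^{n-2} \mid \widehat{\xi_i}) \circ \sigma.
\]
Invoking the inductive hypothesis on $\mathcal{P}_0^{n-2}$ and shifting the index concludes the step.

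For \eqref{eq:=00003DH_i}, I would apply Birkhoff's pointwise ergodic theorem to the function $\mathrm{I}_{\beta}(\mathcal{P} \mid \widehat{\xi_i})$. This function lies in $L^1(\beta)$ because $\mathcal{P}$ is finite (so $\mathrm{H}_{\beta}(\mathcal{P}\mid\widehat{\xi_i}) \le \mathrm{H}_{\beta}(\mathcal{P}) \le \log|\Lambda| < \infty$), and its integral is by definition $\mathrm{H}_i$. Since $(\Omega, \beta, \sigma)$ is ergodic (Section \ref{subsec:boundary map}), dividing the telescoped sum by $n$ and passing to the limit yields $\mathrm{H}_i$ for $\beta$-a.e.\ $\omega$.

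There is no real obstacle here; the entire argument is a mechanical combination of the basic entropy/information formulas from Section \ref{subsec:info and ent}, the disintegration identity from Theorem \ref{thm:def of cond measures}, and the $\sigma$-quasi-invariance $\xi_i \vee \mathcal{P} = \sigma^{-1}\xi_i$ of Lemma \ref{lem:eq of partitions}, with Birkhoff supplying the limit. The only place to be careful is matching the atom-valued expression $\beta_{\omega}^{\xi_i}(\mathcal{P}_0^{n-1}(\omega))$ with $\mathrm{E}_\beta(1_{A}\mid\widehat{\xi_i})(\omega)$ on the correct $\beta$-conull set where both conditional measures and conditional expectations simultaneously represent all finitely many atoms of $\mathcal{P}_0^{n-1}$, but this poses no difficulty since countably many a.e.\ statements may be intersected.
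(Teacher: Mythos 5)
Your proof is correct and follows essentially the same route as the paper: unwind the definition of conditional information to get the first equality, use $\xi_i\vee\mathcal{P}=\sigma^{-1}\xi_i$ (Lemma \ref{lem:eq of partitions}) together with the conditional-information formula \eqref{eq:cond info formula} and the measure-preserving composition identity \eqref{eq:info comp MP map} to telescope, and then apply Birkhoff. The only cosmetic difference is that you phrase the telescoping step as an explicit induction while the paper performs one step and iterates; these are the same argument.
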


\begin{proof}
By Lemma \ref{lem:eq of partitions} we have $\xi_{i}\vee\mathcal{P}=\sigma^{-1}\xi_{i}$.
It is easy to verify that this implies,
\[
\widehat{\xi_{i}}\vee\widehat{\mathcal{P}}\underset{\beta}{=}\sigma^{-1}\widehat{\xi_{i}},
\]
which means that for every $B\in\widehat{\xi_{i}}\vee\widehat{\mathcal{P}}$
there exists $B'\in\sigma^{-1}\widehat{\xi_{i}}$ with $\beta(B\Delta B')=0$
and vice versa. From this, together with (\ref{eq:cond info formula})
and (\ref{eq:info comp MP map}) in Section \ref{subsec:info and ent},
it follows that for $n\ge1$, 
\begin{eqnarray*}
\mathrm{I}_{\beta}(\mathcal{P}_{0}^{n-1}\mid\widehat{\xi_{i}}) & = & \mathrm{I}_{\beta}(\mathcal{P}\mid\widehat{\xi_{i}})+\mathrm{I}_{\beta}(\mathcal{P}_{1}^{n-1}\mid\widehat{\xi_{i}}\vee\widehat{\mathcal{P}})\\
 & = & \mathrm{I}_{\beta}(\mathcal{P}\mid\widehat{\xi_{i}})+\mathrm{I}_{\beta}(\sigma^{-1}\mathcal{P}_{0}^{n-2}\mid\sigma^{-1}\widehat{\xi_{i}})\\
 & = & \mathrm{I}_{\beta}(\mathcal{P}\mid\widehat{\xi_{i}})+\mathrm{I}_{\beta}(\mathcal{P}_{0}^{n-2}\mid\widehat{\xi_{i}})\circ\sigma\:.
\end{eqnarray*}
Iterating this we get,
\[
\mathrm{I}_{\beta}(\mathcal{P}_{0}^{n-1}\mid\widehat{\xi_{i}})=\sum_{j=0}^{n-1}\mathrm{I}_{\beta}(\mathcal{P}\mid\widehat{\xi_{i}})\circ\sigma^{j}\:.
\]
Additionally, by the definitions of the conditional information and
measures (see Theorem \ref{thm:def of cond measures}),
\[
-\log\:\beta_{\omega}^{\xi_{i}}(\mathcal{P}_{0}^{n-1}(\omega))=\mathrm{I}_{\beta}(\mathcal{P}_{0}^{n-1}\mid\widehat{\xi_{i}})(\omega)\text{ for }\beta\text{-a.e. }\omega,
\]
which gives (\ref{eq:=00003Derg sum of info}). Birkhoff's ergodic
theorem combined with (\ref{eq:=00003Derg sum of info}) implies (\ref{eq:=00003DH_i}),
which completes the proof of the lemma.
\end{proof}
The following two lemmas will be used in Section \ref{sec:Proof-of-results}
when we prove our main result. Recall the sets $Q_{n,\epsilon}$ from
(\ref{eq:def of Q_n,eps}).
\begin{lem}
\label{lem:density of Q}For $\epsilon>0$ and $0\le i\le s$,
\[
\underset{n\rightarrow\infty}{\lim}\:\frac{\beta_{\omega}^{\xi_{i}}(Q_{n,\epsilon}\cap\mathcal{P}_{0}^{n-1}(\omega))}{\beta_{\omega}^{\xi_{i}}(\mathcal{P}_{0}^{n-1}(\omega))}=1\text{ for }\beta\text{-a.e. }\omega\in\Omega_{0}\:.
\]
\end{lem}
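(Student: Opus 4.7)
The plan is to show that the ratio in question equals the conditional measure on a finer atom, and then bound its complement in $L^{1}(\beta)$ with a summable decay rate, from which Borel--Cantelli delivers almost sure convergence.

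First I would identify the ratio. By Lemma \ref{lem:slices of slices} applied to $\zeta=\xi_{i}$ and the refining partition $\xi_{i}\vee\mathcal{P}_{0}^{n-1}$, the conditional measure $\beta_{y}^{\xi_{i}\vee\mathcal{P}_{0}^{n-1}}$ is constant (equal to $\beta_{\omega}^{\xi_{i}\vee\mathcal{P}_{0}^{n-1}}$) as $y$ ranges over the common atom $\xi_{i}(\omega)\cap\mathcal{P}_{0}^{n-1}(\omega)$. Integrating against $\beta_{\omega}^{\xi_{i}}$ and invoking Lemma \ref{lem:eq of partitions} to rewrite $\xi_{i}\vee\mathcal{P}_{0}^{n-1}=\sigma^{-n}\xi_{i}$ gives, for $\beta$-a.e.\ $\omega$,
\[
\frac{\beta_{\omega}^{\xi_{i}}(Q_{n,\epsilon}^{c}\cap\mathcal{P}_{0}^{n-1}(\omega))}{\beta_{\omega}^{\xi_{i}}(\mathcal{P}_{0}^{n-1}(\omega))}=\beta_{\omega}^{\sigma^{-n}\xi_{i}}(Q_{n,\epsilon}^{c})=:\Phi_{n}(\omega).
\]
It therefore suffices to prove $\Phi_{n}\to 0$ $\beta$-almost surely.

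Next I would transfer $\Phi_{n}$ to the natural time base via Lemma \ref{lem:push of slices}, which gives $\sigma^{n}\beta_{\omega}^{\sigma^{-n}\xi_{i}}=\beta_{\sigma^{n}\omega}^{\xi_{i}}$ for $\beta$-a.e.\ $\omega$, and therefore $\Phi_{n}(\omega)=\beta_{\sigma^{n}\omega}^{\xi_{i}}(\sigma^{n}Q_{n,\epsilon}^{c})$. Writing $F_{r}=\{\eta:d(\pi\eta,\mathrm{P}(V_{\eta}^{0}))<r\}$, I have $Q_{n,\epsilon}^{c}=\bigcup_{j\ge n}\sigma^{-j}F_{e^{-j\epsilon}}$, so
\[
\sigma^{n}Q_{n,\epsilon}^{c}=\bigcup_{k\ge 0}\sigma^{-k}F_{e^{-(n+k)\epsilon}}.
\]
Integrating in $\omega$ and using first $\sigma$-invariance of $\beta$ and then the identity $\int\beta_{\omega}^{\xi_{i}}(A)\,d\beta(\omega)=\beta(A)$,
\[
\int\Phi_{n}\,d\beta\le\sum_{k\ge 0}\beta\bigl(\sigma^{-k}F_{e^{-(n+k)\epsilon}}\bigr)=\sum_{k\ge 0}\beta\bigl(F_{e^{-(n+k)\epsilon}}\bigr).
\]
By Lemma \ref{lem:bd on mass near hyperplane} applied pointwise in $\omega$ to the hyperplane $V_{\omega}^{0}$, followed by integration against $\beta$ (using the independence of the negative and nonnegative coordinates, as in Lemma \ref{lem:dist not exp small}), each summand is at most $Ce^{-(n+k)\epsilon\alpha}$. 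Summing the geometric series yields $\int\Phi_{n}\,d\beta\le C'e^{-n\epsilon\alpha}$, hence $\sum_{n}\int\Phi_{n}\,d\beta<\infty$.

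Finally, the monotone convergence theorem gives $\sum_{n}\Phi_{n}(\omega)<\infty$ for $\beta$-a.e.\ $\omega$, so $\Phi_{n}(\omega)\to 0$ almost surely, which is the desired conclusion. The main delicate point is the first step: correctly reading the ratio as $\beta_{\omega}^{\sigma^{-n}\xi_{i}}(Q_{n,\epsilon}^{c})$; everything afterwards is a soft combination of invariance and the Guivarc'h-type regularity estimate already recorded as Lemma \ref{lem:bd on mass near hyperplane}.
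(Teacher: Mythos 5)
Your argument is correct, but it takes a genuinely different route from the paper's. The paper fixes $m$, identifies the ratio with $\mathrm{E}_{\beta}(1_{Q_{m,\epsilon}}\mid\widehat{\xi_{i}}\vee\widehat{\mathcal{P}_{0}^{n-1}})(\omega)$ via Lemma \ref{lem:slices of slices}, lets $n\to\infty$ with the increasing martingale theorem to obtain the limit $1_{Q_{m,\epsilon}}(\omega)$, and then uses the monotonicity $Q_{m,\epsilon}\subset Q_{n,\epsilon}$ together with the exhaustion $\Omega_{0}=\cup_{m}Q_{m,\epsilon}$ (Lemma \ref{lem:dist not exp small}) to conclude; it is a soft argument needing only $\beta(Q_{m,\epsilon})\uparrow1$. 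You instead let the set move with $n$ and compensate with a quantitative first-moment bound: $\int\Phi_{n}\,d\beta=\beta(Q_{n,\epsilon}^{c})\le C'e^{-n\epsilon\alpha}$ by the Guivarc'h-type estimate of Lemma \ref{lem:bd on mass near hyperplane}, and then Borel--Cantelli. Both proofs rest on the same regularity input, but yours uses it directly and yields a summable rate as a bonus. Two small remarks: (i) your detour through $\sigma^{n}$ and Lemma \ref{lem:push of slices} is unnecessary, since once the ratio is recognised as $\mathrm{E}_{\beta}(1_{Q_{n,\epsilon}^{c}}\mid\widehat{\xi_{i}}\vee\widehat{\mathcal{P}_{0}^{n-1}})(\omega)$ (equivalently $\beta_{\omega}^{\sigma^{-n}\xi_{i}}(Q_{n,\epsilon}^{c})$, exactly the identity derived inside the proof of Lemma \ref{lem:form for con meas}), integrating against $\beta$ gives $\beta(Q_{n,\epsilon}^{c})$ at once, and $\beta(Q_{n,\epsilon}^{c})\le\sum_{j\ge n}\beta\{\eta\::\:d(\pi\eta,\mathrm{P}(V_{\eta}^{0}))<e^{-j\epsilon}\}\le C\sum_{j\ge n}e^{-j\epsilon\alpha}$ exactly as in the proof of Lemma \ref{lem:dist not exp small}; (ii) you should note that the denominator $\beta_{\omega}^{\xi_{i}}(\mathcal{P}_{0}^{n-1}(\omega))$ is $\beta$-a.e.\ positive, which follows from the identification of $-\log$ of it with the conditional information in Lemma \ref{lem:asym of ergo sum of info}.
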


\begin{proof}
From Lemma \ref{lem:slices of slices} and Theorem \ref{thm:def of cond measures}
it follows that for $n,m\ge1$,
\begin{equation}
\frac{\beta_{\omega}^{\xi_{i}}(Q_{m,\epsilon}\cap\mathcal{P}_{0}^{n-1}(\omega))}{\beta_{\omega}^{\xi_{i}}(\mathcal{P}_{0}^{n-1}(\omega))}=\mathrm{E}_{\beta}(1_{Q_{m,\epsilon}}\mid\widehat{\xi_{i}}\vee\widehat{\mathcal{P}_{0}^{n-1}})(\omega)\text{ for }\beta\text{-a.e. }\omega\:.\label{eq:expre as cond exp}
\end{equation}
Note that the sequence of $\sigma$-algebras,
\[
\left\{ \widehat{\xi_{i}}\vee\widehat{\mathcal{P}_{0}^{n-1}}\right\} _{n\ge1},
\]
increases to the Borel $\sigma$-algebra of $\Omega$. Thus, from
(\ref{eq:expre as cond exp}) and the increasing martingale theorem
(see \cite[Section 2.1]{Pa}),
\[
\underset{n\rightarrow\infty}{\lim}\:\frac{\beta_{\omega}^{\xi_{i}}(Q_{m,\epsilon}\cap\mathcal{P}_{0}^{n-1}(\omega))}{\beta_{\omega}^{\xi_{i}}(\mathcal{P}_{0}^{n-1}(\omega))}=1_{Q_{m,\epsilon}}(\omega)\text{ for }\beta\text{-a.e. }\omega\:.
\]
Additionally we have $Q_{m,\epsilon}\subset Q_{n,\epsilon}$ whenever
$n\ge m$, hence
\[
\underset{n\rightarrow\infty}{\liminf}\:\frac{\beta_{\omega}^{\xi_{i}}(Q_{n,\epsilon}\cap\mathcal{P}_{0}^{n-1}(\omega))}{\beta_{\omega}^{\xi_{i}}(\mathcal{P}_{0}^{n-1}(\omega))}\ge1_{Q_{m,\epsilon}}(\omega)\text{ for }\beta\text{-a.e. }\omega\:.
\]
Now since by Lemma \ref{lem:dist not exp small},
\[
\Omega_{0}=\cup_{m\ge1}Q_{m,\epsilon},
\]
this completes the proof of the lemma.
\end{proof}
Recall the sets $\Gamma_{k}(\omega,r)$ from (\ref{eq:def of Gamma_i}).
\begin{lem}
\label{lem:pos density}Let $1\le k\le s$ and $0\le i\le s$ be given,
and let $F\subset\Omega_{0}$ be a Borel set. Then for $\beta$-a.e.
$\omega\in F$,
\[
\underset{r\downarrow0}{\lim}\:\frac{\beta_{\omega}^{\xi_{i}}(\Gamma_{k}(\omega,r)\cap F)}{\beta_{\omega}^{\xi_{i}}(\Gamma_{k}(\omega,r))}>0\:.
\]
\end{lem}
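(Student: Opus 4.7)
The plan is to reduce this to the conditional density Lemma \ref{lem:con exp as lim} by packaging the family of $\omega$-dependent projections into a single measurable map. Define $\Phi:\Omega_0\to\mathrm{P}(V)$ by
\[
\Phi(\omega)=\overline{P_{(V_{\omega}^{k})^{\perp}}\pi\omega}.
\]
This is well-defined because for $1\le k\le s$ we have $V_{\omega}^{k}\subset V_{\omega}^{0}$, and Lemma \ref{lem:pi om not in P(V^0_om)} guarantees $\pi\omega\notin\mathrm{P}(V_{\omega}^{0})\supseteq\mathrm{P}(V_{\omega}^{k})$; it is Borel measurable since $\pi$ and $\omega\mapsto V_{\omega}^{k}$ are. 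Crucially, whenever $\xi_{0}(\omega)=\xi_{0}(\eta)$ we have $V_{\omega}^{k}=V_{\eta}^{k}$, so both $\Phi(\omega)$ and $\Phi(\eta)$ lie in $\mathrm{P}((V_{\omega}^{k})^{\perp})$, on which the ambient metric $d$ of $\mathrm{P}(V)$ restricts to the intrinsic projective metric. Consequently
\[
B^{\Phi}(\omega,r)\cap\xi_{0}(\omega)=\Gamma_{k}(\omega,r).
\]

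Next, note that $\xi_{i}$ is finer than $\xi_{0}$ by construction, so $\beta_{\omega}^{\xi_{i}}$ is supported inside $\xi_{i}(\omega)\subset\xi_{0}(\omega)$. Intersecting freely with $\xi_{0}(\omega)$ therefore yields, for every Borel set $A$ and every $r>0$,
\[
\beta_{\omega}^{\xi_{i}}(B^{\Phi}(\omega,r)\cap A)=\beta_{\omega}^{\xi_{i}}(\Gamma_{k}(\omega,r)\cap A).
\]
Now apply Lemma \ref{lem:con exp as lim} with $X=\Omega_{0}$, $Y=\mathrm{P}(V)$ (a compact Riemannian manifold and hence Besicovitch), $\phi=\Phi$, $\xi=\xi_{i}$, and $A=F$. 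This gives, for $\beta$-a.e. $\omega$,
\[
\lim_{r\downarrow0}\frac{\beta_{\omega}^{\xi_{i}}(\Gamma_{k}(\omega,r)\cap F)}{\beta_{\omega}^{\xi_{i}}(\Gamma_{k}(\omega,r))}=\mathrm{E}_{\beta}\bigl(1_{F}\,\big|\,\widehat{\xi_{i}}\vee\Phi^{-1}(\mathcal{B}_{\mathrm{P}(V)})\bigr)(\omega).
\]

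The proof concludes with a routine observation: if $\mathcal{G}$ denotes the $\sigma$-algebra on the right and $N=\{\omega:\mathrm{E}_{\beta}(1_{F}\mid\mathcal{G})(\omega)=0\}$, then $N\in\mathcal{G}$ and
\[
\beta(F\cap N)=\int_{N}1_{F}\,d\beta=\int_{N}\mathrm{E}_{\beta}(1_{F}\mid\mathcal{G})\,d\beta=0,
\]
so the conditional expectation is strictly positive at $\beta$-a.e. point of $F$. There is no real obstacle; the only subtlety is the one addressed above, namely showing that the $\omega$-dependence of the target subspace $(V_{\omega}^{k})^{\perp}$ can be absorbed into a single map $\Phi$ into $\mathrm{P}(V)$ because, after conditioning on $\xi_{i}$ (which refines $\xi_{0}$), the relevant subspace is fixed along $\beta_{\omega}^{\xi_{i}}$-typical $\eta$.
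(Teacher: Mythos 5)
Your proof is correct and follows essentially the same route as the paper: both define the Borel map $\omega\mapsto P_{(V_{\omega}^{k})^{\perp}}\pi\omega$ into $\mathrm{P}(V)$, identify $\Gamma_{k}(\omega,r)$ with $\xi_{0}(\omega)\cap B^{\phi_k}(\omega,r)$, apply Lemma \ref{lem:con exp as lim}, and finish with the standard positivity argument for conditional expectations (which the paper cites to \cite[Lemma 3.10]{FH} and you spell out). Your explicit justification that $\beta_{\omega}^{\xi_{i}}$ is carried by $\xi_{0}(\omega)$, allowing the $\xi_{0}(\omega)$-intersection to be dropped, is a detail the paper leaves implicit.
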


\begin{proof}
Define a map $\phi_{k}:\Omega_{0}\rightarrow\mathrm{P}(V)$ by $\phi_{k}(\omega)=P_{(V_{\omega}^{k})^{\perp}}\pi\omega$.
Recall that $\pi\omega\notin\mathrm{P}(V_{\omega}^{0})$ for each
$\omega\in\Omega_{0}$, so $\phi_{k}$ is well defined. Since $\pi$
and $\omega\rightarrow V_{\omega}^{k}$ are Borel measurable the same
holds for $\phi_{k}$. It follows directly from the definitions that
for $r>0$ and $\omega\in\Omega_{0}$,
\[
\Gamma_{k}(\omega,r)=\xi_{0}(\omega)\cap B^{\phi_{k}}(\omega,r),
\]
where the notation $B^{\phi_{k}}(\omega,r)$ was defined in Section
\ref{subsec:General-notations}. Write $\mathcal{B}$ for the Borel
$\sigma$-algebra of $\mathrm{P}(V)$. Then by Lemma \ref{lem:con exp as lim}
it follows that for $\beta$-a.e. $\omega$,
\[
\underset{r\downarrow0}{\lim}\:\frac{\beta_{\omega}^{\xi_{i}}(\Gamma_{k}(\omega,r)\cap F)}{\beta_{\omega}^{\xi_{i}}(\Gamma_{k}(\omega,r))}=\mathrm{E}_{\beta}(1_{F}\mid\widehat{\xi_{i}}\vee\phi_{k}^{-1}\mathcal{B})(\omega)\:.
\]
Now, by the definition of the conditional expectation, it follows
easily (see \cite[Lemma 3.10]{FH}) that for $\beta$-a.e. $\omega\in F$,
\[
\mathrm{E}_{\beta}(1_{F}\mid\widehat{\xi_{i}}\vee\phi_{k}^{-1}\mathcal{B})(\omega)>0,
\]
which completes the proof of the lemma.
\end{proof}
The following lemma, which is similar to \cite[Lemma 4.5(3)]{Fe},
will be used in the next section.
\begin{lem}
\label{lem:form for con meas}Let $0\le i\le s$ and $n\ge1$ be given.
Then for $\beta$-a.e. $\omega$ we have for any Borel set $F\subset\Omega_{0}$,
\[
\beta_{\omega}^{\xi_{i}}(\sigma^{-n}F\cap\mathcal{P}_{0}^{n-1}(\omega))=\beta_{\sigma^{n}\omega}^{\xi_{i}}(F)\beta_{\omega}^{\xi_{i}}(\mathcal{P}_{0}^{n-1}(\omega))\:.
\]
\end{lem}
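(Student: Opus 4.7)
The strategy is to relate the conditional measures $\beta_\omega^{\xi_i}$ and $\beta_{\sigma^n\omega}^{\xi_i}$ through their common refinement $\sigma^{-n}\xi_i$. The key observation, already in hand from Lemma \ref{lem:eq of partitions}, is that
\[
\sigma^{-n}\xi_i \;=\; \xi_i\vee\mathcal{P}_0^{n-1},
\]
so $\sigma^{-n}\xi_i$ is finer than $\xi_i$ and its atom through $\omega$ is exactly $\xi_i(\omega)\cap\mathcal{P}_0^{n-1}(\omega)$. This puts us in position to combine the two disintegration lemmas from Section \ref{subsec:Disintegration}: one to pass between the $\xi_i$- and $\sigma^{-n}\xi_i$-disintegrations, and the other to transport the $\sigma^{-n}\xi_i$-disintegration to the $\xi_i$-disintegration at $\sigma^n\omega$ via the measure preserving shift $\sigma^n$.

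\textbf{Steps.} First, apply Lemma \ref{lem:slices of slices} to $\zeta=\xi_i$ and the finer $\xi=\sigma^{-n}\xi_i$: for $\beta$-a.e.\ $\omega$ and $\beta_\omega^{\xi_i}$-a.e.\ $y$,
\[
(\beta_\omega^{\xi_i})_y^{\sigma^{-n}\xi_i}=\beta_y^{\sigma^{-n}\xi_i}.
\]
Next, apply Lemma \ref{lem:push of slices} with $T=\sigma^n$ (which preserves $\beta$) and $\xi=\xi_i$: for $\beta$-a.e.\ $y$,
\[
\sigma^n\,\beta_y^{\sigma^{-n}\xi_i}=\beta_{\sigma^n y}^{\xi_i}.
\]
Fix a generic $\omega$ where both statements hold and write $\mu:=(\beta_\omega^{\xi_i})_\omega^{\sigma^{-n}\xi_i}$. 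By property (3) of Theorem \ref{thm:def of cond measures}, $(\beta_\omega^{\xi_i})_y^{\sigma^{-n}\xi_i}=\mu$ for every $y$ in the common atom $\sigma^{-n}\xi_i(\omega)=\xi_i(\omega)\cap\mathcal{P}_0^{n-1}(\omega)$, on which $\mu$ is supported, and the two displays above combine to give $\sigma^n\mu=\beta_{\sigma^n\omega}^{\xi_i}$. Now disintegrate $\beta_\omega^{\xi_i}$ along the finer partition $\sigma^{-n}\xi_i$:
\[
\beta_\omega^{\xi_i}\!\bigl(\sigma^{-n}F\cap\mathcal{P}_0^{n-1}(\omega)\bigr)=\int (\beta_\omega^{\xi_i})_y^{\sigma^{-n}\xi_i}\!\bigl(\sigma^{-n}F\cap\mathcal{P}_0^{n-1}(\omega)\bigr)\,d\beta_\omega^{\xi_i}(y).
\]
Since the integrand vanishes unless $y\in\mathcal{P}_0^{n-1}(\omega)$ (because $(\beta_\omega^{\xi_i})_y^{\sigma^{-n}\xi_i}$ lives on $\mathcal{P}_0^{n-1}(y)$), and on that set equals the constant $\mu(\sigma^{-n}F)$, the right-hand side becomes
\[
\mu(\sigma^{-n}F)\cdot\beta_\omega^{\xi_i}(\mathcal{P}_0^{n-1}(\omega))=(\sigma^n\mu)(F)\cdot\beta_\omega^{\xi_i}(\mathcal{P}_0^{n-1}(\omega))=\beta_{\sigma^n\omega}^{\xi_i}(F)\cdot\beta_\omega^{\xi_i}(\mathcal{P}_0^{n-1}(\omega)),
\]
which is the desired identity.

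\textbf{Main obstacle.} There is no genuine analytical difficulty; the content is pure bookkeeping with conditional measures. The only care needed is that the exceptional null set should not depend on $F$. This is handled by first verifying the identity on a countable generating semi-ring (e.g.\ cylinder sets of $\Omega_0$), and then extending to all Borel $F$ by the Carath\'eodory uniqueness of measures, since both sides are $\sigma$-finite measures in $F$ for each fixed $\omega$.
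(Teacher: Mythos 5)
Your proposal is correct and follows essentially the same route as the paper: both arguments combine Lemma \ref{lem:eq of partitions} (giving $\sigma^{-n}\xi_i=\xi_i\vee\mathcal{P}_0^{n-1}$) with Lemma \ref{lem:slices of slices} to identify $\beta_\omega^{\sigma^{-n}\xi_i}$ with the normalized restriction of $\beta_\omega^{\xi_i}$ to $\mathcal{P}_0^{n-1}(\omega)$, and with Lemma \ref{lem:push of slices} applied to $T=\sigma^n$ to transport this to $\beta_{\sigma^n\omega}^{\xi_i}$. Your concluding remark about checking the identity on a countable generating family is unnecessary, since the identities obtained are equalities of measures and hence hold for all Borel $F$ at once off a single null set of $\omega$.
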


\begin{proof}
By Lemma \ref{lem:push of slices},
\begin{equation}
\sigma^{-n}\beta_{\sigma^{n}\omega}^{\xi_{i}}=\beta_{\omega}^{\sigma^{-n}\xi_{i}}\text{ for }\beta\text{-a.e. }\omega\:.\label{eq:equi of cond meas}
\end{equation}
By Lemmas \ref{lem:eq of partitions} and \ref{lem:slices of slices}
it follows that for $\beta$-a.e. $\omega$,
\[
\beta_{\eta}^{\sigma^{-n}\xi_{i}}=\beta_{\eta}^{\xi_{i}\vee\mathcal{P}_{0}^{n-1}}=(\beta_{\eta}^{\xi_{i}})_{\eta}^{\xi_{i}\vee\mathcal{P}_{0}^{n-1}}\;\text{ for }\beta_{\omega}^{\xi_{i}}\text{-a.e. }\eta\:.
\]
Thus for $\beta$-a.e. $\omega$ and any $F\subset\Omega_{0}$ Borel,
\[
\beta_{\omega}^{\sigma^{-n}\xi_{i}}(F)=\frac{\beta_{\omega}^{\xi_{i}}(F\cap\mathcal{P}_{0}^{n-1}(\omega))}{\beta_{\omega}^{\xi_{i}}(\mathcal{P}_{0}^{n-1}(\omega))}\:.
\]
From this and (\ref{eq:equi of cond meas}) it follows that for $\beta$-a.e.
$\omega$ and any $F\subset\Omega_{0}$ Borel,
\[
\frac{\beta_{\omega}^{\xi_{i}}(\sigma^{-n}F\cap\mathcal{P}_{0}^{n-1}(\omega))}{\beta_{\omega}^{\xi_{i}}(\mathcal{P}_{0}^{n-1}(\omega))}=\beta_{\omega}^{\sigma^{-n}\xi_{i}}(\sigma^{-n}F)=\sigma^{-n}\beta_{\sigma^{n}\omega}^{\xi_{i}}(\sigma^{-n}F)=\beta_{\sigma^{n}\omega}^{\xi_{i}}(F),
\]
which completes the proof of the lemma.
\end{proof}

\section{\label{sec:Transverse-dimensions}Transverse dimensions}

In this section we prove an inequality for the transverse dimensions.
Recall that,
\[
\mathrm{H}_{i}=\mathrm{H}_{\beta}(\mathcal{P}\mid\widehat{\xi_{i}})\text{ for }0\le i\le s,
\]
and that for $1\le i\le s$, $\omega\in\Omega_{0}$ and $r>0$,
\[
\Gamma_{i}(\omega,r)=\{\eta\in\xi_{0}(\omega)\::\:d(P_{(V_{\omega}^{i})^{\perp}}\pi\omega,P_{(V_{\omega}^{i})^{\perp}}\pi\eta)\le r\}\:.
\]
We also set,
\[
\vartheta_{i-1}(\omega)=\underset{r\downarrow0}{\liminf}\:\frac{\log\beta_{\omega}^{\xi_{i-1}}(\Gamma_{i}(\omega,r))}{\log r}\:.
\]
Following \cite{Fe} we call $\vartheta_{0},...,\vartheta_{s-1}$
the transverse dimensions of $\beta$. The purpose of this section
is to prove the following proposition. Its proof is a modification
of that of \cite[Proposition 5.1]{Fe}.
\begin{prop}
\label{prop:transvers dim =00003D}For $1\le i\le s$ and $\beta$-a.e.
$\omega$,
\[
\vartheta_{i-1}(\omega)\ge\frac{\mathrm{H}_{i}-\mathrm{H}_{i-1}}{\tilde{\lambda}_{i}}\:.
\]
\end{prop}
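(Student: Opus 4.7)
The plan is to recast the lower bound on $\vartheta_{i-1}$ as an upper bound on the tube mass. Since $\tilde\lambda_i<0$ and $\mathrm{H}_i\le\mathrm{H}_{i-1}$, the quantity $(\mathrm{H}_i-\mathrm{H}_{i-1})/\tilde\lambda_i$ is non-negative, and the conclusion is equivalent to
\[
\beta_\omega^{\xi_{i-1}}\!\bigl(\Gamma_i(\omega,r)\bigr)\le r^{(\mathrm{H}_{i-1}-\mathrm{H}_i)/|\tilde\lambda_i|-O(\epsilon)}\quad\text{as }r\downarrow 0,
\]
for every $\epsilon>0$. Fixing such an $\epsilon$ and discretising with $r_n:=e^{n(\tilde\lambda_i+5\epsilon)}$, the target becomes to show that for $\beta$-a.e.\ $\omega$ and all sufficiently large $n$,
\[
\beta_\omega^{\xi_{i-1}}\!\bigl(\Gamma_i(\omega,r_n)\bigr)\le e^{-n(\mathrm{H}_{i-1}-\mathrm{H}_i)+o_\epsilon(n)}.
\]

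The three main ingredients from Section~\ref{sec:measurable partitions} are: the Shannon--McMillan--Breiman asymptotic $\beta_\omega^{\xi_j}(\mathcal{P}_0^{n-1}(\omega))=e^{-n\mathrm{H}_j+o(n)}$ for $j=i-1,i$, coming from Lemma~\ref{lem:asym of ergo sum of info}; the geometric Proposition~\ref{prop:int cont in ball}, applied with the indices $i\mapsto i-1$ and $j=i$, which says that for $\eta\in Q_{n,\epsilon}$ with $n\ge N_\epsilon(\eta)$ the entire cylinder $\xi_{i-1}(\eta)\cap\mathcal{P}_0^{n-1}(\eta)$ is mapped by $q:=P_{(V_\omega^i)^\perp}\!\circ\pi$ into the ball $B(q(\eta),r_n)$; and the shift-equivariance Lemma~\ref{lem:form for con meas} combined with the disintegration identity $(\beta_\omega^{\xi_{i-1}})_\eta^{\xi_i}=\beta_\eta^{\xi_i}$ of Lemma~\ref{lem:slices of slices}. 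Lemma~\ref{lem:density of Q} will reduce the problem to the good portion of the tube meeting $Q_{n,\epsilon}$ at a negligible cost.

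The core step is a counting/entropy argument on the image measure $\tau_\omega:=q_\ast\beta_\omega^{\xi_{i-1}}$, noting that $\beta_\omega^{\xi_{i-1}}(\Gamma_i(\omega,r_n))=\tau_\omega(B(q(\omega),r_n))$. By the Proposition, every $n$-cylinder of $\xi_{i-1}(\omega)$ meeting $Q_{n,\epsilon}$ is mapped by $q$ into a ball of diameter $\le 2r_n$, so the $n$-cylinder partition descends, modulo a controlled overlap, to an $r_n$-cell cover of the tube, with each cell carrying $\beta_\omega^{\xi_{i-1}}$-mass $\approx e^{-n\mathrm{H}_{i-1}}$. The target $\tau_\omega$-bound of $e^{-n(\mathrm{H}_{i-1}-\mathrm{H}_i)+o_\epsilon(n)}$ therefore amounts to bounding the number of contributing cells by $e^{n\mathrm{H}_i+o_\epsilon(n)}$. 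This count reflects the entropy gap $\mathrm{H}_{i-1}-\mathrm{H}_i$ between the two slice disintegrations, and is extracted using the shift-equivariance applied to the finer $\xi_i$-SMB asymptotic $\beta_\eta^{\xi_i}(\mathcal{P}_0^{n-1}(\eta))\approx e^{-n\mathrm{H}_i}$ from Lemma~\ref{lem:asym of ergo sum of info}.

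The hardest step will be making this counting genuinely rigorous, since the $n$-cylinders do not form a true partition of $B(q(\omega),r_n)$ (their $q$-images can overlap) and the geometric control of Proposition~\ref{prop:int cont in ball} is only valid on the non-uniform set $Q_{n,\epsilon}$ with return times $N_\epsilon$ depending on $\omega$. Following the strategy of~\cite{Fe}, I would handle this by passing to an induced subshift built over a compact subset of $\Omega_0$ on which the Oseledets splittings, the quantities $\kappa(\omega)$, and the return times $N_\epsilon$ are uniformly controlled; on the induced system the cell cover becomes uniform and the counting goes through by a direct ergodic-theoretic estimate. Transferring the estimate back to the original system using Lemmas~\ref{lem:form for con meas} and~\ref{lem:density of Q}, and then sending $\epsilon\downarrow 0$, yields Proposition~\ref{prop:transvers dim =00003D}.
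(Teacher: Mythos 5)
Your reformulation of the statement as the tube--mass upper bound $\beta_\omega^{\xi_{i-1}}(\Gamma_i(\omega,r_n))\le e^{-n(\mathrm{H}_{i-1}-\mathrm{H}_i)+o_\epsilon(n)}$ with $r_n=e^{n(\tilde\lambda_i+5\epsilon)}$ is correct, and several ingredients you list (the asymptotics of Lemma \ref{lem:asym of ergo sum of info}, Lemmas \ref{lem:form for con meas} and \ref{lem:density of Q}, an induced system) do occur in the paper's proof. But the core step --- ``bounding the number of contributing cells by $e^{n\mathrm{H}_i+o_\epsilon(n)}$ \ldots\ extracted using the finer $\xi_i$-SMB asymptotic'' --- has a genuine gap. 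The asymptotic $\beta_\eta^{\xi_i}(\mathcal{P}_0^{n-1}(\eta))\approx e^{-n\mathrm{H}_i}$ only controls the number of $n$-cylinders carrying mass inside a \emph{single} atom $\xi_i(\eta)$, where their $\beta_\eta^{\xi_i}$-masses sum to at most $1$. The tube $\Gamma_i(\omega,r_n)$ is a union of a continuum of $\xi_i$-atoms, and distinct cylinders meeting the tube generally meet disjoint families of such atoms, so there is no fixed probability measure against which to count them; nothing a priori prevents the tube from meeting on the order of $e^{n\mathrm{H}_{i-1}}$ cylinders, i.e.\ having mass $\approx 1$. Passing to an induced subshift with uniform $\kappa$, $N_\epsilon$, etc.\ removes non-uniformity but does not supply the missing counting mechanism. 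Note also that Proposition \ref{prop:int cont in ball} is the wrong geometric ingredient here: it places a cylinder-slice \emph{inside} a tube, which is what the lower-bound argument of Proposition \ref{prop:ineq for delta overline} needs, whereas the present upper bound needs the reverse containment (tube intersected with the central cylinder mapped into a tube around the shifted point), which is Lemma \ref{lem:equiv of Gamma} and rests on the $T_i$-set machinery of Section \ref{subsec:Preparations-for-transv} that your proposal does not invoke.

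The mechanism by which the paper actually produces the factor $e^{n\mathrm{H}_i}$ is not a count but a density-point/martingale statement: by Lemma \ref{lem:con exp as lim}, using $\widehat{\xi_{i-1}}\vee\phi_i^{-1}(\mathcal{B})\underset{\beta}{=}\widehat{\xi_i}$, the proportion of the tube's mass carried by the central cylinder converges as $r\downarrow0$ to $\exp(-\mathrm{I}_\beta(\mathcal{P}_0^{m}\mid\widehat{\xi_i})(\omega))$; this is Lemma \ref{lem:prep for maker}. Inverting that ratio bounds the tube mass by the central cylinder's mass times $e^{m\mathrm{H}_i+o(m)}$, but only for a fixed cylinder length as $r\downarrow0$, so one cannot simply set $r=r_m$ and let $m\to\infty$. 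The paper resolves this by telescoping the tube mass along the orbit of the induced map $\sigma_F$ (via Lemmas \ref{lem:equiv of Gamma} and \ref{lem:form for con meas}) and averaging the resulting non-uniformly convergent density terms with Maker's ergodic theorem (Theorem \ref{thm:Maker}), whose hypothesis $\sup_n|G_n|\in L^1(\beta_F)$ is precisely where the finite-support assumption on $\mu$ enters. These three steps --- the martingale identification of the $e^{n\mathrm{H}_i}$ factor, the telescoping recursion over the induced orbit, and Maker's theorem --- are the heart of the proof and are absent from your proposal; a static cylinder count cannot replace them.
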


\subsection{\label{subsec:Preparations-for-transv}Preparations for the proof
of Proposition \ref{prop:transvers dim =00003D}}

For $1\le i\le s$, $\omega\in\Omega_{0}$ and $r>0$ set,
\[
T_{i}(\omega,r)=\{\eta\in\xi_{i-1}(\omega)\::\:|g_{\omega}^{i}(\pi\omega)-g_{\omega}^{i}(\pi\eta)|\le r\}\:.
\]
In this subsection we mainly study the relation between the sets $\Gamma_{i}(\omega,r)$
and $T_{i}(\omega,r)$. Later we establish other facts which will
be needed for the proof of Proposition \ref{prop:transvers dim =00003D}.
We start with the following containment.
\begin{lem}
\label{lem:T sub Gamma}Let $1\le i\le s$, $\omega\in\Omega_{0}$
and $r>0$ be given. Then,
\[
T_{i}(\omega,r)\subset\Gamma_{i}\left(\omega,s^{3}2^{3+s}\Vert g_{\omega}\pi\omega\Vert_{\infty}\kappa(\omega)^{-2s-2}r\right)\:.
\]
\end{lem}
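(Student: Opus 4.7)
The plan is to work in the affine chart $\{f_\omega^0 = 1\}$ and exploit the fact that $P := P_{(V_\omega^i)^\perp}$ annihilates exactly the unstable part $V_\omega^i = \bigoplus_{k > i} E_\omega^k$. Given $\eta \in T_i(\omega,r)$, I first choose representatives $x \in \pi\omega$ and $y \in \pi\eta$ with $f_\omega^0(x) = f_\omega^0(y) = 1$, so that $g_\omega^k(\pi\omega) = L_\omega^k(x)$ and $g_\omega^k(\pi\eta) = L_\omega^k(y)$ for every $k$. Since $\eta \in \xi_{i-1}(\omega)$, Lemma \ref{lem:dif char of xi_i} gives $L_\omega^k(x) = L_\omega^k(y)$ for every $0 \le k < i$, while the definition of $T_i$ reads $|L_\omega^i(x) - L_\omega^i(y)| \le r$; the differences $L_\omega^k(y) - L_\omega^k(x)$ for $k > i$ are a priori uncontrolled, but they lie in $\ker P$.

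Putting these observations together, $P(y - x) = P(L_\omega^i(y) - L_\omega^i(x))$, and hence $|P(y-x)| \le r$. Since $Py = Px + P(y-x)$, the wedge $Px \wedge Py$ simplifies to $Px \wedge P(y-x)$ and therefore has norm at most $|Px|\,r$, which yields
\[
d(P\overline{x}, P\overline{y}) \;=\; \frac{\|Px \wedge Py\|}{|Px|\,|Py|} \;\le\; \frac{r}{|Py|}.
\]
The statement is thus reduced to a lower bound on $|Py|$. Setting $u = \sum_{k=0}^i L_\omega^k(y) \in U := \bigoplus_{k=0}^i E_\omega^k$, one has $Py = Pu$ because the remaining $L_\omega^k(y)$ lie in $\ker P$.

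For the lower bound on $|Pu|$ I would use the elementary identity $|Pu| = |u|\,d(\overline{u}, \mathrm{P}(V_\omega^i))$ (valid when $i < s$; the case $i = s$ is trivially $|Pu| = |u|$), which follows from the orthogonal decomposition $u = P_{V_\omega^i}u + Pu$ together with the computation $d(\overline{u}, \overline{P_{V_\omega^i}u})^2 = 1 - |P_{V_\omega^i}u|^2/|u|^2$ and the fact that the closest point in $\mathrm{P}(V_\omega^i)$ to $\overline{u}$ is $\overline{P_{V_\omega^i}u}$. Lemma \ref{lem:lb on norm} combined with $|L_\omega^0(u)| = |u_\omega^0| = 1$ gives $|u| \ge 2^{-s/2}\kappa(\omega)^s$, and the defining property of $\kappa(\omega)$ applied to the disjoint index sets $\{0,\dots,i\}$ and $\{i+1,\dots,s\}$ gives $d(\overline{u}, \mathrm{P}(V_\omega^i)) \ge \kappa(\omega)$. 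Hence $|Py| \ge 2^{-s/2}\kappa(\omega)^{s+1}$, so that $d(P\overline{x}, P\overline{y}) \le 2^{s/2}\kappa(\omega)^{-s-1}\,r$, which is comfortably majorised by the claimed $s^{3}2^{3+s}\|g_\omega \pi\omega\|_\infty \kappa(\omega)^{-2s-2}r$ since $\|g_\omega\pi\omega\|_\infty \ge 1$ and $\kappa(\omega) \le 1$. There is no real obstacle here; the only pitfall would be to forget that the conditions $\eta\in\xi_{i-1}(\omega)$ and $\eta\in T_i(\omega,r)$ together control only the first $i+1$ components of $L_\omega(y-x)$, with higher components possibly large—but $P$ is precisely engineered to discard those.
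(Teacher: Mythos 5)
Your proof is correct, and it takes a genuinely different (and in fact sharper) route than the paper's. The key divergence is in how the wedge product is handled: you write $P_{(V_\omega^i)^\perp}x\wedge P_{(V_\omega^i)^\perp}y = P_{(V_\omega^i)^\perp}x\wedge P_{(V_\omega^i)^\perp}(y-x)$, observe that $P_{(V_\omega^i)^\perp}(y-x)$ reduces to $P_{(V_\omega^i)^\perp}(L_\omega^i(y)-L_\omega^i(x))$ and hence has norm at most $r$, and then cancel the factor $|P_{(V_\omega^i)^\perp}x|$ against the denominator of the distance formula. This eliminates the factor $\Vert g_\omega\pi\omega\Vert_\infty$ entirely and leaves only a lower bound on $|P_{(V_\omega^i)^\perp}y|$ to prove, which you obtain cleanly from the identity $|P_{(V_\omega^i)^\perp}u|=|u|\,d(\overline{u},\mathrm{P}(V_\omega^i))$, Lemma \ref{lem:lb on norm}, and the definition of $\kappa(\omega)$ applied to the disjoint index sets $\{0,\dots,i\}$ and $\{i+1,\dots,s\}$. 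The paper instead bounds the wedge by $2s\Vert g_\omega\pi\omega\Vert_\infty r$ via the decomposition into $y+g_\omega^i\pi\omega$ and $y+g_\omega^i\pi\eta$, and bounds \emph{both} $|P_{(V_\omega^i)^\perp}x_\omega|$ and $|P_{(V_\omega^i)^\perp}x_\eta|$ from below by a detour through Lemma \ref{lem:lb on dist to hyperplane} and the distance of $\pi\eta$ to $\mathrm{P}(V_\omega^0)$. Your version yields the stronger conclusion $T_i(\omega,r)\subset\Gamma_i(\omega,2^{s/2}\kappa(\omega)^{-s-1}r)$, which is comfortably majorised by the stated constant since $\Vert g_\omega\pi\omega\Vert_\infty\ge1$ and $\kappa(\omega)\le1$; the only cosmetic point is that you should also note $P_{(V_\omega^i)^\perp}x\ne0$ (immediate from $\pi\omega\notin\mathrm{P}(V_\omega^0)\supset\mathrm{P}(V_\omega^i)$, or by the same argument you give for $y$) so that $P_{(V_\omega^i)^\perp}\overline{x}$ is defined.
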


\begin{proof}
Let $\eta\in T_{i}(\omega,r)$, and let $x_{\omega}\in\pi\omega$
and $x_{\eta}\in\pi\eta$ be with $f_{\omega}^{0}(x_{\omega})=f_{\omega}^{0}(x_{\eta})=1$.
First we show that,
\begin{equation}
|P_{(V_{\omega}^{i})^{\perp}}x_{\omega}|,|P_{(V_{\omega}^{i})^{\perp}}x_{\eta}|\ge s^{-1}2^{-1-(s/2)}\kappa(\omega)^{s+1}\:.\label{eq:lb on norm of proj}
\end{equation}
We prove this inequality only for $x_{\eta}$, the proof for $x_{\omega}$
is similar. By Lemma \ref{lem:lb on norm},
\[
|x_{\eta}|\ge2^{-s/2}\kappa(\omega)^{s}\Vert g_{\omega}\pi\eta\Vert_{\infty}\:.
\]
If $\pi\eta\in\mathrm{P}((V_{\omega}^{i})^{\perp})$ then since $\Vert g_{\omega}\pi\eta\Vert_{\infty}\ge1$,
\[
|P_{(V_{\omega}^{i})^{\perp}}x_{\eta}|=|x_{\eta}|\ge2^{-s/2}\kappa(\omega)^{s},
\]
and so we may assume that $\pi\eta\notin\mathrm{P}((V_{\omega}^{i})^{\perp})$.
Now since $P_{V_{\omega}^{i}}\pi\eta\in\mathrm{P}(V_{\omega}^{0})$,
\begin{eqnarray*}
d(\pi\eta,\mathrm{P}(V_{\omega}^{0})) & \le & d(\pi\eta,P_{V_{\omega}^{i}}\pi\eta)\\
 & = & |x_{\eta}|^{-1}|P_{V_{\omega}^{i}}x_{\eta}|^{-1}\Vert x_{\eta}\wedge P_{V_{\omega}^{i}}x_{\eta}\Vert\\
 & \le & 2^{s/2}\kappa(\omega)^{-s}\Vert g_{\omega}\pi\eta\Vert_{\infty}^{-1}|P_{V_{\omega}^{i}}x_{\eta}|^{-1}\Vert P_{(V_{\omega}^{i})^{\perp}}x_{\eta}\wedge P_{V_{\omega}^{i}}x_{\eta}\Vert\\
 & = & 2^{s/2}\kappa(\omega)^{-s}\Vert g_{\omega}\pi\eta\Vert_{\infty}^{-1}|P_{(V_{\omega}^{i})^{\perp}}x_{\eta}|\:.
\end{eqnarray*}
This together with Lemma \ref{lem:lb on dist to hyperplane} gives,
\[
(2s)^{-1}\kappa(\omega)\Vert g_{\omega}\pi\eta\Vert_{\infty}^{-1}\le d(\pi\eta,\mathrm{P}(V_{\omega}^{0}))\le2^{s/2}\kappa(\omega)^{-s}\Vert g_{\omega}\pi\eta\Vert_{\infty}^{-1}|P_{(V_{\omega}^{i})^{\perp}}x_{\eta}|,
\]
which implies (\ref{eq:lb on norm of proj}).

Now set $M=\Vert g_{\omega}\pi\omega\Vert_{\infty}$ and let us show
that,
\begin{equation}
\Vert P_{(V_{\omega}^{i})^{\perp}}x_{\omega}\wedge P_{(V_{\omega}^{i})^{\perp}}x_{\eta}\Vert\le2sMr\:.\label{eq:ub on wedge of proj}
\end{equation}
Write $y=\sum_{k=0}^{i-1}g_{\omega}^{k}\pi\omega$. From $\eta\in T_{i}(\omega,r)\subset\xi_{i-1}(\omega)$
and Lemma \ref{lem:dif char of xi_i} it follows,
\[
y=\sum_{k=0}^{i-1}g_{\omega}^{k}\pi\eta\quad\text{ and }\quad|g_{\omega}^{i}\pi\omega-g_{\omega}^{i}\pi\eta|\le r\:.
\]
Since $g_{\omega}^{k}\pi\eta\in V_{\omega}^{i}$ for all $i<k\le s$,
\[
P_{(V_{\omega}^{i})^{\perp}}x_{\eta}=P_{(V_{\omega}^{i})^{\perp}}\sum_{k=0}^{s}g_{\omega}^{k}\pi\eta=P_{(V_{\omega}^{i})^{\perp}}(y+g_{\omega}^{i}\pi\eta)\:.
\]
Similarly,
\[
P_{(V_{\omega}^{i})^{\perp}}x_{\omega}=P_{(V_{\omega}^{i})^{\perp}}(y+g_{\omega}^{i}\pi\omega)\:.
\]
Since $P_{(V_{\omega}^{i})^{\perp}}$ is an orthogonal projection
the same holds for $\mathrm{A}^{2}P_{(V_{\omega}^{i})^{\perp}}$ (which
is defined in (\ref{eq:def of end on alt})). Hence,
\begin{eqnarray*}
\Vert P_{(V_{\omega}^{i})^{\perp}}x_{\omega}\wedge P_{(V_{\omega}^{i})^{\perp}}x_{\eta}\Vert & = & \Vert\mathrm{A}^{2}P_{(V_{\omega}^{i})^{\perp}}((y+g_{\omega}^{i}\pi\omega)\wedge(y+g_{\omega}^{i}\pi\eta))\Vert\\
 & \le & \Vert(y+g_{\omega}^{i}\pi\omega)\wedge(y+g_{\omega}^{i}\pi\eta)\Vert\\
 & \le & \Vert y\wedge(g_{\omega}^{i}\pi\eta-g_{\omega}^{i}\pi\omega)\Vert+\Vert g_{\omega}^{i}\pi\omega\wedge(g_{\omega}^{i}\pi\eta-g_{\omega}^{i}\pi\omega)\Vert\\
 & \le & |y|\cdot|g_{\omega}^{i}\pi\eta-g_{\omega}^{i}\pi\omega|+|g_{\omega}^{i}\pi\omega|\cdot|g_{\omega}^{i}\pi\eta-g_{\omega}^{i}\pi\omega|\\
 & \le & 2sMr\:.
\end{eqnarray*}

Combining (\ref{eq:lb on norm of proj}) with (\ref{eq:ub on wedge of proj})
we obtain,
\begin{eqnarray*}
d(P_{(V_{\omega}^{i})^{\perp}}\pi\omega,P_{(V_{\omega}^{i})^{\perp}}\pi\eta) & = & |P_{(V_{\omega}^{i})^{\perp}}x_{\omega}|^{-1}|P_{(V_{\omega}^{i})^{\perp}}x_{\eta}|^{-1}\Vert P_{(V_{\omega}^{i})^{\perp}}x_{\omega}\wedge P_{(V_{\omega}^{i})^{\perp}}x_{\eta}\Vert\\
 & \le & s^{3}2^{3+s}M\cdot\kappa(\omega)^{-2s-2}r,
\end{eqnarray*}
which completes the proof of the lemma.
\end{proof}
The containment in the other direction, which is proven in Lemma \ref{lem:Gamma sub T},
requires a bit more work. For $\omega\in\Omega_{0}$ and $0\le i\le s$
we write $W_{\omega}^{i}$ for $\oplus_{k=0}^{i}E_{\omega}^{k}$.
\begin{lem}
\label{lem:lb on norm of proj of wedge}Let $1\le i\le s$ be given,
then for every $\epsilon>0$ there exists $\delta=\delta(\epsilon)>0$
such that the following holds. Let $\omega\in\Omega_{0}$ be with
$\kappa(\omega)\ge\epsilon$. Then for every $x,y\in W_{\omega}^{i}$,
\[
\Vert\mathrm{A}^{2}P_{(V_{\omega}^{i})^{\perp}}(x\wedge y)\Vert\ge\delta\Vert x\wedge y\Vert\:.
\]
\end{lem}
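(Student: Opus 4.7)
The plan is to reduce the $\mathrm{A}^2$ inequality to a linear estimate for the restriction of $P := P_{(V_\omega^i)^\perp}$ to $W_\omega^i$, and then use the standard operator-norm bound $\|\mathrm{A}^2 T\|_{\mathrm{op}} \le \|T\|_{\mathrm{op}}^2$ (the product of the two top singular values of $T$ is at most the square of the top one) for the inverse of this restriction.

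First I would establish the linear estimate
\[
|P_{(V_\omega^i)^\perp} x| \ge \kappa(\omega)\,|x| \quad \text{for all } x \in W_\omega^i.
\]
By homogeneity we may take $|x|=1$, and orthogonality gives $|P_{(V_\omega^i)^\perp} x|^2 = 1 - |P_{V_\omega^i} x|^2$. If $P_{V_\omega^i} x \neq 0$, then $v := P_{V_\omega^i} x/|P_{V_\omega^i} x|$ is a unit vector in $V_\omega^i$ with $\langle x, v\rangle = |P_{V_\omega^i} x|$. Since $W_\omega^i = \oplus_{k \le i} E_\omega^k$ and $V_\omega^i = \oplus_{k > i} E_\omega^k$ are sums over disjoint index sets, Theorem~\ref{thm:from Oseledets}(\ref{enu:asym dist of lines}) applied to $\bar{x}$ and $\bar{v}$ gives $d(\bar x, \bar v) \ge \kappa(\omega)$, i.e.\ $1 - \langle x, v\rangle^2 \ge \kappa(\omega)^2$. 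This yields $|P_{V_\omega^i} x|^2 \le 1 - \kappa(\omega)^2$ and hence the claimed bound.

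Next I would observe that $\dim W_\omega^i = \dim V - \dim V_\omega^i = \dim(V_\omega^i)^\perp$, so the restriction $P|_{W_\omega^i}$ is a linear isomorphism onto $(V_\omega^i)^\perp$. Let $Q : V \to V$ be the extension by zero on $V_\omega^i$ of its inverse; the linear estimate above is equivalent to $\|Q\|_{\mathrm{op}} \le \kappa(\omega)^{-1} \le \epsilon^{-1}$.

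Finally, for $x, y \in W_\omega^i$, setting $u = Px$ and $v = Py$ gives $Qu = x$ and $Qv = y$, so
\[
x \wedge y = \mathrm{A}^2 Q(u \wedge v) = \mathrm{A}^2 Q\bigl(\mathrm{A}^2 P_{(V_\omega^i)^\perp}(x \wedge y)\bigr),
\]
whence
\[
\|x \wedge y\| \le \|\mathrm{A}^2 Q\|_{\mathrm{op}} \cdot \|\mathrm{A}^2 P_{(V_\omega^i)^\perp}(x \wedge y)\| \le \|Q\|_{\mathrm{op}}^2 \cdot \|\mathrm{A}^2 P_{(V_\omega^i)^\perp}(x \wedge y)\| \le \epsilon^{-2} \|\mathrm{A}^2 P_{(V_\omega^i)^\perp}(x \wedge y)\|.
\]
Taking $\delta(\epsilon) := \epsilon^2$ proves the lemma. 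The only step with content is the linear estimate, which is an essentially immediate unpacking of the definition of $\kappa(\omega)$; no serious obstacle is anticipated.
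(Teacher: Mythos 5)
Your proof is correct, and it takes a genuinely different route from the paper's. The paper argues by contradiction and compactness: it normalizes $|x_n|=|y_n|=1$, passes to convergent subsequences in the Grassmannians $\mathrm{Gr}(q_1,V)$ and $\mathrm{Gr}(q_2,V)$ to obtain limit subspaces $W$ and $U$ with $d(\overline{w},\overline{u})\ge\epsilon$ (hence $V=W\oplus U$), and derives a contradiction by showing the limit would make $P_{U^{\perp}}|_{W}$ non-injective. Your argument instead proves the linear estimate $|P_{(V_\omega^i)^\perp}x|\ge\kappa(\omega)|x|$ for $x\in W_\omega^i$ directly from the definition of $\kappa(\omega)$, inverts the resulting isomorphism $P|_{W_\omega^i}:W_\omega^i\to(V_\omega^i)^\perp$ (the dimension count $\dim W_\omega^i=\dim(V_\omega^i)^\perp$ is what makes this work), and then uses the elementary bound $\|\mathrm{A}^2Q\|_{\mathrm{op}}\le\|Q\|_{\mathrm{op}}^2$ on the wedge square. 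What this buys you is an explicit and clean constant $\delta=\epsilon^2$, whereas the paper's compactness argument gives no effective bound; your argument is also shorter and avoids the subsequence extraction. The only minor omission is that you should note $\epsilon\le 1$ without loss of generality (the hypothesis $\kappa(\omega)\ge\epsilon$ is vacuous otherwise, since $\kappa\le 1$), which is needed for $\delta=\epsilon^2\le 1$ to also cover the degenerate case $P_{V_\omega^i}x=0$; but that is cosmetic.
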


\begin{proof}
Since $V_{\omega}^{s}=\{0\}$ for every $\omega\in\Omega_{0}$, the
lemma holds trivially when $i=s$. Assume by contradiction that the
lemma is false for some $1\le i<s$ and $\epsilon>0$. Then for every
$n\ge1$ there exist $\omega_{n}\in\Omega_{0}$ and $x_{n},y_{n}\in W_{\omega_{n}}^{i}$
such that $\kappa(\omega_{n})\ge\epsilon$, $\Vert x_{n}\wedge y_{n}\Vert=1$
and $\Vert\mathrm{A}^{2}P_{(V_{\omega_{n}}^{i})^{\perp}}(x_{n}\wedge y_{n})\Vert\le\frac{1}{n}$.
Note that from $\Vert x_{n}\wedge y_{n}\Vert=1$ it follows that,
\[
x_{n}\wedge y_{n}=|x_{n}|^{-1}|P_{\overline{x_{n}}^{\perp}}y_{n}|^{-1}(x_{n}\wedge P_{\overline{x_{n}}^{\perp}}y_{n})\:.
\]
From this, from $P_{\overline{x_{n}}^{\perp}}y_{n}\in W_{\omega_{n}}^{i}$
and by replacing the vectors $x_{n}$ and $y_{n}$ with the vectors
$|x_{n}|^{-1}x_{n}$ and $|P_{\overline{x_{n}}^{\perp}}y_{n}|^{-1}P_{\overline{x_{n}}^{\perp}}y_{n}$
if necessary, it follows that we may assume to begin with that $|x_{n}|=|y_{n}|=1$.

Recall that by part (\ref{enu:splitting of V}) of Theorem \ref{thm:from Oseledets}
we have $\dim E_{\omega}^{k}=d_{k}$ for $\omega\in\Omega_{0}$ and
$0\le k\le s$. Set $q_{1}=\sum_{k=0}^{i}d_{k}$ and $q_{2}=\sum_{k=i+1}^{s}d_{k}$,
then $W_{\omega_{n}}^{i}\in\mathrm{Gr}(q_{1},V)$ and $V_{\omega_{n}}^{i}\in\mathrm{Gr}(q_{2},V)$
for $n\ge1$. By moving to a subsequence without changing the notation,
we may assume that there exist $W\in\mathrm{Gr}(q_{1},V)$, $U\in\mathrm{Gr}(q_{2},V)$
and $x,y\in W$ such that $W_{\omega_{n}}^{i}\overset{n}{\rightarrow}W$,
$V_{\omega_{n}}^{i}\overset{n}{\rightarrow}U$, $x_{n}\overset{n}{\rightarrow}x$
and $y_{n}\overset{n}{\rightarrow}y$. Since $\kappa(\omega_{n})\ge\epsilon$
for $n\ge1$, it follows from the definition of $\kappa$ that,
\[
d(\overline{w},\overline{u})\ge\epsilon\text{ for all }\overline{w}\in W\text{ and }\overline{u}\in U\:.
\]
In particular we have $V=W\oplus U$. From,
\[
\Vert\mathrm{A}^{2}P_{(V_{\omega_{n}}^{i})^{\perp}}(x_{n}\wedge y_{n})\Vert\le\frac{1}{n}\;\text{ and }\;\Vert x_{n}\wedge y_{n}\Vert=1\text{ for }n\ge1,
\]
it follows that $P_{U^{\perp}}x\wedge P_{U^{\perp}}y=0$ and $x\wedge y\ne0$.

Since $P_{U^{\perp}}x\wedge P_{U^{\perp}}y=0$ there exists $c_{x},c_{y}\in\mathbb{R}$,
not both $0$, such that $c_{x}P_{U^{\perp}}x+c_{y}P_{U^{\perp}}y=0$,
and so $P_{U^{\perp}}(c_{x}x+c_{y}y)=0$. From $x\wedge y\ne0$ it
follows that $c_{x}x+c_{y}y\ne0$, which shows that the restriction
of $P_{U^{\perp}}$ to $W$ is not injective. But this clearly contradicts
$V=W\oplus U$, which completes the proof of the lemma.
\end{proof}
\begin{lem}
\label{lem:lb on wedge}Let $0\le i<s$ be given, then for every $\epsilon>0$
there exists $\delta=\delta(\epsilon)>0$ such that the following
holds. Let $\omega\in\Omega_{0}$, $w\in W_{\omega}^{i}$ and $v_{1},v_{2}\in V_{\omega}^{i}$
be with $\kappa(\omega)\ge\epsilon$, $\epsilon\le|w|\le\epsilon^{-1}$
and $|v_{1}|\le\epsilon^{-1}$. Then,
\[
\Vert(w+v_{1})\wedge v_{2}\Vert\ge\delta|w+v_{1}|\cdot|v_{2}|\:.
\]
\end{lem}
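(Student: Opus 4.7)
The plan is to mimic the compactness argument used in the preceding Lemma \ref{lem:lb on norm of proj of wedge}. Suppose, towards a contradiction, that the conclusion fails for some $\epsilon>0$. Then for every $n\ge 1$ we can choose $\omega_{n}\in\Omega_{0}$, $w_{n}\in W_{\omega_{n}}^{i}$ and $v_{1,n},v_{2,n}\in V_{\omega_{n}}^{i}$ satisfying $\kappa(\omega_{n})\ge\epsilon$, $\epsilon\le|w_{n}|\le\epsilon^{-1}$, $|v_{1,n}|\le\epsilon^{-1}$, such that
\[
\Vert(w_{n}+v_{1,n})\wedge v_{2,n}\Vert<\tfrac{1}{n}\,|w_{n}+v_{1,n}|\cdot|v_{2,n}|.
\]
Dividing $v_{2,n}$ by its norm (which we may assume is nonzero, else there is nothing to prove), I may assume $|v_{2,n}|=1$.

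Next, I would pass to a subsequence so that the relevant data converges. Using $\dim W_{\omega_{n}}^{i}=\sum_{k=0}^{i}d_{k}$ and $\dim V_{\omega_{n}}^{i}=\sum_{k=i+1}^{s}d_{k}$, compactness of the appropriate Grassmannians yields subspaces $W$ and $U$ of the matching dimensions with $W_{\omega_{n}}^{i}\to W$ and $V_{\omega_{n}}^{i}\to U$. The uniform bound $\kappa(\omega_{n})\ge\epsilon$ passes to the limit to give $d(\bar x,\bar y)\ge\epsilon$ for all $\bar x\in\mathrm{P}(W)$, $\bar y\in\mathrm{P}(U)$, hence $V=W\oplus U$. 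The bounds on $|w_{n}|$, $|v_{1,n}|$, $|v_{2,n}|$ also let me extract limits $w_{n}\to w\in W$, $v_{1,n}\to v_{1}\in U$, $v_{2,n}\to v_{2}\in U$ satisfying $\epsilon\le|w|\le\epsilon^{-1}$, $|v_{1}|\le\epsilon^{-1}$ and $|v_{2}|=1$. A standard estimate using the angle bound between $W_{\omega_{n}}^{i}$ and $V_{\omega_{n}}^{i}$ gives $|w_{n}+v_{1,n}|\ge c(\epsilon)|w_{n}|\ge c(\epsilon)\epsilon$ for some $c(\epsilon)>0$, so the limit $w+v_{1}$ is nonzero and the divisions above are legitimate; passing to the limit in the inequality yields $\Vert(w+v_{1})\wedge v_{2}\Vert=0$.

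This means $w+v_{1}$ and $v_{2}$ are linearly dependent. Since $|v_{2}|=1$, we may write $w+v_{1}=cv_{2}$ for some $c\in\mathbb{R}$, i.e.\ $w=cv_{2}-v_{1}$. But $w\in W$ while $cv_{2}-v_{1}\in U$, and $V=W\oplus U$ forces $w=0$, contradicting $|w|\ge\epsilon>0$. The expected obstacle is purely bookkeeping: one must be careful that the limits of $v_{1,n}$ and $v_{2,n}$ actually lie in $U$ (which follows from $v_{j,n}\in V_{\omega_{n}}^{i}\to U$ together with the norm bounds), and that $w_{n}+v_{1,n}$ stays bounded away from $0$ uniformly in $n$; both follow readily from the angle lower bound provided by $\kappa(\omega_{n})\ge\epsilon$. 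No other analytic input is needed, so the proof is essentially a soft compactness/transversality argument.
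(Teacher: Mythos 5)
Your argument is correct and is essentially identical to the paper's proof: the same contradiction setup, normalisation $|v_{2,n}|=1$, compactness of the Grassmannians, passage of $\kappa(\omega_n)\ge\epsilon$ to the limit to get $V=W\oplus U$, and the final contradiction $w=cv_2-v_1\in W\cap U=\{0\}$ against $|w|\ge\epsilon$. The extra lower bound $|w_n+v_{1,n}|\ge c(\epsilon)\epsilon$ you mention is harmless but not needed, since the right-hand side already tends to $0$ from the upper bound $|w_n+v_{1,n}|\le 2\epsilon^{-1}$, and linear dependence with $v_2\ne0$ suffices for the last step.
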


\begin{proof}
Assume by contradiction that the lemma is false for some $0\le i<s$
and $\epsilon>0$. Then for every $n\ge1$ there exist $\omega_{n}\in\Omega_{0}$,
$w_{n}\in W_{\omega_{n}}^{i}$ and $v_{1,n},v_{2,n}\in V_{\omega_{n}}^{i}$
such that $\kappa(\omega_{n})\ge\epsilon$, $\epsilon\le|w_{n}|\le\epsilon^{-1}$,
$|v_{1,n}|\le\epsilon^{-1}$, $|v_{2,n}|=1$ and,
\[
\Vert(w_{n}+v_{1,n})\wedge v_{2,n}\Vert\le\frac{1}{n}|w_{n}+v_{1,n}|\:.
\]
Set $q_{1}=\sum_{k=0}^{i}d_{k}$ and $q_{2}=\sum_{k=i+1}^{s}d_{k}$.
By moving to a subsequence without changing the notation we may assume
that there exist $W\in\mathrm{Gr}(q_{1},V)$, $U\in\mathrm{Gr}(q_{2},V)$,
$w\in W$ and $u_{1},u_{2}\in U$ such that $W_{\omega_{n}}^{i}\overset{n}{\rightarrow}W$,
$V_{\omega_{n}}^{i}\overset{n}{\rightarrow}U$, $w_{n}\overset{n}{\rightarrow}w$,
$v_{1,n}\overset{n}{\rightarrow}u_{1}$ and $v_{2,n}\overset{n}{\rightarrow}u_{2}$.

As in the proof of Lemma \ref{lem:lb on norm of proj of wedge}, since
$\kappa(\omega_{n})\ge\epsilon$ for $n\ge1$ we have $V=W\oplus U$.
From $|w_{n}|\ge\epsilon$ and $|v_{2,n}|=1$ for $n\ge1$ it follows
that $w\ne0$ and $u_{2}\ne0$. Since,
\[
\frac{1}{n}|w_{n}+v_{1,n}|\le2/(\epsilon n)\text{ for }n\ge1,
\]
it is also clear that $(w+u_{1})\wedge u_{2}=0$. Thus there exists
$c\in\mathbb{R}$ such that $w=cu_{2}-u_{1}$. But this contradicts
$V=W\oplus U$ and $w\ne0$, which completes the proof of the lemma.
\end{proof}
\begin{lem}
\label{lem:Gamma sub T}For every $\epsilon>0$ there exists $M=M(\epsilon)>1$
such that the following holds. Let $1\le i\le s$, $\omega\in\Omega_{0}$
and $r>0$ be given. Suppose that $\kappa(\omega)\ge\epsilon$, $\Vert g_{\omega}\pi\omega\Vert_{\infty}\le\epsilon^{-1}$
and $r<M^{-1}$, then
\[
\Gamma_{i}(\omega,r)\cap\xi_{i-1}(\omega)\subset T_{i}(\omega,Mr)\:.
\]
\end{lem}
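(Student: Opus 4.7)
The plan is to translate the hypothesis $d(P_{(V_\omega^i)^\perp}\pi\omega,P_{(V_\omega^i)^\perp}\pi\eta)\le r$ into the upper bound $|g_\omega^i\pi\omega-g_\omega^i\pi\eta|\le Mr$, exploiting the fact that $\eta\in\xi_{i-1}(\omega)$ kills the first $i$ components of $g_\omega\pi\eta$. Fix representatives $x_\omega\in\pi\omega$, $x_\eta\in\pi\eta$ with $f_\omega^0(x_\omega)=f_\omega^0(x_\eta)=1$, and set
$$y=\sum_{k=0}^{i-1}g_\omega^k\pi\omega,\qquad w=y+g_\omega^i\pi\omega,\qquad v=g_\omega^i\pi\eta-g_\omega^i\pi\omega.$$
Lemma \ref{lem:dif char of xi_i} gives $y=\sum_{k=0}^{i-1}g_\omega^k\pi\eta$, and since $g_\omega^k\pi\omega,g_\omega^k\pi\eta\in V_\omega^i$ for $k\ge i+1$ we obtain $P_{(V_\omega^i)^\perp}x_\omega=P_{(V_\omega^i)^\perp}w$ and $P_{(V_\omega^i)^\perp}x_\eta=P_{(V_\omega^i)^\perp}(w+v)$. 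Hence
$$P_{(V_\omega^i)^\perp}x_\omega\wedge P_{(V_\omega^i)^\perp}x_\eta=\mathrm{A}^2P_{(V_\omega^i)^\perp}(w\wedge v),$$
with $w,v\in W_\omega^i$.

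Next I would chain two lower bounds for this numerator. Since $\kappa(\omega)\ge\epsilon$, Lemma \ref{lem:lb on norm of proj of wedge} applies in $W_\omega^i$ and yields $\Vert\mathrm{A}^2P_{(V_\omega^i)^\perp}(w\wedge v)\Vert\ge\delta_1(\epsilon)\,\Vert w\wedge v\Vert$. To bound $\Vert w\wedge v\Vert$ from below I would invoke Lemma \ref{lem:lb on wedge} with its index equal to $i-1$, applied to the triple $(y,g_\omega^i\pi\omega,v)$: here $y\in W_\omega^{i-1}$, and $g_\omega^i\pi\omega,v\in E_\omega^i\subset V_\omega^{i-1}$. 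Its hypotheses are checked using $\Vert g_\omega\pi\omega\Vert_\infty\le\epsilon^{-1}$, which yields $|y|\le s/\epsilon$ and $|g_\omega^i\pi\omega|\le 1/\epsilon$, together with Lemma \ref{lem:lb on norm} applied to $y$ (noting $L_\omega^0y=u_\omega^0$), which combined with $\kappa(\omega)\ge\epsilon$ gives $|y|\ge 2^{-s/2}\epsilon^{s}$. The conclusion is $\Vert w\wedge v\Vert\ge\delta_2(\epsilon)\,|w|\cdot|v|$, and an additional application of Lemma \ref{lem:lb on norm} to $w$ (using $L_\omega^0w=u_\omega^0$) produces $|w|\ge 2^{-s/2}\epsilon^{s}$.

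For the denominator in $d(\overline{a},\overline{b})=|a|^{-1}|b|^{-1}\Vert a\wedge b\Vert$ I would use the trivial bounds $|P_{(V_\omega^i)^\perp}x_\omega|\le|x_\omega|\le s/\epsilon$ and $|P_{(V_\omega^i)^\perp}x_\eta|\le|w|+|v|\le(s+1)/\epsilon+|v|$, where the $|v|$ term is the only piece depending on $\eta$. Substituting into the hypothesis $d(\cdot,\cdot)\le r$ produces an inequality of the shape
$$r\;\ge\;\frac{A(\epsilon)\,|v|}{B(\epsilon)+|v|}$$
for explicit positive constants $A(\epsilon),B(\epsilon)$ depending only on $\epsilon$ and $s$.

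The argument closes by a one-line dichotomy. If $|v|\le B(\epsilon)$ then the displayed inequality gives $|v|\le 2B(\epsilon)r/A(\epsilon)$, while if $|v|>B(\epsilon)$ it forces $r\ge A(\epsilon)/2$. Taking $M:=2\max(1,B(\epsilon))/A(\epsilon)$, the assumption $r<M^{-1}$ rules out the second alternative and yields $|v|\le Mr$, i.e.\ $\eta\in T_i(\omega,Mr)$. The main technical point is the verification of the hypotheses of Lemma \ref{lem:lb on wedge}, especially the two-sided control of $|y|$; this is where both standing assumptions $\kappa(\omega)\ge\epsilon$ and $\Vert g_\omega\pi\omega\Vert_\infty\le\epsilon^{-1}$ are essentially used.
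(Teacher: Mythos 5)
Your argument is correct and follows essentially the same route as the paper: reduce to $w\wedge v$ with $w,v\in W_\omega^i$, apply Lemma \ref{lem:lb on norm of proj of wedge} and then Lemma \ref{lem:lb on wedge} with index $i-1$ to $(y,\,g_\omega^i\pi\omega,\,v)$, bound $|w|$ below via Lemma \ref{lem:lb on norm}, and close with a dichotomy that forces $|v|$ small once $r<M^{-1}$ (the paper phrases the dichotomy in terms of $|g_\omega^i\pi\eta|$ rather than $|v|$, which is an equivalent rearrangement). The only blemish is a harmless miscount $|x_\omega|\le s/\epsilon$, which should read $(s+1)/\epsilon$ since $x_\omega$ has $s+1$ components; it only changes the unimportant constants $A(\epsilon),B(\epsilon),M(\epsilon)$.
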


\begin{proof}
Let $\epsilon>0$, let $\delta>0$ be small with respect to $\epsilon$
and $s$, and let $M>1$ be large with respect to $\delta$. Let $1\le i\le s$,
$\omega\in\Omega_{0}$ and $r>0$, and suppose that $\kappa(\omega)\ge\epsilon$,
$\Vert g_{\omega}\pi\omega\Vert_{\infty}\le\epsilon^{-1}$ and $r<M^{-1}$.
Let $\eta\in\Gamma_{i}(\omega,r)\cap\xi_{i-1}(\omega)$ and fix $x_{\omega}\in\pi\omega$
and $x_{\eta}\in\pi\eta$ with $f_{\omega}^{0}(x_{\omega})=f_{\omega}^{0}(x_{\eta})=1$.

Write $y=\sum_{k=0}^{i-1}g_{\omega}^{k}\pi\omega$. From $\eta\in\xi_{i-1}(\omega)$
and Lemma \ref{lem:dif char of xi_i} it follows that $y=\sum_{k=0}^{i-1}g_{\omega}^{k}\pi\eta$.
Since $g_{\omega}^{k}\pi\omega,g_{\omega}^{k}\pi\eta\in V_{\omega}^{i}$
for all $i<k\le s$,
\[
P_{(V_{\omega}^{i})^{\perp}}x_{\omega}=P_{(V_{\omega}^{i})^{\perp}}(y+g_{\omega}^{i}\pi\omega)\;\text{ and }\;P_{(V_{\omega}^{i})^{\perp}}x_{\eta}=P_{(V_{\omega}^{i})^{\perp}}(y+g_{\omega}^{i}\pi\eta)\:.
\]
Thus, by Lemma \ref{lem:lb on norm of proj of wedge} and by assuming
that $\delta$ is small enough with respect to $\epsilon$,
\begin{eqnarray}
\Vert P_{(V_{\omega}^{i})^{\perp}}x_{\omega}\wedge P_{(V_{\omega}^{i})^{\perp}}x_{\eta}\Vert & = & \Vert\mathrm{A}^{2}P_{(V_{\omega}^{i})^{\perp}}\left((y+g_{\omega}^{i}\pi\omega)\wedge(y+g_{\omega}^{i}\pi\eta)\right)\Vert\nonumber \\
 & \ge & \delta\Vert(y+g_{\omega}^{i}\pi\omega)\wedge(y+g_{\omega}^{i}\pi\eta)\Vert\label{eq:first lb for wedge of proj}\\
 & = & \delta\Vert(y+g_{\omega}^{i}\pi\omega)\wedge(g_{\omega}^{i}\pi\eta-g_{\omega}^{i}\pi\omega)\Vert\:.\nonumber 
\end{eqnarray}
By Lemma \ref{lem:lb on norm} and $|L_{\omega}^{0}y|=1$,
\[
|y|\ge2^{-s/2}\kappa(\omega)^{s}\Vert L_{\omega}(y)\Vert_{\infty}\ge2^{-s/2}\epsilon^{s}\:.
\]
From $\Vert g_{\omega}\pi\omega\Vert_{\infty}\le\epsilon^{-1}$ it
follows that $|y|\le s\epsilon^{-1}$ and $|g_{\omega}^{i}\pi\omega|\le\epsilon^{-1}$.
Thus by (\ref{eq:first lb for wedge of proj}), Lemma \ref{lem:lb on wedge}
and by assuming that $\delta$ is small enough with respect to $\epsilon$
and $s$,
\[
\Vert P_{(V_{\omega}^{i})^{\perp}}x_{\omega}\wedge P_{(V_{\omega}^{i})^{\perp}}x_{\eta}\Vert\ge\delta^{2}|y+g_{\omega}^{i}\pi\omega|\cdot|g_{\omega}^{i}\pi\eta-g_{\omega}^{i}\pi\omega|\:.
\]
From Lemma \ref{lem:lb on norm} we get $|y+g_{\omega}^{i}\pi\omega|\ge2^{-s/2}\epsilon^{s}$.
Hence we may assume that,
\[
\Vert P_{(V_{\omega}^{i})^{\perp}}x_{\omega}\wedge P_{(V_{\omega}^{i})^{\perp}}x_{\eta}\Vert\ge\delta^{3}|g_{\omega}^{i}\pi\eta-g_{\omega}^{i}\pi\omega|\:.
\]
Additionally,
\[
|P_{(V_{\omega}^{i})^{\perp}}x_{\omega}|\le|x_{\omega}|\le(s+1)\Vert g_{\omega}\pi\omega\Vert_{\infty}\le(s+1)\epsilon^{-1}\le\delta^{-1}\:.
\]
Thus from $\eta\in\Gamma_{i}(\omega,r)$,
\begin{eqnarray}
r & \ge & d(P_{(V_{\omega}^{i})^{\perp}}\pi\omega,P_{(V_{\omega}^{i})^{\perp}}\pi\eta)\nonumber \\
 & = & |P_{(V_{\omega}^{i})^{\perp}}x_{\omega}|^{-1}|P_{(V_{\omega}^{i})^{\perp}}x_{\eta}|^{-1}\Vert P_{(V_{\omega}^{i})^{\perp}}x_{\omega}\wedge P_{(V_{\omega}^{i})^{\perp}}x_{\eta}\Vert\label{eq:lb of r}\\
 & \ge & \delta^{4}\cdot|P_{(V_{\omega}^{i})^{\perp}}x_{\eta}|^{-1}\cdot|g_{\omega}^{i}\pi\eta-g_{\omega}^{i}\pi\omega|\:.\nonumber 
\end{eqnarray}

Now assume by contradiction that $|g_{\omega}^{i}\pi\eta|>2\Vert g_{\omega}\pi\omega\Vert_{\infty}$,
then
\[
|g_{\omega}^{i}\pi\eta-g_{\omega}^{i}\pi\omega|\ge|g_{\omega}^{i}\pi\eta|-\Vert g_{\omega}\pi\omega\Vert_{\infty}\ge\frac{1}{2}|g_{\omega}^{i}\pi\eta|\:.
\]
Also, by assuming that $\delta$ is small enough with respect to $\epsilon$
and $s$,
\[
|P_{(V_{\omega}^{i})^{\perp}}x_{\eta}|=|P_{(V_{\omega}^{i})^{\perp}}(y+g_{\omega}^{i}\pi\eta)|\le|y|+|g_{\omega}^{i}\pi\eta|\le\delta^{-1}|g_{\omega}^{i}\pi\eta|\:.
\]
Hence by (\ref{eq:lb of r}),
\[
r\ge\delta^{5}\cdot|g_{\omega}^{i}\pi\eta|^{-1}\cdot\frac{1}{2}|g_{\omega}^{i}\pi\eta|=\delta^{5}/2\:.
\]
But by assuming that $M>2\delta^{-5}$ this contradicts $r<M^{-1}$,
and so we must have
\[
|g_{\omega}^{i}\pi\eta|\le2\Vert g_{\omega}\pi\omega\Vert_{\infty}\le2\epsilon^{-1}\:.
\]
This gives,
\[
|P_{(V_{\omega}^{i})^{\perp}}x_{\eta}|\le|y|+|g_{\omega}^{i}\pi\eta|\le(s+2)\epsilon^{-1}\le\delta^{-1},
\]
and so by (\ref{eq:lb of r}),
\[
r\ge\delta^{5}|g_{\omega}^{i}\pi\eta-g_{\omega}^{i}\pi\omega|\:.
\]
Assuming $M\ge\delta^{-5}$ this gives $\eta\in T_{i}(\omega,Mr)$,
which completes the proof of the lemma.
\end{proof}
The advantage of working with the sets $T_{i}(\omega,r)$ is that
they behave relatively well with respect to the shift $\sigma$. This
is displayed by the following lemma. For $1\le i\le s$, $\omega\in\Omega_{0}$
and $n\ge0$ set,
\[
a_{\omega,n}^{i}:=\frac{1}{|A_{\omega_{0}...\omega_{n-1}}u_{\sigma^{n}\omega}^{0}|}\cdot\underset{x\in E_{\sigma^{n}\omega}^{i},|x|=1}{\min}\:|A_{\omega_{0}...\omega_{n-1}}x|\:.
\]

\begin{lem}
\label{lem:semi-equi of T_i}Let $1\le i\le s$, $\omega\in\Omega_{0}$,
$n\ge0$ and $r>0$ be given. Then,
\[
T_{i}(\omega,a_{\omega,n}^{i}r)\cap\mathcal{P}_{0}^{n-1}(\omega)\subset\sigma^{-n}T_{i}(\sigma^{n}\omega,r)\:.
\]
\end{lem}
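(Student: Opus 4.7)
The plan is as follows. Fix $\eta\in T_{i}(\omega,a^{i}_{\omega,n}r)\cap\mathcal{P}_{0}^{n-1}(\omega)$. I must show $\sigma^{n}\eta\in T_{i}(\sigma^{n}\omega,r)$, which has two requirements: (a) $\sigma^{n}\eta\in\xi_{i-1}(\sigma^{n}\omega)$ and (b) $|g^{i}_{\sigma^{n}\omega}(\pi\sigma^{n}\omega)-g^{i}_{\sigma^{n}\omega}(\pi\sigma^{n}\eta)|\le r$. Part (a) is immediate from Lemma \ref{lem:eq of partitions}, which gives $\xi_{i-1}(\omega)\cap\mathcal{P}_{0}^{n-1}(\omega)=\sigma^{-n}\xi_{i-1}(\sigma^{n}\omega)$, so everything reduces to establishing (b).

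Write $A:=A_{\omega_{0}\ldots\omega_{n-1}}$; since $\eta\in\mathcal{P}_{0}^{n-1}(\omega)$ this also equals $A_{\eta_{0}\ldots\eta_{n-1}}$. By the equivariance in Theorem \ref{thm:def of bd map}(\ref{enu:equi of pi}), $\pi\omega=A\pi\sigma^{n}\omega$ and $\pi\eta=A\pi\sigma^{n}\eta$. Iterating Theorem \ref{thm:from Oseledets}(\ref{enu:equi of E^i}) shows $AE^{k}_{\sigma^{n}\omega}=E^{k}_{\omega}$ for every $0\le k\le s$, so the Oseledets projections satisfy $L^{k}_{\omega}\circ A=A\circ L^{k}_{\sigma^{n}\omega}$. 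In particular, $Au^{0}_{\sigma^{n}\omega}=c\,u^{0}_{\omega}$ for a nonzero scalar $c$ with $|c|=|Au^{0}_{\sigma^{n}\omega}|$.

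Next, pick the unique representatives $x_{\sigma^{n}\omega}\in\pi\sigma^{n}\omega$ and $x_{\sigma^{n}\eta}\in\pi\sigma^{n}\eta$ with $f^{0}_{\sigma^{n}\omega}(\,\cdot\,)=1$, so that $g^{k}_{\sigma^{n}\omega}(\pi\sigma^{n}\omega)=L^{k}_{\sigma^{n}\omega}x_{\sigma^{n}\omega}$ and likewise for $\eta$. A short check using the intertwining relation and the identity $Au^{0}_{\sigma^{n}\omega}=c\,u^{0}_{\omega}$ shows that $c^{-1}Ax_{\sigma^{n}\omega}$ and $c^{-1}Ax_{\sigma^{n}\eta}$ are the representatives of $\pi\omega$ and $\pi\eta$ satisfying $f^{0}_{\omega}(\,\cdot\,)=1$. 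Consequently,
\[
g^{i}_{\omega}(\pi\omega)-g^{i}_{\omega}(\pi\eta)=c^{-1}A\bigl(g^{i}_{\sigma^{n}\omega}(\pi\sigma^{n}\omega)-g^{i}_{\sigma^{n}\omega}(\pi\sigma^{n}\eta)\bigr).
\]

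Setting $v:=g^{i}_{\sigma^{n}\omega}(\pi\sigma^{n}\omega)-g^{i}_{\sigma^{n}\omega}(\pi\sigma^{n}\eta)\in E^{i}_{\sigma^{n}\omega}$, the operator lower bound $|Av|\ge|v|\cdot\min_{x\in E^{i}_{\sigma^{n}\omega},|x|=1}|Ax|$ combined with $|c|=|Au^{0}_{\sigma^{n}\omega}|$ yields
\[
|g^{i}_{\omega}(\pi\omega)-g^{i}_{\omega}(\pi\eta)|\ge a^{i}_{\omega,n}|v|.
\]
The hypothesis $\eta\in T_{i}(\omega,a^{i}_{\omega,n}r)$ then forces $|v|\le r$, which is (b). No essential obstacle is expected: the scalar $a^{i}_{\omega,n}$ was defined precisely to absorb the ratio between the expansion of $A$ on $E^{i}_{\sigma^{n}\omega}$ and its action on $u^{0}_{\sigma^{n}\omega}$, which is exactly how the $g^{i}$-coordinate transforms under the shift, so the proof reduces to carefully unpacking equivariance of $\pi$ and of the Oseledets splitting.
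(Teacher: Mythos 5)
Your proposal is correct and follows essentially the same route as the paper: both use the equivariance $A_{\omega_0\ldots\omega_{n-1}}E^k_{\sigma^n\omega}=E^k_\omega$ to intertwine the projections $L^k$, identify the normalized representatives (with $f^0=1$) of $\pi\omega$ and $\pi\eta$ as $c^{-1}A$ applied to those of $\pi\sigma^n\omega$ and $\pi\sigma^n\eta$ where $|c|=|Au^0_{\sigma^n\omega}|$, and then bound $|A v|$ from below by $\min_{x\in E^i_{\sigma^n\omega},|x|=1}|Ax|\cdot|v|$ so that $a^i_{\omega,n}$ absorbs exactly the right ratio; the membership in $\xi_{i-1}(\sigma^n\omega)$ is handled via Lemma \ref{lem:eq of partitions} in both arguments.
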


\begin{proof}
Write $A_{\omega,n}$ for $A_{\omega_{0}...\omega_{n-1}}$. By part
(\ref{enu:equi of E^i}) of Theorem \ref{thm:from Oseledets},
\begin{equation}
E_{\omega}^{k}=A_{\omega,n}E_{\sigma^{n}\omega}^{k}\text{ for }0\le k\le s\:.\label{eq:AE_sig=00003D}
\end{equation}
Since $\dim E_{\eta}^{0}=1$ for all $\eta\in\Omega_{0}$, there exists
$b_{\omega,n}=\pm1$ such that,
\[
\frac{A_{\omega,n}u_{\sigma^{n}\omega}^{0}}{|A_{\omega,n}u_{\sigma^{n}\omega}^{0}|}=b_{\omega,n}u_{\omega}^{0}\:.
\]

Let $\eta\in T_{i}(\omega,a_{\omega,n}^{i}r)\cap\mathcal{P}_{0}^{n-1}(\omega)$
and fix $x_{\sigma^{n}\eta}\in\pi\sigma^{n}\eta$ with $f_{\sigma^{n}\omega}^{0}(x_{\sigma^{n}\eta})=1$.
We have,
\begin{eqnarray*}
A_{\omega,n}x_{\sigma^{n}\eta} & = & A_{\omega,n}u_{\sigma^{n}\omega}^{0}+\sum_{k=1}^{s}A_{\omega,n}L_{\sigma^{n}\omega}^{k}x_{\sigma^{n}\eta}\\
 & = & \left|A_{\omega,n}u_{\sigma^{n}\omega}^{0}\right|b_{\omega,n}u_{\omega}^{0}+\sum_{k=1}^{s}A_{\omega,n}L_{\sigma^{n}\omega}^{k}x_{\sigma^{n}\eta}\:.
\end{eqnarray*}
From this and (\ref{eq:AE_sig=00003D}),
\[
L_{\omega}^{i}A_{\omega,n}x_{\sigma^{n}\eta}=A_{\omega,n}L_{\sigma^{n}\omega}^{i}x_{\sigma^{n}\eta}\;\text{ and }\;f_{\omega}^{0}(A_{\omega,n}x_{\sigma^{n}\eta})=\left|A_{\omega,n}u_{\sigma^{n}\omega}^{0}\right|b_{\omega,n}\:.
\]
Now note that $\pi\eta=A_{\omega,n}\pi\sigma^{n}\eta$, and so $0\ne A_{\omega,n}x_{\sigma^{n}\eta}\in\pi\eta$.
This implies,
\begin{eqnarray*}
g_{\omega}^{i}\pi\eta & = & (L_{\omega}^{i}A_{\omega,n}x_{\sigma^{n}\eta})/f_{\omega}^{0}(A_{\omega,n}x_{\sigma^{n}\eta})\\
 & = & \left|A_{\omega,n}u_{\sigma^{n}\omega}^{0}\right|^{-1}b_{\omega,n}^{-1}\cdot A_{\omega,n}L_{\sigma^{n}\omega}^{i}x_{\sigma^{n}\eta}\\
 & = & \left|A_{\omega,n}u_{\sigma^{n}\omega}^{0}\right|^{-1}b_{\omega,n}^{-1}\cdot A_{\omega,n}g_{\sigma^{n}\omega}^{i}(\pi\sigma^{n}\eta)\:.
\end{eqnarray*}
A similar argument gives,
\[
g_{\omega}^{i}\pi\omega=\left|A_{\omega,n}u_{\sigma^{n}\omega}^{0}\right|^{-1}b_{\omega,n}^{-1}\cdot A_{\omega,n}g_{\sigma^{n}\omega}^{i}(\pi\sigma^{n}\omega)\:.
\]
From these formulas, the definition of $a_{\omega,n}^{i}$ and $\eta\in T_{i}(\omega,a_{\omega,n}^{i}r)$,
we get
\begin{eqnarray*}
a_{\omega,n}^{i}r & \ge & |g_{\omega}^{i}\pi\omega-g_{\omega}^{i}\pi\eta|\\
 & = & \left|A_{\omega,n}u_{\sigma^{n}\omega}^{0}\right|^{-1}\left|A_{\omega,n}(g_{\sigma^{n}\omega}^{i}(\pi\sigma^{n}\omega)-g_{\sigma^{n}\omega}^{i}(\pi\sigma^{n}\eta))\right|\\
 & \ge & a_{\omega,n}^{i}|g_{\sigma^{n}\omega}^{i}(\pi\sigma^{n}\omega)-g_{\sigma^{n}\omega}^{i}(\pi\sigma^{n}\eta)|\:.
\end{eqnarray*}
This, together with Lemma \ref{lem:eq of partitions}, implies that
$\sigma^{n}\eta\in T_{i}(\sigma^{n}\omega,r)$, which completes the
proof of the lemma.
\end{proof}
The following theorem, which will be used in the next subsection,
is due to Maker \cite{Mak}.
\begin{thm}
\label{thm:Maker}Let $(X,\mathcal{B},\rho,T)$ be an ergodic measure
preserving system and let $h,h_{1},h_{2},...\in L^{1}(\rho)$. Suppose
that $h_{n}(x)\overset{n}{\rightarrow}h(x)$ for $\rho$-a.e. $x$
and that $\underset{n\ge1}{\sup}\:|h_{n}|$ is integrable. Then,
\[
\frac{1}{n}\sum_{j=0}^{n-1}h_{n-j}\circ T^{j}(x)\overset{n}{\rightarrow}\int h\:d\rho\quad\text{ for }\rho\text{-a.e. }x\:.
\]
\end{thm}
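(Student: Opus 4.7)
The plan is to reduce Maker's theorem to Birkhoff's ergodic theorem applied to the limit $h$. Since Birkhoff gives $\frac{1}{n}\sum_{j=0}^{n-1} h \circ T^j(x) \to \int h \, d\rho$ for $\rho$-a.e. $x$, it suffices to show that the error
\[
R_n(x) := \frac{1}{n}\sum_{j=0}^{n-1} (h_{n-j} - h) \circ T^j(x)
\]
tends to $0$ almost everywhere.

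The key auxiliary objects I would introduce are the monotone tail suprema $g_N := \sup_{k \ge N} |h_k - h|$, for $N \ge 1$. Since $h_k \to h$ pointwise a.e., one has $g_N \downarrow 0$ a.e., while $g_N \le \sup_{k \ge 1} |h_k| + |h|$ is integrable by the hypothesis on $\sup_k |h_k|$. Dominated convergence then yields $\int g_N \, d\rho \downarrow 0$ as $N \to \infty$. This monotone envelope is what allows Birkhoff to be applied to a single function that dominates the whole tail of the sequence $\{h_k - h\}$.

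Next I would fix $N$ and split $R_n = R_n^{(1)} + R_n^{(2)}$, where $R_n^{(1)}$ collects the indices $0 \le j \le n-N$ and $R_n^{(2)}$ collects $n-N+1 \le j \le n-1$. In the bulk $R_n^{(1)}$, the shifted index $n-j$ is at least $N$, so $|h_{n-j} - h| \le g_N$ and
\[
|R_n^{(1)}(x)| \le \frac{1}{n}\sum_{j=0}^{n-N} g_N \circ T^j(x) \;\longrightarrow\; \int g_N \, d\rho \quad \textrm{a.e.}
\]
by Birkhoff applied to $g_N \in L^1(\rho)$. The tail $R_n^{(2)}$ consists of at most $N-1$ summands of the form $\frac{1}{n}(h_k - h)\circ T^{n-k}$ for a fixed $k$, and each of these tends to $0$ a.e.\ because for any $f \in L^1(\rho)$ one has $f \circ T^m / m \to 0$ a.e.; this last fact follows from Birkhoff applied to $|f|$, since consecutive Birkhoff averages of $|f|$ differ by $\tfrac{1}{m}|f|\circ T^{m-1}$ and both converge to the same limit.

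Combining the two bounds gives $\limsup_n |R_n(x)| \le \int g_N \, d\rho$ for a.e. $x$, and letting $N \to \infty$ produces $R_n \to 0$ a.e. The only delicate point is controlling $R_n^{(2)}$; the hypothesis $\sup_{k \ge 1} |h_k| \in L^1(\rho)$ is used crucially both to dominate $g_N$ for the bulk estimate and to place each individual $h_k - h$ in $L^1(\rho)$ for the tail estimate.
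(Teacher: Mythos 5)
Your argument is correct, and every step checks out: the envelope $g_N=\sup_{k\ge N}|h_k-h|$ is integrable (dominated by $\sup_k|h_k|+|h|$) and decreases to $0$ a.e., so $\int g_N\,d\rho\downarrow 0$; Birkhoff applied to $g_N$ controls the bulk $0\le j\le n-N$; and the finitely many tail terms are handled by the standard fact $f\circ T^m/m\to 0$ a.e.\ for $f\in L^1(\rho)$, which you correctly deduce from consecutive Birkhoff averages. Note that the paper itself offers no proof of this statement---it is quoted as Maker's theorem with a citation to Maker's 1940 paper---so there is no internal argument to compare against; your reduction to Birkhoff via the monotone tail envelope is the classical proof of this result, and it is complete as written.
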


\subsection{\label{subsec:Proof-of-Theorem transverse}Proof of Proposition \ref{prop:transvers dim =00003D}}

We are now ready to begin the proof of Proposition \ref{prop:transvers dim =00003D}.
Let $0<\epsilon<1$ be small and let $M>1$ be large with respect
to $\epsilon$ and $s$. Set,
\[
F_{0}(\epsilon)=\{\omega\in\Omega_{0}\::\:\kappa(\omega)\ge\epsilon\text{ and }\Vert g_{\omega}\pi\omega\Vert_{\infty}\le\epsilon^{-1}\},
\]
then $\beta(F_{0}(\epsilon))>0$ by assuming that $\epsilon$ is sufficiently
small. By part (\ref{enu:asym of back mul}) of Theorem \ref{thm:from Oseledets}
there exist an integer $N=N(\epsilon,M)\ge1$ and a Borel set $F=F(\epsilon,M)\subset F_{0}(\epsilon)$,
such that $\beta(F)\ge(1-\epsilon)\beta(F_{0}(\epsilon))>0$ and for
every $1\le i\le s$,
\begin{equation}
M^{-1}\ge a_{\omega,n}^{i}\ge M\exp(n(\tilde{\lambda}_{i}-\epsilon))\text{ for }\omega\in F\text{ and }n\ge N\:.\label{eq:>=00003D a >=00003D}
\end{equation}
It is clear that $\beta(F)\rightarrow1$ as $\epsilon\rightarrow0$.
Thus in order to prove Proposition \ref{prop:transvers dim =00003D}
it suffices to show that for each $1\le i\le s$,
\begin{equation}
\vartheta_{i-1}(\omega)\ge\frac{\mathrm{H}_{i}-\mathrm{H}_{i-1}}{\tilde{\lambda}_{i}-\epsilon}\text{ for }\beta\text{-a.e. }\omega\in F\:.\label{eq:suffices to show}
\end{equation}

By the Poincaré recurrence theorem, and by removing a Borel set of
zero $\beta$-measure from $F$, we may assume that
\begin{equation}
\#\{n\ge1\::\:\sigma^{nN}\omega\in F\}=\infty\text{ for all }\omega\in F\:.\label{eq:inf many times}
\end{equation}
Let $\sigma_{F}:F\rightarrow F$ be the transformation induced by
$\sigma^{N}$ on the set $F$. That is $\sigma_{F}(\omega)=\sigma^{Nr_{F}(\omega)}(\omega)$
for $\omega\in F$, where
\[
r_{F}(\omega)=\inf\{n\ge1\::\:\sigma^{nN}\omega\in F\}\:.
\]
Let $\beta_{F}$ be the Borel probability measure on $F$ which satisfies,
\[
\beta_{F}(D)=\beta(F\cap D)/\beta(F)\text{ for any Borel set }D\subset F\:.
\]
Since $(\Omega,\sigma^{N},\beta)$ is an ergodic measure preserving
system the same is true for the system $(F,\sigma_{F},\beta_{F})$
(e.g. see \cite[Lemma 2.43]{EW}).

For $\omega\in F$ and $1\le i\le s$ set,
\[
\ell(\omega)=Nr_{F}(\omega)\;\text{ and }\;\rho(i,\omega)=\exp(\ell(\omega)(\tilde{\lambda}_{i}-\epsilon))\:.
\]
In the proof of the following lemma we are going to use the auxiliary
results obtained in Section \ref{subsec:Preparations-for-transv}.
\begin{lem}
\label{lem:equiv of Gamma}Let $\omega\in F$, $1\le i\le s$ and
$0<r\le1$ be given. Then,
\[
\xi_{i-1}(\omega)\cap\Gamma_{i}(\omega,\rho(i,\omega)r)\cap\mathcal{P}_{0}^{\ell(\omega)-1}(\omega)\subset\sigma^{-\ell(\omega)}\Gamma_{i}(\sigma_{F}\omega,r)\:.
\]
\end{lem}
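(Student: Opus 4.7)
The strategy is to chase an arbitrary element of the left-hand side through three transformations: (i) convert the $\Gamma_{i}$-condition at $\omega$ into a $T_{i}$-condition using Lemma \ref{lem:Gamma sub T}; (ii) shift this $T_{i}$-condition from $\omega$ to $\sigma_{F}\omega$ using the quasi-equivariance of $T_{i}$ established in Lemma \ref{lem:semi-equi of T_i}; (iii) convert back from a $T_{i}$-condition at $\sigma_{F}\omega$ into a $\Gamma_{i}$-condition using Lemma \ref{lem:T sub Gamma}. The contraction needed to make the whole chain close up is supplied by the lower bound $a_{\omega,\ell(\omega)}^{i}\ge M\exp(\ell(\omega)(\tilde{\lambda}_{i}-\epsilon))=M\rho(i,\omega)$ from \eqref{eq:>=00003D a >=00003D}.

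More precisely, fix $\eta\in\xi_{i-1}(\omega)\cap\Gamma_{i}(\omega,\rho(i,\omega)r)\cap\mathcal{P}_{0}^{\ell(\omega)-1}(\omega)$ and let $M_{1}=M_{1}(\epsilon)$ be the constant supplied by Lemma \ref{lem:Gamma sub T} (I use a subscript to distinguish it from the global $M$). Because $\omega\in F\subset F_{0}(\epsilon)$, the hypotheses $\kappa(\omega)\ge\epsilon$ and $\Vert g_{\omega}\pi\omega\Vert_{\infty}\le\epsilon^{-1}$ of that lemma are satisfied; and the size condition $\rho(i,\omega)r<M_{1}^{-1}$ holds provided $N$ was chosen large enough relative to $\epsilon$ and $M_{1}$ (since $\tilde{\lambda}_{i}-\epsilon<0$ and $\ell(\omega)\ge N$, so $\rho(i,\omega)r\le e^{N(\tilde{\lambda}_{i}-\epsilon)}$). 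Hence Lemma \ref{lem:Gamma sub T} yields $\eta\in T_{i}(\omega,M_{1}\rho(i,\omega)r)$. Writing $M_{1}\rho(i,\omega)r=a_{\omega,\ell(\omega)}^{i}\cdot r_{0}$ with $r_{0}:=M_{1}\rho(i,\omega)r/a_{\omega,\ell(\omega)}^{i}$, Lemma \ref{lem:semi-equi of T_i} then gives $\sigma^{\ell(\omega)}\eta\in T_{i}(\sigma_{F}\omega,r_{0})$.

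Now apply Lemma \ref{lem:T sub Gamma} at $\sigma_{F}\omega\in F$: since $\Vert g_{\sigma_{F}\omega}\pi\sigma_{F}\omega\Vert_{\infty}\le\epsilon^{-1}$ and $\kappa(\sigma_{F}\omega)\ge\epsilon$, there is a constant $C=C(\epsilon,s):=s^{3}2^{3+s}\epsilon^{-2s-3}$ such that
\[
T_{i}(\sigma_{F}\omega,r_{0})\subset\Gamma_{i}(\sigma_{F}\omega,Cr_{0})=\Gamma_{i}\!\left(\sigma_{F}\omega,\,\frac{CM_{1}\rho(i,\omega)}{a_{\omega,\ell(\omega)}^{i}}\,r\right)\subset\Gamma_{i}\!\left(\sigma_{F}\omega,\,\frac{CM_{1}}{M}\,r\right),
\]
where the last inclusion uses $a_{\omega,\ell(\omega)}^{i}\ge M\rho(i,\omega)$ from \eqref{eq:>=00003D a >=00003D}.

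The key point, and the only moment where the overall bookkeeping of constants actually has to be checked, is that the constants $M_{1}=M_{1}(\epsilon)$ and $C=C(\epsilon,s)$ arising from the two conversion lemmas depend only on $\epsilon$ and $s$, whereas $M$ was chosen at the outset to be large with respect to $\epsilon$ and $s$. Thus we may (and do) assume $M\ge CM_{1}$, which gives $CM_{1}/M\le 1$ and therefore $\sigma^{\ell(\omega)}\eta\in\Gamma_{i}(\sigma_{F}\omega,r)$, i.e.\ $\eta\in\sigma^{-\ell(\omega)}\Gamma_{i}(\sigma_{F}\omega,r)$, as required.
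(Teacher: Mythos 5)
Your proof is correct and follows essentially the same route as the paper's: convert $\Gamma_{i}$ to $T_{i}$ at $\omega$ via Lemma \ref{lem:Gamma sub T}, shift with Lemma \ref{lem:semi-equi of T_i}, convert back with Lemma \ref{lem:T sub Gamma}, and close the loop using the lower bound $a_{\omega,\ell(\omega)}^{i}\ge M\rho(i,\omega)$ from (\ref{eq:>=00003D a >=00003D}). The only difference is cosmetic bookkeeping: the paper absorbs the conversion constants into factors of $M^{1/2}$, while you track $M_{1}$ and $C$ explicitly and require $M\ge CM_{1}$ at the end, which is equally valid since both depend only on $\epsilon$ and $s$.
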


\begin{proof}
Write,
\[
L=\xi_{i-1}(\omega)\cap\Gamma_{i}(\omega,\rho(i,\omega)r)\cap\mathcal{P}_{0}^{\ell(\omega)-1}(\omega)\:.
\]
By (\ref{eq:>=00003D a >=00003D}) and since $\ell(\omega)\ge N$,
\[
M^{-1}\ge a_{\omega,\ell(\omega)}^{i}\ge M\rho(i,\omega)\;.
\]
Since $F\subset F_{0}(\epsilon)$, we have $\kappa(\omega)\ge\epsilon$
and $\Vert g_{\omega}\pi\omega\Vert_{\infty}\le\epsilon^{-1}$. From
this, $\rho(i,\omega)r<M^{-1}$, Lemma \ref{lem:Gamma sub T} and
by assuming that $M$ is sufficiently large with respect to $\epsilon$,
\[
L\subset T_{i}(\omega,M^{1/2}\rho(i,\omega)r)\cap\mathcal{P}_{0}^{\ell(\omega)-1}(\omega)\:.
\]
Thus from $M\rho(i,\omega)\le a_{\omega,\ell(\omega)}^{i}$ and Lemma
\ref{lem:semi-equi of T_i},
\begin{equation}
L\subset T_{i}(\omega,M^{-1/2}a_{\omega,\ell(\omega)}^{i}r)\cap\mathcal{P}_{0}^{\ell(\omega)-1}(\omega)\subset\sigma^{-\ell(\omega)}T_{i}(\sigma^{\ell(\omega)}\omega,M^{-1/2}r)\:.\label{eq:L sub sid^-ell}
\end{equation}

Write,
\[
R=s^{3}2^{3+s}\Vert g_{\sigma^{\ell(\omega)}\omega}\pi\sigma^{\ell(\omega)}\omega\Vert_{\infty}\kappa(\sigma^{\ell(\omega)}\omega)^{-2s-2}\:.
\]
Then by Lemma \ref{lem:T sub Gamma},
\begin{equation}
T_{i}(\sigma^{\ell(\omega)}\omega,M^{-1/2}r)\subset\Gamma_{i}(\sigma^{\ell(\omega)}\omega,RM^{-1/2}r)\:.\label{eq:T_i(sig^ell) sub}
\end{equation}
Since $\sigma^{\ell(\omega)}\omega\in F$ we have,
\[
\kappa(\sigma^{\ell(\omega)}\omega)\ge\epsilon\quad\text{ and }\quad\Vert g_{\sigma^{\ell(\omega)}\omega}\pi\sigma^{\ell(\omega)}\omega\Vert_{\infty}\le\epsilon^{-1}\:.
\]
Hence, by taking $M$ to be sufficiently large with respect to $\epsilon$
and $s$ we may assume that $RM^{-1/2}\le1$. From this, (\ref{eq:L sub sid^-ell})
and (\ref{eq:T_i(sig^ell) sub}) we now get,
\[
L\subset\sigma^{-\ell(\omega)}\Gamma_{i}(\sigma^{\ell(\omega)}\omega,r)\:.
\]
Since $\sigma^{\ell(\omega)}\omega=\sigma_{F}\omega$ this completes
the proof of the lemma.
\end{proof}
The rest of the proof of Proposition \ref{prop:transvers dim =00003D}
is similar to the proof of \cite[Proposition 5.1]{Fe}. For completeness
and clarity we essentially provide full details. We shall need to
establish some more lemmas before we can continue with the proof of
(\ref{eq:suffices to show}).

For $n\ge1$ write,
\[
F_{n}=\{\omega\in F\::\:r_{F}(\omega)=n\}\:.
\]
The following lemma will enable us to apply Maker's ergodic theorem,
which was stated above.
\begin{lem}
\label{lem:prep for maker}Let $1\le i\le s$ be given. Then for $\beta$-a.e.
$\omega\in F$,
\[
\underset{r\downarrow0}{\lim}\:\log\frac{\beta_{\omega}^{\xi_{i-1}}(\Gamma_{i}(\omega,r)\cap\mathcal{P}_{0}^{\ell(\omega)-1}(\omega))}{\beta_{\omega}^{\xi_{i-1}}(\Gamma_{i}(\omega,r))}=-\sum_{0\le j<Nr_{F}(\omega)}\:\mathrm{I}_{\beta}(\mathcal{P}\mid\widehat{\xi_{i}})(\sigma^{j}\omega)\:.
\]
Furthermore, set
\[
q(\omega)=-\underset{r>0}{\inf}\:\log\frac{\beta_{\omega}^{\xi_{i-1}}(\Gamma_{i}(\omega,r)\cap\mathcal{P}_{0}^{\ell(\omega)-1}(\omega))}{\beta_{\omega}^{\xi_{i-1}}(\Gamma_{i}(\omega,r))},
\]
then $q\ge0$ and $q\in L^{1}(\beta_{F})$.
\end{lem}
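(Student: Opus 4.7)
I will prove the two assertions in turn, aiming to reduce the limit to a single application of Lemma~\ref{lem:con exp as lim}, after which Lemma~\ref{lem:asym of ergo sum of info} gives the desired information-sum formula; the integrability of $q$ will be the main obstacle.

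For the limit, the plan is to apply Lemma~\ref{lem:con exp as lim} with the probability space $(\Omega_{0},\beta)$, the partition $\xi_{i-1}$, and the Borel map $\phi_{i}:\Omega_{0}\rightarrow\mathrm{P}(V)$ defined by $\phi_{i}(\omega)=P_{(V_{\omega}^{i})^{\perp}}\pi\omega$. This map is well defined by Lemma~\ref{lem:pi om not in P(V^0_om)}, and $\mathrm{P}(V)$ is Besicovitch. Since $\beta_{\omega}^{\xi_{i-1}}$ is supported on $\xi_{i-1}(\omega)\subset\xi_{0}(\omega)$ and $\omega\mapsto V_{\omega}^{i}$ depends only on the negative coordinates, the sets $\Gamma_{i}(\omega,r)$ and $B^{\phi_{i}}(\omega,r)$ coincide on the support of $\beta_{\omega}^{\xi_{i-1}}$, so Lemma~\ref{lem:con exp as lim} yields, for every fixed Borel $A\subset\Omega_{0}$ and $\beta$-a.e.\ $\omega$,
\[
\lim_{r\downarrow0}\frac{\beta_{\omega}^{\xi_{i-1}}(\Gamma_{i}(\omega,r)\cap A)}{\beta_{\omega}^{\xi_{i-1}}(\Gamma_{i}(\omega,r))}=\mathrm{E}_{\beta}\bigl(1_{A}\mid\widehat{\xi_{i-1}}\vee\phi_{i}^{-1}\mathcal{B}\bigr)(\omega),
\]
where $\mathcal{B}$ is the Borel $\sigma$-algebra of $\mathrm{P}(V)$.

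I then plan to check that $\widehat{\xi_{i-1}}\vee\phi_{i}^{-1}\mathcal{B}\underset{\beta}{=}\widehat{\xi_{i}}$. The inclusion $\supset$ follows from the identity $\xi_{i}(\omega)=\xi_{0}(\omega)\cap\phi_{i}^{-1}\{\phi_{i}(\omega)\}$, which uses that $V_{\eta}^{i}=V_{\omega}^{i}$ for $\eta\in\xi_{0}(\omega)$, together with $\widehat{\xi_{0}}\subset\widehat{\xi_{i-1}}$. The reverse inclusion follows because $\xi_{i}$ refines $\xi_{i-1}$ (since $V_{\omega}^{i}\subset V_{\omega}^{i-1}$) and $\phi_{i}$ is a.e.\ constant on $\xi_{i}$-atoms by definition. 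Hence the right hand side above equals $\beta_{\omega}^{\xi_{i}}(A)$. Decomposing $F=\bigsqcup_{n\ge1}F_{n}$ and noting that $\mathcal{P}_{0}^{Nn-1}$ is a finite partition, I apply the display separately to each of its atoms (a countable family of exceptional null sets) and then specialise $A$ to the atom $\mathcal{P}_{0}^{Nn-1}(\omega)$ to obtain, for $\beta$-a.e.\ $\omega\in F_{n}$,
\[
\lim_{r\downarrow0}\frac{\beta_{\omega}^{\xi_{i-1}}\bigl(\Gamma_{i}(\omega,r)\cap\mathcal{P}_{0}^{\ell(\omega)-1}(\omega)\bigr)}{\beta_{\omega}^{\xi_{i-1}}(\Gamma_{i}(\omega,r))}=\beta_{\omega}^{\xi_{i}}\bigl(\mathcal{P}_{0}^{\ell(\omega)-1}(\omega)\bigr).
\]
Lemma~\ref{lem:asym of ergo sum of info} rewrites the right hand side as $\exp\bigl(-\sum_{0\le j<Nr_{F}(\omega)}\mathrm{I}_{\beta}(\mathcal{P}\mid\widehat{\xi_{i}})(\sigma^{j}\omega)\bigr)$, and taking $\log$ gives the first assertion.

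Non-negativity of $q$ is immediate, since $\Gamma_{i}(\omega,r)\cap\mathcal{P}_{0}^{\ell(\omega)-1}(\omega)\subset\Gamma_{i}(\omega,r)$ forces the ratio into $(0,1]$. For the integrability of $q$, the strategy is to dominate $q(\omega)$ pointwise by the limit function $\omega\mapsto-\log\beta_{\omega}^{\xi_{i}}(\mathcal{P}_{0}^{\ell(\omega)-1}(\omega))$ plus a uniformly bounded correction extracted from the Hardy--Littlewood / Besicovitch maximal inequality applied to the pushforward measure $\phi_{i}(\beta_{\omega}^{\xi_{i-1}})$ on $\mathrm{P}(V)$. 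Granting such a bound, the $L^{1}(\beta_F)$-integrability of the main term follows from Lemma~\ref{lem:asym of ergo sum of info} and Kac's formula for the induced system $(F,\sigma_{F},\beta_{F})$ built from $(\Omega,\sigma^{N},\beta)$: with $g=\mathrm{I}_{\beta}(\mathcal{P}\mid\widehat{\xi_{i}})$ one gets
\[
\int_{F}\sum_{j=0}^{Nr_{F}(\omega)-1}g(\sigma^{j}\omega)\,d\beta_{F}(\omega)=\frac{N\mathrm{H}_{i}}{\beta(F)}<\infty.
\]
The hard step is establishing this pointwise maximal bound: the obstacle is that the atom $\mathcal{P}_{0}^{\ell(\omega)-1}(\omega)$ varies with $\omega$ through the unbounded return time $r_{F}(\omega)$, so a naive reverse-martingale argument does not apply directly, and one must exploit the Besicovitch property of $\mathrm{P}(V)$ to control the transition of $r\mapsto\beta_{\omega}^{\xi_{i-1}}(\Gamma_{i}(\omega,r)\cap\mathcal{P}_{0}^{\ell(\omega)-1}(\omega))/\beta_{\omega}^{\xi_{i-1}}(\Gamma_{i}(\omega,r))$ between its two stable regimes, namely $\beta_{\omega}^{\xi_{i-1}}(\mathcal{P}_{0}^{\ell(\omega)-1}(\omega))$ for large $r$ and $\beta_{\omega}^{\xi_{i}}(\mathcal{P}_{0}^{\ell(\omega)-1}(\omega))$ for small $r$.
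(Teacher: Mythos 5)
Your proof of the first assertion is correct and follows the paper's own route essentially verbatim: identify $\Gamma_{i}(\omega,r)$ with $\xi_{0}(\omega)\cap B^{\phi_{i}}(\omega,r)$, verify $\widehat{\xi_{i-1}}\vee\phi_{i}^{-1}\mathcal{B}\underset{\beta}{=}\widehat{\xi_{i}}$, apply Lemma \ref{lem:con exp as lim} atom by atom over the decomposition $F=\cup_{k}F_{k}$, and finish with (\ref{eq:=00003Derg sum of info}). The observation $q\ge0$ is likewise fine.

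The integrability of $q$, however, is a genuine gap: you explicitly defer the ``pointwise maximal bound'' and your proposed formulation of it is not the right one. Dominating $q$ by the limit function plus a uniformly bounded correction cannot work, because for a fixed $\omega$ the infimum over $r$ of the ratio can be much smaller than its limit as $r\downarrow0$; the correct statement is distributional, not pointwise. The standard argument (this is \cite[Proposition 5.5]{Fe}, which the paper cites for exactly this step, going back to \cite[Lemma 3.4]{FH}) runs as follows: the Besicovitch covering property of $\mathrm{P}(V)$ yields a constant $C$ such that for every Borel $A$ and every $t>0$,
\[
\beta\Bigl\{\omega\in A\::\:\underset{r>0}{\inf}\:\frac{\beta_{\omega}^{\xi_{i-1}}(B^{\phi_{i}}(\omega,r)\cap A)}{\beta_{\omega}^{\xi_{i-1}}(B^{\phi_{i}}(\omega,r))}<t\Bigr\}\le Ct\:.
\]
Applying this with $A$ ranging over the atoms of $\mathcal{P}_{0}^{kN-1}$ intersected with $F_{k}$ gives, for the restriction $q_{A}$ of $q$ to $A$, the tail bound $\beta\{\omega\in A\::\:q_{A}(\omega)>\lambda\}\le\min(\beta(A),Ce^{-\lambda})$, whence $\int_{A}q\:d\beta\le\beta(A)\log(C/\beta(A))+\beta(A)$. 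Summing over the atoms and over $k$ bounds $\int_{F}q\:d\beta$ by a multiple of $\sum_{k}kN\beta(F_{k})\log|\Lambda|$ plus lower-order terms, and this is finite by Kac's formula (\ref{eq:Kac}). Note that this is precisely where the finite-support hypothesis on $\mu$ enters (cf.\ Remark \ref{rem:fin sup assump}); without carrying out this tail estimate your proof of the lemma is incomplete.
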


\begin{proof}
For $\omega\in F$ and $r>0$ with $\beta_{\omega}^{\xi_{i-1}}(\Gamma_{i}(\omega,r))>0$
write,
\[
\alpha(\omega,r)=\log\frac{\beta_{\omega}^{\xi_{i-1}}(\Gamma_{i}(\omega,r)\cap\mathcal{P}_{0}^{\ell(\omega)-1}(\omega))}{\beta_{\omega}^{\xi_{i-1}}(\Gamma_{i}(\omega,r))}\:.
\]
As in the proof of Lemma \ref{lem:pos density}, define a Borel map
$\phi_{i}:\Omega_{0}\rightarrow\mathrm{P}(V)$ by $\phi_{i}(\omega)=P_{(V_{\omega}^{i})^{\perp}}\pi\omega$.
Note that,
\[
\Gamma_{i}(\omega,r)=\xi_{0}(\omega)\cap B^{\phi_{i}}(\omega,r)\text{ for }\omega\in\Omega_{0}\text{ and }r>0\:.
\]
From this and $F=\cup_{k\ge1}F_{k}$, we get that for $\beta$-a.e.
$\omega\in F$,
\begin{eqnarray*}
\alpha(\omega,r) & = & \log\frac{\beta_{\omega}^{\xi_{i-1}}(B^{\phi_{i}}(\omega,r)\cap\mathcal{P}_{0}^{\ell(\omega)-1}(\omega))}{\beta_{\omega}^{\xi_{i-1}}(B^{\phi_{i}}(\omega,r))}\\
 & = & \sum_{k\ge1}\sum_{A\in\mathcal{P}_{0}^{kN-1}}\:1_{F_{k}\cap A}(\omega)\log\frac{\beta_{\omega}^{\xi_{i-1}}(B^{\phi_{i}}(\omega,r)\cap A)}{\beta_{\omega}^{\xi_{i-1}}(B^{\phi_{i}}(\omega,r))}\:.
\end{eqnarray*}
Denote the Borel $\sigma$-algebra of $\mathrm{P}(V)$ by $\mathcal{B}$.
It is easy to verify that,
\[
\widehat{\xi_{i-1}}\vee\phi_{i}^{-1}(\mathcal{B})\underset{\beta}{=}\widehat{\xi_{i}}\:.
\]
From this and Lemma \ref{lem:con exp as lim} it follows that for
$\beta$-a.e. $\omega\in F$,
\begin{eqnarray*}
\underset{r\downarrow0}{\lim}\:\alpha(\omega,r) & = & \sum_{k\ge1}\sum_{A\in\mathcal{P}_{0}^{kN-1}}\:1_{F_{k}\cap A}(\omega)\log\mathrm{E}_{\beta}(1_{A}\mid\widehat{\xi_{i-1}}\vee\phi_{i}^{-1}(\mathcal{B}))(\omega)\\
 & = & \sum_{k\ge1}1_{F_{k}}(\omega)\sum_{A\in\mathcal{P}_{0}^{kN-1}}\:1_{A}(\omega)\log\mathrm{E}_{\beta}(1_{A}\mid\widehat{\xi_{i}})(\omega)\\
 & = & -\sum_{k\ge1}1_{F_{k}}(\omega)\mathrm{I}_{\beta}(\mathcal{P}_{0}^{kN-1}\mid\widehat{\xi_{i}})(\omega)\:.
\end{eqnarray*}
This together with (\ref{eq:=00003Derg sum of info}) shows that for
$\beta$-a.e. $\omega\in F$,
\begin{eqnarray*}
\underset{r\downarrow0}{\lim}\:\alpha(\omega,r) & = & -\sum_{k\ge1}1_{F_{k}}(\omega)\sum_{j=0}^{kN-1}\mathrm{I}_{\beta}(\mathcal{P}\mid\widehat{\xi_{i}})(\sigma^{j}\omega)\\
 & = & -\sum_{0\le j<Nr_{F}(\omega)}\:\mathrm{I}_{\beta}(\mathcal{P}\mid\widehat{\xi_{i}})(\sigma^{j}\omega),
\end{eqnarray*}
which is the first part of the lemma.

The proof of $q\in L^{1}(\beta_{F})$ is exactly the same as the proof
of the analogous fact in \cite[Proposition 5.5]{Fe}, and is therefore
omitted.
\end{proof}
\begin{rem}
\label{rem:fin sup assump}It is worth pointing out that if in our
main result, $\mu$ is only assumed to be discrete and with finite
Shanon entropy (instead of being finitely supported), then the argument
in \cite[Proposition 5.5]{Fe} which gives the integrability of $q$
does not seem to work.
\end{rem}

We continue towards proving (\ref{eq:suffices to show}). Since $(\Omega,\sigma^{N},\beta)$
is ergodic, a classical result due to Kac \cite{Ka} gives,
\begin{equation}
\int_{F}r_{F}\:d\beta_{F}=\beta(F)^{-1}\:.\label{eq:Kac}
\end{equation}
The following Lemma follows directly from \cite[Lemma 2.11]{Fe},
the ergodicity of $(F,\sigma_{F},\beta_{F})$ and (\ref{eq:Kac}).
\begin{lem}
\label{lem:integ in induced}Let $h\in L^{1}(\beta)$ and set,
\[
\tilde{h}(\omega)=\sum_{0\le j<Nr_{F}(\omega)}h(\sigma^{j}\omega)\quad\text{ for }\omega\in F\:.
\]
Then $\tilde{h}\in L^{1}(\beta_{F})$ and,
\[
\int\tilde{h}\:d\beta_{F}=\frac{N}{\beta(F)}\int h\:d\beta\:.
\]
\end{lem}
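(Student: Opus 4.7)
The plan is to reduce the statement to the classical Kac-style identity for induced transformations, which is precisely what Feng's Lemma 2.11 packages. The strategy is to apply that result to the ergodic measure preserving system $(\Omega,\sigma^{N},\beta)$ with the return set $F$, whose induced transformation is $\sigma_{F}$ with return time $r_{F}$, and to pass from sums along $\sigma^{N}$-orbits to sums along $\sigma$-orbits by a simple regrouping.

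First I would assume without loss of generality that $h\ge 0$, handling the general case at the end by splitting $h=h^{+}-h^{-}$ (and obtaining integrability as an automatic byproduct). Set
\[
g(\omega)=\sum_{i=0}^{N-1}h(\sigma^{i}\omega),
\]
which lies in $L^{1}(\beta)$ with $\int g\,d\beta=N\int h\,d\beta$ by the $\sigma$-invariance of $\beta$. The key observation is the telescoping identity
\[
\tilde h(\omega)=\sum_{j=0}^{Nr_{F}(\omega)-1}h(\sigma^{j}\omega)=\sum_{k=0}^{r_{F}(\omega)-1}\sum_{i=0}^{N-1}h(\sigma^{kN+i}\omega)=\sum_{k=0}^{r_{F}(\omega)-1}g(\sigma^{kN}\omega),
\]
so $\tilde h$ becomes the Birkhoff sum of $g$ along a full $\sigma^{N}$-excursion from $F$ back to $F$.

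Now I would invoke \cite[Lemma 2.11]{Fe} applied to $(\Omega,\sigma^{N},\beta)$, the set $F$, and the function $g$. Using the ergodicity of $(F,\sigma_{F},\beta_{F})$ (so that $\bigcup_{k\ge 0}\sigma^{-kN}F$ exhausts $\Omega$ up to a $\beta$-null set) together with (\ref{eq:Kac}), that lemma yields
\[
\int_{F}\sum_{k=0}^{r_{F}(\omega)-1}g(\sigma^{kN}\omega)\,d\beta_{F}(\omega)=\frac{1}{\beta(F)}\int g\,d\beta.
\]
Substituting the identity above and the value $\int g\,d\beta=N\int h\,d\beta$ produces
\[
\int\tilde h\,d\beta_{F}=\frac{N}{\beta(F)}\int h\,d\beta,
\]
which is the desired formula. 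The integrability claim $\tilde h\in L^{1}(\beta_{F})$ follows by running exactly the same computation with $|h|$ in place of $h$, giving $\int|\tilde h|\,d\beta_{F}\le\frac{N}{\beta(F)}\int|h|\,d\beta<\infty$.

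There is essentially no obstacle here: the content of the lemma is precisely the translation of the Kac-type identity from $\sigma^{N}$-Birkhoff sums of $g$ to $\sigma$-Birkhoff sums of $h$, and the regrouping step handles that translation cleanly. The only care required is to ensure the hypotheses of \cite[Lemma 2.11]{Fe} are met, namely the ergodicity of $(\Omega,\sigma^{N},\beta)$ (which holds as a Bernoulli system) and the finiteness of the return time $\beta_{F}$-a.e. (which is guaranteed by (\ref{eq:inf many times})).
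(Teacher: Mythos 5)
Your proposal is correct and follows essentially the same route as the paper, which simply asserts that the lemma "follows directly from \cite[Lemma 2.11]{Fe}, the ergodicity of $(F,\sigma_{F},\beta_{F})$ and (\ref{eq:Kac})"; your regrouping of the sum into $\sigma^{N}$-blocks via $g=\sum_{i=0}^{N-1}h\circ\sigma^{i}$ is exactly the bridge the paper leaves implicit. The treatment of integrability via $|h|$ is also fine.
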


Recall that for $\omega\in F$ and $1\le i\le s$ we write,
\[
\ell(\omega)=Nr_{F}(\omega)\text{ and }\rho(i,\omega)=\exp(\ell(\omega)(\tilde{\lambda}_{i}-\epsilon))\:.
\]
 Set $\rho_{0}(i,\omega)=1$ and for $n\ge1$ set,
\[
\rho_{n}(i,\omega)=\prod_{k=0}^{n-1}\rho(i,\sigma_{F}^{k}\omega),
\]
where $\sigma_{F}^{k}:=(\sigma_{F})^{k}$.
\begin{lem}
\label{lem:quotient of rho}Let $1\le i\le s$ be given. Then for
$\beta$-a.e. $\omega\in F$,
\[
\underset{n\rightarrow\infty}{\lim}\:\frac{\log\rho_{n}(i,\omega)}{\log\rho_{n-1}(i,\omega)}=1\:.
\]
\end{lem}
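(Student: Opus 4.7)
The plan is to reduce everything to a direct application of the Birkhoff ergodic theorem for the induced system $(F,\sigma_F,\beta_F)$. First I would unpack the definitions: since $\log\rho(i,\omega)=\ell(\omega)(\tilde\lambda_i-\epsilon)$, one has
\[
\log\rho_n(i,\omega)=(\tilde\lambda_i-\epsilon)\sum_{k=0}^{n-1}\ell(\sigma_F^k\omega),
\]
and the constant $\tilde\lambda_i-\epsilon\ne0$ cancels in the quotient. Hence the claim is equivalent to showing that
\[
\frac{\sum_{k=0}^{n-1}\ell(\sigma_F^k\omega)}{\sum_{k=0}^{n-2}\ell(\sigma_F^k\omega)}\xrightarrow[n\to\infty]{}1\qquad\text{for }\beta_F\text{-a.e. }\omega,
\]
or equivalently that $\ell(\sigma_F^{n-1}\omega)=o\bigl(\sum_{k=0}^{n-2}\ell(\sigma_F^k\omega)\bigr)$.

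Next I would verify the integrability of $\ell=Nr_F$ with respect to $\beta_F$, which is immediate from Kac's formula (\ref{eq:Kac}): $\int_F\ell\,d\beta_F=N/\beta(F)<\infty$. Since $(F,\sigma_F,\beta_F)$ is ergodic, Birkhoff's theorem gives
\[
\frac1n\sum_{k=0}^{n-1}\ell(\sigma_F^k\omega)\xrightarrow[n\to\infty]{}\frac{N}{\beta(F)}\qquad\text{for }\beta_F\text{-a.e. }\omega.
\]
In particular the denominator grows like $(n-1)N/\beta(F)$, so it suffices to show $\ell(\sigma_F^{n-1}\omega)/n\to0$ a.e.

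For the last step I would use the standard observation that if $f\in L^1$ and $T$ is measure-preserving, then $f\circ T^{n}/n\to 0$ almost everywhere; concretely,
\[
\frac{\ell(\sigma_F^{n-1}\omega)}{n}=\frac1n\sum_{k=0}^{n-1}\ell(\sigma_F^k\omega)-\frac{n-1}{n}\cdot\frac{1}{n-1}\sum_{k=0}^{n-2}\ell(\sigma_F^k\omega),
\]
and both Birkhoff averages on the right converge to the same constant $N/\beta(F)$, so the difference tends to $0$. Combining this with the asymptotics of the denominator yields
\[
\frac{\ell(\sigma_F^{n-1}\omega)}{\sum_{k=0}^{n-2}\ell(\sigma_F^k\omega)}=\frac{\ell(\sigma_F^{n-1}\omega)/n}{(n-1)/n\cdot(n-1)^{-1}\sum_{k=0}^{n-2}\ell(\sigma_F^k\omega)}\xrightarrow[n\to\infty]{}0,
\]
and the conclusion follows.

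There is essentially no obstacle here; the only point worth checking carefully is the integrability $\ell\in L^1(\beta_F)$, which is precisely Kac's lemma applied to the ergodic system $(\Omega,\sigma^N,\beta)$ and the set $F$ of positive measure. Everything else is a mechanical consequence of Birkhoff's ergodic theorem applied to $(F,\sigma_F,\beta_F)$.
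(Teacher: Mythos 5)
Your proof is correct, and it reaches the same key fact as the paper — namely that $\ell(\sigma_F^{n-1}\omega)/n\to0$ for $\beta_F$-a.e.\ $\omega$, where integrability of $\ell=Nr_F$ comes from Kac's formula — but by a slightly different route. The paper derives this decay directly from Borel--Cantelli: since $\sigma_F$ preserves $\beta_F$, $\sum_n\beta_F\{r_F(\sigma_F^n\omega)\ge nM^{-1}\}=\sum_n\beta_F\{r_F\ge nM^{-1}\}\le\int Mr_F\,d\beta_F<\infty$, and then concludes using the trivial lower bound $-\log\rho_n(i,\omega)\ge n(\epsilon-\tilde{\lambda}_i)$ (valid because $\ell\ge N\ge1$ always) rather than invoking Birkhoff for the denominator. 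You instead apply Birkhoff's ergodic theorem twice and take the telescoping difference of consecutive averages, which also needs ergodicity of $(F,\sigma_F,\beta_F)$ (available here) whereas the Borel--Cantelli route works under mere measure preservation. Both arguments are standard and airtight; yours is marginally heavier in machinery but equally valid. One small cosmetic point: when you reduce to the ratio of sums, you implicitly need the denominator $\sum_{k=0}^{n-2}\ell(\sigma_F^k\omega)$ to be positive, which holds for $n\ge2$ since $\ell\ge N\ge1$; it is worth stating, and it is exactly what the paper's trivial lower bound makes explicit.
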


\begin{proof}
Let $M\ge1$ be an integer. Since $\sigma_{F}\beta_{F}=\beta_{F}$
and by (\ref{eq:Kac}),
\begin{multline*}
\sum_{n\ge1}\beta_{F}\{\omega\::\:r_{F}(\sigma_{F}^{n}\omega)\ge nM^{-1}\}=\sum_{n\ge1}\beta_{F}\{r_{F}\ge nM^{-1}\}\\
=\int Mr_{F}\:d\beta_{F}=M/\beta(F)<\infty\:.
\end{multline*}
From this and the Borel-Cantelli lemma it follows that for $\beta$-a.e.
$\omega\in F$ there exists $N_{\omega}\ge1$ such that,
\[
\frac{1}{n}r_{F}(\sigma_{F}^{n}\omega)<M^{-1}\text{ for all }n\ge N_{\omega},
\]
which shows,
\[
\underset{n\rightarrow\infty}{\lim}\:\frac{1}{n}r_{F}(\sigma_{F}^{n}\omega)=0\text{ for }\beta\text{-a.e. }\omega\in F\:.
\]
The lemma now follows easily from this and since $-\log\rho_{n}(i,\omega)\ge n(\epsilon-\tilde{\lambda}_{i})$
for each $n\ge1$.
\end{proof}
We resume with the proof of Proposition \ref{prop:transvers dim =00003D}.
Fix $1\le i\le s$, and recall that our aim is to show (\ref{eq:suffices to show}).

For $n\ge1$ and $\beta$-a.e. $\omega\in F$ we can set,
\[
K_{n}(\omega)=\log\frac{\beta_{\omega}^{\xi_{i-1}}(\Gamma_{i}(\omega,\rho_{n}(i,\omega)))}{\beta_{\sigma_{F}\omega}^{\xi_{i-1}}(\Gamma_{i}(\sigma_{F}\omega,\rho_{n-1}(i,\sigma_{F}\omega)))},
\]
\[
G_{n}(\omega)=\log\frac{\beta_{\omega}^{\xi_{i-1}}(\Gamma_{i}(\omega,\rho_{n}(i,\omega))\cap\mathcal{P}_{0}^{\ell(\omega)-1}(\omega))}{\beta_{\omega}^{\xi_{i-1}}(\Gamma_{i}(\omega,\rho_{n}(i,\omega)))},
\]
and,
\[
R_{l}(\omega)=\sum_{0\le j<Nr_{F}(\omega)}\mathrm{I}_{\beta}(\mathcal{P}\mid\widehat{\xi_{l}})(\sigma^{j}\omega)\text{ for }l=i,i-1\:.
\]
Then for $\beta$-a.e. $\omega\in F$,
\[
K_{n}(\omega)+G_{n}(\omega)=\log\frac{\beta_{\omega}^{\xi_{i-1}}(\xi_{i-1}(\omega)\cap\Gamma_{i}(\omega,\rho_{n}(i,\omega))\cap\mathcal{P}_{0}^{\ell(\omega)-1}(\omega))}{\beta_{\sigma_{F}\omega}^{\xi_{i-1}}(\Gamma_{i}(\sigma_{F}\omega,\rho_{n-1}(i,\sigma_{F}\omega)))}\:.
\]
Hence by Lemma \ref{lem:equiv of Gamma},
\[
K_{n}(\omega)+G_{n}(\omega)\le\log\frac{\beta_{\omega}^{\xi_{i-1}}(\sigma^{-\ell(\omega)}(\Gamma_{i}(\sigma_{F}\omega,\rho_{n-1}(i,\sigma_{F}\omega)))\cap\mathcal{P}_{0}^{\ell(\omega)-1}(\omega))}{\beta_{\sigma^{\ell(\omega)}\omega}^{\xi_{i-1}}(\Gamma_{i}(\sigma_{F}\omega,\rho_{n-1}(i,\sigma_{F}\omega)))}\:.
\]
From this and Lemma \ref{lem:form for con meas} we get that for $\beta$-a.e.
$\omega\in F$,
\begin{eqnarray*}
K_{n}(\omega)+G_{n}(\omega) & \le & \log\beta_{\omega}^{\xi_{i-1}}(\mathcal{P}_{0}^{\ell(\omega)-1}(\omega))\\
 & = & \sum_{k=1}^{\infty}1_{F_{k}}(\omega)\log\beta_{\omega}^{\xi_{i-1}}(\mathcal{P}_{0}^{kN-1}(\omega))\:.
\end{eqnarray*}
Thus by (\ref{eq:=00003Derg sum of info}),
\[
K_{n}(\omega)+G_{n}(\omega)\le-\sum_{k=1}^{\infty}1_{F_{k}}(\omega)\sum_{j=0}^{kN-1}\mathrm{I}_{\beta}(\mathcal{P}\mid\widehat{\xi_{i-1}})(\sigma^{j}\omega)=-R_{i-1}(\omega)\:.
\]
It follows that for $\beta$-a.e. $\omega\in F$ and any $n\ge1$,
\begin{eqnarray}
-\log\beta_{\omega}^{\xi_{i-1}}(\Gamma_{i}(\omega,\rho_{n}(i,\omega))) & = & \left(-\sum_{j=0}^{n-1}K_{n-j}(\sigma_{F}^{j}\omega)\right)-\log\beta_{\sigma_{F}^{n}\omega}^{\xi_{i-1}}(\Gamma_{i}(\sigma_{F}^{n}\omega,1))\nonumber \\
 & \ge & -\sum_{j=0}^{n-1}K_{n-j}(\sigma_{F}^{j}\omega)\label{eq:mass of ball as erg sums}\\
 & \ge & \sum_{j=0}^{n-1}(G_{n-j}(\sigma_{F}^{j}\omega)+R_{i-1}(\sigma_{F}^{j}\omega))\:.\nonumber 
\end{eqnarray}

By Lemma \ref{lem:integ in induced} it follows that for $l=i,i-1$
we have $R_{l}\in L^{1}(\beta_{F})$ with,
\[
\int R_{l}\:d\beta_{F}=\frac{N}{\beta(F)}\int\mathrm{I}_{\beta}(\mathcal{P}\mid\widehat{\xi_{l}})\:d\beta=\frac{N}{\beta(F)}\mathrm{H}_{\beta}(\mathcal{P}\mid\widehat{\xi_{l}})=\frac{N}{\beta(F)}\mathrm{H}_{l}\:.
\]
From this and Birkhoff's ergodic theorem it follows that for $\beta$-a.e.
$\omega\in F$,
\begin{equation}
\underset{n\rightarrow\infty}{\lim}\:\frac{1}{n}\sum_{j=0}^{n-1}R_{i-1}(\sigma_{F}^{j}\omega)=\frac{N}{\beta(F)}\mathrm{H}_{i-1}\:.\label{eq:erg avg of R_i-1}
\end{equation}
By Lemma \ref{lem:prep for maker} it follows that for $\beta$-a.e.
$\omega\in F$,
\[
\underset{n\rightarrow\infty}{\lim}\:G_{n}(\omega)=-R_{i}(\omega)\text{ and }\underset{n\ge1}{\sup}\:|G_{n}|\in L^{1}(\beta_{F})\:.
\]
Thus from Theorem \ref{thm:Maker} we get that for $\beta$-a.e. $\omega\in F$,
\begin{equation}
\underset{n\rightarrow\infty}{\lim}\:\frac{1}{n}\sum_{j=0}^{n-1}G_{n-j}(\sigma_{F}^{j}\omega)=\int-R_{i}\:d\beta_{F}=-\frac{N}{\beta(F)}\mathrm{H}_{i}\:.\label{eq:erg avg of G}
\end{equation}
For $\omega\in F$ and $n\ge1$,
\[
\log\rho_{n}(i,\omega)=\sum_{j=0}^{n-1}\log\rho(i,\sigma_{F}^{j}\omega)=(\tilde{\lambda}_{i}-\epsilon)N\sum_{j=0}^{n-1}r_{F}(\sigma_{F}^{j}\omega)\:.
\]
From this, Birkhoff's theorem and (\ref{eq:Kac}), it follows that
for $\beta$-a.e. $\omega\in F$,
\[
\underset{n\rightarrow\infty}{\lim}\:-\frac{1}{n}\log\rho_{n}(i,\omega)=(\epsilon-\tilde{\lambda}_{i})N\int r_{F}\:d\beta_{F}=\frac{(\epsilon-\tilde{\lambda}_{i})N}{\beta(F)}\:.
\]
Now from (\ref{eq:mass of ball as erg sums}), (\ref{eq:erg avg of R_i-1}),
(\ref{eq:erg avg of G}) and the last equality, we get that for $\beta$-a.e.
$\omega\in F$,
\begin{multline*}
\underset{n\rightarrow\infty}{\liminf}\:\frac{\log\beta_{\omega}^{\xi_{i-1}}(\Gamma_{i}(\omega,\rho_{n}(i,\omega)))}{\log\rho_{n}(i,\omega)}\\
\ge\underset{n\rightarrow\infty}{\lim}\:\frac{\frac{1}{n}\left(\sum_{j=0}^{n-1}(G_{n-j}(\sigma_{F}^{j}\omega)+R_{i-1}(\sigma_{F}^{j}\omega))\right)}{-\frac{1}{n}\log\rho_{n}(i,\omega)}=\frac{\mathrm{H}_{i}-\mathrm{H}_{i-1}}{\tilde{\lambda}_{i}-\epsilon}\:.
\end{multline*}
This together with Lemma \ref{lem:quotient of rho} shows that for
$\beta$-a.e. $\omega\in F$,
\[
\underset{r\downarrow0}{\liminf}\:\frac{\log\beta_{\omega}^{\xi_{i-1}}(\Gamma_{i}(\omega,r))}{\log r}\ge\frac{\mathrm{H}_{i}-\mathrm{H}_{i-1}}{\tilde{\lambda}_{i}-\epsilon},
\]
which gives (\ref{eq:suffices to show}) and completes the proof of
Proposition \ref{prop:transvers dim =00003D}.

\section{\label{sec:Proof-of-results}Proof of the main result}

In this Section we complete the proof of our main result Theorem \ref{thm:LY formula and ED}.
For $1\le k\le s$, $0\le i\le k$ and $\omega\in\Omega_{0}$ set,
\[
\overline{\gamma}_{i,k}(\omega)=\underset{r\downarrow0}{\limsup}\:\frac{\log\beta_{\omega}^{\xi_{i}}(\Gamma_{k}(\omega,r))}{\log r}\text{ and }\underline{\gamma}_{i,k}(\omega)=\underset{r\downarrow0}{\liminf}\:\frac{\log\beta_{\omega}^{\xi_{i}}(\Gamma_{k}(\omega,r))}{\log r}\:.
\]
Recall that for $0\le i<s$ we write,
\[
\vartheta_{i}(\omega)=\underset{r\downarrow0}{\liminf}\:\frac{\log\beta_{\omega}^{\xi_{i}}(\Gamma_{i+1}(\omega,r))}{\log r}\:.
\]

The proof of the following proposition is a modification of the argument
used in \cite[Section 6, Proof of (C3)]{Fe}. That argument in turn
follows the lines of the proof of \cite[Theorem 2.11]{FH}, which
was adapted from the original proof of \cite[Lemma 11.3.1]{LY}.
\begin{prop}
\label{prop:ineq for delta underline}For $1\le k\le s$, $0\le i<k$
and $\beta$-a.e. $\omega$,
\[
\underline{\gamma}_{i+1,k}(\omega)+\vartheta_{i}(\omega)\le\underline{\gamma}_{i,k}(\omega)\:.
\]
\end{prop}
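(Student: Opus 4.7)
The plan is to use the disintegration
\[
\beta_\omega^{\xi_i}=\int\beta_\eta^{\xi_{i+1}}\,d\beta_\omega^{\xi_i}(\eta)
\]
furnished by Lemma~\ref{lem:slices of slices}, and to bound $\beta_\omega^{\xi_i}(\Gamma_k(\omega,r))$ by a product of a ``fiber'' factor controlled by $\underline{\gamma}_{i+1,k}$ and a ``base'' factor controlled by $\vartheta_i$. The first ingredient is a purely geometric inclusion: since $i+1\le k$ gives $V_\omega^k\subset V_\omega^{i+1}$, we have $P_{(V_\omega^{i+1})^\perp}=P_{(V_\omega^{i+1})^\perp}\circ P_{(V_\omega^k)^\perp}$, and Lemma~\ref{lem:ub on dist of proj} (applicable because Lemma~\ref{lem:pi om not in P(V^0_om)} keeps the projected points of $\pi\omega$ and $\pi\eta'$ away from $\mathrm{P}(V_\omega^{i+1})$) produces a measurable function $C:\Omega_0\to(0,\infty)$ with
\[
\Gamma_k(\omega,r)\subset\Gamma_{i+1}(\omega,C(\omega)r)\quad\text{for all sufficiently small }r.
\]
Since $\Gamma_{i+1}(\omega,s)$ is a union of $\xi_{i+1}$-atoms, this forces $\beta_\eta^{\xi_{i+1}}(\Gamma_k(\omega,r))=0$ outside $\eta\in\Gamma_{i+1}(\omega,C(\omega)r)$.

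The second ingredient is a Lusin-type uniformization. Fix $\epsilon>0$, partition the essential range of $\underline{\gamma}_{i+1,k}$ into finitely many intervals of length $\epsilon$, and choose one interval whose preimage has positive $\beta$-measure. Egorov's theorem applied to the defining $\liminf$ then yields a Borel set $F\subset\Omega_0$ of positive measure, a constant $C^*<\infty$, and $r_0>0$ such that $C(\eta)\le C^*$ on $F$ and
\[
\beta_\eta^{\xi_{i+1}}(\Gamma_k(\eta,r))\le r^{\underline{\gamma}_{i+1,k}(\eta)-\epsilon}\quad\text{for all }\eta\in F,\ r\in(0,r_0].
\]
Fix $\omega\in F$ in the full-measure subset where Lemma~\ref{lem:pos density}, applied to $F$ with indices $(k,i)$, holds. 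Then for small $r$ and some $c>0$,
\[
\beta_\omega^{\xi_i}(\Gamma_k(\omega,r))\le c^{-1}\int\beta_\eta^{\xi_{i+1}}(\Gamma_k(\omega,r)\cap F)\,d\beta_\omega^{\xi_i}(\eta).
\]
For any $\eta$ contributing to this integral, pick $\eta'\in\xi_{i+1}(\eta)\cap\Gamma_k(\omega,r)\cap F$. Then $\beta_\eta^{\xi_{i+1}}=\beta_{\eta'}^{\xi_{i+1}}$ by Theorem~\ref{thm:def of cond measures}, $V_{\eta'}^k=V_\omega^k$, and the triangle inequality for the projective metric gives $\xi_{i+1}(\eta)\cap\Gamma_k(\omega,r)\subset\Gamma_k(\eta',2r)$. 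Using $\eta'\in F$ and the fact that $\underline{\gamma}_{i+1,k}$ varies by less than $\epsilon$ on $F$,
\[
\beta_\eta^{\xi_{i+1}}(\Gamma_k(\omega,r)\cap F)\le\beta_{\eta'}^{\xi_{i+1}}(\Gamma_k(\eta',2r))\le(2r)^{\underline{\gamma}_{i+1,k}(\eta')-\epsilon}\le(2r)^{\underline{\gamma}_{i+1,k}(\omega)-2\epsilon}.
\]

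Combining these bounds with the containment from the first paragraph (which places the effective integration domain inside $\Gamma_{i+1}(\omega,C^*r)$),
\[
\beta_\omega^{\xi_i}(\Gamma_k(\omega,r))\le c^{-1}(2r)^{\underline{\gamma}_{i+1,k}(\omega)-2\epsilon}\,\beta_\omega^{\xi_i}(\Gamma_{i+1}(\omega,C^*r))
\]
for all sufficiently small $r$. Taking $\log$, dividing by $\log r$, and passing to $\liminf_{r\downarrow0}$ gives
\[
\underline{\gamma}_{i,k}(\omega)\ge\underline{\gamma}_{i+1,k}(\omega)-2\epsilon+\vartheta_i(\omega)
\]
for $\beta$-a.e.\ $\omega\in F$; running over a countable sequence $\epsilon\downarrow0$ and the finitely many intervals partitioning the essential range of $\underline{\gamma}_{i+1,k}$ yields the proposition. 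The main technical obstacle lies in the center-shift step: the uniform fiber bound on $F$ concerns balls $\Gamma_k(\eta',\cdot)$ centered at $P_{(V_{\eta'}^k)^\perp}\pi\eta'$, whereas the disintegration produces balls $\Gamma_k(\omega,\cdot)$ centered at $P_{(V_\omega^k)^\perp}\pi\omega$. Bridging this discrepancy forces a representative $\eta'\in\Gamma_k(\omega,r)\cap F$, whose existence with sufficient $\beta_\omega^{\xi_i}$-weight is exactly what Lemma~\ref{lem:pos density} guarantees.
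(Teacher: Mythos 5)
Your argument follows the same route as the paper's proof (which adapts the Ledrappier--Young/Feng--Hu/Feng scheme): disintegrate $\beta_{\omega}^{\xi_{i}}$ over $\xi_{i+1}$-atoms, uniformize the fibre decay and the geometric nondegeneracy on a positive-measure set $F$, use Lemma \ref{lem:pos density} to pass to $\Gamma_{k}(\omega,r)\cap F$ at bounded multiplicative cost, shift centres to a witness $\eta'\in F$, and control the base by a ball $\Gamma_{i+1}(\omega,Cr)$ via Lemma \ref{lem:ub on dist of proj}. The paper packages this as a proof by contradiction with a fixed triple of constants, while you argue directly and exhaust $\Omega_{0}$ by countably many localizations; that difference is cosmetic.

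One statement in your first paragraph is false as written: there is no function $C(\omega)$ of $\omega$ alone with $\Gamma_{k}(\omega,r)\subset\Gamma_{i+1}(\omega,C(\omega)r)$. The constant produced by Lemma \ref{lem:ub on dist of proj} is $d(P_{(V_{\omega}^{k})^{\perp}}\pi\omega,\mathrm{P}(V_{\omega}^{i+1}))^{-1}\,d(P_{(V_{\omega}^{k})^{\perp}}\pi\eta,\mathrm{P}(V_{\omega}^{i+1}))^{-1}$, and the second factor is unbounded as $\eta$ ranges over $\Gamma_{k}(\omega,r)$; Lemma \ref{lem:pi om not in P(V^0_om)} only guarantees these points lie off $\mathrm{P}(V_{\omega}^{i+1})$, not quantitatively away from it. What is true, and what your final assembly actually uses, is the restricted version: any $\eta$ in the effective integration domain admits a witness $\eta'\in\xi_{i+1}(\eta)\cap\Gamma_{k}(\omega,r)\cap F$, and since both $\omega$ and $\eta'$ lie in $F$ --- where the relevant distances to $\mathrm{P}(V_{\omega}^{i+1})$ must be bounded below as part of the definition of $F$ (your ``$C(\eta)\le C^{*}$ on $F$'') --- one gets $\eta\in\Gamma_{i+1}(\omega,(C^{*})^{2}r)$. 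This is exactly how the paper proceeds, choosing $F$ so that $d(P_{(V_{\omega}^{k})^{\perp}}\pi\omega,\mathrm{P}(V_{\omega}^{0}))\ge\epsilon$ and noting $\mathrm{P}(V_{\omega}^{i+1})\subset\mathrm{P}(V_{\omega}^{0})$. With the containment stated in this restricted form, your proof is complete.
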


\begin{proof}
Assume by contradiction that the proposition is false. Recall that
for $\omega\in\Omega_{0}$ we have $\pi\omega\notin\mathrm{P}(V_{\omega}^{0})$.
Since $(V_{\omega}^{0})^{\perp}\subset(V_{\omega}^{k})^{\perp}$ this
implies that also $P_{(V_{\omega}^{k})^{\perp}}\pi\omega\notin\mathrm{P}(V_{\omega}^{0})$.
Thus, since we assume that the proposition is false, there exist $1\le k\le s$,
$0\le i<k$, $0<\epsilon<1$ and $F\subset\Omega_{0}$, such that
$\beta(F)>0$ and for $\omega\in F$,
\[
d(P_{(V_{\omega}^{k})^{\perp}}\pi\omega,\mathrm{P}(V_{\omega}^{0}))\ge\epsilon\text{ and }\underline{\gamma}_{i+1,k}(\omega)+\vartheta_{i}(\omega)>\underline{\gamma}_{i,k}(\omega)\:.
\]
There exist $\alpha>0$ and real numbers $\underline{\gamma}_{i,k},\underline{\gamma}_{i+1,k},\vartheta_{i}$
such that,
\[
\underline{\gamma}_{i+1,k}+\vartheta_{i}>\underline{\gamma}_{i,k}+\alpha,
\]
and for any $\rho>0$ there exists $F_{\rho}\subset F$ with $\beta(F_{\rho})>0$,
so that for $\omega\in F_{\rho}$,
\begin{equation}
|\underline{\gamma}_{i,k}(\omega)-\underline{\gamma}_{i,k}|<\rho/2,\:|\underline{\gamma}_{i+1,k}(\omega)-\underline{\gamma}_{i+1,k}|<\rho/2\text{ and }|\vartheta_{i}(\omega)-\vartheta_{i}|<\rho/2\:.\label{eq:small diffs}
\end{equation}

Fix $0<\rho<\alpha/2$, then there exist $N_{1}\ge1$ and $F_{\rho}'\subset F_{\rho}$
with $\beta(F_{\rho}')>0$ and,
\begin{equation}
\beta_{\omega}^{\xi_{i+1}}(\Gamma_{k}(\omega,2e^{-n}))\le e^{-n(\underline{\gamma}_{i+1,k}-\rho)}\text{ for }\omega\in F_{\rho}'\text{ and }n\ge N_{1}\:.\label{eq:ub on mas of ba cen at F_rho'}
\end{equation}
By Lemma \ref{lem:pos density} there exist $c>0$, $N_{2}\ge N_{1}$
and $F_{\rho}''\subset F_{\rho}'$ such that $\beta(F_{\rho}'')>0$
and,
\begin{equation}
\frac{\beta_{\omega}^{\xi_{i}}(\Gamma_{k}(\omega,e^{-n})\cap F_{\rho}')}{\beta_{\omega}^{\xi_{i}}(\Gamma_{k}(\omega,e^{-n}))}>c\text{ for }\omega\in F_{\rho}''\text{ and }n\ge N_{2}\:.\label{eq:lb mas ba cen F''}
\end{equation}

Let $\omega\in F_{\rho}''$ and $n\ge N_{2}$, and write
\[
B_{\omega,n}=\{\eta\in\xi_{i}(\omega)\::\:\xi_{i+1}(\eta)\cap F_{\rho}'\cap\Gamma_{k}(\omega,e^{-n})\ne\emptyset\}\:.
\]
Let us show that $B_{\omega,n}\subset\Gamma_{i+1}(\omega,\epsilon^{-2}e^{-n})$.
Given $\eta\in B_{\omega,n}$ there exists,
\[
\zeta\in\xi_{i+1}(\eta)\cap F_{\rho}'\cap\Gamma_{k}(\omega,e^{-n})\:.
\]
Since $\xi_{0}(\zeta)=\xi_{0}(\eta)=\xi_{0}(\omega)$,
\[
V_{\omega}^{j}=V_{\eta}^{j}=V_{\zeta}^{j}\text{ for all }0\le j\le s\:.
\]
If $i+1<s$ then from $\omega,\zeta\in F_{\rho}'\subset F$,
\[
d(P_{(V_{\omega}^{k})^{\perp}}\pi\omega,\mathrm{P}(V_{\omega}^{i+1}))\ge d(P_{(V_{\omega}^{k})^{\perp}}\pi\omega,\mathrm{P}(V_{\omega}^{0}))\ge\epsilon,
\]
and,
\[
d(P_{(V_{\omega}^{k})^{\perp}}\pi\zeta,\mathrm{P}(V_{\omega}^{i+1}))\ge d(P_{(V_{\omega}^{k})^{\perp}}\pi\zeta,\mathrm{P}(V_{\omega}^{0}))=d(P_{(V_{\zeta}^{k})^{\perp}}\pi\zeta,\mathrm{P}(V_{\zeta}^{0}))\ge\epsilon\:.
\]
Since $i<k$ we have $(V_{\omega}^{i+1})^{\perp}\subset(V_{\omega}^{k})^{\perp}$.
From this and Lemma \ref{lem:ub on dist of proj},
\begin{eqnarray*}
d(P_{(V_{\omega}^{i+1})^{\perp}}\pi\zeta,P_{(V_{\omega}^{i+1})^{\perp}}\pi\omega) & = & d(P_{(V_{\omega}^{i+1})^{\perp}}P_{(V_{\omega}^{k})^{\perp}}\pi\zeta,P_{(V_{\omega}^{i+1})^{\perp}}P_{(V_{\omega}^{k})^{\perp}}\pi\omega)\\
 & \le & \epsilon^{-2}d(P_{(V_{\omega}^{k})^{\perp}}\pi\zeta,P_{(V_{\omega}^{k})^{\perp}}\pi\omega)\:.
\end{eqnarray*}
Note that since $V_{\omega}^{s}=\{0\}$ this inequality is trivial
when $i+1=s$. Since $\zeta\in\xi_{i+1}(\eta)$ we have $P_{(V_{\omega}^{i+1})^{\perp}}\pi\eta=P_{(V_{\omega}^{i+1})^{\perp}}\pi\zeta$.
Now combining these facts with $\zeta\in\Gamma_{k}(\omega,e^{-n})$
we get,
\begin{eqnarray*}
d(P_{(V_{\omega}^{i+1})^{\perp}}\pi\eta,P_{(V_{\omega}^{i+1})^{\perp}}\pi\omega) & = & d(P_{(V_{\omega}^{i+1})^{\perp}}\pi\zeta,P_{(V_{\omega}^{i+1})^{\perp}}\pi\omega)\\
 & \le & \epsilon^{-2}d(P_{(V_{\omega}^{k})^{\perp}}\pi\zeta,P_{(V_{\omega}^{k})^{\perp}}\pi\omega)\\
 & \le & \epsilon^{-2}e^{-n},
\end{eqnarray*}
which shows $B_{\omega,n}\subset\Gamma_{i+1}(\omega,\epsilon^{-2}e^{-n})$.

Next let us show that,
\begin{equation}
\beta_{\eta}^{\xi_{i+1}}(\Gamma_{k}(\omega,e^{-n})\cap F_{\rho}')\le e^{-n(\underline{\gamma}_{i+1,k}-\rho)}\quad\text{ for }\eta\in B_{\omega,n}\:.\label{eq:ub mas of ba cen B}
\end{equation}
Let $\eta$ and $\zeta$ be as in the last paragraph. Since $d(P_{(V_{\omega}^{k})^{\perp}}\pi\zeta,P_{(V_{\omega}^{k})^{\perp}}\pi\omega)\le e^{-n}$,
\[
\Gamma_{k}(\omega,e^{-n})\cap F_{\rho}'\subset\Gamma_{k}(\zeta,2e^{-n})\:.
\]
From this, $\beta_{\eta}^{\xi_{i+1}}=\beta_{\zeta}^{\xi_{i+1}}$ and
(\ref{eq:ub on mas of ba cen at F_rho'}),
\begin{eqnarray*}
\beta_{\eta}^{\xi_{i+1}}(\Gamma_{k}(\omega,e^{-n})\cap F_{\rho}') & = & \beta_{\zeta}^{\xi_{i+1}}(\Gamma_{k}(\omega,e^{-n})\cap F_{\rho}')\\
 & \le & \beta_{\zeta}^{\xi_{i+1}}(\Gamma_{k}(\zeta,2e^{-n}))\\
 & \le & e^{-n(\underline{\gamma}_{i+1,k}-\rho)},
\end{eqnarray*}
which gives (\ref{eq:ub mas of ba cen B}).

Now from (\ref{eq:lb mas ba cen F''}), $B_{\omega,n}\subset\Gamma_{i+1}(\omega,\epsilon^{-2}e^{-n})$
and (\ref{eq:ub mas of ba cen B}), it follows that for $\beta$-a.e.
$\omega\in F_{\rho}''$ and every $n\ge N_{2}$,
\begin{eqnarray*}
\beta_{\omega}^{\xi_{i}}(\Gamma_{k}(\omega,e^{-n})) & \le & c^{-1}\beta_{\omega}^{\xi_{i}}(\Gamma_{k}(\omega,e^{-n})\cap F_{\rho}')\\
 & = & c^{-1}\int_{B_{\omega,n}}\beta_{\eta}^{\xi_{i+1}}(\Gamma_{k}(\omega,e^{-n})\cap F_{\rho}')\:d\beta_{\omega}^{\xi_{i}}(\eta)\\
 & \le & c^{-1}\beta_{\omega}^{\xi_{i}}(\Gamma_{i+1}(\omega,\epsilon^{-2}e^{-n}))e^{-n(\underline{\gamma}_{i+1,k}-\rho)}\:.
\end{eqnarray*}
Thus, by taking logarithm on both sides, dividing by $-n$ and letting
$n$ tend to infinity,
\[
\underline{\gamma}_{i,k}(\omega)\ge\vartheta_{i}(\omega)+\underline{\gamma}_{i+1,k}-\rho\quad\text{ for }\beta\text{-a.e. }\omega\in F_{\rho}''\:.
\]
By (\ref{eq:small diffs}) we now get,
\[
\underline{\gamma}_{i,k}+2\rho\ge\vartheta_{i}+\underline{\gamma}_{i+1,k}\:.
\]
But this contradicts $\rho<\alpha/2$ and $\underline{\gamma}_{i+1,k}+\vartheta_{i}>\underline{\gamma}_{i,k}+\alpha$,
which completes the proof of the proposition.
\end{proof}
The proof of the following proposition follows the lines of the argument
used in \cite[Section 6, Proof of (C2)]{Fe}. That argument in turn
is modified from \cite[§10.2]{LY} and the proof of \cite[Theorem 2.11]{FH}.
\begin{prop}
\label{prop:ineq for delta overline}For $1\le k\le s$, $0\le i<k$
and $\beta$-a.e. $\omega$,
\[
\frac{\mathrm{H}_{i+1}-\mathrm{H}_{i}}{\tilde{\lambda}_{i+1}}\ge\overline{\gamma}_{i,k}(\omega)-\overline{\gamma}_{i+1,k}(\omega)\:.
\]
\end{prop}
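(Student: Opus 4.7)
The proof follows the template of \cite[Section 6, Proof of (C2)]{Fe}, which itself adapts \cite[\S10.2]{LY} and \cite[Theorem 2.11]{FH}. The argument is structurally dual to that of Proposition \ref{prop:ineq for delta underline}: rather than bounding $\beta_\omega^{\xi_i}(\Gamma_k(\omega,r))$ from above to control $\underline{\gamma}_{i,k}$ from below, we bound it from below along a suitable scale sequence to control $\overline{\gamma}_{i,k}$ from above. I would argue by contradiction. Assume there are $\alpha>0$, $1\le k\le s$, $0\le i<k$, real numbers $\overline{\gamma}_{i,k}$ and $\overline{\gamma}_{i+1,k}$ with $\overline{\gamma}_{i,k}-\overline{\gamma}_{i+1,k}>(\mathrm{H}_{i+1}-\mathrm{H}_i)/\tilde{\lambda}_{i+1}+\alpha$, and a Borel set $F\subset\Omega_0$ with $\beta(F)>0$ on which both $\overline{\gamma}_{i,k}(\omega)$ and $\overline{\gamma}_{i+1,k}(\omega)$ lie within $\alpha/8$ of these constants. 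After the usual Luzin/Egorov reductions arrange on $F$ the uniform bounds $\kappa(\omega)\ge\epsilon$, $\|g_\omega\pi\omega\|_\infty\le\epsilon^{-1}$, $d(P_{(V_\omega^k)^\perp}\pi\omega,\mathrm{P}(V_\omega^0))\ge\epsilon$, an analogue of (\ref{eq:>=00003D a >=00003D}) for the exponent $\tilde\lambda_{i+1}$, and a common sequence of scales along which the $\limsup$ defining $\overline{\gamma}_{i+1,k}(\omega)$ is attained.

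Introduce the induced dynamics $(F,\sigma_F,\beta_F)$ as in Section \ref{sec:Transverse-dimensions} with $\ell(\omega)=Nr_F(\omega)$ and scales $\rho_n(\omega)=\prod_{j=0}^{n-1}\exp(\ell(\sigma_F^j\omega)(\tilde{\lambda}_{i+1}-\epsilon))$. The core step is a converse to Lemma \ref{lem:equiv of Gamma}: using the equivariance $\pi\eta=A_{\omega_0\cdots\omega_{\ell(\omega)-1}}\pi\sigma^{\ell(\omega)}\eta$, the identity $V_\omega^j=A_{\omega_0\cdots\omega_{\ell(\omega)-1}}V_{\sigma_F\omega}^j$, the uniform Oseledets contraction estimates on $F$, and the coordinate-chart analysis of Section \ref{sec:Construction-of-local}, one establishes an inclusion
\[
\sigma^{-\ell(\omega)}\bigl(\xi_{i+1}(\sigma_F\omega)\cap\Gamma_k(\sigma_F\omega,r)\bigr)\cap\mathcal{P}_0^{\ell(\omega)-1}(\omega)\subset\Gamma_k\bigl(\omega,C\exp(\ell(\omega)(\tilde\lambda_{i+1}-\epsilon))\,r\bigr).
\]
Combining with Lemma \ref{lem:form for con meas} and the disintegration $\beta_\omega^{\xi_i}=\int\beta_\eta^{\xi_{i+1}}\,d\beta_\omega^{\xi_i}(\eta)$ yields a lower-bound recursion relating $\beta_\omega^{\xi_i}(\Gamma_k(\omega,\rho_n(\omega)))$ to $\beta_\omega^{\xi_i}(\mathcal{P}_0^{\ell(\omega)-1}(\omega))\cdot\beta_{\sigma_F\omega}^{\xi_{i+1}}(\Gamma_k(\sigma_F\omega,\rho_{n-1}(\sigma_F\omega)))$.

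Iterate the recursion along the $\sigma_F$-orbit, take $-\log$, and use Birkhoff's ergodic theorem on $(F,\sigma_F,\beta_F)$ with Lemma \ref{lem:asym of ergo sum of info} to identify the averages of $-\log\beta_\cdot^{\xi_l}(\mathcal{P}_0^{\ell(\cdot)-1})$ as $N\mathrm{H}_l/\beta(F)$ for $l=i,i+1$; then apply Maker's theorem (Theorem \ref{thm:Maker}) to a sequence $L^1(\beta_F)$-dominated as in Lemma \ref{lem:prep for maker}. This produces
\[
\limsup_{n\to\infty}\frac{\log\beta_\omega^{\xi_i}(\Gamma_k(\omega,\rho_n(\omega)))}{\log\rho_n(\omega)}\le\overline{\gamma}_{i+1,k}(\omega)+\frac{\mathrm{H}_i-\mathrm{H}_{i+1}}{-\tilde{\lambda}_{i+1}+\epsilon}.
\]
Lemma \ref{lem:quotient of rho} upgrades the bound from the sparse scales $\rho_n(\omega)$ to all small $r>0$, giving $\overline{\gamma}_{i,k}(\omega)\le\overline{\gamma}_{i+1,k}(\omega)+(\mathrm{H}_i-\mathrm{H}_{i+1})/(-\tilde{\lambda}_{i+1}+\epsilon)$ for $\beta$-a.e.\ $\omega\in F$, and letting $\epsilon\downarrow 0$ contradicts the opening assumption. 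The main obstacle is establishing the converse inclusion in a form iterable along $\sigma_F$: one must keep the $\xi_i$-class of $\omega$ synchronised with the $\xi_{i+1}$-class of its image while simultaneously controlling the $P_{(V_\omega^k)^\perp}$-contraction via the $g_\omega$-coordinates of Section \ref{sec:Construction-of-local}, where the angle bound $\kappa(\sigma_F^j\omega)\ge\epsilon$ is used uniformly along the orbit.
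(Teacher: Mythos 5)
Your overall setup (argue by contradiction, fix approximating constants $\overline{\gamma}_{i,k},\overline{\gamma}_{i+1,k}$ on a positive-measure set, and aim at a lower bound for $\beta_\omega^{\xi_i}(\Gamma_k(\omega,\cdot))$) matches the paper, but the mechanism you propose has a genuine gap at its core. The recursion
\[
\beta_\omega^{\xi_i}(\Gamma_k(\omega,\rho_n(\omega)))\gtrsim\beta_\omega^{\xi_i}(\mathcal{P}_0^{\ell(\omega)-1}(\omega))\cdot\beta_{\sigma_F\omega}^{\xi_{i+1}}(\Gamma_k(\sigma_F\omega,\rho_{n-1}(\sigma_F\omega)))
\]
does not follow from the tools you cite: Lemma \ref{lem:form for con meas} keeps the partition fixed (it relates $\beta_\omega^{\xi_i}$ to $\beta_{\sigma^n\omega}^{\xi_i}$, not to $\beta_{\sigma^n\omega}^{\xi_{i+1}}$), and the disintegration $\beta_\omega^{\xi_i}=\int\beta_\eta^{\xi_{i+1}}\,d\beta_\omega^{\xi_i}(\eta)$ only expresses $\beta_\omega^{\xi_i}$ as an \emph{average} over $\xi_{i+1}$-fibres, which gives no pointwise lower bound by a single fibre measure. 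The exchange between the $\xi_{i+1}$- and $\xi_i$-conditional measures is precisely where the factor $e^{n(\mathrm{H}_{i+1}-\mathrm{H}_i)}$ must be produced, and your sketch contains no mechanism for it; moreover, even granting the recursion, it cannot be iterated as written, since after one step the right-hand side carries $\xi_{i+1}$ and a second application would have to change partition again. Your ``converse inclusion'' is also not available in the multiplicative-in-$r$ form you state: Lemma \ref{lem:g^k equal implies} is an asymptotic statement for a \emph{fixed} pair of points, not a uniform Lipschitz bound for the pulled-back projections on a whole fibre, and Proposition \ref{prop:int cont in ball} yields an absolute radius $e^{n(\tilde{\lambda}_{i+1}+5\epsilon)}$, not one proportional to $r$.

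For comparison, the paper's proof works at a single large scale $n$ and needs no induced dynamics, Maker's theorem, or Lemma \ref{lem:quotient of rho}. One chooses $n$ so that $\beta_\omega^{\xi_{i+1}}(D_{\omega,n})\ge e^{n(\tilde{\lambda}_{i+1}+5\epsilon)(\overline{\gamma}_{i+1,k}+\epsilon)}$ with $D_{\omega,n}=\Gamma_k(\omega,e^{n(\tilde{\lambda}_{i+1}+5\epsilon)})$, covers $\xi_{i+1}(\omega)\cap D_{\omega,n}\cap\Delta$ by atoms of $\mathcal{P}_0^{n-1}$, each of $\xi_{i+1}$-conditional mass at most $e^{-n(\mathrm{H}_{i+1}-\epsilon)}$ by Lemma \ref{lem:asym of ergo sum of info}, so that there are at least $c\,e^{n(\tilde{\lambda}_{i+1}+5\epsilon)(\overline{\gamma}_{i+1,k}+\epsilon)+n(\mathrm{H}_{i+1}-\epsilon)}$ of them; then, by Proposition \ref{prop:int cont in ball} and Lemmas \ref{lem:density of Q} and \ref{lem:pos density}, each such atom intersected with $Q_{n,\epsilon}\cap\xi_i(\omega)$ lies in $\Gamma_k(\omega,2e^{n(\tilde{\lambda}_{i+1}+5\epsilon)})$ and carries $\xi_i$-conditional mass at least $e^{-n(\mathrm{H}_i+\epsilon)}$. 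Summing over the atoms gives the lower bound on $\beta_\omega^{\xi_i}(\Gamma_k(\omega,2e^{n(\tilde{\lambda}_{i+1}+5\epsilon)}))$ that contradicts the definition of $\overline{\gamma}_{i,k}$. This cylinder-counting comparison between the two conditional measure systems is the ingredient your argument is missing; once you have it at a single scale, the induced-dynamics apparatus becomes superfluous.
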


\begin{proof}
Assume by contradiction that the proposition is false. Then there
exist $1\le k\le s$, $0\le i<k$ and $F\subset\Omega_{0}$ with $\beta(F)>0$
and,
\[
\frac{\mathrm{H}_{i+1}-\mathrm{H}_{i}}{\tilde{\lambda}_{i+1}}<\overline{\gamma}_{i,k}(\omega)-\overline{\gamma}_{i+1,k}(\omega)\text{ for }\omega\in F\:.
\]
Thus there exist $\alpha>0$ and real numbers $\overline{\gamma}_{i,k}$
and $\overline{\gamma}_{i+1,k}$ such that,
\begin{equation}
\frac{\mathrm{H}_{i+1}-\mathrm{H}_{i}}{\tilde{\lambda}_{i+1}}<\overline{\gamma}_{i,k}-\overline{\gamma}_{i+1,k}-\alpha,\label{eq:<-alpha}
\end{equation}
and for any $\epsilon>0$ there exists $B_{\epsilon}\subset F$ with
$\beta(B_{\epsilon})>0$, so that for $\omega\in B_{\epsilon}$,
\[
|\overline{\gamma}_{i,k}(\omega)-\overline{\gamma}_{i,k}|<\epsilon/2\quad\text{ and }\quad|\overline{\gamma}_{i+1,k}(\omega)-\overline{\gamma}_{i+1,k}|<\epsilon/2\:.
\]

Fix $0<\epsilon<-\tilde{\lambda}_{i+1}/6$, and for $\omega\in\Omega_{0}$
and $n\ge1$ write,
\[
D_{\omega,n}=\Gamma_{k}(\omega,e^{n(\tilde{\lambda}_{i+1}+5\epsilon)})\:.
\]
Recall the sets $Q_{n,\epsilon}$ defined in (\ref{eq:def of Q_n,eps}).
By removing a subset of zero $\beta$-measure from $B_{\epsilon}$
without changing the notation, it follows that there exists a Borel
function $n_{0}:B_{\epsilon}\rightarrow\mathbb{N}$ such that for
$\omega\in B_{\epsilon}$ and $n\ge n_{0}(\omega)$,
\begin{enumerate}
\item \label{enu:< delta_i+1}$\frac{\log\beta_{\omega}^{\xi_{i+1}}(D_{\omega,n})}{n(\tilde{\lambda}_{i+1}+5\epsilon)}<\overline{\gamma}_{i+1,k}+\epsilon$;
\item \label{>H_i+1 -eps}$-\frac{1}{n}\log\beta_{\omega}^{\xi_{i+1}}(\mathcal{P}_{0}^{n-1}(\omega))>\mathrm{H}_{i+1}-\epsilon$
$\quad$(by Lemma \ref{lem:asym of ergo sum of info});
\item \label{enu:cont in L}$Q_{n,\epsilon}\cap\xi_{i}(\omega)\cap\mathcal{P}_{0}^{n-1}(\omega)\subset D_{\omega,n}$
$\quad$(by Proposition \ref{prop:int cont in ball});
\item \label{enu:<H_i+eps}$-\frac{1}{n}\log\beta_{\omega}^{\xi_{i}}(Q_{n,\epsilon}\cap\mathcal{P}_{0}^{n-1}(\omega))<\mathrm{H}_{i}+\epsilon$
$\quad$(by Lemmas \ref{lem:asym of ergo sum of info} and \ref{lem:density of Q});
\end{enumerate}
Let $N_{0}\ge1$ be such that for,
\[
\Delta=\{\omega\in B_{\epsilon}\::\:n_{0}(\omega)\le N_{0}\},
\]
we have $\beta(\Delta)>0$. By Lemma \ref{lem:pos density} there
exist $0<c<1$ and $\Delta'\subset\Delta$, with $\beta(\Delta')>0$,
so that for $\omega\in\Delta'$ there exits $n=n(\omega)\ge N_{0}$
such that,
\begin{enumerate}
\item [(5)]\setcounter{enumi}{5}$\frac{\beta_{\omega}^{\xi_{i+1}}(D_{\omega,n}\cap\Delta)}{\beta_{\omega}^{\xi_{i+1}}(D_{\omega,n})}>c$;
\item \label{enu:ub mas of ball mu^xi_i}$\frac{\log\beta_{\omega}^{\xi_{i}}(\Gamma_{k}(\omega,2e^{n(\tilde{\lambda}_{i+1}+5\epsilon)}))}{n(\tilde{\lambda}_{i+1}+5\epsilon)}>\overline{\gamma}_{i,k}-\epsilon$;
\item \label{enu:-log c /n <}$\frac{-\log c}{n}<\epsilon$;
\end{enumerate}
Fix $\omega\in\Delta'$ such that all of the conditions (\ref{enu:< delta_i+1})--(\ref{enu:-log c /n <})
are satisfied with $n=n(\omega)$. By (5) and (\ref{enu:< delta_i+1}),
\begin{equation}
\beta_{\omega}^{\xi_{i+1}}(D_{\omega,n}\cap\Delta)>c\beta_{\omega}^{\xi_{i+1}}(D_{\omega,n})>c\exp(n(\tilde{\lambda}_{i+1}+5\epsilon)(\overline{\gamma}_{i+1,k}+\epsilon))\:.\label{eq:lb mas xi_i+1(L cup Del)}
\end{equation}
Write,
\[
\mathcal{E}=\{P\in\mathcal{P}_{0}^{n-1}\::\:P\cap\xi_{i}(\omega)\cap D_{\omega,n}\cap\Delta\ne\emptyset\},
\]
and,
\[
\mathcal{E}'=\{P\in\mathcal{P}_{0}^{n-1}\::\:P\cap\xi_{i+1}(\omega)\cap D_{\omega,n}\cap\Delta\ne\emptyset\}\:.
\]
From $\xi_{i+1}(\omega)\subset\xi_{i}(\omega)$ it follows that $\mathcal{E}'\subset\mathcal{E}$.

Given $P\in\mathcal{E}'$ there exists $\eta\in\xi_{i+1}(\omega)\cap D_{\omega,n}\cap\Delta$
with $P=\mathcal{P}_{0}^{n-1}(\eta)$. Hence from (\ref{>H_i+1 -eps}),
\[
\beta_{\omega}^{\xi_{i+1}}(P)=\beta_{\eta}^{\xi_{i+1}}(\mathcal{P}_{0}^{n-1}(\eta))<\exp(-n(\mathrm{H}_{i+1}-\epsilon))\:.
\]
Thus,
\[
\beta_{\omega}^{\xi_{i+1}}(D_{\omega,n}\cap\Delta)\le\sum_{P\in\mathcal{E}'}\beta_{\omega}^{\xi_{i+1}}(P)<|\mathcal{E}'|\exp(-n(\mathrm{H}_{i+1}-\epsilon))\:.
\]
This together with (\ref{eq:lb mas xi_i+1(L cup Del)}) implies,
\begin{equation}
|\mathcal{E}'|>c\exp(n(\tilde{\lambda}_{i+1}+5\epsilon)(\overline{\gamma}_{i+1,k}+\epsilon))\exp(n(\mathrm{H}_{i+1}-\epsilon))\:.\label{eq:lb card E'}
\end{equation}

Let us show that,
\begin{equation}
Q_{n,\epsilon}\cap\xi_{i}(\omega)\cap P\subset\Gamma_{k}(\omega,2e^{n(\tilde{\lambda}_{i+1}+5\epsilon)})\text{ for }P\in\mathcal{E},\label{eq:int cont in ball}
\end{equation}
and,
\begin{equation}
\beta_{\omega}^{\xi_{i}}(Q_{n,\epsilon}\cap P)>e^{-n(\mathrm{H}_{i}+\epsilon)}\text{ for }P\in\mathcal{E}\:.\label{eq:mas mu^xi_i(P)>}
\end{equation}
Given $P\in\mathcal{E}$ there exists $\eta\in P\cap\xi_{i}(\omega)\cap D_{\omega,n}\cap\Delta$.
Since $\eta\in D_{\omega,n}$ we have,
\[
d(P_{(V_{\omega}^{k})^{\perp}}\pi\eta,P_{(V_{\omega}^{k})^{\perp}}\pi\omega)\le e^{n(\tilde{\lambda}_{i+1}+5\epsilon)}\:.
\]
By (\ref{enu:cont in L}) it follows,
\[
d(P_{(V_{\omega}^{k})^{\perp}}\pi\eta,P_{(V_{\omega}^{k})^{\perp}}\pi\zeta)\le e^{n(\tilde{\lambda}_{i+1}+5\epsilon)}\quad\text{ for }\zeta\in Q_{n,\epsilon}\cap\xi_{i}(\eta)\cap\mathcal{P}_{0}^{n-1}(\eta)\:.
\]
Thus,
\[
Q_{n,\epsilon}\cap\xi_{i}(\omega)\cap P=Q_{n,\epsilon}\cap\xi_{i}(\eta)\cap\mathcal{P}_{0}^{n-1}(\eta)\subset\Gamma_{k}(\omega,2e^{n(\tilde{\lambda}_{i+1}+5\epsilon)}),
\]
which gives (\ref{eq:int cont in ball}). Since $\eta\in\Delta$ it
follows by (\ref{enu:<H_i+eps}),
\[
\beta_{\omega}^{\xi_{i}}(Q_{n,\epsilon}\cap P)=\beta_{\eta}^{\xi_{i}}(Q_{n,\epsilon}\cap\mathcal{P}_{0}^{n-1}(\eta))>e^{-n(\mathrm{H}_{i}+\epsilon)},
\]
which gives (\ref{eq:mas mu^xi_i(P)>}).

From (\ref{eq:int cont in ball}), (\ref{eq:mas mu^xi_i(P)>}), $\mathcal{E}'\subset\mathcal{E}$
and (\ref{eq:lb card E'}) we now get,
\begin{eqnarray*}
\beta_{\omega}^{\xi_{i}}(\Gamma_{k}(\omega,2e^{n(\tilde{\lambda}_{i+1}+5\epsilon)})) & \ge & \sum_{P\in\mathcal{E}}\beta_{\omega}^{\xi_{i}}(Q_{n,\epsilon}\cap P)\\
 & \ge & |\mathcal{E}|e^{-n(\mathrm{H}_{i}+\epsilon)}\\
 & \ge & c\exp(n(\tilde{\lambda}_{i+1}+5\epsilon)(\overline{\gamma}_{i+1,k}+\epsilon)+n(\mathrm{H}_{i+1}-\epsilon)-n(\mathrm{H}_{i}+\epsilon))\:.
\end{eqnarray*}
This together with (\ref{enu:ub mas of ball mu^xi_i}) gives,
\[
c\exp(n(\tilde{\lambda}_{i+1}+5\epsilon)(\overline{\gamma}_{i+1,k}+\epsilon)+n\mathrm{H}_{i+1}-n\mathrm{H}_{i}-2n\epsilon)<\exp(n(\overline{\gamma}_{i,k}-\epsilon)(\tilde{\lambda}_{i+1}+5\epsilon))\:.
\]
Now by taking logarithm on both sides and by dividing by $n$ it follows
from (\ref{enu:-log c /n <}) that,
\[
(\tilde{\lambda}_{i+1}+5\epsilon)(\overline{\gamma}_{i+1,k}+\epsilon)+\mathrm{H}_{i+1}-\mathrm{H}_{i}-3\epsilon<(\overline{\gamma}_{i,k}-\epsilon)(\tilde{\lambda}_{i+1}+5\epsilon)\:.
\]
Since this holds for arbitrarily small $\epsilon>0$ we obtain,
\[
\frac{\mathrm{H}_{i+1}-\mathrm{H}_{i}}{\tilde{\lambda}_{i+1}}\ge\overline{\gamma}_{i,k}-\overline{\gamma}_{i+1,k},
\]
which contradicts (\ref{eq:<-alpha}) and completes the proof of the
proposition.
\end{proof}
Combining Propositions \ref{prop:transvers dim =00003D}, \ref{prop:ineq for delta underline}
and \ref{prop:ineq for delta overline} together, we obtain the following.
\begin{claim}
\label{cla:gam up =00003D gam low}For $1\le k\le s$ and $0\le i\le k$
we have,
\begin{equation}
\overline{\gamma}_{i,k}(\omega)=\underline{\gamma}_{i,k}(\omega)=\sum_{j=i}^{k-1}\frac{\mathrm{H}_{j+1}-\mathrm{H}_{j}}{\tilde{\lambda}_{j+1}}\text{ for }\beta\text{-a.e. }\omega\:.\label{eq:up gak low gam eq}
\end{equation}
\end{claim}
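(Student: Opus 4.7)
The plan is to prove Claim \ref{cla:gam up =00003D gam low} by backwards induction on $i$, starting from $i=k$ and descending to $i=0$, using the three preceding propositions to close the gap between the upper and lower limits simultaneously.

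The base case $i=k$ is immediate from the definition of $\xi_k$: if $\eta\in\xi_k(\omega)$ then $P_{(V_\omega^k)^\perp}\pi\eta=P_{(V_\omega^k)^\perp}\pi\omega$, so $\xi_k(\omega)\subset\Gamma_k(\omega,r)$ for every $r>0$ and $\beta_\omega^{\xi_k}(\Gamma_k(\omega,r))=1$. Hence $\overline{\gamma}_{k,k}(\omega)=\underline{\gamma}_{k,k}(\omega)=0$, which agrees with the empty sum $\sum_{j=k}^{k-1}$.

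For the inductive step, suppose for some $0\le i<k$ the identity \eqref{eq:up gak low gam eq} holds at level $i+1$, so in particular $\overline{\gamma}_{i+1,k}(\omega)=\underline{\gamma}_{i+1,k}(\omega)$ for $\beta$-a.e. $\omega$. Proposition \ref{prop:ineq for delta overline} gives
\[
\overline{\gamma}_{i,k}(\omega)\le\overline{\gamma}_{i+1,k}(\omega)+\frac{\mathrm{H}_{i+1}-\mathrm{H}_i}{\tilde{\lambda}_{i+1}}
\]
for $\beta$-a.e. $\omega$, while Propositions \ref{prop:transvers dim =00003D} and \ref{prop:ineq for delta underline} combine to yield
\[
\underline{\gamma}_{i,k}(\omega)\ge\underline{\gamma}_{i+1,k}(\omega)+\vartheta_i(\omega)\ge\underline{\gamma}_{i+1,k}(\omega)+\frac{\mathrm{H}_{i+1}-\mathrm{H}_i}{\tilde{\lambda}_{i+1}}
\]
for $\beta$-a.e. $\omega$. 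Since trivially $\underline{\gamma}_{i,k}(\omega)\le\overline{\gamma}_{i,k}(\omega)$, the inductive hypothesis forces both inequalities to be equalities, and we obtain $\overline{\gamma}_{i,k}(\omega)=\underline{\gamma}_{i,k}(\omega)=\overline{\gamma}_{i+1,k}(\omega)+\frac{\mathrm{H}_{i+1}-\mathrm{H}_i}{\tilde{\lambda}_{i+1}}$ for $\beta$-a.e. $\omega$. Iterating produces the desired telescoping sum.

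There is essentially no obstacle here: all the analytic work is packaged in Propositions \ref{prop:transvers dim =00003D}, \ref{prop:ineq for delta underline}, and \ref{prop:ineq for delta overline}. The only point to be slightly careful about is the null-set bookkeeping — the three propositions only give the stated inequalities on a $\beta$-conull set depending on $k$ and $i$, and one needs to intersect the (finitely many) conull sets arising for $0\le i\le k\le s$ to obtain a single $\beta$-conull set on which \eqref{eq:up gak low gam eq} holds for every pair $(i,k)$ with $0\le i\le k\le s$.
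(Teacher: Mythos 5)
Your proof is correct and follows essentially the same route as the paper: backward induction on $i$ with base case $\overline{\gamma}_{k,k}=\underline{\gamma}_{k,k}=0$, then squeezing $\underline{\gamma}_{i,k}\le\overline{\gamma}_{i,k}$ between the chain of inequalities supplied by Propositions \ref{prop:ineq for delta overline}, \ref{prop:transvers dim =00003D} and \ref{prop:ineq for delta underline}. No issues.
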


\begin{proof}
Fix $1\le k\le s$. We prove the claim by backward induction on $i$.
By the definition of $\xi_{k}$ it follows that for $\beta$-a.e.
$\omega$,
\[
P_{(V_{\omega}^{k})^{\perp}}\pi\eta=P_{(V_{\omega}^{k})^{\perp}}\pi\omega\qquad\text{ for }\beta_{\omega}^{\xi_{k}}\text{-a.e. }\eta\:.
\]
From this it follows directly that $\overline{\gamma}_{k,k}(\omega)=\underline{\gamma}_{k,k}(\omega)=0$
for $\beta$-a.e. $\omega$, which gives (\ref{eq:up gak low gam eq})
in the case $i=k$.

Now let $0\le i<k$ and suppose that (\ref{eq:up gak low gam eq})
has been proven for $i+1$. By Proposition \ref{prop:ineq for delta overline},
\[
\frac{\mathrm{H}_{i+1}-\mathrm{H}_{i}}{\tilde{\lambda}_{i+1}}\ge\overline{\gamma}_{i,k}(\omega)-\overline{\gamma}_{i+1,k}(\omega)\qquad\text{ for }\beta\text{-a.e. }\omega,
\]
by Proposition \ref{prop:transvers dim =00003D},
\[
\vartheta_{i}(\omega)\ge\frac{\mathrm{H}_{i+1}-\mathrm{H}_{i}}{\tilde{\lambda}_{i+1}}\qquad\text{ for }\beta\text{-a.e. }\omega,
\]
and by Proposition \ref{prop:ineq for delta underline},
\[
\underline{\gamma}_{i+1,k}(\omega)+\vartheta_{i}(\omega)\le\underline{\gamma}_{i,k}(\omega)\qquad\text{ for }\beta\text{-a.e. }\omega\:.
\]
Combining these facts with the induction hypothesis, we obtain that
for $\beta$-a.e. $\omega$,
\begin{eqnarray*}
\overline{\gamma}_{i,k}(\omega)-\sum_{j=i+1}^{k-1}\frac{\mathrm{H}_{j+1}-\mathrm{H}_{j}}{\tilde{\lambda}_{j+1}} & = & \overline{\gamma}_{i,k}(\omega)-\overline{\gamma}_{i+1,k}(\omega)\\
 & \le & \frac{\mathrm{H}_{i+1}-\mathrm{H}_{i}}{\tilde{\lambda}_{i+1}}\\
 & \le & \vartheta_{i}(\omega)\\
 & \le & \underline{\gamma}_{i,k}(\omega)-\underline{\gamma}_{i+1,k}(\omega)\\
 & = & \underline{\gamma}_{i,k}(\omega)-\sum_{j=i+1}^{k-1}\frac{\mathrm{H}_{j+1}-\mathrm{H}_{j}}{\tilde{\lambda}_{j+1}}\:.
\end{eqnarray*}
This proves (\ref{eq:up gak low gam eq}) also for $i$, which completes
the proof of the claim.
\end{proof}
We are finally ready to complete the proof of Theorem \ref{thm:LY formula and ED},
which follows easily from the last claim. Recall from Section \ref{subsec:boundary map}
that $\nu=\pi\beta$ is the Furstenberg measure corresponding to $\mu=\sum_{l\in\Lambda}p_{l}\delta_{A_{l}}$.
Also, recall from Section \ref{subsec:Dimension-formulas} that for
a proper linear subspace $W$ of $V$ the partition $\zeta_{W}$ of
$\mathrm{P}(V)\setminus\mathrm{P}(W)$ is define by,
\[
\zeta_{W}(\overline{x})=\{\overline{y}\in\mathrm{P}(V)\setminus\mathrm{P}(W)\::\:P_{W^{\perp}}\overline{y}=P_{W^{\perp}}\overline{x}\}\:.
\]

By remark \ref{rem:only last part needed} in Section \ref{subsec:Dimension-formulas},
in order to prove Theorem \ref{thm:LY formula and ED} we only need
to establish part (\ref{enu:LY for proj of slices}) of that theorem,
whose statement we now recall.
\begin{thm*}
For $\beta$-a.e. $\omega$, $\nu$-a.e. $\overline{x}$ and every
$0\le i<k\le s$, the measure $P_{(V_{\omega}^{k})^{\perp}}\nu_{\overline{x}}^{\zeta_{V_{\omega}^{i}}}$
is exact dimensional with,
\[
\dim P_{(V_{\omega}^{k})^{\perp}}\nu_{\overline{x}}^{\zeta_{V_{\omega}^{i}}}=\sum_{j=i}^{k-1}\frac{\mathrm{H}_{j+1}-\mathrm{H}_{j}}{\tilde{\lambda}_{j+1}}\:.
\]
\end{thm*}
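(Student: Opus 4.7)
The strategy is to transport the ball-mass estimate of Claim \ref{cla:gam up =00003D gam low} across the Furstenberg boundary map $\pi$, and then upgrade the resulting one-point local dimension into genuine exact dimensionality for almost every centre. The starting point is the push-forward identity
\[
\pi_{*}\beta_{\omega}^{\xi_{i}}=\nu_{\pi\omega}^{\zeta_{V_{\omega}^{i}}}\quad\text{for }\beta\text{-a.e.\ }\omega.
\]
To establish this, I would first note that $\pi_{*}\beta_{\omega}^{\xi_{0}}=\nu$ for every $\omega\in\Omega_{0}$, since $\pi$ depends only on the nonnegative coordinates and $\beta_{\omega}^{\xi_{0}}$ is the Bernoulli measure $p^{\mathbb{Z}_{\ge0}}$ sitting on the fibre over the fixed negative coordinates. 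On $\xi_{0}(\omega)$ the subspace $V_{\omega}^{i}$ is constant, and unpacking the definition of $\xi_{i}$ shows that $\pi^{-1}\zeta_{V_{\omega}^{i}}$ agrees with $\xi_{i}$ as a partition of $\xi_{0}(\omega)$. Combining Lemmas \ref{lem:slices of slices} and \ref{lem:push of slices}, applied to $\pi:(\xi_{0}(\omega),\beta_{\omega}^{\xi_{0}})\to(\mathrm{P}(V),\nu)$ and the partition $\zeta_{V_{\omega}^{i}}$, then yields the identity.

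Next, since $\beta_{\omega}^{\xi_{i}}$ is supported on $\xi_{i}(\omega)$, the push-forward turns $\Gamma_{k}$-sets into preimages of balls in $\mathrm{P}((V_{\omega}^{k})^{\perp})$:
\[
\beta_{\omega}^{\xi_{i}}(\Gamma_{k}(\omega,r))=P_{(V_{\omega}^{k})^{\perp}}\nu_{\pi\omega}^{\zeta_{V_{\omega}^{i}}}\!\left(B(P_{(V_{\omega}^{k})^{\perp}}\pi\omega,r)\right).
\]
Writing $\alpha:=\sum_{j=i}^{k-1}(\mathrm{H}_{j+1}-\mathrm{H}_{j})/\tilde{\lambda}_{j+1}$, Claim \ref{cla:gam up =00003D gam low} then gives that for $\beta$-a.e.\ $\omega$, the measure $P_{(V_{\omega}^{k})^{\perp}}\nu_{\pi\omega}^{\zeta_{V_{\omega}^{i}}}$ has local dimension $\alpha$ at the single centre $P_{(V_{\omega}^{k})^{\perp}}\pi\omega$.

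To upgrade this to exact dimensionality, let $E$ be the full $\beta$-measure set on which Claim \ref{cla:gam up =00003D gam low} holds. By Fubini applied to $\beta=\int\beta_{\omega}^{\xi_{i}}\,d\beta(\omega)$, for $\beta$-a.e.\ $\omega$ one has $\omega'\in E$ for $\beta_{\omega}^{\xi_{i}}$-a.e.\ $\omega'$. For such an $\omega'\in\xi_{i}(\omega)$ the disintegration is constant, $\beta_{\omega'}^{\xi_{i}}=\beta_{\omega}^{\xi_{i}}$, and the flag $(V_{\omega'}^{j})_{j}$ agrees with $(V_{\omega}^{j})_{j}$, so Claim \ref{cla:gam up =00003D gam low} applied at $\omega'$ says exactly that $P_{(V_{\omega}^{k})^{\perp}}\nu_{\pi\omega}^{\zeta_{V_{\omega}^{i}}}$ has local dimension $\alpha$ at $P_{(V_{\omega}^{k})^{\perp}}\pi\omega'$. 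Since $\omega'\mapsto P_{(V_{\omega}^{k})^{\perp}}\pi\omega'$ pushes $\beta_{\omega}^{\xi_{i}}$ forward to $P_{(V_{\omega}^{k})^{\perp}}\nu_{\pi\omega}^{\zeta_{V_{\omega}^{i}}}$, this is precisely exact dimensionality with dimension $\alpha$.

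Finally, I would pass from the special centre $\overline{x}=\pi\omega$ to a generic centre $\overline{x}\sim\nu$ independent of $\omega$. The property in question depends on $\omega$ only through the negative coordinates $\pi_{-}(\omega)$ (which determine $V_{\omega}^{i}$, $V_{\omega}^{k}$ and $\zeta_{V_{\omega}^{i}}$), and on $\overline{x}$ only through its $\zeta_{V_{\omega}^{i}}$-atom. Under $\beta$ the negative and nonnegative coordinates are independent, so $(\pi_{-}(\omega),\pi\omega)$ and $(\pi_{-}(\omega),\overline{x})$ (the latter under $\beta\otimes\nu$) share the joint distribution $p^{\mathbb{Z}_{<0}}\otimes\nu$. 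The previous step shows the property holds $\beta$-almost surely for the first pair, hence $\beta\otimes\nu$-almost surely for the second, and Fubini converts this into the ``$\beta$-a.e.\ $\omega$, $\nu$-a.e.\ $\overline{x}$'' formulation; intersecting over the finitely many pairs $(i,k)$ with $0\le i<k\le s$ handles the remaining quantifier. I expect the main technical obstacle to be the careful measure-theoretic justification of Step~1: verifying, outside a genuine $\beta$-null set, that the partition identification $\pi^{-1}\zeta_{V_{\omega}^{i}}=\xi_{i}$ is a valid identification of measurable partitions on $\xi_{0}(\omega)$ in the sense required by Lemma \ref{lem:push of slices}, so that the conditional measures genuinely coincide.
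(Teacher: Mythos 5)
Your proposal is correct and follows essentially the same route as the paper: it uses the same push-forward identity $\pi\beta_{\omega}^{\xi_{i}}=\nu_{\pi\omega}^{\zeta_{V_{\omega}^{i}}}$ (which the paper obtains from the product structure $\beta=\beta^{-}\times\beta^{+}$ and Lemma \ref{lem:push of slices}), the same translation of $\Gamma_{k}$-sets into balls, and the same appeal to Claim \ref{cla:gam up =00003D gam low}. The only deviation is cosmetic: you upgrade from local dimension at the one centre $P_{(V_{\omega}^{k})^{\perp}}\pi\omega$ to exact dimensionality by Fubini inside the atom $\xi_{i}(\omega)$ before passing to an independent $\overline{x}\sim\nu$, whereas the paper first passes to $(\omega,\overline{x})$ via the independence of $q^{+}$ and $q^{-}$ and then performs the analogous Fubini within the atom $\zeta_{V_{\omega}^{i}}(\overline{x})$; both orders are valid and rest on the same constancy of conditional measures on partition atoms.
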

\begin{proof}
Let $\mathbb{Z}_{\ge0}$ and $\mathbb{Z}_{<0}$ denote the sets of
nonnegative and negative integers respectively. Write $\Omega^{+}$
for the space of sequences $(\omega_{n})_{n\ge0}\in\Lambda^{\mathbb{Z}_{\ge0}}$,
and $\Omega^{-}$ for the space of sequences $(\omega_{n})_{n<0}\in\Lambda^{\mathbb{Z}_{<0}}$.
We equip each of these spaces with its Borel $\sigma$-algebra generated
by cylinder sets. Let $q^{+}:\Omega\rightarrow\Omega^{+}$ and $q^{-}:\Omega\rightarrow\Omega^{-}$
be the projections onto the nonnegative and negative coordinates respectively.
Note that since $\beta$ is a Bernoulli measure the maps $q^{+}$
and $q^{-}$ are independent as random elements on $(\Omega,\beta)$.
Write $\beta^{+}$ and $\beta^{-}$ for the Bernoulli measures corresponding
to $p$ on $\Omega^{+}$ and $\Omega^{-}$ respectively, that is $\beta^{+}=p^{\mathbb{Z}_{\ge0}}$
and $\beta^{-}=p^{\mathbb{Z}_{<0}}$.

Recall that $\pi$ only depends on the nonnegative coordinates. Thus
there exists a Borel map $\pi_{+}:\Omega^{+}\rightarrow\mathrm{P}(V)$
such that $\pi\omega=\pi_{+}q^{+}\omega$ for $\omega\in\Omega_{0}$.
Since $\nu=\pi\beta$ it follows that $\nu=\pi_{+}\beta^{+}$. Also,
recall that for each $0\le i\le s$ the map which takes $\omega\in\Omega_{0}$
to the $V_{\omega}^{i}$ depends only on the negative coordinates.
Thus we may write $V_{q^{-}\omega}^{i}$ in place of $V_{\omega}^{i}$
for $\omega\in\Omega_{0}$.

For a proper linear subspace $W$ of $V$ write $\xi_{W}$ for the
partition of $\Omega^{+}\setminus\pi_{+}^{-1}\mathrm{P}(W)$, such
that for $\omega$ in this set,
\[
\xi_{W}(\omega)=\{\eta\in\Omega^{+}\setminus\pi_{+}^{-1}\mathrm{P}(W)\::\:P_{W^{\perp}}\pi_{+}\eta=P_{W^{\perp}}\pi_{+}\omega\}\:.
\]
Note that,
\begin{equation}
\xi_{W}(\omega)=\pi_{+}^{-1}\zeta_{W}(\pi_{+}\omega)\:.\label{eq:pull back of zeta_W}
\end{equation}
Since $W\ne V$ we have,
\[
\beta^{+}(\pi_{+}^{-1}\mathrm{P}(W))=\nu(\mathrm{P}(W))=0,
\]
so the conditional measures $\{(\beta^{+})_{\omega}^{\xi_{W}}\}_{\omega\in\Omega^{+}}\subset\mathcal{M}(\Omega^{+})$
are $\beta^{+}$-a.e. defined.

From $\nu=\pi_{+}\beta^{+}$, (\ref{eq:pull back of zeta_W}) and
Lemma \ref{lem:push of slices},
\[
\pi_{+}(\beta^{+})_{\omega}^{\xi_{W}}=\nu_{\pi_{+}\omega}^{\zeta_{W}}\text{ for }\beta^{+}\text{-a.e. }\omega\:.
\]
Since $\beta=\beta^{-}\times\beta^{+}$, it is easy to verify (by
using \cite[Proposition 5.19]{EW} for instance) that for each $0\le i\le s$,
\begin{equation}
\beta_{\omega}^{\xi_{i}}=\delta_{q^{-}\omega}\times\biggl(\beta^{+}\biggr)_{q^{+}\omega}^{\xi_{V_{q^{-}\omega}^{i}}}\text{ for }\beta\text{-a.e. }\omega\:.\label{eq:expl form for beta^xi_i}
\end{equation}
From these facts together with $\pi=\pi_{+}q^{+}$, it follows that
for $0\le i\le s$ and $\beta$-a.e. $\omega$,
\begin{equation}
\pi\beta_{\omega}^{\xi_{i}}=\pi_{+}(\beta^{+})_{q^{+}\omega}^{\xi_{V_{q^{-}\omega}^{i}}}=(\nu)_{\pi_{+}q^{+}\omega}^{\zeta_{V_{q^{-}\omega}^{i}}}\:.\label{eq:push of cond}
\end{equation}

Now let $0\le i<k\le s$ be given and write,
\[
\alpha=\sum_{j=i}^{k-1}\frac{\mathrm{H}_{j+1}-\mathrm{H}_{j}}{\tilde{\lambda}_{j+1}}\:.
\]
From Claim \ref{cla:gam up =00003D gam low}, and the definitions
of $\overline{\gamma}_{i,k}$ and $\underline{\gamma}_{i,k}$, it
follows that for $\beta$-a.e. $\omega$,
\[
\underset{r\downarrow0}{\lim}\:\frac{\log\beta_{\omega}^{\xi_{i}}(\Gamma_{k}(\omega,r))}{\log r}=\alpha\:.
\]
By the definition of $\Gamma_{k}(\omega,r)$ this implies that for
$\beta$-a.e. $\omega$,
\[
\underset{r\downarrow0}{\lim}\:\frac{\log P_{(V_{q^{-}\omega}^{k})^{\perp}}\pi\beta_{\omega}^{\xi_{i}}(B(P_{(V_{q^{-}\omega}^{k})^{\perp}}\pi\omega,r))}{\log r}=\alpha\:.
\]
From this and (\ref{eq:push of cond}) it follows that for $\beta$-a.e.
$\omega$,
\[
\underset{r\downarrow0}{\lim}\:\frac{\log P_{(V_{q^{-}\omega}^{k})^{\perp}}(\nu)_{\pi_{+}q^{+}\omega}^{\zeta_{V_{q^{-}\omega}^{i}}}(B(P_{(V_{q^{-}\omega}^{k})^{\perp}}\pi_{+}q^{+}\omega,r))}{\log r}=\alpha\:.
\]
From this, since $\pi_{+}q^{+}\beta=\nu$ and since $q^{+}$ and $q^{-}$
are $\beta$-independent elements, it follows that for $\beta^{-}$-a.e.
$\omega$ and $\nu$-a.e. $\overline{x}$,
\[
\underset{r\downarrow0}{\lim}\:\frac{\log P_{(V_{\omega}^{k})^{\perp}}\nu_{\overline{x}}^{\zeta_{V_{\omega}^{i}}}(B(P_{(V_{\omega}^{k})^{\perp}}\overline{x},r))}{\log r}=\alpha\:.
\]
Note that for $\beta^{-}$-a.e. $\omega$, $\nu$-a.e. $\overline{x}$
and $\nu_{\overline{x}}^{\zeta_{V_{\omega}^{i}}}$-a.e. $\overline{y}$
the last equality holds with $\overline{y}$ in place of $\overline{x}$.
Since for $\beta^{-}$-a.e. $\omega$ and $\nu$-a.e. $\overline{x}$,
\[
\nu_{\overline{y}}^{\zeta_{V_{\omega}^{i}}}=\nu_{\overline{x}}^{\zeta_{V_{\omega}^{i}}}\qquad\text{ for }\nu_{\overline{x}}^{\zeta_{V_{\omega}^{i}}}\text{-a.e. }\overline{y},
\]
this completes the proof of the theorem.
\end{proof}
In the next lemma we show that the different definitions for $\mathrm{H}_{i}$,
given in Section \ref{subsec:Dimension-formulas} and Section \ref{sec:measurable partitions},
yield the same value.
\begin{lem}
\label{lem:H_i same val}Let $\mathcal{B}$ be the Borel $\sigma$-algebra
of $\mathrm{P}(V)$. Then for $0\le i\le s$ we have,
\[
\mathrm{H}_{\beta}(\mathcal{P}\mid\widehat{\xi_{i}})=\int\mathrm{H}_{\beta}(\mathcal{P}\mid\pi^{-1}P_{(V_{\omega}^{i})^{\perp}}^{-1}\mathcal{B})\:d\beta(\omega)\:.
\]
\end{lem}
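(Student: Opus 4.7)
The plan is to reduce the claim to a disintegration over the coarser partition $\xi_0$, combined with the Bernoulli product structure $\beta = \beta^{-}\otimes\beta^{+}$ via the projections $q^{\pm}$ used in the proof of Theorem \ref{thm:LY formula and ED}. The key observation is that the Oseledets subspace map $\omega \mapsto V_\omega^i$ depends only on the negative coordinates, so on each atom of $\xi_0$ the subspace $V_\omega^i$ is constant; this is what allows one to match the $W$-dependent $\sigma$-algebra on the right-hand side with the restriction of $\xi_i$ to $\xi_0$-atoms on the left-hand side.

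First I would verify, directly from the definition of $\xi_i$, that $\xi_i$ refines $\xi_0$ and that on each atom $\xi_0(\omega)$ its restriction is the level-set partition of the Borel map $\varphi_i(\eta) := P_{(V_\eta^i)^\perp}\pi\eta$ (well defined outside a $\beta$-null set by Lemma \ref{lem:pi om not in P(V^0_om)}), since $V_\eta^i = V_\omega^i$ on the atom. This gives, modulo $\beta$-null sets, $\widehat{\xi_i} = \widehat{\xi_0} \vee \varphi_i^{-1}\mathcal{B}$. Then, using Lemma \ref{lem:slices of slices} with $\xi_0 \subset \xi_i$ together with the information-function formula for conditional entropy, I would rewrite the left-hand side as
\[
\mathrm{H}_\beta(\mathcal{P}\mid\widehat{\xi_i}) = \int \mathrm{H}_{\beta_\omega^{\xi_0}}\!\bigl(\mathcal{P} \,\big|\, \widehat{\xi_i}\bigr)\, d\beta(\omega),
\]
where inside the integral the conditioning $\sigma$-algebra is (via restriction to $\xi_0(\omega)$) generated by $\eta \mapsto P_{(V_\omega^i)^\perp}\pi\eta$.

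Next, invoking formula (\ref{eq:expl form for beta^xi_i}), namely $\beta_\omega^{\xi_0} = \delta_{q^-\omega} \times \beta^{+}$, and transferring the inner entropy via $q^{+}$ to $(\Omega^+,\beta^+)$, one sees that it equals $\mathrm{H}_{\beta^+}(\mathcal{P}^+\mid\pi_+^{-1}P_{W^\perp}^{-1}\mathcal{B})$ with $W := V_\omega^i$, where $\mathcal{P}^+$ is the analogue of $\mathcal{P}$ on $\Omega^+$. Finally, the Bernoulli independence of $q^{-}$ and $q^{+}$ shows that for any fixed proper subspace $W$ of $V$ the entropy $\mathrm{H}_\beta(\mathcal{P}\mid\pi^{-1}P_{W^\perp}^{-1}\mathcal{B})$ is also equal to $\mathrm{H}_{\beta^+}(\mathcal{P}^+\mid\pi_+^{-1}P_{W^\perp}^{-1}\mathcal{B})$, because both $\widehat{\mathcal{P}}$ and $\pi^{-1}P_{W^\perp}^{-1}\mathcal{B}$ are $q^{+}$-measurable. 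Applying this with $W = V_\omega^i$ and integrating over $\omega$ yields the claimed identity.

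The whole argument is essentially bookkeeping. The only subtlety, which is the one step requiring care, is that the subspace $W = V_\omega^i$ varies with $\omega$; the entire point of passing through the coarser partition $\xi_0$ is that on each $\xi_0$-atom $W$ is constant, which makes both the $\sigma$-algebra identity $\widehat{\xi_i} \underset{\beta}{=} \widehat{\xi_0} \vee \varphi_i^{-1}\mathcal{B}$ and the subsequent entropy manipulations clean to verify.
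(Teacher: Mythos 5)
Your argument is correct and is essentially the paper's proof: both rest on the product structure $\beta=\beta^{-}\times\beta^{+}$, the fact that $\omega\mapsto V_{\omega}^{i}$ depends only on the negative coordinates, and the identification (on each $\xi_{0}$-atom) of the conditioning $\sigma$-algebra $\widehat{\xi_{i}}$ with the pullback of $\mathcal{B}$ under $P_{(V_{\omega}^{i})^{\perp}}\circ\pi$. The only cosmetic difference is that the paper disintegrates over $\xi_{i}$ directly and invokes the explicit formula $\beta_{\omega}^{\xi_{i}}=\delta_{q^{-}\omega}\times(\beta^{+})_{q^{+}\omega}^{\xi_{V_{q^{-}\omega}^{i}}}$, whereas you disintegrate over the coarser $\xi_{0}$ and keep the residual conditioning; the two computations are the same modulo this reorganization.
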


\begin{proof}
We use here the notations introduced in the last proof. Let $\mathcal{P}^{+}$
be the partition of $\Omega^{+}$ according to the $0$-coordinate.
Given $\theta\in\mathcal{M}(\Omega^{+})$ it will be convenient to
write $\mathrm{H}(\mathcal{P}^{+};\theta)$ in place of $\mathrm{H}_{\theta}(\mathcal{P}^{+})$.
By the definitions of the conditional measures and entropy, and by
(\ref{eq:expl form for beta^xi_i}), we get
\begin{eqnarray*}
\mathrm{H}_{\beta}(\mathcal{P}\mid\widehat{\xi_{i}}) & = & \int\mathrm{H}_{\beta_{\omega}^{\xi_{i}}}(\mathcal{P})\:d\beta(\omega)\\
 & = & \int\mathrm{H}\left(\mathcal{P}^{+};\biggl(\beta^{+}\biggr)_{\omega_{1}}^{\xi_{V_{\omega_{2}}^{i}}}\right)\:d\beta^{+}(\omega_{1})\:d\beta^{-}(\omega_{2})\\
 & = & \int\mathrm{H}_{\beta^{+}}(\mathcal{P}^{+}\mid\widehat{\xi_{V_{\omega_{2}}^{i}}})\:d\beta^{-}(\omega_{2})\\
 & = & \int\mathrm{H}_{\beta}(\mathcal{P}\mid\pi^{-1}P_{(V_{\omega}^{i})^{\perp}}^{-1}\mathcal{B})\:d\beta(\omega),
\end{eqnarray*}
which completes the proof of the lemma.
\end{proof}
As a Corollary of Theorem \ref{thm:LY formula and ED} we can now
prove the following lemma, which was used in Section \ref{subsec:The-Lyapunov-dimension}
when the Lyapunov dimension was discussed. Recall the numbers $d_{0},...,d_{s}$
from Theorem \ref{thm:from Oseledets}.
\begin{lem}
\label{lem:ub on dif H}Let $0\le i<s$ be given, then
\[
0\le\mathrm{H}_{i}-\mathrm{H}_{i+1}\le-\tilde{\lambda}_{i+1}d_{i+1}\:.
\]
\end{lem}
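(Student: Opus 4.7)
The plan is to obtain both halves of the inequality from results already in hand, with the upper bound being the only part that requires work. The lower bound $\mathrm{H}_{i+1} \le \mathrm{H}_i$ was already observed in Section \ref{subsec:Dimension-formulas}: since $V_\omega^{i+1} \subset V_\omega^i$ gives $(V_\omega^i)^\perp \subset (V_\omega^{i+1})^\perp$, the conditioning $\sigma$-algebra defining $\mathrm{H}_{i+1}$ refines the one defining $\mathrm{H}_i$, and the monotonicity property (\ref{eq:monoto of ent wrt sig-alg}) applies after integrating over $\omega$.

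For the upper bound, I would apply Theorem \ref{thm:LY formula and ED}(\ref{enu:LY for proj of slices}) with $k = i+1$: for $\beta$-a.e.\ $\omega$ and $\nu$-a.e.\ $\bar x$, the measure $P_{(V_\omega^{i+1})^\perp}\,\nu_{\bar x}^{\zeta_{V_\omega^{i}}}$ is exact dimensional with dimension exactly $(\mathrm{H}_{i+1}-\mathrm{H}_{i})/\tilde{\lambda}_{i+1}$. It then suffices to show this dimension is bounded above by $d_{i+1}$; since $\tilde{\lambda}_{i+1} < 0$, this inequality will rearrange to $\mathrm{H}_{i}-\mathrm{H}_{i+1} \le -\tilde{\lambda}_{i+1}\,d_{i+1}$.

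The remaining geometric step, which I expect to be the only real content of the proof, is a support-dimension bound. Set $W_j = (V_\omega^{j})^\perp$, so that $W_i \subset W_{i+1}$, and let $U = W_{i+1} \cap V_\omega^{i}$, which has dimension $\dim W_{i+1}-\dim W_i = d_{i+1}$. For any $\bar y \in \zeta_{V_\omega^{i}}(\bar x)$, the identity $P_{W_i} = P_{W_i}\circ P_{W_{i+1}}$ (valid because $W_i \subset W_{i+1}$) gives $P_{W_i}(P_{W_{i+1}}\bar y) = P_{W_i}\bar x$, so $P_{W_{i+1}}\bar y$ lies in the fiber
\[
F = \{\bar z \in \mathrm{P}(W_{i+1})\setminus \mathrm{P}(U) : P_{W_i}\bar z = P_{W_i}\bar x\}.
\]
Choosing a unit representative $z$ of $P_{W_i}\bar x$, the map $u \mapsto \overline{z+u}$ is a smooth embedding of $U \cong \mathbb{R}^{d_{i+1}}$ with image $F$, so $F$ is a $d_{i+1}$-dimensional smooth submanifold of $\mathrm{P}(W_{i+1})$. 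Consequently the support of $P_{W_{i+1}}\,\nu_{\bar x}^{\zeta_{V_\omega^{i}}}$ is contained in $F$, and its Hausdorff dimension is at most $d_{i+1}$. There is no substantive obstacle to overcome; indeed the paper's introduction already flags this lemma as a simple consequence of Theorem \ref{thm:LY formula and ED}(\ref{enu:LY for proj of slices}).
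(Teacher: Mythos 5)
Your proposal is correct and follows essentially the same route as the paper: both apply part (\ref{enu:LY for proj of slices}) of Theorem \ref{thm:LY formula and ED} with $k=i+1$ and then bound the dimension of $P_{(V_{\omega}^{i+1})^{\perp}}\nu_{\overline{x}}^{\zeta_{V_{\omega}^{i}}}$ by $d_{i+1}$ via containment of its support in a smooth $d_{i+1}$-dimensional manifold (the paper uses the projective space $\mathrm{P}(P_{(V_{\omega}^{i+1})^{\perp}}\overline{x}\oplus P_{(V_{\omega}^{i+1})^{\perp}}V_{\omega}^{i})$, you parametrize the same fiber by an affine chart). The only cosmetic difference is that you take the lower bound from entropy monotonicity, already noted in Section \ref{subsec:Dimension-formulas}, whereas the paper reads it off from the nonnegativity of the dimension; both are valid.
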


\begin{proof}
For $\omega\in\Omega_{0}$ and $\overline{x}\in\mathrm{P}(V)\setminus\mathrm{P}(V_{\omega}^{i})$,
\[
\zeta_{V_{\omega}^{i}}(\overline{x})=\mathrm{P}(\overline{x}\oplus V_{\omega}^{i})\setminus\mathrm{P}(V_{\omega}^{i})\:.
\]
Hence,
\begin{equation}
P_{(V_{\omega}^{i+1})^{\perp}}(\zeta_{V_{\omega}^{i}}(\overline{x}))\subset\mathrm{P}(P_{(V_{\omega}^{i+1})^{\perp}}\overline{x}\oplus P_{(V_{\omega}^{i+1})^{\perp}}V_{\omega}^{i})\:.\label{eq:proj of slice contained}
\end{equation}
Note that,
\[
\dim P_{(V_{\omega}^{i+1})^{\perp}}V_{\omega}^{i}=\dim V_{\omega}^{i}-\dim V_{\omega}^{i+1}=d_{i+1},
\]
and so,
\[
\mathrm{P}(P_{(V_{\omega}^{i+1})^{\perp}}\overline{x}\oplus P_{(V_{\omega}^{i+1})^{\perp}}V_{\omega}^{i}),
\]
is a smooth $d_{i+1}$-dimensional manifold. This together with (\ref{eq:proj of slice contained})
gives,
\begin{equation}
\dim_{H}\left(P_{(V_{\omega}^{i+1})^{\perp}}(\zeta_{V_{\omega}^{i}}(\overline{x}))\right)\le d_{i+1},\label{eq:H-dim of proj of slice}
\end{equation}
where $\dim_{H}$ stands from Hausdorff dimension.

Additionally, for $\beta$-a.e. $\omega$ and $\nu$-a.e. $\overline{x}$,
\[
P_{(V_{\omega}^{i+1})^{\perp}}\nu_{\overline{x}}^{\zeta_{V_{\omega}^{i}}}\left(P_{(V_{\omega}^{i+1})^{\perp}}(\zeta_{V_{\omega}^{i}}(\overline{x}))\right)=1\:.
\]
Hence if $P_{(V_{\omega}^{i+1})^{\perp}}\nu_{\overline{x}}^{\zeta_{V_{\omega}^{i}}}$
is also exact dimensional, then from (\ref{eq:H-dim of proj of slice})
and (\ref{eq:def of h-dim}) we obtain that its dimension is at most
$d_{i+1}$. It now follows from part (\ref{enu:LY for proj of slices})
of Theorem \ref{thm:LY formula and ED} that for $\beta$-a.e. $\omega$
and $\nu$-a.e. $\overline{x}$,
\[
0\le\mathrm{H}_{i}-\mathrm{H}_{i+1}=-\tilde{\lambda}_{i+1}\dim P_{(V_{\omega}^{i+1})^{\perp}}\nu_{\overline{x}}^{\zeta_{V_{\omega}^{i}}}\le-\tilde{\lambda}_{i+1}d_{i+1},
\]
which completes the proof of the lemma.
\end{proof}

$\newline$$\newline$\textsc{Centre for Mathematical Sciences,\newline Wilberforce Road, Cambridge CB3 0WA, UK}$\newline$$\newline$\textit{E-mail: }
\texttt{ariel.rapaport@mail.huji.ac.il}
\end{document}